\newenvironment{claim}[1]{\par\noindent\underline{Claim:}\space#1}{}
\newenvironment{definition}[1][Definition]{\begin{trivlist}
\item[\hskip \labelsep {\bfseries #1}]}{\end{trivlist}}
\newtheorem{theorem}{Theorem}
\newtheorem{assumption}{Assumption}
\newtheorem{lemma}{Lemma}
\newtheorem{corollary}{Corollary}
\title{Distributed SDDM Solvers: Theory \& Applications}
\author{
Rasul Tutunov \\
Department of Computer and Information Science,\\
University of Pennsylvania, \\
\texttt{tutunov@seas.upenn.edu} \\
\And
Haitham Bou-Ammar \\
Department of Computer and Information Science, \\
University of Pennsylvania, \\
Department of Operations Research and Financial Engineering,\\
Princeton University, \\
\texttt{haithamb@seas.upenn.edu} \\
\AND
Ali Jadbabaie,  \\
Department of Electrical and Systems Engineering,\\
University of Pennsylvania, \\
\texttt{jadbabaie@seas.upenn.edu} \\
}
\begin{document}

\maketitle

\begin{abstract}
In this paper, we propose distributed solvers for systems of linear equations given by symmetric diagonally dominant M-matrices based on the parallel solver of Spielman and Peng. We propose two versions of the solvers, where in the first, full communication in the network is required, while in the second communication is restricted to the R-Hop neighborhood between nodes for some $R \geq 1$. We rigorously analyze the convergence and convergence rates of our solvers, showing that our methods are capable of outperforming state-of-the-art techniques. 

Having developed such solvers, we then contribute by proposing an accurate distributed Newton method for network flow optimization. Exploiting the sparsity pattern of the dual Hessian, we propose a Newton method for network flow optimization that is both faster and more accurate than state-of-the-art techniques. Our method utilizes the distributed SDDM solvers for determining the Newton direction up to any arbitrary precision $\epsilon >0$. We analyze the properties of our algorithm and show superlinear convergence within a neighborhood of the optimal. Finally, in a set of experiments conducted on randomly generated and barbell networks, we demonstrate that our approach is capable of significantly outperforming state-of-the-art techniques.
\end{abstract}
\section{Introduction}

Solving systems of linear equations given by symmetric diagonally matrices (SDD) is of interest to researchers in a variety of fields. Such constructs, for example, are used to determine solutions to partial differential equations~\cite{c18} and computations of maximum flows in graphs~\cite{c19,c20}. Other application domains include machine learning~\cite{c21,c22}, and computer vision~\cite{c34}\footnote{This research is supported in parts by by ONR grant Number N00014-12-1-0997 and AFOSR grant FA9550-13-1-0097.}. 

Much interest has been devoted to determining fast algorithms for solving SDD systems. Recently, Spielman and Teng~\cite{c24}, utilized the multi-level framework of~\cite{c25}, pre-conditioners~\cite{c26}, and spectral graph sparsifiers~\cite{c27}, to propose a nearly linear-time algorithm for solving SDD systems. Further exploiting these ingredients, Koutis \textit{et. al}~\cite{c28, c29} developed an even faster algorithm for acquiring $\epsilon$-close solutions to SDD linear systems. Further improvements have been discovered by Kelner \textit{et. al}~\cite{c30}, where their algorithm relied on only spanning-trees and eliminated the need for graph sparsifiers and the multi-level framework.  

Motivated by applications, much progress has been made in developing  parallel versions of these algorithms. Koutis and Miller~\cite{c31} proposed an algorithm requiring nearly-linear work (i.e., total number of operations executed by a computation) and $m^{\sfrac{1}{6}}$ depth (i.e., longest chain of sequential dependencies in the computation) for planar graphs. This was then extended to general graphs in~\cite{c32} leading to depth close to $m^{\sfrac{1}{3}}$. Since then, Peng and Spielman~\cite{c11} have proposed an efficient parallel solver requiring nearly-linear work and poly-logarithmic depth without the need for low-stretch spanning trees. Their algorithm, which we provide a distribute construction for, requires sparse approximate inverse chains~\cite{c11} which facilitates the solution of the SDD system. 

Less progress, on the other hand, has been made on the distributed version of these solvers. Contrary to the parallel setting, memory is not shared and is rather distributed in the sense that each unit abides by its own memory restrictions. Furthermore, communication in a distributed setting fundamentally relies on message passing through communication links. Current methods, e.g., Jacobi iteration~\cite{c16,c17}, can be used for such distributed solutions but require substantial complexity. In~\cite{c33}, the authors propose a gossiping framework for acquiring a solution to SDDM systems in a distributed fashion. 
Recent work~\cite{c34} considers a local and asynchronous solution for solving systems of linear equations, where they acquire a bound on the number of needed multiplication proportional to the degree and condition number of the graph for one component of the solution vector.

\textbf{Contributions:} In this paper, we propose a fast distributed solver for linear equations given by symmetric diagonally dominant M-Matrices. Our approach distributes the parallel solver in~\cite{c11} by considering a specific approximated inverse chain which can be computed efficiently in a distributed fashion. We develop two versions of the solver. The first, requires full communication in the network, while the second is restricted to R-Hop neighborhood of nodes for some $R \geq 1$. Similar to the work in~\cite{c11}, our algorithms operate in two phases. In the first, a ``crude'' solution to the system of equations is retuned, while in the second a distributed R-Hop restricted pre-conditioner is proposed to drive the ``crude'' solution to an $\epsilon$-approximate one for any $\epsilon > 0$. Due to the distributed nature of the setting considered, the direct application of the sparsfier and pre-conditioner of Peng and Spielman~\cite{c11} is difficult due to the need of global information. Consequently, we propose a new sparse inverse chain which can be computed in a decentralized fashion for determining the solution to the SDDM system. 

Interestingly, due to the involvement of powers of matrices with eigenvalues less than one, our inverse chain is substantially shorter compared to that in~\cite{c11}. This leads us to a distributed SDDM solver with lower computational complexity compared to state-of-the-art methods.  Specifically, our algorithm's complexity is given by  \[\mathcal{O}\left(n^{3}\frac{\bm{\alpha}}{R}\frac{\bm{W}_{\text{max}}}{\bm{W}_{\text{min}}}\log\left(\frac{1}{\epsilon}\right)\right),\] with $n$ being the number of nodes in graph $\mathcal{G}$, $\bm{W}_{\text{max}}$ and $\bm{W}_{\text{min}}$ denoting the largest and smaller weights of the edges in $\mathcal{G}$, respectively, $\bm{\alpha}=\min\left\{n,\frac{d_{\text{max}}^{R+1}-1}{d_{\text{max}}-1}\right\}$ representing the upper bound on the size of the R-Hop neighborhood $\forall \bm{v} \in \mathcal{V}$, and $\epsilon \in (0,\frac{1}{2}]$ being the precision parameter. Furthermore, our approach improves current linear methods by a factor of $\log n$ and by a factor of the degree  compared to~ \cite{c34} for each component of the solution vector.

Having developed such distributed solvers, we next contribute by proposing an accurate distributed Newton method for network flow optimization. Exploiting the sparsity pattern of the dual Hessian, we propose a Newton method for network optimization that is both faster and more accurate than state-of-the-art techniques. Our method utilizes the proposed SDDM distributed solvers to approximate the Newton direction up to any arbitrary $\epsilon >0$. The resulting algorithm is an efficient and accurate distributed second-order method which performs almost identically to exact Newton. We analyze the properties of the proposed algorithm and show that, similar to conventional Newton methods, superlinear convergence within a neighborhood of the optimal value is attained.  We finally demonstrate the effectiveness of the approach in a set of experiments on randomly generated and Barbell networks.

\section{The parallel SDDM Solver}\label{Sec:ParallelSolver}
We now review the parallel solver for symmetric diagonally dominant (SDD) linear systems~\cite{c11}. 

\subsection{Problem Setting}\label{Sec:ProbSetting}
As detailed in~\cite{c11}, SDDM solvers consider the following system of linear equations:
\begin{equation}\label{lin_sys}
\bm{M}_{0}\bm{x} = \bm{b}_{0}
\end{equation}
where $\bm{M}_{0}$ is a Symmetric Diagonally Dominant M-Matrix (SDDM). Namely, $\bm{M}_{0}$ is symmetric positive definite with non-positive off diagonal elements, such that for all $i=1,2,\ldots, n$:
\begin{equation*}
\left[\bm{M}_{0}\right]_{ii} \ge -\sum_{j=1, j\ne i}^{n}\left[\bm{M}_{0}\right]_{ij}.
\end{equation*}
The system of Equations in~\ref{lin_sys} can be interpreted as representing an undirected weighted graph, $\mathcal{G}$, with $\bm{M}_{0}$ being its Laplacian. Namely, $\mathcal{G} = \left(\mathcal{V},\mathcal{E},\bm{W}\right)$, with $\mathcal{V}$ representing the set of nodes, $\mathcal{E}$ denoting the edges, and $\bm{W}$ representing the weighted graph adjacency. Nodes $\bm{v}_i$ and $\bm{v}_j$ are connected with an edge $\bm{e}=\left(i,j\right)$ iff $\bm{W}_{ij}> 0$, where: 
\begin{equation*}
\bm{W}_{ij} = -\left[\bm{M}_{0}\right]_{ij}.
\end{equation*}
Following~\cite{c11}, we seek $\epsilon$-approximate solutions to $\bm{x}^{\star}$, being the exact solution of $\bm{M}_{0}\bm{x}=\bm{b}_{0}$, defined as:
\begin{definition}[$\epsilon-$ Approximate Solution]
Let $\bm{x}^{\star}\in \mathbb{R}^{n}$ be the solution of $\bm{M}_{0}\bm{x}=\bm{b}_{0}$. A vector $\tilde{\bm{x}}\in \mathbb{R}^{n}$ is called an $\epsilon-$ approximate solution, if:
\begin{equation}
\left|\left|\bm{x}^{\star} - \tilde{\bm{x}}\right|\right|_{\bm{M}_{0}} \le \epsilon\left|\left|\bm{x}^{\star}\right|\right|_{\bm{M}_{0}}, \ \ \ \text{where $\left|\left|\bm{u}\right|\right|^{2}_{\bm{M}_0} = \bm{u}^{\mathsf{T}}\bm{M}_{0}\bm{u}$.
}
\end{equation}
\end{definition}

The R-hop neighbourhood of node $\bm{v}_{k}$ is defined as $\mathbb{N}_{R}\left(\bm{v}_{k}\right) = \{\bm{v}\in \mathcal{V}: \text{dist}\left(\bm{v}_{k}, \bm{v}\right)\le R\}$. We also make use of the diameter of a graph, $\mathcal{G}$, defined as $\text{diam}\left(\mathcal{G}\right) = \max_{\bm{v}_{i},\bm{v}_{j}\in \mathcal{V}}\text{dist}\left(\bm{v}_i,\bm{v}_j\right)$. 

\begin{definition}[Sparsity Pattern]
We say that a matrix $\bm{A} \in \mathbb{R}^{n\times n}$ has a sparsity pattern corresponding to the R-hop neighborhood if $\bm{A}_{ij} = 0$ for all $i = 1,\ldots, n$ and for all $j$ such that $\bm{v}_j\notin \mathbb{N}_{R}\left(\bm{v}_i\right)$. 
\end{definition}

We will denote the spectral radius of a matrix $\bm{A}$ by $\rho\left(\bm{A}\right) = \max{\left|\bm{\lambda}_i\right|}$, where $\bm{\lambda}_i$ represents an eigenvalue of the matrix $\bm{A}$. Furthermore, we will make use of the condition number\footnote{Please note that in the case of the graph Laplacian,  the condition number is defined as the ratio of the largest to the smallest nonzero eigenvalues.}, $\kappa\left(\bm{A}\right)$ of a matrix $\bm{A}$ defined as $\kappa=\left|\frac{\bm{\lambda}_{\text{max}}\left(\bm{A}\right)}{\bm{\lambda}_{\text{min}}\left(\bm{A}\right)}\right|$.  In~\cite{c11} it is shown that the condition number of the graph Laplacian is at most \[\mathcal{O}\left(n^{3}\frac{\bm{W}_{\text{max}}}{\bm{W}_{\text{min}}}\right),\] where $\bm{W}_{\text{max}}$ and $\bm{W}_{\text{min}}$ represent the largest and the smallest edge weights in $\mathcal{G}$. Finally, the condition number of a sub-matrix of the Laplacian is at most $\mathcal{O}\left(n^4\frac{\bm{W}_{\text{max}}}{\bm{W}_{\text{min}}}\right)$, see~\cite{c11}.

\subsection{Standard Splittings \& Approximations}\label{Sec:Standard}
Our first contribution is a distributed version of the parallel solver for SDDM systems of equations previously proposed in~\cite{c11}. Before detailing our solver, however, we next introduce basic mathematical machinery needed for developing the parallel solver of~\cite{c11}. The parallel solver commences by considering the standard splitting of the symmetric matrix $\bm{M}_{0}$: 
\begin{definition}
The standard splitting of a symmetric matrix $\bm{M}_{0}$ is: 
\begin{equation}
\bm{M}_{0} = \bm{D}_{0} - \bm{A}_{0}.
\end{equation}
\end{definition}
Here, $\bm{D}_0$ is a diagonal matrix consisting of the diagonal elements in $\bm{M}_{0}$ such that: 
\begin{equation*}
\left[\bm{D}_{0}\right]_{ii} = \left[\bm{M}_{0}\right]_{ii} \ \ \forall i=1,2,\dots, n.
\end{equation*}
Furthermore, $\bm{A}_0$ is a non-negative symmetric matrix such that:
\begin{align*}
\left[\bm{A}_0\right]_{ij}=
\begin{cases}
-\left[\bm{M}_{0}\right]_{ij} &: \text{if $i \ne j$,} \\
0 &: \text{otherwise.}
\end{cases}
\end{align*}

To quantify the quality of the acquired solutions, we define two additional mathematical constructs. First, the Loewner ordering is defined as: 
\begin{definition}
Let $\mathcal{\bm{S}}_{(n)}$ be the space of $n \times n$-symmetric matrices. The Loewner ordering $\preceq$ is a partial order on $\mathcal{\bm{S}}_{(n)}$ such that $\bm{Y}\preceq \bm{X}$ if and only if $\bm{X} - \bm{Y}$ is positive semidefinite.
\end{definition}

Having defined the Loewner order, we next define the notion of approximation for matrices ``$\approx_{\alpha}$'': 
\begin{definition}\label{Def:Ordering}
Let $\bm{X}$ and $\bm{Y}$ be positive semidefinite symmetric matrices. Then $\bm{X}\approx_{\alpha} \bm{Y}$ if and only iff
\begin{equation}
e^{-\alpha}\bm{X} \preceq \bm{Y} \preceq e^{\alpha}\bm{X}
\end{equation}
with $\bm{A}\preceq \bm{B}$ meaning $\bm{B} - \bm{A}$ is positive semidefinite.
\end{definition}

Based on the above definitions, the following lemma represents the basic characteristics of the $\approx_{\alpha}$ operator:
\begin{lemma}~\cite{c11}\label{approx_lemma_facts}
Let $\bm{X},\bm{Y},\bm{Z}$ and, $\bm{Q}$ be symmetric positive semi definite matrices. Then
\begin{enumerate}
\item[] (1) If $\bm{X}\approx_{\alpha} \bm{Y}$, then $\bm{X} + \bm{Z} \approx_{\alpha} \bm{Y} + \bm{Z}$, 
\item[] (2) If $\bm{X}\approx_{\alpha} \bm{Y}$ and $\bm{Z}\approx_{\alpha} \bm{Q}$, then $\bm{X} + \bm{Z} \approx_{\alpha} \bm{Y} + \bm{Q}$, 
\item[] (3) If $\bm{X}\approx_{\alpha_1} \bm{Y}$ and $\bm{Y} \approx_{\alpha_2} \bm{Z}$, then $\bm{X} \approx_{\alpha_1 + \alpha_2} \bm{Z}$
\item[] (4) If $\bm{X}$, and $\bm{Y}$ are non singular and $\bm{X}\approx_{\alpha} \bm{Y}$, then $\bm{X}^{-1}\approx_{\alpha} \bm{Y}^{-1}$, 
\item[] (5) If $\bm{X}\approx_{\alpha} \bm{Y}$ and $\bm{V}$ is a matrix, then $\bm{V}^{\mathsf{T}}\bm{X}\bm{V}\approx_{\alpha}\bm{V}^{\mathsf{T}}\bm{Y}\bm{V}$.
\end{enumerate}
\end{lemma}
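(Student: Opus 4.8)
The plan is to unfold the definition of $\approx_{\alpha}$ into the two Loewner inequalities $e^{-\alpha}\bm{X}\preceq\bm{Y}\preceq e^{\alpha}\bm{X}$ and reduce each of the five claims to elementary manipulations of the partial order $\preceq$. The three structural facts about $\preceq$ I will rely on are: additivity (if $\bm{A}\preceq\bm{B}$ and $\bm{C}\preceq\bm{D}$ then $\bm{A}+\bm{C}\preceq\bm{B}+\bm{D}$); compatibility with nonnegative scaling together with transitivity (if $\bm{A}\preceq\bm{B}$ then $c\bm{A}\preceq c\bm{B}$ for $c\geq0$, and $\bm{A}\preceq\bm{B}\preceq\bm{C}$ gives $\bm{A}\preceq\bm{C}$); and invariance under congruence (if $\bm{A}\preceq\bm{B}$ then $\bm{V}^{\mathsf{T}}\bm{A}\bm{V}\preceq\bm{V}^{\mathsf{T}}\bm{B}\bm{V}$). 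Each follows directly from the definition $\bm{B}-\bm{A}\succeq0$, using that the sum of positive semidefinite matrices is positive semidefinite and that $\bm{u}^{\mathsf{T}}\bm{V}^{\mathsf{T}}(\bm{B}-\bm{A})\bm{V}\bm{u}=(\bm{V}\bm{u})^{\mathsf{T}}(\bm{B}-\bm{A})(\bm{V}\bm{u})\geq0$.

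With these in hand the first three claims are short. For (2) I would add the inequality pair defining $\bm{X}\approx_{\alpha}\bm{Y}$ to the pair defining $\bm{Z}\approx_{\alpha}\bm{Q}$ via additivity, yielding $e^{-\alpha}(\bm{X}+\bm{Z})\preceq\bm{Y}+\bm{Q}\preceq e^{\alpha}(\bm{X}+\bm{Z})$; claim (1) is then the special case $\bm{Z}=\bm{Q}$, noting that $\bm{Z}\approx_{\alpha}\bm{Z}$ holds for $\alpha\geq0$. For (3), from $\bm{Y}\preceq e^{\alpha_1}\bm{X}$ and $\bm{Z}\preceq e^{\alpha_2}\bm{Y}$ I scale the first inequality by $e^{\alpha_2}$ and chain by transitivity to obtain $\bm{Z}\preceq e^{\alpha_1+\alpha_2}\bm{X}$, and argue symmetrically for the lower bound, recovering $\bm{X}\approx_{\alpha_1+\alpha_2}\bm{Z}$. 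Claim (5) is a single application of congruence invariance to each of the two inequalities of $\bm{X}\approx_{\alpha}\bm{Y}$, after pulling the scalars $e^{\pm\alpha}$ through the congruence.

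The one step that needs genuine work is the order-reversal under inversion required for (4): that $0\prec\bm{A}\preceq\bm{B}$ implies $\bm{B}^{-1}\preceq\bm{A}^{-1}$. I would establish this by a congruence argument: multiplying $\bm{A}\preceq\bm{B}$ on both sides by $\bm{A}^{-1/2}$ gives $\bm{I}\preceq\bm{A}^{-1/2}\bm{B}\bm{A}^{-1/2}$, so every eigenvalue of the symmetric positive definite matrix $\bm{A}^{-1/2}\bm{B}\bm{A}^{-1/2}$ is at least one; its inverse $\bm{A}^{1/2}\bm{B}^{-1}\bm{A}^{1/2}$ then has all eigenvalues at most one, i.e.\ $\bm{A}^{1/2}\bm{B}^{-1}\bm{A}^{1/2}\preceq\bm{I}$, and conjugating back by $\bm{A}^{-1/2}$ yields $\bm{B}^{-1}\preceq\bm{A}^{-1}$. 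Applying this to $\bm{Y}\preceq e^{\alpha}\bm{X}$ and to $e^{-\alpha}\bm{X}\preceq\bm{Y}$ (both nonsingular positive semidefinite, hence positive definite) gives $e^{-\alpha}\bm{X}^{-1}\preceq\bm{Y}^{-1}\preceq e^{\alpha}\bm{X}^{-1}$, which is exactly $\bm{X}^{-1}\approx_{\alpha}\bm{Y}^{-1}$. The main obstacle I anticipate is precisely this order-reversal fact, as it is the only part that is not pure bookkeeping with the order axioms; the mild condition $\alpha\geq0$ (implicit in the definition) is also worth flagging, since it is what makes $\bm{Z}\approx_{\alpha}\bm{Z}$ valid in the reduction of (1).
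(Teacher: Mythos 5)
Your proposal is correct. Be aware, though, that the paper never proves this lemma itself: it is stated as a known result imported from Peng and Spielman~\cite{c11}, so there is no in-paper argument to compare yours against, and your write-up supplies the missing (standard) proof. All five items are handled soundly: (1)--(3) and (5) reduce, exactly as you say, to additivity, nonnegative scaling plus transitivity, and congruence invariance of $\preceq$, while the one step requiring genuine work --- the order reversal $0 \prec \bm{A} \preceq \bm{B} \Rightarrow \bm{B}^{-1} \preceq \bm{A}^{-1}$ behind item (4) --- is established correctly via the congruence by $\bm{A}^{-1/2}$, the spectral argument on $\bm{A}^{-1/2}\bm{B}\bm{A}^{-1/2}$, and conjugating back. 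You also correctly flag the two hypotheses that make the bookkeeping legitimate: $\alpha \geq 0$ (needed for $\bm{Z} \approx_{\alpha} \bm{Z}$, using that $\bm{Z}$ is positive semidefinite, in the reduction of (1) to (2)) and the fact that a nonsingular positive semidefinite matrix is positive definite (so that $\bm{A}^{-1/2}$ exists and $\bm{Y}$ is invertible in (4)).
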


Since the parallel solver returns an approximation, $\bm{Z}_{0}$, to $\bm{M}_{0}^{-1}$ (see Section~\ref{Sec:ParallelSolver}), the following lemma shows that ``good'' approximations to $\bm{M}^{-1}_0$ guarantee ``good'' approximate solutions to $\bm{M}_{0}\bm{x}=\bm{b}_{0}$.
\begin{lemma}\label{lemma_approx_matrix_inverse}
Let $\bm{Z}_0\approx_{\epsilon}\bm{M}^{-1}_0$, and $\tilde{\bm{x}} = \bm{Z}_0\bm{b}_0$, then $\tilde{\bm{x}}$ is $\sqrt{2^{\epsilon}(e^{\epsilon} - 1)}$ approximate solution to $\bm{M}_{0}\bm{x}=\bm{b}_{0}$.
\end{lemma}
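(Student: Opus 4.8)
The plan is to convert the weighted $\bm{M}_0$-norm estimate into a single spectral-norm bound on a symmetrized error operator, and then read that bound off from the hypothesis $\bm{Z}_0 \approx_{\epsilon} \bm{M}_0^{-1}$ via Lemma~\ref{approx_lemma_facts}. First I would remove $\bm{b}_0$ from the statement. Since $\bm{x}^{\star} = \bm{M}_0^{-1}\bm{b}_0$ we may write $\bm{b}_0 = \bm{M}_0\bm{x}^{\star}$, so that $\tilde{\bm{x}} = \bm{Z}_0\bm{M}_0\bm{x}^{\star}$ and the error becomes $\bm{x}^{\star} - \tilde{\bm{x}} = \left(\bm{I} - \bm{Z}_0\bm{M}_0\right)\bm{x}^{\star}$. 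This expresses the error entirely in terms of the exact solution, which is exactly the quantity against which the $\epsilon$-approximate solution is measured.

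Next I would whiten the geometry with the symmetric square root $\bm{M}_0^{1/2}$. Setting $\bm{y} = \bm{M}_0^{1/2}\bm{x}^{\star}$, so that $\left\lVert\bm{x}^{\star}\right\rVert_{\bm{M}_0} = \left\lVert\bm{y}\right\rVert_2$, a short manipulation gives
\[
\left\lVert\bm{x}^{\star} - \tilde{\bm{x}}\right\rVert_{\bm{M}_0}^{2} = \left\lVert\bm{M}_0^{1/2}\left(\bm{I} - \bm{Z}_0\bm{M}_0\right)\bm{x}^{\star}\right\rVert_2^{2} = \left\lVert\left(\bm{I} - \bm{N}\right)\bm{y}\right\rVert_2^{2}, \qquad \bm{N} := \bm{M}_0^{1/2}\bm{Z}_0\bm{M}_0^{1/2}.
\]
The point of the substitution is that $\bm{N}$ is symmetric, whereas the raw error operator $\bm{I} - \bm{Z}_0\bm{M}_0$ is not, and $\bm{Z}_0$ and $\bm{M}_0$ need not commute. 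This non-symmetry is the one genuine obstacle, and the square-root similarity is precisely what removes it, reducing the problem to bounding the spectral radius of the single symmetric matrix $\bm{I} - \bm{N}$.

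The key step is then to transfer the hypothesis onto $\bm{N}$. Applying part~(5) of Lemma~\ref{approx_lemma_facts} with $\bm{V} = \bm{M}_0^{1/2}$ turns $\bm{Z}_0 \approx_{\epsilon} \bm{M}_0^{-1}$ into $\bm{N} = \bm{M}_0^{1/2}\bm{Z}_0\bm{M}_0^{1/2} \approx_{\epsilon} \bm{M}_0^{1/2}\bm{M}_0^{-1}\bm{M}_0^{1/2} = \bm{I}$, which by Definition~\ref{Def:Ordering} means $e^{-\epsilon}\bm{I} \preceq \bm{N} \preceq e^{\epsilon}\bm{I}$. Hence every eigenvalue $\mu$ of $\bm{N}$ lies in $[e^{-\epsilon}, e^{\epsilon}]$, every eigenvalue of $\bm{I} - \bm{N}$ lies in $[1 - e^{\epsilon},\, 1 - e^{-\epsilon}]$, and therefore $\rho\left(\bm{I} - \bm{N}\right) \le \max\{e^{\epsilon} - 1,\, 1 - e^{-\epsilon}\} = e^{\epsilon} - 1$, the last equality following from $e^{\epsilon} + e^{-\epsilon} \ge 2$. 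Substituting back yields $\left\lVert\bm{x}^{\star} - \tilde{\bm{x}}\right\rVert_{\bm{M}_0} \le \left(e^{\epsilon} - 1\right)\left\lVert\bm{x}^{\star}\right\rVert_{\bm{M}_0}$.

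This already proves the claim with a constant at least as good as the stated one, so it remains only to reconcile $e^{\epsilon} - 1$ with $\sqrt{2^{\epsilon}\left(e^{\epsilon} - 1\right)}$. After squaring, this amounts to the elementary inequality $e^{\epsilon} - 1 \le 2^{\epsilon}$, which holds on the operating range $\epsilon \in (0, \tfrac{1}{2}]$, so $\left(e^{\epsilon} - 1\right) \le \sqrt{2^{\epsilon}\left(e^{\epsilon} - 1\right)}$ and $\tilde{\bm{x}}$ is a $\sqrt{2^{\epsilon}\left(e^{\epsilon} - 1\right)}$-approximate solution. I expect the only care required in the write-up to be the square-root bookkeeping in the second step; everything after the reduction to $\bm{N} \approx_{\epsilon} \bm{I}$ is a scalar estimate.
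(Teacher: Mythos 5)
Your proof is correct, but it takes a genuinely different route from the paper's. The paper never symmetrizes: it expands $\norm{\bm{x}^{\star}-\tilde{\bm{x}}}^{2}_{\bm{M}_0}$ into the three quadratic forms $(\bm{x}^{\star})^{\mathsf{T}}\bm{M}_0\bm{x}^{\star}$, $\tilde{\bm{x}}^{\mathsf{T}}\bm{M}_0\tilde{\bm{x}}$, and the cross term $(\bm{x}^{\star})^{\mathsf{T}}\bm{M}_0\tilde{\bm{x}}=\bm{b}_0^{\mathsf{T}}\bm{Z}_0\bm{b}_0$, bounds the first two by $e^{\epsilon}\bm{b}_0^{\mathsf{T}}\bm{Z}_0\bm{b}_0$ using the Loewner relations (invoking $\bm{M}_0\approx_{\epsilon}\bm{Z}_0^{-1}$, i.e.\ part (4) of Lemma~\ref{approx_lemma_facts}), obtaining $2(e^{\epsilon}-1)\bm{b}_0^{\mathsf{T}}\bm{Z}_0\bm{b}_0$, and then pays a second factor of $e^{\epsilon}$ to convert $\bm{b}_0^{\mathsf{T}}\bm{Z}_0\bm{b}_0$ back into $\norm{\bm{x}^{\star}}^{2}_{\bm{M}_0}$; this yields $\sqrt{2e^{\epsilon}(e^{\epsilon}-1)}$, which is evidently what the stated constant $\sqrt{2^{\epsilon}(e^{\epsilon}-1)}$ is a typo for. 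Your conjugation by $\bm{M}_0^{1/2}$ and reduction to $\bm{N}\approx_{\epsilon}\bm{I}$ via part (5) of the same lemma collapses all of this bookkeeping into a single spectral estimate and produces the strictly sharper constant $e^{\epsilon}-1$ (order $\epsilon$ rather than order $\sqrt{\epsilon}$ as $\epsilon\to 0$), so your argument proves more than the lemma claims. What the paper's approach buys is independence from the size of $\epsilon$: your final reconciliation step $e^{\epsilon}-1\le 2^{\epsilon}$ is valid on the paper's operating range $\epsilon\in(0,\tfrac{1}{2}]$ but fails for $\epsilon$ large (roughly $\epsilon\gtrsim 1.2$), whereas against the typo-corrected constant $\sqrt{2e^{\epsilon}(e^{\epsilon}-1)}$ your bound dominates for every $\epsilon>0$ since $e^{\epsilon}-1\le 2e^{\epsilon}$ always; you should state the comparison against that constant to make the argument unconditional. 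The only other point requiring a word in a write-up is that $\bm{M}_0^{1/2}$ exists and is symmetric positive definite because $\bm{M}_0$ is SDDM, which licenses both the identity $\norm{\bm{u}}_{\bm{M}_0}=\norm{\bm{M}_0^{1/2}\bm{u}}_2$ and the use of $\bm{V}=\bm{M}_0^{1/2}$ in Lemma~\ref{approx_lemma_facts}(5).
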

\begin{proof}
The proof can be found in the appendix. 
\end{proof}

\subsection{The Solver}\label{Parallel:SDDM}\label{Sec:ParrallelSolver}
The parallel SDDM solver proposed in~\cite{c11} is a parallelized technique for solving the problem of Section~\ref{Sec:ProbSetting}. It makes use of inverse approximated chains (see Definition~\ref{Def:InvChain}) to determine $\tilde{\bm{x}}$ and can be split in two steps. In the first, Algorithm~\ref{Algo:Inv}, a ``crude'' approximation, $\bm{x}_{0}$, of $\bm{\tilde{x}}$ is returned. $\bm{x}_{0}$ is driven to the $\epsilon$-close solution, $\tilde{\bm{x}}$, using Richardson Preconditioning in Algorithm~\ref{Algo:Inv2}. Before we proceed, we start with the following two Lemmas which enable the definition of inverse chain approximation. 

\begin{lemma}~\cite{c11}\label{SDDM_splitting_lemma}
If $\bm{M} = \bm{D} - \bm{A}$ is an SDDM matrix, with $\bm{D}$ being positive diagonal, and $\bm{A}$ denoting a non-negative symmetric matrix, then $\bm{D} - \bm{A}\bm{D}^{-1}\bm{A}$ is also SDDM.
\end{lemma}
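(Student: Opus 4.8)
The plan is to verify directly the four defining properties of an SDDM matrix for $\bm{N} := \bm{D} - \bm{A}\bm{D}^{-1}\bm{A}$: symmetry, non-positive off-diagonal entries, diagonal dominance, and positive definiteness. Symmetry is immediate since $\bm{D}^{-1}$ is diagonal and $\bm{A}$ is symmetric, giving $\left(\bm{A}\bm{D}^{-1}\bm{A}\right)^{\mathsf{T}} = \bm{A}\bm{D}^{-1}\bm{A}$. For the off-diagonal signs, note that $\bm{D}$ only affects the diagonal of $\bm{N}$, so for $i \neq j$ we have $\bm{N}_{ij} = -\left(\bm{A}\bm{D}^{-1}\bm{A}\right)_{ij} = -\sum_k \bm{A}_{ik}\bm{D}_{kk}^{-1}\bm{A}_{kj} \le 0$, because every factor is non-negative ($\bm{A} \ge 0$ and $\bm{D} > 0$).

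For diagonal dominance I would work with the all-ones vector $\bm{1}$. The diagonal dominance of $\bm{M} = \bm{D} - \bm{A}$ is exactly the componentwise statement $\bm{D}\bm{1} \ge \bm{A}\bm{1}$, since $\left[\bm{M}\right]_{ii} = \bm{D}_{ii} - \bm{A}_{ii} \ge \sum_{j \neq i}\bm{A}_{ij}$ rearranges to $\bm{D}_{ii} \ge (\bm{A}\bm{1})_i$. Multiplying by the positive diagonal $\bm{D}^{-1}$ gives $\bm{D}^{-1}\bm{A}\bm{1} \le \bm{1}$, and since left-multiplication by $\bm{A} \ge 0$ preserves inequalities, $\bm{A}\bm{D}^{-1}\bm{A}\bm{1} \le \bm{A}\bm{1} \le \bm{D}\bm{1}$. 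Because the off-diagonal entries of $\bm{N}$ are non-positive, dominance of $\bm{N}$ is equivalent to $(\bm{N}\bm{1})_i \ge 0$ for every $i$, i.e. $\bm{A}\bm{D}^{-1}\bm{A}\bm{1} \le \bm{D}\bm{1}$, which is precisely the outer inequality just derived.

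The remaining and most delicate property is positive definiteness. Here I would normalize by setting $\bm{X} := \bm{D}^{-1/2}\bm{A}\bm{D}^{-1/2}$, a symmetric matrix, so that $\bm{M} = \bm{D}^{1/2}(\bm{I} - \bm{X})\bm{D}^{1/2}$ and $\bm{N} = \bm{D}^{1/2}(\bm{I} - \bm{X}^2)\bm{D}^{1/2}$. Since congruence by the invertible $\bm{D}^{1/2}$ preserves definiteness, it suffices to show $\bm{I} - \bm{X}^2 \succ 0$, i.e. that every eigenvalue $\lambda$ of the symmetric matrix $\bm{X}$ satisfies $|\lambda| < 1$. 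Positive definiteness of $\bm{M}$ immediately yields $\bm{I} - \bm{X} \succ 0$, hence $\lambda_{\max}(\bm{X}) < 1$; the difficulty is the other side, ruling out $\lambda \le -1$, since the naive diagonal-dominance bound only gives $\bm{D} + \bm{A} \succeq 0$ and hence $\lambda \ge -1$ with possible equality.

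This is the main obstacle, and I would resolve it with Perron--Frobenius. The matrix $\bm{X}$ is similar to the non-negative matrix $\bm{D}^{-1}\bm{A}$ (via $\bm{D}^{-1}\bm{A} = \bm{D}^{-1/2}\bm{X}\bm{D}^{1/2}$), so they share the same spectrum, and because $\bm{X}$ is symmetric this spectrum is real with $\rho(\bm{X}) = \max_i |\lambda_i| = \rho(\bm{D}^{-1}\bm{A})$. Perron--Frobenius guarantees that for the non-negative matrix $\bm{D}^{-1}\bm{A}$ the spectral radius is itself an eigenvalue, so $\rho(\bm{D}^{-1}\bm{A}) = \lambda_{\max}(\bm{D}^{-1}\bm{A}) = \lambda_{\max}(\bm{X}) < 1$. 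Combining, $\max_i |\lambda_i(\bm{X})| < 1$, which excludes any eigenvalue $\le -1$ and establishes $\bm{I} - \bm{X}^2 \succ 0$, completing the proof that $\bm{N}$ is SDDM.
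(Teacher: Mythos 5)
Your proof is correct, and it is worth noting that the paper itself never proves this lemma---it is imported from~\cite{c11} with a citation only, and no argument appears in the appendix---so your write-up is a valid self-contained substitute rather than a rederivation of an in-paper proof. Symmetry and the sign pattern of the off-diagonal entries are handled exactly as one would expect; your reduction of diagonal dominance to the componentwise chain $\bm{A}\bm{D}^{-1}\bm{A}\bm{1} \le \bm{A}\bm{1} \le \bm{D}\bm{1}$ is the standard calculation and is carried out correctly (including the equivalence, for matrices with non-positive off-diagonals, of dominance with $\bm{N}\bm{1} \ge 0$). Most importantly, you correctly identify the one genuinely delicate point: strict positive definiteness. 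Writing $\bm{N} = \bm{D}^{1/2}\left(\bm{I}-\bm{X}^{2}\right)\bm{D}^{1/2}$ with $\bm{X}=\bm{D}^{-1/2}\bm{A}\bm{D}^{-1/2}$, positive definiteness of $\bm{M}$ gives $\lambda_{\max}(\bm{X})<1$, while diagonal dominance alone only gives $\bm{I}+\bm{X}\succeq 0$, i.e.\ $\lambda_{\min}(\bm{X})\ge -1$ with possible equality; your appeal to Perron--Frobenius (the spectral radius of a non-negative matrix is itself an eigenvalue, hence $\rho(\bm{X})=\lambda_{\max}(\bm{X})<1$) is precisely the right tool to exclude the boundary case, and the conclusion $\bm{I}-\bm{X}^{2}\succ 0$ follows. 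Two cosmetic remarks: Perron--Frobenius can be applied directly to $\bm{X}$, which is itself entrywise non-negative and symmetric, so the similarity step via $\bm{D}^{-1}\bm{A}$ can be skipped; and if one prefers to avoid quoting Perron--Frobenius, an elementary substitute is to suppose $(\bm{D}+\bm{A})\bm{v}=0$ for some $\bm{v}\neq 0$, deduce the entrywise bound $\bm{D}\left|\bm{v}\right| \le \bm{A}\left|\bm{v}\right|$, and contradict $\left|\bm{v}\right|^{\mathsf{T}}(\bm{D}-\bm{A})\left|\bm{v}\right|>0$. Neither change is needed; the argument as written is complete.
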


\begin{lemma}~\cite{c11}\label{approx_inverse_formulae_lemma}
Let $\bm{M} =\bm{D} - \bm{A}$ be an SDDM matrix, where $\bm{D}$ is positive diagonal and $\bm{A}$ a symmetric matrix. Then 
\begin{align}\label{Inv_of_SDDM}
\left(\bm{D}-\bm{A}\right)^{-1} &= \frac{1}{2}\Big[\bm{D}^{-1} + \left(\bm{I} + \bm{D}^{-1}\bm{A}\right)\left(\bm{D} - \bm{A}\bm{D}^{-1}\bm{A}\right)^{-1} \left(\bm{I} + \bm{A}\bm{D}^{-1}\right)\Big].
\end{align}
\end{lemma}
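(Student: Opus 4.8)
The plan is to verify the identity algebraically by direct matrix manipulation, since this is an exact equation (not an approximation). The key is to exploit the structure $\bm{M}=\bm{D}-\bm{A}$ together with the ``completing-the-square'' factorization that underlies it: the matrix $\bm{D}-\bm{A}\bm{D}^{-1}\bm{A}$ appearing on the right-hand side factors as $(\bm{D}-\bm{A})\bm{D}^{-1}(\bm{D}+\bm{A})$, and symmetrically as $(\bm{D}+\bm{A})\bm{D}^{-1}(\bm{D}-\bm{A})$. This factorization is the crux of the whole calculation, and I would establish it first.

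First I would note that $\bm{D}$ is positive diagonal, hence invertible, so $\bm{D}^{-1}$ is well defined; and by Lemma \ref{SDDM_splitting_lemma} the matrix $\bm{D}-\bm{A}\bm{D}^{-1}\bm{A}$ is SDDM and therefore nonsingular, so its inverse on the right-hand side is meaningful. Then I would verify the factorization
\[
\bm{D}-\bm{A}\bm{D}^{-1}\bm{A}=\left(\bm{I}+\bm{A}\bm{D}^{-1}\right)^{-1}\left(\bm{D}-\bm{A}\right)\bm{D}^{-1}\left(\bm{D}+\bm{A}\right)\left(\bm{I}+\bm{D}^{-1}\bm{A}\right)^{-1}
\]
is not quite the right target; more directly, I would expand $(\bm{D}-\bm{A})\bm{D}^{-1}(\bm{D}+\bm{A})=\bm{D}+\bm{A}-\bm{A}-\bm{A}\bm{D}^{-1}\bm{A}=\bm{D}-\bm{A}\bm{D}^{-1}\bm{A}$, which gives exactly the matrix in question.

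Next I would invert this factorization, using that each factor is nonsingular, to obtain
\[
\left(\bm{D}-\bm{A}\bm{D}^{-1}\bm{A}\right)^{-1}=\left(\bm{D}+\bm{A}\right)^{-1}\bm{D}\left(\bm{D}-\bm{A}\right)^{-1}.
\]
Substituting this into the right-hand side of \eqref{Inv_of_SDDM} and recognizing that $\bm{I}+\bm{D}^{-1}\bm{A}=\bm{D}^{-1}(\bm{D}+\bm{A})$ and $\bm{I}+\bm{A}\bm{D}^{-1}=(\bm{D}+\bm{A})\bm{D}^{-1}$, the factors $(\bm{D}+\bm{A})$ and $(\bm{D}+\bm{A})^{-1}$ telescope, collapsing the bracketed term to $\bm{D}^{-1}(\bm{D}+\bm{A})(\bm{D}-\bm{A})^{-1}(\bm{D}+\bm{A})\bm{D}^{-1}$. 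What remains is to show the full bracket equals $2(\bm{D}-\bm{A})^{-1}$.

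The main obstacle, and the step requiring the most care, is the final cancellation: after substitution the right-hand side reads $\tfrac12\big[\bm{D}^{-1}+\bm{D}^{-1}(\bm{D}+\bm{A})(\bm{D}-\bm{A})^{-1}(\bm{D}+\bm{A})\bm{D}^{-1}\big]$, and one must show this collapses to $(\bm{D}-\bm{A})^{-1}$. I would handle this by left- and right-multiplying the claimed identity by $(\bm{D}-\bm{A})$, reducing everything to a polynomial identity in $\bm{D}$ and $\bm{A}$; using $(\bm{D}+\bm{A})(\bm{D}-\bm{A})^{-1}(\bm{D}+\bm{A})=(\bm{D}-\bm{A})+2\bm{A}+2\bm{A}(\bm{D}-\bm{A})^{-1}\bm{A}$ is one route, but the cleanest is to multiply the target equation on both sides by $2\bm{D}$ and $(\bm{D}-\bm{A})$ and check the resulting identity $2\bm{D}=(\bm{D}-\bm{A})+(\bm{D}+\bm{A})=2\bm{D}$ after clearing the middle inverse. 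Care with noncommutativity of $\bm{D}$ and $\bm{A}$ is essential throughout, though the symmetry of both matrices keeps the bookkeeping manageable.
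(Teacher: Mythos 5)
Your overall route is the right one, and in fact the paper gives no proof of this lemma at all (it is imported directly from \cite{c11}), so a direct algebraic verification is exactly what is called for: the factorization $(\bm{D}-\bm{A})\bm{D}^{-1}(\bm{D}+\bm{A})=\bm{D}-\bm{A}\bm{D}^{-1}\bm{A}$, its inversion, substitution into the right-hand side of \eqref{Inv_of_SDDM}, and a final cancellation. However, there is a concrete algebra error at the substitution step, and the identity you propose to verify at the end is false as written. Substituting $(\bm{D}-\bm{A}\bm{D}^{-1}\bm{A})^{-1}=(\bm{D}+\bm{A})^{-1}\bm{D}(\bm{D}-\bm{A})^{-1}$ together with $\bm{I}+\bm{D}^{-1}\bm{A}=\bm{D}^{-1}(\bm{D}+\bm{A})$ and $\bm{I}+\bm{A}\bm{D}^{-1}=(\bm{D}+\bm{A})\bm{D}^{-1}$, the telescoping cancels one pair $(\bm{D}+\bm{A})(\bm{D}+\bm{A})^{-1}$, so the middle term collapses to $(\bm{D}-\bm{A})^{-1}(\bm{D}+\bm{A})\bm{D}^{-1}$, with only \emph{one} surviving factor of $\bm{D}+\bm{A}$. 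Your expression $\bm{D}^{-1}(\bm{D}+\bm{A})(\bm{D}-\bm{A})^{-1}(\bm{D}+\bm{A})\bm{D}^{-1}$ keeps both factors; it amounts to replacing $(\bm{D}-\bm{A}\bm{D}^{-1}\bm{A})^{-1}$ by $(\bm{D}-\bm{A})^{-1}$ inside the bracket. Consequently your ``target equation'' $\frac{1}{2}\left[\bm{D}^{-1}+\bm{D}^{-1}(\bm{D}+\bm{A})(\bm{D}-\bm{A})^{-1}(\bm{D}+\bm{A})\bm{D}^{-1}\right]=(\bm{D}-\bm{A})^{-1}$ is not an identity: for scalars $\bm{D}=1$, $\bm{A}=a$ it reads $\frac{1}{2}\left(1+\frac{(1+a)^{2}}{1-a}\right)=\frac{1}{1-a}$, which fails for every $a\notin\{0,-1\}$, so the final verification you describe would not close on this expression. (The auxiliary identity you quote is also off: the correct expansion is $(\bm{D}+\bm{A})(\bm{D}-\bm{A})^{-1}(\bm{D}+\bm{A})=(\bm{D}-\bm{A})+4\bm{A}+4\bm{A}(\bm{D}-\bm{A})^{-1}\bm{A}$, with coefficients $4$, not $2$.)

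The good news is that the gap is purely in this bookkeeping, not in the plan. With the correctly collapsed bracket, $\bm{D}^{-1}+(\bm{D}-\bm{A})^{-1}(\bm{D}+\bm{A})\bm{D}^{-1}$, your proposed final reduction works verbatim: multiply on the right by $\bm{D}$ and on the left by $\bm{D}-\bm{A}$ to obtain $(\bm{D}-\bm{A})+(\bm{D}+\bm{A})=2\bm{D}$, which is exactly the check you stated. One further point worth making explicit: the invertibility of $\bm{D}+\bm{A}$, which you need in order to invert the factorization, is not among the hypotheses; it follows from the factorization itself, since $\bm{D}-\bm{A}\bm{D}^{-1}\bm{A}$ is SDDM by Lemma \ref{SDDM_splitting_lemma} and hence nonsingular, while $\bm{D}-\bm{A}$ and $\bm{D}^{-1}$ are nonsingular, forcing the remaining factor $\bm{D}+\bm{A}$ to be nonsingular as well.
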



Given the results in Lemmas \ref{SDDM_splitting_lemma} and \ref{approx_inverse_formulae_lemma}, we now can consider inverse approximated chains of $\bm{M}_0$:
\begin{definition}\label{Def:InvChain}
Let $\mathcal{C} = \{\bm{M}_0, \bm{M}_1, \ldots, \bm{M}_d\}$  be a collection of SDDM matrices such that $\bm{M}_i = \bm{D}_i - \bm{A}_i$, with $\bm{D}_i$ a positive diagonal matrix, and $\bm{A}_i$ denoting a non-negative symmetric matrix. Then $\mathcal{C}$ is an inverse approximated chain if there exists  positive real numbers $\epsilon_0, \epsilon_1, \ldots, \epsilon_d$ such that: 
\begin{enumerate}
\item[](1) $\bm{D}_i - \bm{A}_i \approx_{\epsilon_{i-1}} \bm{D}_{i-1} - \bm{A}_{i-1}\bm{D}^{-1}_{i-1}\bm{A}_{i-1} \ \ \forall i = 1,\ldots, d$, 
\item[] (2) $\bm{D}_i \approx_{\epsilon_{i-1}}\bm{D}_{i-1}$, and 
\item[] (3) $\bm{D}_d\approx_{\epsilon_d} \bm{D}_d - \bm{A}_d$. 
\end{enumerate}
\end{definition}
 
It is shown in~\cite{c11} that an approximate inverse chain allows for ``crude'' solutions to the system of linear equations in $\bm{D}_{0}-\bm{A}_{0}$ in time proportional to the number of non-zeros entries in the matrices in the inverse chain. Such a procedure is summarized in the following algorithm: 

\begin{algorithm}\label{Algo:CrudeParallel}
  \caption{$\text{ParallelRSolve}\left(\bm{M}_0,\bm{M}_1,\ldots, \bm{M}_d, \bm{b}_0\right)$}
  \label{Algo:Inv}
  \begin{algorithmic}[1]
	\STATE \textbf{Input}: Inverse approximated chain, $\{\bm{M}_0,\bm{M}_1,\ldots, \bm{M}_d\}$, and $\bm{b}_0$ being 
	\STATE \textbf{Output}: The ``crude'' approximation, $\bm{x}_0$, of $\bm{x}^{\star}$    
    \FOR  {$i=1$ to $d$} 
    	\STATE $\bm{b}_i = \left(\bm{I}+\bm{A}_{i-1}\bm{D}^{-1}_{i-1}\right)\bm{b}_{i-1}$
    \ENDFOR 
    \STATE $\bm{x}_d = \bm{D}^{-1}_d\bm{b}_d$
    \FOR  {$i=d-1$ to $0$} 
    	\STATE $\bm{x}_{i} = \frac{1}{2}\left[\bm{D}^{-1}_i\bm{b}_{i} + \left(\bm{I} + \bm{D}^{-1}_i\bm{A}_i\right)\bm{x}_{i+1}\right]$
    \ENDFOR 
    \STATE \textbf{return} $x_0$ 
  \end{algorithmic}
\end{algorithm}
On a high level, Algorithm~\ref{Algo:CrudeParallel} operates in two phases. In the first (i.e., lines~3-5) a forward loop (up-to the length of the inverse chain $d$) computes intermediate vectors $\bm{b}_{i}$ as:
\begin{equation}
\label{Eq:bUpdate}
\bm{b}_{i}=\left(\bm{I}+\bm{A}_{i-1}\bm{D}_{i-1}^{-1}\right)\bm{b}_{i-1},
\end{equation}
for $i=\{1,\dots,d\}$. These can then be used to compute the ``crude'' solution $\bm{x}_{0}$ using a ``backward'' loop (i.e., lines~7-9). Consequently, the crude solution is computed iteratively backwards as: 

\begin{equation*}
\bm{x}_{i} = \frac{1}{2}\left[\bm{D}^{-1}_i\bm{b}_{i} + \left(\bm{I} + \bm{D}^{-1}_i\bm{A}_i\right)\bm{x}_{i+1}\right], 
\end{equation*}
with $\bm{x}_{d}=\bm{D}_{d}^{-1}\bm{b}_{d}$ and $\bm{b}_{i}$ as defined in Equation~\ref{Eq:bUpdate}. The quality of the ``crude'' solution returned by the Algorithm is quantified in the following lemma:
\begin{lemma}~\cite{c11}\label{Rude_Alg_guarantee_Lemma}
Let $\{\bm{M}_0, \bm{M}_1,\ldots, \bm{M}_d\}$ be the inverse approximated chain and denote $\bm{Z}_0$ be the operator defined by $\text{ParallelRSolve}\left(\bm{M}_0, \bm{M}_1,\ldots, \bm{M}_d, \bm{b}_0\right)$, namely, $\bm{x}_0 = \bm{Z}_0\bm{b}_0$. Then
\begin{equation}\label{appr_inv_express}
\bm{Z}_0\approx_{\sum_{i=0}^{d}\epsilon_i}\bm{M}^{-1}_0
\end{equation}
\end{lemma}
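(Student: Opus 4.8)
The plan is to prove by backward induction on $i$ that the operator $\bm{Z}_i$ implicitly computed by the backward loop of Algorithm~\ref{Algo:CrudeParallel} satisfies $\bm{Z}_i \approx_{\sum_{j=i}^{d}\epsilon_j} \bm{M}_i^{-1}$; the statement of Lemma~\ref{Rude_Alg_guarantee_Lemma} is then the case $i=0$. The first step is to extract the recursion obeyed by $\bm{Z}_i$. Substituting $\bm{x}_{i+1} = \bm{Z}_{i+1}\bm{b}_{i+1}$ together with the forward update $\bm{b}_{i+1} = (\bm{I} + \bm{A}_i\bm{D}_i^{-1})\bm{b}_i$ into the backward step of the algorithm, and reading off the map $\bm{b}_i \mapsto \bm{x}_i = \bm{Z}_i\bm{b}_i$, yields
\begin{equation*}
\bm{Z}_i = \frac{1}{2}\left[\bm{D}_i^{-1} + \left(\bm{I} + \bm{D}_i^{-1}\bm{A}_i\right)\bm{Z}_{i+1}\left(\bm{I} + \bm{A}_i\bm{D}_i^{-1}\right)\right], \qquad \bm{Z}_d = \bm{D}_d^{-1}.
\end{equation*}
The crucial observation is that this is exactly the exact-inverse identity of Lemma~\ref{approx_inverse_formulae_lemma} with the true middle factor $(\bm{D}_i - \bm{A}_i\bm{D}_i^{-1}\bm{A}_i)^{-1}$ replaced by the recursively computed approximation $\bm{Z}_{i+1}$. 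Using symmetry of $\bm{A}_i$ and diagonality of $\bm{D}_i$, note that $(\bm{I} + \bm{A}_i\bm{D}_i^{-1})^{\mathsf{T}} = \bm{I} + \bm{D}_i^{-1}\bm{A}_i$, so the two outer factors are transposes of each other; an easy induction also shows each $\bm{Z}_i$ is symmetric positive definite, so that $\approx_\alpha$ is well defined.

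For the base case $i=d$, condition~(3) of Definition~\ref{Def:InvChain} gives $\bm{D}_d \approx_{\epsilon_d} \bm{D}_d - \bm{A}_d = \bm{M}_d$, and applying the inversion property (Lemma~\ref{approx_lemma_facts}(4)) yields $\bm{Z}_d = \bm{D}_d^{-1} \approx_{\epsilon_d} \bm{M}_d^{-1}$, as required. For the inductive step I would assume $\bm{Z}_{i+1} \approx_{\sum_{j=i+1}^{d}\epsilon_j}\bm{M}_{i+1}^{-1}$ and chain the properties of $\approx_\alpha$. Condition~(1) of the chain states $\bm{M}_{i+1} = \bm{D}_{i+1} - \bm{A}_{i+1} \approx_{\epsilon_i} \bm{D}_i - \bm{A}_i\bm{D}_i^{-1}\bm{A}_i$; inverting (property~(4)) gives $\bm{M}_{i+1}^{-1} \approx_{\epsilon_i} (\bm{D}_i - \bm{A}_i\bm{D}_i^{-1}\bm{A}_i)^{-1}$. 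Combining this with the inductive hypothesis through transitivity (property~(3)) produces
\begin{equation*}
\bm{Z}_{i+1} \approx_{\sum_{j=i}^{d}\epsilon_j} \left(\bm{D}_i - \bm{A}_i\bm{D}_i^{-1}\bm{A}_i\right)^{-1}.
\end{equation*}

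Finally, I would propagate this estimate through the recursion. Conjugating both sides by $\bm{V} = \bm{I} + \bm{A}_i\bm{D}_i^{-1}$ via the congruence property (Lemma~\ref{approx_lemma_facts}(5)) preserves the same $\approx_\alpha$ constant; adding $\bm{D}_i^{-1}$ to both sides preserves it by property~(1); and scaling by the positive constant $\tfrac{1}{2}$ preserves the defining Loewner inequalities. The left-hand side becomes precisely $\bm{Z}_i$ by the recursion above, while the right-hand side becomes $\bm{M}_i^{-1}$ by the identity of Lemma~\ref{approx_inverse_formulae_lemma}, giving $\bm{Z}_i \approx_{\sum_{j=i}^{d}\epsilon_j}\bm{M}_i^{-1}$ and closing the induction; setting $i=0$ proves the lemma. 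I expect the only genuinely delicate point to be the bookkeeping in the first step: correctly identifying the operator $\bm{Z}_i$ from the two interleaved loops of the algorithm and verifying that the matched transpose structure of the outer factors is exactly what makes property~(5) applicable. Once the recursion is in hand, the remaining steps are a mechanical concatenation of the five facts in Lemma~\ref{approx_lemma_facts}. It is worth noting that this argument invokes only conditions~(1) and~(3) of the inverse chain, and not condition~(2).
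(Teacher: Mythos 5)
Your proof is correct. The paper itself offers no proof of this lemma---it is quoted directly from the Peng--Spielman reference \cite{c11}---and your backward induction on the operators $\bm{Z}_i$, using the recursion $\bm{Z}_i = \frac{1}{2}\left[\bm{D}_i^{-1} + \left(\bm{I} + \bm{D}_i^{-1}\bm{A}_i\right)\bm{Z}_{i+1}\left(\bm{I} + \bm{A}_i\bm{D}_i^{-1}\right)\right]$ together with properties (1), (3), (4), (5) of Lemma~\ref{approx_lemma_facts} and the identity of Lemma~\ref{approx_inverse_formulae_lemma}, is exactly the standard argument from that reference, so it matches the intended proof rather than diverging from it.
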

Having returned a ``crude'' solution to $\bm{M}_{0}\bm{x}=\bm{b}_{0}$, the authors in~\cite{c11} obtain arbitrary close solutions using the \emph{preconditioned Richardson iterative scheme}. The first step in the exact solver is the usage of Algorithm~\ref{Algo:CrudeParallel} to obtain the ``crude'' solution $\chi$. This is then updated through the loop in lines~4-8 to obtain an $\epsilon$-close solution to $\bm{x}^{\star}$, see Algorithm~\ref{Algo:Inv2}.
\begin{algorithm}[h!]
  \caption{$\text{ParallelESolve}\left(\bm{M}_0, \bm{M}_1,\ldots, \bm{M}_d,  \bm{b}_0, \epsilon\right)$}
  \label{Algo:Inv2}
  \begin{algorithmic}[1]
	\STATE \textbf{Input}: Inverse approximated chain $\{\bm{M}_0,\bm{M}_1,\ldots, \bm{M}_d\}$, $\bm{b}_0$, and $\epsilon$. 
	\STATE \textbf{Output}: $\epsilon$ close approximation, $\tilde{\bm{x}}$, of $\bm{x}^*$
	\STATE \textbf{Initialize}: $\bm{y}_0 = 0$; \\
	$\chi = \text{ParallelRSolve}\left(\bm{M}_0,\bm{M}_1,\ldots, \bm{M}_d, \bm{b}_0\right)$ (i.e., Algorithm~\ref{Algo:Inv})
    \FOR  {$k=1$ to $q$}
    	\STATE $\bm{u}_{k}^{(1)} = \bm{M}_0\bm{y}_{k-1}$
    	\STATE $\bm{u}_{k}^{(2)} = \text{ParallelRSolve}\left(\bm{M}_0, \bm{M}_1,\ldots, \bm{M}_d, \bm{u}_{k}^{(1)}\right)$
    	\STATE $\bm{y}_{k} = \bm{y}_{k-1} - \bm{u}_{k}^{(2)} + \chi$ 
    \ENDFOR 
    \STATE $\tilde{\bm{x}} = \bm{y}_q$
    \STATE \textbf{return} $\tilde{\bm{x}}$ 
  \end{algorithmic}
\end{algorithm}
Following the analysis in~\cite{c11}, Lemma~\ref{Exact_Alg_guarantee_lemma} provides the iteration count needed by Algorithm~\ref{Algo:Inv2} to arrive at $\tilde{\bm{x}}$:
\begin{lemma}~\cite{c11}\label{Exact_Alg_guarantee_lemma}
Let $\{\bm{M}_0, \bm{M}_1\ldots \bm{M}_d\}$ be an inverse approximated chain such that $\sum_{i=1}^{d}\epsilon_i < \frac{1}{3}\ln2$. Then $\text{ParallelESolve}\left(\bm{M}_0, \bm{M}_1,\ldots, \bm{M}_d, \bm{b}_0, \epsilon\right)$ requires $q$ iterations to arrive at an $\epsilon$ close solution of $\bm{x}^{\star}$ with:  
$q = \mathcal{O}\left(\log\frac{1}{\epsilon}\right)$.
\end{lemma}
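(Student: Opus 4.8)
\section*{Proof proposal}

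The plan is to recognize Algorithm~\ref{Algo:Inv2} as a preconditioned Richardson iteration driven by the crude‑solver operator $\bm{Z}_0$ of Lemma~\ref{Rude_Alg_guarantee_Lemma}, and then to show its error contracts geometrically in the $\bm{M}_0$‑norm. Writing $\chi = \bm{Z}_0\bm{b}_0$ and noting that lines~5--6 compute $\bm{u}_k^{(2)} = \bm{Z}_0\bm{u}_k^{(1)} = \bm{Z}_0\bm{M}_0\bm{y}_{k-1}$, the update in line~7 collapses to
\begin{equation*}
\bm{y}_k = \left(\bm{I} - \bm{Z}_0\bm{M}_0\right)\bm{y}_{k-1} + \bm{Z}_0\bm{b}_0 .
\end{equation*}
Because the relation $\bm{Z}_0 \approx_{\sigma} \bm{M}_0^{-1}$ (with $\sigma := \sum_{i=0}^{d}\epsilon_i$) presupposes, via Definition~\ref{Def:Ordering}, that $\bm{Z}_0$ is symmetric positive definite, the unique fixed point of this map is exactly $\bm{x}^\star = \bm{M}_0^{-1}\bm{b}_0$. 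Subtracting $\bm{x}^\star$ gives the clean error recursion $\bm{e}_k = (\bm{I} - \bm{Z}_0\bm{M}_0)\bm{e}_{k-1}$, hence $\bm{e}_q = (\bm{I} - \bm{Z}_0\bm{M}_0)^q\bm{e}_0$, where $\bm{e}_0 = -\bm{x}^\star$ since $\bm{y}_0 = 0$.

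The next step, which I expect to be the crux, is to convert the multiplicative Loewner approximation of Lemma~\ref{Rude_Alg_guarantee_Lemma} into an operator‑norm contraction in the correct geometry. Applying part~(5) of Lemma~\ref{approx_lemma_facts} with $\bm{V} = \bm{M}_0^{1/2}$ to $\bm{Z}_0 \approx_{\sigma} \bm{M}_0^{-1}$ yields $\bm{M}_0^{1/2}\bm{Z}_0\bm{M}_0^{1/2} \approx_{\sigma} \bm{I}$, i.e. $e^{-\sigma}\bm{I} \preceq \bm{M}_0^{1/2}\bm{Z}_0\bm{M}_0^{1/2} \preceq e^{\sigma}\bm{I}$. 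Consequently the symmetric matrix $\bm{I} - \bm{M}_0^{1/2}\bm{Z}_0\bm{M}_0^{1/2}$ has all eigenvalues in $[\,1 - e^{\sigma},\, 1 - e^{-\sigma}\,]$, so its spectral norm equals $e^{\sigma} - 1$. The point is that this matrix is precisely $\bm{M}_0^{1/2}(\bm{I} - \bm{Z}_0\bm{M}_0)\bm{M}_0^{-1/2}$, and conjugation by $\bm{M}_0^{1/2}$ identifies the $\bm{M}_0$‑induced operator norm with the ordinary spectral norm; therefore $\left\|\bm{I} - \bm{Z}_0\bm{M}_0\right\|_{\bm{M}_0} = e^{\sigma} - 1 =: \rho$.

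It then remains to assemble the iteration count. The hypothesis $\sum_{i=1}^{d}\epsilon_i < \tfrac{1}{3}\ln 2$ keeps the total chain error $\sigma$ below $\ln 2$, so $\rho = e^{\sigma} - 1$ is a constant strictly inside $(0,1)$. Since the conjugated matrix is symmetric, $\left\|(\bm{I} - \bm{Z}_0\bm{M}_0)^q\right\|_{\bm{M}_0} = \rho^{q}$ exactly, and therefore
\begin{equation*}
\left\|\bm{e}_q\right\|_{\bm{M}_0} \le \rho^{q}\left\|\bm{x}^\star\right\|_{\bm{M}_0}.
\end{equation*}
Imposing $\rho^{q} \le \epsilon$ and solving gives $q \ge \log(1/\epsilon)/\log(1/\rho)$; as $\log(1/\rho)$ is a fixed positive constant, this is $q = \mathcal{O}\!\left(\log\frac{1}{\epsilon}\right)$, matching the claim and making $\tilde{\bm{x}} = \bm{y}_q$ an $\epsilon$‑approximate solution in the sense of the Definition of Section~\ref{Sec:ProbSetting}.

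Beyond the norm‑transfer argument highlighted above, the one piece of bookkeeping I would verify carefully is the indexing of the error constants: Lemma~\ref{Rude_Alg_guarantee_Lemma} gives the contraction factor in terms of $\sum_{i=0}^{d}\epsilon_i$, whereas the hypothesis controls $\sum_{i=1}^{d}\epsilon_i$, so I would confirm that the chain construction bounds the remaining term $\epsilon_0$ (or that the intended sum matches), ensuring $\rho < 1$ with room to spare.
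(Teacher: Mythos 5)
Your proof is correct, but note that the paper never actually proves this lemma itself: it is imported from~\cite{c11} and invoked as a black box (the appendix proofs of Lemmas~\ref{Dist_Exact_algorithm_guarantee_lemma} and~\ref{Dist_Exact_algorithm_guarantee_lemma_Bla} first identify the distributed iteration with preconditioned Richardson and then cite the present lemma). Your argument therefore supplies exactly the step the paper delegates to the citation, and it does so by the standard mechanism underlying~\cite{c11}: rewrite Algorithm~\ref{Algo:Inv2} as $\bm{y}_{k}=\left(\bm{I}-\bm{Z}_{0}\bm{M}_{0}\right)\bm{y}_{k-1}+\bm{Z}_{0}\bm{b}_{0}$, conjugate by $\bm{M}_{0}^{1/2}$ so that the Loewner bounds $e^{-\sigma}\bm{I}\preceq\bm{M}_{0}^{1/2}\bm{Z}_{0}\bm{M}_{0}^{1/2}\preceq e^{\sigma}\bm{I}$ become a spectral-norm contraction, and iterate. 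Two small repairs. First, the eigenvalue containment gives only $\left\|\bm{I}-\bm{M}_{0}^{1/2}\bm{Z}_{0}\bm{M}_{0}^{1/2}\right\|_{2}\leq e^{\sigma}-1$, not equality (the spectrum need not touch the endpoint of the interval); an upper bound is all you use, so ``equals'' should read ``is at most.'' Second, your closing worry about the constants is justified: Lemma~\ref{Rude_Alg_guarantee_Lemma} controls $\sigma=\sum_{i=0}^{d}\epsilon_{i}$, while the hypothesis bounds $\sum_{i=1}^{d}\epsilon_{i}$, and nothing in the statement caps $\epsilon_{0}$. This is a typo-level defect of the lemma as transcribed (the intended condition is on the full sum), and it is harmless in the only place the paper applies the lemma: for the chain of Equation~\ref{Eq:InverseChain} one has $\epsilon_{0}=\cdots=\epsilon_{d-1}=0$, so the two sums coincide and $\rho=e^{\sigma}-1<2^{1/3}-1<1$, giving the claimed $q=\mathcal{O}\left(\log\frac{1}{\epsilon}\right)$ with room to spare.
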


\section{Distributed SDDM Solvers}
Having introduced the parallel solver, next we detail our first contribution by proposing a distributed solver for SDDM linear systems. In particular, we develop two versions. The first, requires full communication in the network, while the second restricts communication to the R-Hop neighborhood increasing its applicability. Not only our solver improves the computational complexity of distributed methods for system of equations represented by an SDDM matrix, but can also be applied to a variety of fields including distributed Newton methods, computer vision, among others.

To compute the solution of SDDM systems in a distributed fashion, we follow a similar strategy to that of~\cite{c11} with major differences. Our distributed solver requires two steps to arrive at an $\epsilon$-close approximation to $\bm{x}^{\star}$. Similar to~\cite{c11}, the first step adopts an inverse approximated chain to determine a ``crude'' solution to $\bm{x}^{\star}$. The inverse chain proposed in~\cite{c11} can not be computed in a distributed fashion rendering its immediate application to our setting difficult. Hence, we par-ways with~\cite{c11} by proposing an inverse chain which can be computed in a distributed fashion. This chain, defined in Section~\ref{Sec:Chain}, enables the distributed computation of both a crude and exact solution to $\bm{M}_{0}\bm{x}=\bm{b}_{0}$. Interestingly, due to the involvement of matrices with eigenvalues less than 1, the length, $d$, of our inverse chain is substantially shorter compared to that of~\cite{c11}, allowing for fast and efficient distributed solvers. Given the crude solution, the second step computes an $\epsilon$-close approximation to $\bm{x}^{\star}$.  This is achieved by proposing a distributed version of the Richardson pre-conditioning scheme. Definitely, this step is also similar in spirit to that in~\cite{c11}, but generalizes the aforementioned authors' work into a distributed setting and allows for $\epsilon$-close approximation to $\bm{x}^{\star}$ for \emph{any arbitrary} $\epsilon > 0$. Main results on the full communication version of the solver are summarized in the following theorem: 

\begin{theorem}\label{Main_Theorem}
There exists a distributed algorithm,
\[\mathcal{A}\left(\{[\bm{M}_0]_{k1},\ldots [\bm{M}_0]_{kn}\}, [\bm{b}_0]_k, \epsilon\right),\] that computes $\epsilon$-close approximations to the solution of $\bm{M}_{0}\bm{x}=\bm{b}_{0}$ in 
$\mathcal{O}\left(n^2\log\kappa \log\left(\frac{1}{\epsilon}\right)\right)$ time steps, with $n$ the number of nodes in $\mathcal{G}$, $\kappa$ the condition number of  $\bm{M}_0$, and $[\bm{M}_{0}]_{k\cdot}$ the $k^{th}$ row of $\bm{M}_{0}$, as well as $\epsilon\in \left(0, \frac{1}{2}\right]$ representing the precision parameter. 
\end{theorem}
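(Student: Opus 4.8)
The plan is to instantiate the parallel solver of Algorithms~\ref{Algo:Inv} and~\ref{Algo:Inv2} on a \emph{specific} inverse approximated chain that (i) can be assembled by each node using only full network communication and (ii) is short, of length $d = \mathcal{O}(\log\kappa)$. First I would build the chain by repeated squaring of the normalized off-diagonal block: writing $\bm{M}_0 = \bm{D}_0 - \bm{A}_0$ and $\bm{B} = \bm{D}_0^{-1/2}\bm{A}_0\bm{D}_0^{-1/2}$, I set $\bm{D}_i = \bm{D}_0$ and $\bm{A}_i = \bm{A}_{i-1}\bm{D}_0^{-1}\bm{A}_{i-1}$ for $i = 1,\ldots,d$, so that $\bm{M}_i = \bm{D}_0 - \bm{A}_i = \bm{D}_0^{1/2}\bigl(\bm{I} - \bm{B}^{2^{i}}\bigr)\bm{D}_0^{1/2}$. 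I would verify that each $\bm{M}_i$ stays SDDM (inductively from Lemma~\ref{SDDM_splitting_lemma}, using the entrywise nonnegativity of $\bm{A}_i$ and the dominance inherited from $\bm{A}_0\mathbf{1}\le\bm{D}_0\mathbf{1}$), and that $\{\bm{M}_0,\ldots,\bm{M}_d\}$ satisfies Definition~\ref{Def:InvChain}. Conditions~(1) and~(2) hold \emph{exactly}, with $\epsilon_0 = \cdots = \epsilon_{d-1} = 0$, because $\bm{D}_i - \bm{A}_i = \bm{D}_{i-1} - \bm{A}_{i-1}\bm{D}_{i-1}^{-1}\bm{A}_{i-1}$ and $\bm{D}_i = \bm{D}_{i-1}$ by construction, so the only nonzero error is the terminal $\epsilon_d$ from condition~(3).

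The core estimate is the bound on the chain length. Since $\bm{M}_0$ is SDDM and positive definite, $\rho(\bm{B}) < 1$, and the smallest eigenvalue of $\bm{I} - \bm{B} = \bm{D}_0^{-1/2}\bm{M}_0\bm{D}_0^{-1/2}$ is controlled by the condition number, giving $\rho(\bm{B}) \le 1 - c/\kappa$ for a constant $c$. Condition~(3), $\bm{D}_0 \approx_{\epsilon_d} \bm{M}_d$, is equivalent to $e^{-\epsilon_d}\bm{I} \preceq \bm{I} - \bm{B}^{2^{d}} \preceq e^{\epsilon_d}\bm{I}$, which holds once $\rho(\bm{B})^{2^{d}} \le 1 - e^{-\epsilon_d}$. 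Fixing $\epsilon_d$ to a constant below $\tfrac{1}{3}\ln 2$ and using $\rho(\bm{B})^{2^{d}} \le (1-c/\kappa)^{2^{d}}$, the requirement is met as soon as $2^{d} \gtrsim \kappa$, i.e. $d = \mathcal{O}(\log\kappa)$. This is exactly where the ``eigenvalues less than one'' are exploited: squaring drives $\bm{B}^{2^i}$ to zero doubly exponentially, so the chain terminates after logarithmically many levels. With $\sum_i \epsilon_i = \epsilon_d < \tfrac{1}{3}\ln 2$, Lemma~\ref{Rude_Alg_guarantee_Lemma} shows $\text{ParallelRSolve}$ returns an operator $\bm{Z}_0 \approx_{\epsilon_d}\bm{M}_0^{-1}$, and Lemma~\ref{Exact_Alg_guarantee_lemma} shows the Richardson loop of $\text{ParallelESolve}$ reaches an $\epsilon$-close solution in $q = \mathcal{O}(\log\tfrac{1}{\epsilon})$ iterations.

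It remains to give the distributed realization and account for its running time. I would have node $k$ store the $k$-th entry of every intermediate vector and the $k$-th row of every chain matrix. Forming $[\bm{A}_i]_{k\cdot}$ from the entries $[\bm{A}_{i-1}]_{kl}[\bm{D}_0^{-1}]_{ll}[\bm{A}_{i-1}]_{lj}$ and evaluating each matrix--vector product in lines~4 and~8 of Algorithm~\ref{Algo:Inv} require node $k$ to gather up to $n$ values from the network; under full communication one such network-wide aggregation costs $\mathcal{O}(n)$ time steps. The crude solve performs $\mathcal{O}(d) = \mathcal{O}(\log\kappa)$ products, $\text{ParallelESolve}$ invokes it $q = \mathcal{O}(\log\tfrac{1}{\epsilon})$ times, and the per-node assembly of the (now dense) chain rows contributes an additional factor of $n$, so that multiplying these yields the claimed $\mathcal{O}\bigl(n^2\log\kappa\,\log\tfrac{1}{\epsilon}\bigr)$ bound. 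The main obstacle is the chain-length estimate: confirming that each squared matrix remains SDDM and that condition~(3) is reached after only $\mathcal{O}(\log\kappa)$ squarings demands a careful spectral argument tying $\rho(\bm{B})$ to $\kappa$. A secondary difficulty is the complexity bookkeeping — because the $\bm{A}_i$ densify as $i$ grows, one must confirm that full communication really realizes each aggregation in $\mathcal{O}(n)$ time steps and that the dense rows can be stored and updated within the stated budget.
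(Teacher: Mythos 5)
Your proposal is correct and follows essentially the same route as the paper's proof: the identical repeated-squaring chain $\bm{D}_i=\bm{D}_0$, $\bm{A}_i=\bm{A}_{i-1}\bm{D}_0^{-1}\bm{A}_{i-1}=\bm{D}_0\left(\bm{D}_0^{-1}\bm{A}_0\right)^{2^i}$ with $\epsilon_0=\cdots=\epsilon_{d-1}=0$, the same spectral bound $\rho\le 1-c/\kappa$ giving $d=\mathcal{O}(\log\kappa)$ (the paper's Lemma~\ref{eps_d_lemma} and its claims), and the same assembly of Lemmas~\ref{Rude_Alg_guarantee_Lemma} and~\ref{Exact_Alg_guarantee_lemma} with an $\mathcal{O}(dn^2)$-per-call distributed realization (Lemmas~\ref{Rude_Dec_Alg_guarantee_Lemma}--\ref{time_complexity_of_distresolve}). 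Your only cosmetic deviation is phrasing the spectral argument through the symmetrized matrix $\bm{B}=\bm{D}_0^{-1/2}\bm{A}_0\bm{D}_0^{-1/2}$ instead of $\bm{D}_0^{-1}\bm{A}_0$; these are similar matrices with the same spectrum, so the argument coincides with the paper's.
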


The above distributed algorithms require no knowledge of the graph's topology, but do require the information from all other nodes (i.e., full communication) for computing solutions to $\bm{M}_{0}\bm{x}=\bm{b}_{0}$. In a variety of real-world applications (e.g., smart-grids, transportation) load, capacity, money and resource restrictions pose problems for such a requirement. Consequently, we extend the previous solvers to an R-Hop version in which communication is restricted to the R-Hop neighborhood between nodes for some $R\geq 1$. Again we follow a two-step strategy, where in the first we compute the crude solution and in the second an $\epsilon$-close approximation to $\bm{x}_{0}$ is determined using an ``R-Hop restricted'' Richardson pre-conditioner.  These results are captured in the following theorem: 

\begin{theorem}\label{Main_TheoremTwo}
There is a decentralized algorithm, 
\[\mathcal{A}(\{[\bm{M}_0]_{k1},\ldots [\bm{M}_0]_{kn}\}, [\bm{b}_0]_k, R, \epsilon),\] that uses only $R$-Hop communication between the nodes and computes $\epsilon$-close solutions to $\bm{M}_{0}\bm{x}=\bm{b}_{0}$ in \[\mathcal{O}\left(\left(\frac{\alpha\kappa}{R} + \alpha Rd_{max}\right) \log\left(\frac{1}{\epsilon}\right)\right)\] time steps, with $n$ being the number of nodes in $\mathcal{G}$, $d_{max}$ denoting the maximal degree, $\kappa$ the condition number of $\bm{M}_0$, and $\alpha = \min\left\{n, \frac{\left(d^{R+1}_{\text{max}} - 1\right)}{\left(d_{\text{max}} - 1\right)}\right\}$ representing the upper bound on the size of the R-hop neighborhood $\forall \bm{v}\in \mathcal{V}$, and $\epsilon\in (0, \frac{1}{2}]$ being the precision parameter. 
\end{theorem}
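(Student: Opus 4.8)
The plan is to instantiate the parallel framework of Section~\ref{Sec:ParallelSolver} with an inverse approximated chain that is computable under R-hop communication, and then to account carefully for the extra communication incurred because the chain matrices have sparsity patterns that grow with the chain index. Concretely, I would fix the diagonal across the chain, setting $\bm{D}_i = \bm{D}_0$ and $\bm{A}_i = \bm{A}_{i-1}\bm{D}_0^{-1}\bm{A}_{i-1}$, so that $\bm{M}_i = \bm{D}_0 - \bm{A}_{i-1}\bm{D}_0^{-1}\bm{A}_{i-1}$. With this choice conditions (1) and (2) of Definition~\ref{Def:InvChain} hold exactly ($\epsilon_0 = \cdots = \epsilon_{d-1} = 0$), so only the termination condition (3) contributes to $\sum_i \epsilon_i$; I would pick the truncation length $d$ so that $\epsilon_d$ is a small constant with $\sum_i\epsilon_i < \frac{1}{3}\ln 2$, enabling Lemmas~\ref{Rude_Alg_guarantee_Lemma} and~\ref{Exact_Alg_guarantee_lemma}.

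The first key step is bounding the chain length. Writing $\bm{X}_0 = \bm{D}_0^{-1/2}\bm{A}_0\bm{D}_0^{-1/2}$, the squaring recursion gives $\bm{D}_0^{-1/2}\bm{A}_i\bm{D}_0^{-1/2} = \bm{X}_0^{2^i}$, and since $\bm{M}_0$ is SDDM the eigenvalues of $\bm{X}_0$ lie strictly inside $(-1,1)$ with $\rho(\bm{X}_0) \le 1 - \Theta(1/\kappa)$. Hence $\rho(\bm{X}_0^{2^d}) \le (1-\Theta(1/\kappa))^{2^d}$, and requiring this to drop below the constant $\epsilon_d$ forces only $d = \mathcal{O}(\log\kappa)$ while making $2^d = \mathcal{O}(\kappa)$. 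This is exactly the ``short chain'' phenomenon highlighted in the introduction, and it is what keeps the iteration counts logarithmic.

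The second and main step is the distributed cost analysis under the R-hop constraint. Because $\bm{A}_i = \bm{A}_{i-1}\bm{D}_0^{-1}\bm{A}_{i-1}$, the matrix $\bm{A}_i$ inherits a $2^i$-hop sparsity pattern, so a product $\bm{A}_i\bm{v}$ requires each node to collect data from its $2^i$-hop neighborhood. I would implement this product by relaying through R-hop exchanges, costing $\lceil 2^i/R\rceil$ communication rounds; each round a node processes at most $\alpha$ stored neighborhood entries, while assembling and maintaining the R-hop neighborhood data itself contributes the $\alpha R d_{max}$ overhead ($R$ sequential hops, each touching up to $d_{max}$ neighbors over an $\alpha$-size neighborhood). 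Summing the relay cost over the forward and backward loops of Algorithm~\ref{Algo:CrudeParallel}, the geometric series $\sum_{i=0}^{d}2^i/R = \mathcal{O}(2^d/R) = \mathcal{O}(\kappa/R)$ dominates, so one crude solve costs $\mathcal{O}(\alpha\kappa/R + \alpha R d_{max})$ time steps. Wrapping this in the R-hop Richardson preconditioner (the distributed analogue of Algorithm~\ref{Algo:Inv2}), Lemma~\ref{Exact_Alg_guarantee_lemma} bounds the outer iterations by $q = \mathcal{O}(\log(1/\epsilon))$, and each iteration additionally applies $\bm{M}_0$, which is only $1$-hop sparse and hence cheap. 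Multiplying yields the claimed $\mathcal{O}\big((\alpha\kappa/R + \alpha R d_{max})\log(1/\epsilon)\big)$, and correctness of the output as an $\epsilon$-approximate solution follows by combining Lemma~\ref{Rude_Alg_guarantee_Lemma} with Lemma~\ref{lemma_approx_matrix_inverse}.

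The hard part, I expect, is the R-hop cost analysis rather than the algebra of the chain. One must argue that relaying the $2^i$-hop products through R-hop rounds reproduces the \emph{exact} products computed by Algorithm~\ref{Algo:CrudeParallel}, so that the approximation guarantees survive the restriction, and simultaneously pin down the two competing contributions — communication decreasing in $R$ as $\kappa/R$ versus the per-round neighborhood-assembly cost increasing in $R$ as $R d_{max}$ — without double counting. Verifying that the R-hop-restricted preconditioner still realizes the operator $\bm{Z}_0$ analyzed in Lemma~\ref{Rude_Alg_guarantee_Lemma}, and that $\sum_i\epsilon_i$ stays below $\frac{1}{3}\ln 2$ after truncation, are the places where the distributed setting genuinely departs from~\cite{c11}.
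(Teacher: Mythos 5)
Your proposal is correct and follows essentially the same route as the paper: the same chain $\bm{D}_i=\bm{D}_0$, $\bm{A}_i=\bm{D}_0\left(\bm{D}_0^{-1}\bm{A}_0\right)^{2^i}$ with $\epsilon_0=\cdots=\epsilon_{d-1}=0$, the same spectral argument giving $2^d=\mathcal{O}(\kappa)$ (the paper's Lemma~\ref{eps_d_lemma}), the same split of the R-hop cost into precomputing the $R$-th powers via $\text{f}_0,\text{f}_1$ at $\mathcal{O}(\alpha R d_{max})$ and relaying the $2^i$-hop products in $\lceil 2^i/R\rceil$ rounds summing to $\mathcal{O}\left(\frac{2^d}{R}\alpha\right)$ (Lemma~\ref{r_hop_rude_lemma} with Claims~\ref{claim_1} and~\ref{claim_2}), and the same Richardson wrapper contributing the $\mathcal{O}\left(\log\frac{1}{\epsilon}\right)$ factor (Lemmas~\ref{Dist_Exact_algorithm_guarantee_lemma_Bla} and~\ref{time_complexity_of_distresolve_Bla}). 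The points you flag as delicate — exactness of the relayed products and keeping $\sum_i\epsilon_i<\frac{1}{3}\ln 2$ — are precisely what the paper's claims establish, so there is no gap.
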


The remainder of the section details the above distributed solvers and provides rigorous theoretical guarantees on the convergence and convergence rates of each of the algorithms. We start by describing solvers requiring full network communication and then detail the R-Hop restricted versions. 

\subsection{Full Communication Distributed Solvers}
As mentioned previously, our strategy for a distributed implementation of the parallel solver in~\cite{c11} requires two steps. In the first a ``crude'' solution is returned, while in the second an $\epsilon$-close approximation (for any arbitrary $\epsilon >0$) to $\bm{x}_{0}$ is computed. 

\subsubsection{``Crude'' Distributed SDDM Solvers}\label{Sec:Chain}
The distributed crude solver, represented in Algorithm~\ref{Algo:DisRudeApprox}, resembles similarities to the parallel one of~\cite{c11} with major differences. On a high level, Algorithm~\ref{Algo:DisRudeApprox} operates in two distributed phases. In the first, a forward loop computes intermediate $\bm{b}$ vectors which are then used to update the crude solution of $\bm{M}_{0}\bm{x}=\bm{b}_{0}$. The crucial difference to~\cite{c11}, however, is the distributed nature of these computations. Precisely, the algorithm is responsible for determining the crude solution for each node $\bm{v}_{k} \in \mathcal{V}$. Due to such distributed nature, the inverse approximated chain used in~\cite{c11} is inapplicable to our setting. Therefore, the second crucial difference to the parallel SDDM solver is the introduction of a new chain which can be computed in a distributed fashion. Starting from $\bm{M}_{0} = \bm{D}_{0} - \bm{A}_{0}$, our ``crude'' distributed solver makes use of the following collection as the inverse approximated chain: 
\begin{equation}\label{Eq:InverseChain}
\mathcal{C} = \{\bm{A}_0,\bm{D}_0,\bm{A}_1, \bm{D}_1,\ldots, \bm{A}_d, \bm{D}_d\},
\end{equation}
where 
$\bm{D}_{k}=\bm{D}_{0}$, \text{and} $\bm{A}_{k}=\bm{D}_{0}\left(\bm{D}_{0}^{-1}\bm{A}_{0}\right)^{2^{k}}$, 
for $k=\{1,\dots, d\}$ with $d$ being the length of the inverse chain. Note that since the magnitude of the eigenvalues of $\bm{D}^{-1}_0\bm{A}_0$ is strictly less than 1, $\left(\bm{D}^{-1}_0\bm{A}_0\right)^{2^{k}}$ tends to zero as $k$ increases which reduces the length of the chain needed. This length is explicitly computed in Section~\ref{Sec:Length} for attaining $\epsilon$-close approximations to $\bm{x}^{\star}$. 

It is relatively easy to verify that $\mathcal{C}$ is an inverse approximated chain, since: 
\begin{enumerate}
\item[] (1) $\bm{D}_i - \bm{A}_i \approx_{\epsilon_{i-1}} \bm{D}_{i-1} - \bm{A}_{i-1}\bm{D}^{-1}_{i-1}\bm{A}_{i-1}$ with $\epsilon_i = 0$ for $i = 1,\ldots, d$,
\item[] (2)  $\bm{D}_{i}\approx_{\epsilon_{i-1}}\bm{D}_{i-1}$ with $\epsilon_i = 0$ for $i = 1,\ldots, d$, and 
\item[] (3) $\bm{D}_d\approx_{\epsilon_d}\bm{D}_d - \bm{A}_d$.
\end{enumerate}

Algorithm~\ref{Algo:DisRudeApprox} returns the $k^{th}$ component of the approximate solution vector, $[\bm{x}_{0}]_{k}$. As inputs it requires the inverse chain of Equation~\ref{Eq:InverseChain}, the $k^{th}$ component of $\bm{b}_{0}$, and the length of the inverse chain. Namely, each node, $\bm{v}_k\in \mathcal{V}$, receives the $k^{th}$ row of $\bm{M}_0$, the $k^{th}$ value of $\bm{b}_{0}$ (i.e., $[\bm{b}_0]_k$), and the length of the inverse approximated chain $d$ and then operates in two parts. In the first (i.e., lines~1-8) a forward loop computes the $k^{th}$ component of $\bm{b}$ exploiting the distributed inverse chain, while in the second a backward loop (lines~9-17) is responsible for computing the $k^{th}$ component of the``crude'' solution $[\bm{x}_{0}]_{k}$ which is then returned. Essentially, in both the forward and backward loops each of the $\bm{b}$ and $\bm{x}$ vectors are computed in a distributed fashion based on the relevant components of the matrices, explaining the usage of $\mathbb{N}_{\cdot}$ loops in Algorithm~\ref{Algo:DisRudeApprox}. 

\begin{algorithm}[t!]
  \caption{
 		 \hspace{0em} $\text{DistrRSolve}\Big(\left\{[\bm{M}_0]_{k1},\ldots, [\bm{M}_0]_{kn}\right\}, 	 [\bm{b}_0]_k, d\Big)$
	   }
	   \label{Algo:DisRudeApprox}
  \begin{algorithmic}[1]
	\STATE \textbf{Part One: Computing $[\bm{b}_{i}]_{k}$}
	\STATE $[\bm{b}_1]_k = [\bm{b}_0]_k + \sum_{j: \bm{v}_j\in \mathbb{N}_{1}\left(\bm{v}_k\right)}[\bm{A}_0\bm{D}^{-1}_0]_{kj}[\bm{b}_{0}]_j$	
	\FOR {$i = 2$ to $d$}
		\FOR {$j: \bm{v}_j \in \mathbb{N}_{2^{i-1}}\left(\bm{v}_k\right)$}
			\STATE$
			\left[(\bm{A}_0\bm{D}^{-1}_0)^{2^{i-1}}\right]_{kj} =  \sum_{r=1}^{n}\frac{[\bm{D}_0]_{rr}}{[\bm{D}_0]_{jj}}\left[(\bm{A}_0\bm{D}^{-1}_0)^{2^{i-2}}\right]_{kr} \left[(\bm{A}_0\bm{D}^{-1}_0)^{2^{i-2}}\right]_{jr}$
		\ENDFOR 
		\STATE $[\bm{b}_i]_k = [\bm{b}_{i-1}]_k + \sum_{j: \bm{v}_j \in \mathbb{N}_{2^{i-1}}\left(\bm{v}_k\right)}\left[(\bm{A}_0\bm{D}^{-1}_0)^{2^{i-1}}\right]_{kj}[\bm{b}_{i-1}]_{j}$
	\ENDFOR
	\\\hrulefill
	\STATE \textbf{Part Two: Computing $[\bm{x}_{0}]_{k}$}
	\STATE $[\bm{x}_d]_k = \sfrac{[\bm{b}_d]_k}{[\bm{D}_0]_{kk}}$
	\FOR {$i = d - 1$ to $1$}
		\FOR {$j: \bm{v}_j \in \mathbb{N}_{2^i}(\bm{v}_k)$}
			\STATE$
			\left[(\bm{D}^{-1}_0\bm{A}_0)^{2^i}\right]_{kj} = 
			\sum_{r=1}^{n}\frac{[\bm{D}_0]_{jj}}{[\bm{D}_0]_{rr}}\left[(\bm{D}^{-1}_0\bm{A}_0)^{2^{i-1}}\right]_{kr} \left[(\bm{D}^{-1}_0\bm{A}_0)^{2^{i-1}}\right]_{jr}$
		\ENDFOR 
		\STATE$
		[\bm{x}_i]_k =  \frac{[\bm{b}_i]_k}{2[\bm{D}_0]_{kk}} + \frac{[\bm{x}_{i+1}]_{k+1}}{2} +\frac{1}{2}\sum_{j: \bm{v}_j \in \mathbb{N}_{2^{i}}(\bm{v}_k)}\left[(\bm{D}^{-1}_0\bm{A}_0)^{2^i}\right]_{kj}[\bm{x}_{i+1}]_j$
	\ENDFOR 
	\STATE $[\bm{x}_0]_k = \frac{[\bm{b}_0]_k}{2[\bm{D}_{0}]_{kk}} + \frac{[\bm{x}_{1}]_k}{2} + \frac{1}{2}\sum_{j:\bm{v}_j\in \mathbb{N}_{1}(\bm{v}_k)}[\bm{D}^{-1}_0\bm{A}_0]_{kj}[\bm{x}_1]_j$
    \STATE \textbf{return:} $[\bm{x}_0]_k$ 
  \end{algorithmic}
\end{algorithm}

\textbf{Theoretical Guarantees of Algorithm~\ref{Algo:DisRudeApprox}:} Due to the modifications made to the original parallel solver, new theoretical analysis quantifying convergence and accuracy of the returned ``crude'' solution is needed. We show that $\text{DistrRSolve}$ computes the $k^{th}$ component of the ``crude'' approximation of $\bm{x}^{\star}$ and provide time complexity analysis. These results are summarized in the following lemma\footnote{For ease of presentation, we leave the proof of the lemma to the appendix.}: 

\begin{lemma}\label{Rude_Dec_Alg_guarantee_Lemma}
Let $\bm{M}_0 = \bm{D}_0 - \bm{A}_0$ be the standard splitting of $\bm{M}_{0}$. Let $\bm{Z}^{\prime}_0$ be the operator defined by $\text{DistrRSolve}([\{[\bm{M}_0]_{k1},\ldots, [\bm{M}_0]_{kn}\}, [\bm{b}_0]_k, d)$ (i.e., $\bm{x}_0 = \bm{Z}^{\prime}_0\bm{b}_0$). Then
\begin{equation*}
\bm{Z}^{\prime}_0\approx_{\epsilon_d} \bm{M}^{-1}_0.
\end{equation*}
Moreover, Algorithm~\ref{Algo:DisRudeApprox} requires  $\mathcal{O}\left(dn^2\right)$ time steps. 
\end{lemma}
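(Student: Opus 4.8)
The plan is to split the argument into a correctness claim and a complexity claim. For correctness, the key observation is that Algorithm~\ref{Algo:DisRudeApprox} is nothing but an entry-wise, distributed re-implementation of the parallel routine $\text{ParallelRSolve}$ (Algorithm~\ref{Algo:Inv}) run on the specific chain $\mathcal{C}$ of Equation~\ref{Eq:InverseChain}, so the whole correctness statement reduces to Lemma~\ref{Rude_Alg_guarantee_Lemma} once I verify the two algorithms produce identical iterates. First I would specialize the parallel updates to $\mathcal{C}$. Writing $\bm{D}_k = \bm{D}_0$ and $\bm{A}_k = \bm{D}_0(\bm{D}_0^{-1}\bm{A}_0)^{2^k}$, a one-line computation gives $\bm{A}_{i-1}\bm{D}_{i-1}^{-1} = (\bm{A}_0\bm{D}_0^{-1})^{2^{i-1}}$ and $\bm{D}_i^{-1}\bm{A}_i = (\bm{D}_0^{-1}\bm{A}_0)^{2^{i}}$, so the forward recursion $\bm{b}_i = (\bm{I}+\bm{A}_{i-1}\bm{D}_{i-1}^{-1})\bm{b}_{i-1}$ and the backward recursion $\bm{x}_i = \tfrac12[\bm{D}_i^{-1}\bm{b}_i + (\bm{I}+\bm{D}_i^{-1}\bm{A}_i)\bm{x}_{i+1}]$ become exactly the vector forms whose $k$-th components are assembled in lines~2,~7 (forward) and lines~10,~15,~16 (backward) of Algorithm~\ref{Algo:DisRudeApprox}.

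The substantive step is to check that the distributed repeated-squaring recursions in lines~5 and~13 really compute the entries of these matrix powers without any node needing more than its own rows. For $\bm{P} := (\bm{A}_0\bm{D}_0^{-1})^{2^{i-2}}$ I would use that $\bm{A}_0$ is symmetric and $\bm{D}_0$ diagonal, so $\bm{P}^{\mathsf{T}} = (\bm{D}_0^{-1}\bm{A}_0)^{2^{i-2}} = \bm{D}_0^{-1}\bm{P}\bm{D}_0$, which yields the pointwise identity $[\bm{P}]_{rj} = \tfrac{[\bm{D}_0]_{rr}}{[\bm{D}_0]_{jj}}[\bm{P}]_{jr}$. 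Substituting this into $[\bm{P}^2]_{kj} = \sum_r [\bm{P}]_{kr}[\bm{P}]_{rj}$ reproduces line~5 verbatim (and the analogous manipulation reproduces line~13), so the squaring is exact. I would also record the sparsity fact that $(\bm{A}_0\bm{D}_0^{-1})^{m}$ inherits the $m$-hop sparsity pattern of $\bm{A}_0^{m}$, which is precisely what licenses restricting the $j$-sums to $\mathbb{N}_{2^{i-1}}(\bm{v}_k)$ and $\mathbb{N}_{2^{i}}(\bm{v}_k)$. Having shown that the two algorithms generate identical $\bm{b}_i$ and $\bm{x}_i$, I conclude $\bm{Z}_0' = \bm{Z}_0$; since the chain parameters satisfy $\epsilon_0 = \epsilon_1 = \cdots = \epsilon_{d-1} = 0$ with only $\epsilon_d$ nonzero (item~(3) of the chain definition), Lemma~\ref{Rude_Alg_guarantee_Lemma} gives $\bm{Z}_0' \approx_{\sum_{i=0}^{d} \epsilon_i} \bm{M}_0^{-1}$, which collapses to the claimed $\bm{Z}_0' \approx_{\epsilon_d}\bm{M}_0^{-1}$.

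For the complexity claim I would simply tally the work. In the forward loop the dominant cost per level $i$ is computing the row entries $[(\bm{A}_0\bm{D}_0^{-1})^{2^{i-1}}]_{kj}$: there are at most $n$ admissible indices $j$, and each entry costs an $\mathcal{O}(n)$ summation over $r$, giving $\mathcal{O}(n^2)$ per level, while the subsequent assembly of $[\bm{b}_i]_k$ in line~7 is only $\mathcal{O}(n)$. Summing over the $d$ levels yields $\mathcal{O}(dn^2)$, and the backward loop (lines~11--15) is identical in structure, contributing another $\mathcal{O}(dn^2)$, so the total is $\mathcal{O}(dn^2)$ time steps.

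I expect the main obstacle to be the second step, establishing that the local squaring recursion is exact. The point is delicate because $\bm{A}_0\bm{D}_0^{-1}$ is not symmetric, so the entry $[\bm{P}]_{rj}$ needed in the naive product $\sum_r[\bm{P}]_{kr}[\bm{P}]_{rj}$ is not directly available to node $k$ (it would require column information). The symmetry-plus-diagonal identity $[\bm{P}]_{rj} = \tfrac{[\bm{D}_0]_{rr}}{[\bm{D}_0]_{jj}}[\bm{P}]_{jr}$ is exactly the device that re-expresses it through the row entries $[\bm{P}]_{jr}$ that the neighbours can supply. Getting this bookkeeping right for both the $\bm{A}_0\bm{D}_0^{-1}$ and $\bm{D}_0^{-1}\bm{A}_0$ powers, together with the matching $m$-hop sparsity bounds, is where the real content of the lemma lies.
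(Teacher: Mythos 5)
Your proposal is correct and takes essentially the same route as the paper's own proof: correctness is reduced to Lemma~\ref{Rude_Alg_guarantee_Lemma} by checking that $\text{DistrRSolve}$ reproduces the $\text{ParallelRSolve}$ iterates on the chain $\mathcal{C}$ (whose parameters satisfy $\epsilon_0=\cdots=\epsilon_{d-1}=0$, so the sum collapses to $\epsilon_d$), with the symmetrization identity $[\bm{P}]_{rj}=\frac{[\bm{D}_0]_{rr}}{[\bm{D}_0]_{jj}}[\bm{P}]_{jr}$ — which is exactly the paper's observation that $\bm{D}_0^{-1}\left(\bm{A}_0\bm{D}_0^{-1}\right)^{2^{i-2}}$ is symmetric — doing the key work of turning unavailable column accesses into row accesses, and the $r$-hop sparsity of the powers justifying the neighborhood-restricted sums. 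Your complexity tally ($\mathcal{O}(n^2)$ per level of each of the two loops, over $d$ levels) matches the paper's $\mathcal{O}(dn^2)$ accounting as well.
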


In words, Lemma~\ref{Rude_Dec_Alg_guarantee_Lemma} states that Algorithm~\ref{Algo:DisRudeApprox} requires $\mathcal{O}\left(dn^2\right)$ to arrive at an $\epsilon_{d}$ approximation to the real inverse $\bm{M}_{0}^{-1}$, where this approximation is quantified using Definition~\ref{Def:Ordering}: 
\begin{equation*}
e^{-\epsilon_{d}} \bm{Z}_{0}^{\prime}\preceq \bm{M}_{0}^{-1}\preceq e^{\epsilon_{d}}\bm{Z}^{\prime}_{0}.
\end{equation*}

$\bm{Z}_{0}^{\prime}$ can then be used to compute the crude solution as $\bm{x}_{0}=\bm{Z}_{0}^{\prime}\bm{b}$. Note that the accuracy of approximating $\bm{M}_{0}$ is limited to $\epsilon_{d}$ motivating the need for an ``exact'' distributed solver reducing the error to any $\epsilon>0$.

\subsubsection{``Exact'' Distributed SDDM Solvers}
Having introduced $\text{DistrRSolve}$, we are now ready to present a distributed version of Algorithm~\ref{Algo:Inv2} which enables the computation of $\epsilon$ close solutions for $\bm{M}_{0}\bm{x}=\bm{b}_{0}$. Contrary to the work of~\cite{c11}, our algorithm is capable of acquiring solutions up to any arbitrary $\epsilon > 0$. Similar to $\text{DistrRSolve}$, each node $\bm{v}_k\in \mathcal{V}$ receives the $k^{th}$ row of $\bm{M}_0$, $[\bm{b}_0]_k$, $d$ and a precision parameter $\epsilon$ as inputs. Node $\bm{v}_k$ then computes the $k^{th}$ component of the $\epsilon$  close approximation of $\bm{x}^{\star}$ by using $\text{DistrRSolve}$ as a sub-routine and updates the solution iteratively as shown in lines~2-6 in Algorithm~\ref{Alg_ExactBla}.

\begin{algorithm}[h!]
  \caption{$\text{DistrESolve}\left(\{[\bm{M}_0]_{k1},\ldots, [\bm{M}_0]_{kn}\}, [\bm{b}_0]_k, d, \epsilon\right)$}
  \begin{algorithmic}[1]
\STATE \textbf{Initialize}: $[\bm{y}_0]_k = 0$; 
$[\chi]_k = \text{DistrRSolve}\left(\{[\bm{M}_0]_{k1},\ldots, [\bm{M}_0]_{kn}\}, [\bm{b}_0]_k, d\right)$ (i.e., Algorithm~\ref{Algo:DisRudeApprox})	
	\FOR {$t=1$ to $q$}
		\STATE $\left[\bm{u}^{(1)}_{t}\right]_k = [\bm{D}_0]_{kk}[\bm{y}_{t-1}]_k - \sum_{j: \bm{v}_j\in \mathbb{N}_{1}(\bm{v}_k)}[\bm{A}_{0}]_{kj}[\bm{y}_{t-1}]_j$
		\STATE $\left[\bm{u}^{(2)}_{t}\right]_k = \text{DistrRSolve}(\{[\bm{M}_0]_{k1},\ldots, [\bm{M}_0]_{kn}\}, \left[\bm{u}^{(1)}_{t}\right]_k, d,)$
		\STATE $[\bm{y}_t]_k = [\bm{y}_{t-1}]_k - \left[\bm{u}^{(2)}_{t}\right]_k + [\chi]_k$ 
	\ENDFOR 
    \STATE $[\tilde{\bm{x}}]_k = [\bm{y}_q]_k$
    \STATE \textbf{return} $[\tilde{\bm{x}}]_k$ 
  \end{algorithmic}
  \label{Alg_ExactBla}  
\end{algorithm}

\textbf{Analysis of Algorithm~\ref{Alg_ExactBla}:} Here, we again provide the theoretical analysis needed for quantifying the convergence and computational time of the exact algorithm for returning $\epsilon$-close approximation to $\bm{x}^{\star}$. The following lemma shows that $\text{DistrESolve}$ computes the $k^{th}$ component of the $\epsilon$-close approximation of $x^{\star}$:
\begin{lemma}\label{Dist_Exact_algorithm_guarantee_lemma}
Let $\bm{M}_0 = \bm{D}_0 - \bm{A}_0$ be the standard splitting. Further, let $\epsilon_d < \frac{1}{3}\ln2$ in the nverse approximated chain $\mathcal{C} = \{\bm{A}_0,\bm{D}_0,\bm{A}_1, \bm{D}_1,\ldots, \bm{A}_d, \bm{D}_d\}$. Then $\text{DistrESolve}(\{[\bm{M}_0]_{k1},\ldots, [\bm{M}_0]_{kn}\}, [b_0]_k, d, \epsilon)$ requires $\mathcal{O}\left(\log\frac{1}{\epsilon}\right)$ iterations to return the $k^{th}$ component of the $\epsilon$ close approximation for $\bm{x}^{\star}$.
\end{lemma}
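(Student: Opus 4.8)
The plan is to reduce the claim to the convergence analysis of the parallel preconditioned Richardson scheme already available as Lemma~\ref{Exact_Alg_guarantee_lemma}, by showing that the iterates produced by $\text{DistrESolve}$ (Algorithm~\ref{Alg_ExactBla}) coincide exactly, component by component, with those produced by $\text{ParallelESolve}$ (Algorithm~\ref{Algo:Inv2}) when the latter is instantiated with the preconditioner $\bm{Z}_0'$ defined by $\text{DistrRSolve}$. Once this equivalence of iterates is in place, nothing new about the contraction rate needs to be established: the number of outer iterations $q$ is inherited verbatim from the parallel result, and the lemma concerns only that iteration count.

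First I would verify that each line of the inner loop of Algorithm~\ref{Alg_ExactBla} is a faithful distributed realization of the corresponding line of Algorithm~\ref{Algo:Inv2}. The only step that involves the graph structure is the matrix-vector product $\bm{u}_t^{(1)} = \bm{M}_0 \bm{y}_{t-1}$. Writing $\bm{M}_0 = \bm{D}_0 - \bm{A}_0$ and using that the off-diagonal entry $[\bm{M}_0]_{kj}$ vanishes whenever $\bm{v}_j \notin \mathbb{N}_1(\bm{v}_k)$ (the sparsity pattern of the Laplacian corresponds to the edges of $\mathcal{G}$), one has
\begin{equation*}
\left[\bm{M}_0\bm{y}_{t-1}\right]_k = [\bm{D}_0]_{kk}[\bm{y}_{t-1}]_k - \sum_{j:\bm{v}_j\in\mathbb{N}_1(\bm{v}_k)}[\bm{A}_0]_{kj}[\bm{y}_{t-1}]_j,
\end{equation*}
which is exactly line~3 of Algorithm~\ref{Alg_ExactBla} and uses only one-hop communication. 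For the remaining lines, Lemma~\ref{Rude_Dec_Alg_guarantee_Lemma} already guarantees that the call to $\text{DistrRSolve}$ realizes a fixed linear operator $\bm{Z}_0'$, so that $[\bm{u}_t^{(2)}]_k = [\bm{Z}_0'\bm{u}_t^{(1)}]_k$ and $[\chi]_k = [\bm{Z}_0'\bm{b}_0]_k$. Substituting these into the update $[\bm{y}_t]_k = [\bm{y}_{t-1}]_k - [\bm{u}_t^{(2)}]_k + [\chi]_k$ and assembling over $k$ shows that $\text{DistrESolve}$ computes precisely
\begin{equation*}
\bm{y}_t = \left(\bm{I} - \bm{Z}_0'\bm{M}_0\right)\bm{y}_{t-1} + \bm{Z}_0'\bm{b}_0,
\end{equation*}
the same recursion as the parallel solver with $\bm{Z}_0 = \bm{Z}_0'$.

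Next I would match the accuracy hypotheses. By Lemma~\ref{Rude_Dec_Alg_guarantee_Lemma} the operator satisfies $\bm{Z}_0'\approx_{\epsilon_d}\bm{M}_0^{-1}$, because the chain $\mathcal{C}=\{\bm{A}_0,\bm{D}_0,\ldots,\bm{A}_d,\bm{D}_d\}$ of Equation~\ref{Eq:InverseChain} has $\epsilon_0=\cdots=\epsilon_{d-1}=0$ and therefore $\sum_{i=1}^d\epsilon_i=\epsilon_d$. Consequently the standing assumption $\epsilon_d<\tfrac{1}{3}\ln 2$ of this lemma is identical to the hypothesis $\sum_{i=1}^d\epsilon_i<\tfrac{1}{3}\ln 2$ required by Lemma~\ref{Exact_Alg_guarantee_lemma}. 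Applying that lemma to the (identical) iterates $\bm{y}_t$ then yields that $q=\mathcal{O}\!\left(\log\tfrac{1}{\epsilon}\right)$ iterations suffice for $\bm{y}_q$ to be an $\epsilon$-close approximation of $\bm{x}^\star$; since node $\bm{v}_k$ returns exactly $[\bm{y}_q]_k$, the stated bound on the iteration count follows.

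The main obstacle I anticipate is the bookkeeping of the first step, namely certifying that the distributed routine genuinely reproduces the parallel iterate rather than merely approximating it. This hinges on two facts that must be checked with care: that $\bm{M}_0$ can be applied \emph{exactly} using only $\mathbb{N}_1$ information (which relies on the Laplacian sparsity and would fail for a denser operator), and that $\text{DistrRSolve}$ is a fixed linear operator independent of its input vector, so that the same $\bm{Z}_0'$ appears in the definition of $\chi$ and in every inner call. Both are delivered by Lemma~\ref{Rude_Dec_Alg_guarantee_Lemma}; once they are invoked, the convergence statement is an immediate corollary of the parallel analysis of~\cite{c11} and no separate contraction estimate is needed.
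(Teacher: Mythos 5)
Your proposal is correct and takes essentially the same route as the paper's own proof: the paper likewise identifies the iterates of $\text{DistrESolve}$ with the preconditioned Richardson recursion $\bm{y}_t = \left(\bm{I} - \bm{Z}_0'\bm{M}_0\right)\bm{y}_{t-1} + \bm{Z}_0'\bm{b}_0$, invokes Lemma~\ref{Rude_Dec_Alg_guarantee_Lemma} to get $\bm{Z}_0'\approx_{\epsilon_d}\bm{M}_0^{-1}$, and then applies Lemma~\ref{Exact_Alg_guarantee_lemma} to obtain the $\mathcal{O}\left(\log\frac{1}{\epsilon}\right)$ iteration count. Your additional bookkeeping (verifying the one-hop exactness of the product $\bm{M}_0\bm{y}_{t-1}$ and that $\epsilon_d$ equals the sum $\sum_{i=1}^{d}\epsilon_i$ required by the parallel lemma) only makes explicit details the paper leaves implicit.
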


The above lemma proofs that the algorithm requires $\mathcal{O}(\log \frac{1}{\epsilon})$ iterations for attaining for returning the $k^{th}$ of the $\epsilon$-close approximation to $\bm{x}^{\star}$. Consequently, the overall complexity can be summarized as: 
\begin{lemma}\label{time_complexity_of_distresolve}
Let $\bm{M}_0 =\bm{D}_0 - \bm{A}_0$ be the standard splitting. Further, let $\epsilon_d < \frac{1}{3}\ln2$ in the inverse approximated chain $\mathcal{C} = \{\bm{A}_0,\bm{D}_0,\bm{A}_1, \bm{D}_1,\ldots, \bm{A}_d, \bm{D}_d\}$. Then, $\text{DistrESolve}(\{[\bm{M}_0]_{k1},\ldots, [\bm{M}_0]_{kn}\}, [\bm{b}_0]_k, d, \epsilon)$ requires $\mathcal{O}\left(dn^2\log(\frac{1}{\epsilon})\right)$ time steps.
\end{lemma}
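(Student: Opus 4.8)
The plan is to combine the per-iteration cost of Algorithm~\ref{Alg_ExactBla} with the iteration count already established in Lemma~\ref{Dist_Exact_algorithm_guarantee_lemma}. First I would isolate the work performed inside a single pass of the loop (lines~3--5). Line~4 invokes $\text{DistrRSolve}$ once, which by Lemma~\ref{Rude_Dec_Alg_guarantee_Lemma} costs $\mathcal{O}(dn^2)$ time steps; this is the dominant contribution. Line~3 evaluates $\left[\bm{u}^{(1)}_{t}\right]_k$, a matrix--vector product of the row $[\bm{M}_0]_{k\cdot}$ against $\bm{y}_{t-1}$ restricted to the $1$-hop neighborhood $\mathbb{N}_{1}(\bm{v}_k)$, which takes at most $\mathcal{O}(n)$ operations per node, while line~5 is a constant number of scalar additions. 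Hence each iteration costs $\mathcal{O}(dn^2) + \mathcal{O}(n) + \mathcal{O}(1) = \mathcal{O}(dn^2)$.

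Next I would account for the initialization in line~1, which performs one additional call to $\text{DistrRSolve}$ to form $[\chi]_k$; this contributes a further $\mathcal{O}(dn^2)$, but is incurred only once and is therefore absorbed into the loop cost. Invoking Lemma~\ref{Dist_Exact_algorithm_guarantee_lemma}, the loop runs for $q = \mathcal{O}\left(\log\frac{1}{\epsilon}\right)$ iterations under the hypothesis $\epsilon_d < \frac{1}{3}\ln 2$. Multiplying the per-iteration cost by the number of iterations and adding the one-time initialization yields
\[
\mathcal{O}(dn^2) + q\cdot\mathcal{O}(dn^2) = \mathcal{O}\left(dn^2\log\frac{1}{\epsilon}\right),
\]
which is the claimed bound.

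The argument is essentially bookkeeping, so there is no deep obstacle; the only point requiring care is confirming that the auxiliary operations inside the loop (lines~3 and~5) truly do not dominate the $\text{DistrRSolve}$ call, so that the per-iteration cost collapses to $\mathcal{O}(dn^2)$. One should also note that the hypothesis $\epsilon_d < \frac{1}{3}\ln 2$ is precisely what licenses the use of Lemma~\ref{Dist_Exact_algorithm_guarantee_lemma} for the iteration count; without it the Richardson preconditioning count $q = \mathcal{O}\left(\log\frac{1}{\epsilon}\right)$ is not guaranteed. Once these are checked, the time-step bound follows immediately by summing the contributions.
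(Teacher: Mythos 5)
Your proposal is correct and follows essentially the same route as the paper's own proof: the per-iteration cost is dominated by the $\text{DistrRSolve}$ call at $\mathcal{O}(dn^2)$ (Lemma~\ref{Rude_Dec_Alg_guarantee_Lemma}), and multiplying by the $q = \mathcal{O}\left(\log\frac{1}{\epsilon}\right)$ iteration count from Lemma~\ref{Dist_Exact_algorithm_guarantee_lemma} gives the bound. Your version is merely more explicit than the paper's, which skips the bookkeeping for lines~3 and~5 and the one-time initialization that you verify do not dominate.
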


\subsubsection{Length of the Inverse Chain}\label{Sec:Length}
Both introduced algorithms depend on the length of the inverse approximated chain, $d$. Here, we provide an analysis to determine the value of $d$ which guarantees $\epsilon_d < \frac{1}{3}\ln2$ in $\mathcal{C} = \{\bm{A}_0,\bm{D}_0,\bm{A}_1, \bm{D}_1,\ldots, \bm{A}_d, \bm{D}_d\}$: 
\begin{lemma}\label{eps_d_lemma}
Let $\bm{M}_0 = \bm{D}_0 - \bm{A}_0$ be the standard splitting and $\kappa$ denote the condition number of $\bm{M}_0$. Consider the inverse approximated chain 
\begin{equation*}
\mathcal{C} = \{\bm{A}_0,\bm{D}_0,\bm{A}_1, \bm{D}_1,\ldots, \bm{A}_d, \bm{D}_d\},
\end{equation*}
with $d =  \lceil \log \left(2\ln\left(\frac{\sqrt[3]{2}}{\sqrt[3]{2} - 1}\right)\kappa\right)\rceil$, then $
\bm{D}_0\approx_{\epsilon_d} \bm{D}_0 - \bm{D}_0\left(\bm{D}^{-1}_0\bm{A}_0\right)^{2^d},$ 
with $\epsilon_d < \frac{1}{3}\ln2$.
\end{lemma}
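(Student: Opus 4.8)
The plan is to unwind Definition~\ref{Def:Ordering} for condition (3) of the chain and reduce it, by a single congruence, to a scalar statement about the eigenvalues of a symmetric matrix. Since $\bm{A}_d = \bm{D}_0\left(\bm{D}_0^{-1}\bm{A}_0\right)^{2^d}$ and $\bm{D}_d = \bm{D}_0$, the desired relation $\bm{D}_0 \approx_{\epsilon_d} \bm{D}_0 - \bm{D}_0\left(\bm{D}_0^{-1}\bm{A}_0\right)^{2^d}$ is exactly $e^{-\epsilon_d}\bm{D}_0 \preceq \bm{D}_0 - \bm{D}_0\left(\bm{D}_0^{-1}\bm{A}_0\right)^{2^d} \preceq e^{\epsilon_d}\bm{D}_0$. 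First I would conjugate by the invertible matrix $\bm{D}_0^{-1/2}$ (congruence by an invertible matrix is an equivalence for $\preceq$, consistent with part (5) of Lemma~\ref{approx_lemma_facts}) and set $\bm{B} = \bm{D}_0^{-1/2}\bm{A}_0\bm{D}_0^{-1/2}$, which is symmetric and entrywise non-negative. A short index computation gives $\bm{D}_0^{-1/2}\bm{A}_d\bm{D}_0^{-1/2} = \left(\bm{D}_0^{-1/2}\bm{A}_0\bm{D}_0^{-1/2}\right)^{2^d} = \bm{B}^{2^d}$, so the condition collapses to $e^{-\epsilon_d}\bm{I} \preceq \bm{I} - \bm{B}^{2^d} \preceq e^{\epsilon_d}\bm{I}$.

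Next I would read off the eigenvalue requirement. Writing $\mu_i$ for the (real) eigenvalues of $\bm{B}$ and $\rho = \rho(\bm{B}) = \max_i|\mu_i| < 1$ (the spectral radius of $\bm{D}_0^{-1}\bm{A}_0$, which the paper notes is strictly below $1$), the eigenvalues of $\bm{I} - \bm{B}^{2^d}$ are $1 - \mu_i^{2^d} \in (0,1]$ because $2^d$ is even. The upper bound $1 - \mu_i^{2^d} \le 1 \le e^{\epsilon_d}$ holds for free, so only the lower bound binds, and its worst case is the largest $\mu_i^{2^d} = \rho^{2^d}$. Hence the smallest admissible constant is $\epsilon_d = -\ln\!\left(1 - \rho^{2^d}\right)$, and the target $\epsilon_d < \tfrac13\ln 2$ is equivalent to $1 - \rho^{2^d} > 2^{-1/3}$, i.e. $\rho^{2^d} < 1 - 2^{-1/3} = \tfrac{\sqrt[3]{2}-1}{\sqrt[3]{2}}$. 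I expect the genuine obstacle to be the next step: bounding $\rho$ by the condition number $\kappa$ of $\bm{M}_0$. Since $\bm{B} \ge 0$ entrywise, Perron--Frobenius gives $\rho = \lambda_{\max}(\bm{B}) = 1 - \lambda_{\min}(\bm{I}-\bm{B})$, and because $\bm{I} - \bm{B} = \bm{D}_0^{-1/2}\bm{M}_0\bm{D}_0^{-1/2}$ a Rayleigh-quotient comparison yields $\lambda_{\min}(\bm{I}-\bm{B}) \ge \lambda_{\min}(\bm{M}_0)/\lambda_{\max}(\bm{D}_0) \ge \lambda_{\min}(\bm{M}_0)/\lambda_{\max}(\bm{M}_0) = 1/\kappa$, using $[\bm{D}_0]_{ii} = e_i^{\mathsf{T}}\bm{M}_0 e_i \le \lambda_{\max}(\bm{M}_0)$. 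This gives the clean bound $\rho \le 1 - 1/\kappa$.

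Finally I would plug in the stated length. Interpreting $\log$ as base $2$, the ceiling gives $2^d \ge 2\ln\!\left(\tfrac{\sqrt[3]{2}}{\sqrt[3]{2}-1}\right)\kappa$, so chaining $1 - x \le e^{-x}$ with $\rho \le 1 - 1/\kappa$ produces
\[
\rho^{2^d} \le \left(1 - \tfrac{1}{\kappa}\right)^{2^d} \le e^{-2^d/\kappa} \le \exp\!\left(-2\ln\tfrac{\sqrt[3]{2}}{\sqrt[3]{2}-1}\right) = \left(1 - 2^{-1/3}\right)^2 < 1 - 2^{-1/3}.
\]
This is precisely where the factor $2$ in $d$ earns its keep: it turns the target threshold $1 - 2^{-1/3}$ into its square, which is strictly smaller, so the required strict inequality $\rho^{2^d} < 1 - 2^{-1/3}$ holds with room to spare. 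Feeding this back through $\epsilon_d = -\ln(1 - \rho^{2^d})$ yields $\epsilon_d < -\ln 2^{-1/3} = \tfrac13\ln 2$, completing the argument. The routine parts are the congruence identity $\bm{D}_0^{-1/2}\bm{A}_d\bm{D}_0^{-1/2} = \bm{B}^{2^d}$ and the monotone log/exponential estimates; the conceptual core is the spectral bound $\rho \le 1 - 1/\kappa$, which I would isolate as a short preliminary claim before the final numeric chain.
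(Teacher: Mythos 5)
Your proof is correct and follows essentially the same route as the paper's: bound the spectral radius of $\bm{D}_0^{-1}\bm{A}_0$ by $1-\frac{1}{\kappa}$ using the condition number, reduce the Loewner condition $\bm{D}_0 \approx_{\epsilon_d} \bm{D}_0 - \bm{D}_0(\bm{D}_0^{-1}\bm{A}_0)^{2^d}$ to a scalar eigenvalue statement, and close with $(1-\frac{1}{\kappa})^{2^d}\le e^{-2^d/\kappa}$ together with the stated choice of $d$. The only differences are presentational: you inline, via the congruence $\bm{B}=\bm{D}_0^{-1/2}\bm{A}_0\bm{D}_0^{-1/2}$, Perron--Frobenius, and a Rayleigh-quotient bound, the two claims the paper defers to its appendix, and you exploit the evenness of $2^d$ to obtain the upper Loewner bound for free where the paper carries both bounds and then discards the weaker one.
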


\begin{proof}
The proof will be given as a collection of claims:

\begin{claim}
Let $\kappa$ be the condition number of $\bm{M}_0 = \bm{D}_0 - \bm{A}_0$, and $\{\lambda_i\}^{n}_{i=1}$ denote the eigenvalues of $\bm{D}^{-1}_0\bm{A}_0$. Then,
$|\lambda_i| \le 1 - \frac{1}{\kappa}$,  
for all $i=1,\ldots, n$
\end{claim}
\begin{proof}
See Appendix. 
\end{proof}

Notice that if $\lambda_i$ represented an eigenvalue of $\bm{D}^{-1}_0\bm{A}_0$, then $\lambda^{r}_i$ is an eigenvalue of $\left(\bm{D}^{-1}_0\bm{A}_0\right)^r$ for all $r\in \mathbb{N}$. Therefore, we have 
\begin{equation}\label{spect_radius}
\rho\left(\left(\bm{D}^{-1}_0\bm{A}_0\right)^{2^d}\right)\le \left(1 - \frac{1}{\kappa}\right)^{2^d}
\end{equation}

\begin{claim}
Let $\bm{M}$ be an SDDM matrix and consider the splitting $\bm{M} =\bm{D} -\bm{A}$, with $\bm{D}$ being non negative diagonal and $\bm{A}$ being symmetric non negative. Further, assume that the eigenvalues of $\bm{D}^{-1}\bm{A}$ lie between $-\alpha$ and $\beta$. Then, 
\[
(1 - \beta)\bm{D} \preceq \bm{D} -\bm{A} \preceq (1 + \alpha)\bm{D}.\] 
\end{claim}
\begin{proof}
See Appendix. 
\end{proof}

Combining the above results, gives
\[\left[1 - \left(1 - \frac{1}{\kappa}\right)^{2^d}\right]\bm{D}_d\preceq \bm{D}_d - \bm{A}_d\preceq\left[1 + \left(1 - \frac{1}{\kappa}\right)^{2^d}\right]\bm{D}_d. \]
Hence, to guarantee that $\bm{D}_d\approx_{\epsilon_d}\bm{D}_d - \bm{A}_d$, the following system must be satisfied: 
\begin{align*}
e^{-\epsilon_d} &\le 1 - \left(1 - \frac{1}{\kappa}\right)^{2^d},  \ \ \ \ \text{and} \ \ \ \ e^{\epsilon_d} \ge 1 + \left(1 - \frac{1}{\kappa}\right)^{2^d}.
\end{align*}
Introducing $\gamma$ for $\left(1 - \frac{1}{\kappa}\right)^{2^d}$, we arrive at: 
\begin{align*}
\epsilon_d &\ge \ln\left(\frac{1}{1 - \gamma}\right),  \ \ \ \ \text{and} \ \ \ \ \epsilon_d \ge \ln(1 + \gamma).
\end{align*}
Hence, $\epsilon_d \ge \max\left\lbrace \ln\left(\frac{1}{1 - \gamma}\right), \ln(1 + \gamma)\right\rbrace = \ln\left(\frac{1}{1 - \gamma}\right)$. 
Now, notice that if $d = \lceil \log c \kappa\rceil$ then,
$\gamma = \left(1 - \frac{1}{\kappa}\right)^{2^d} =  \left(1 - \frac{1}{\kappa}\right)^{{c}\kappa} \le \frac{1}{e^c}$. 
Hence, $\ln\left(\frac{1}{1 - \gamma}\right) \le \ln\left(\frac{e^c}{e^c - 1}\right)$. This gives $c = \lceil 2\ln\left(\frac{\sqrt[3]{2}}{\sqrt[3]{2} - 1}\right)\rceil$, implying $\epsilon_d = \ln\left(\frac{e^c}{e^c - 1}\right) < \frac{1}{3}\ln2$.
\end{proof}

Using the above results the time complexity of $\text{DistrESolve}$ with $d =  \lceil \log \left(2\ln\left(\frac{\sqrt[3]{2}}{\sqrt[3]{2} - 1}\right)\kappa\right)\rceil$ is $\mathcal{O}\left(n^2\log\kappa \log(\frac{1}{\epsilon})\right)$ times steps, which concludes the proof of Theorem \ref{Main_Theorem}.

\subsection{R-Hop Distributed Solvers}
The above version of the distributed solver requires no knowledge of the graph's topology, but does require the information from all other nodes. Next, we will outline an R-Hop version of the algorithm in which communication is restricted to the R-Hop neighborhood between nodes. Due to such communication constraints, the R-Hop solver is general enough to be applied in a variety of fields including but not limited to, network flow problems (see Section~\ref{Sec:Newton}). Along with Theorem~\ref{Main_TheoremTwo}, the following corollary summarizes the results of the R-Hop distributed solver: 

\begin{corollary}\label{CorollaryTwo}
Let $\bm{M}_0$ be the weighted Laplacian of $\mathcal{G} = \left(\mathcal{V},\mathcal{E},\bm{W}\right)$. There exists a decentralized algorithm that uses only $R$-hop communication between nodes and computes $\epsilon$ close solutions of $\bm{M}_{0}\bm{x}=\bm{b}_{0}$ in
 $\mathcal{O}\left(\frac{n^3\alpha}{R}\frac{\bm{W}_{\text{max}}}{\bm{W}_{\text{min}}}\log(\frac{1}{\epsilon})\right)$ time steps, with $n$ being the number of nodes in $\mathcal{G}$, $\bm{W}_{\text{max}}, \bm{W}_{\text{min}}$ denoting the largest and the smallest weights of edges in $\mathcal{G}$, respectively, $\alpha = \min\left\{n, \frac{\left(d^{R+1}_{\text{max}} - 1\right)}{\left(d_{\text{max}} - 1\right)}\right\}$ representing the upper bound on the size of the R-hop neighborhood $\forall \bm{v}\in \mathcal{V}$, and $\epsilon\in (0, \frac{1}{2}]$ being the precision parameter. 
\end{corollary}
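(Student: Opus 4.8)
The plan is to obtain this corollary as a direct specialization of Theorem~\ref{Main_TheoremTwo} to the situation in which the SDDM matrix $\bm{M}_0$ is the weighted graph Laplacian of $\mathcal{G}$. Theorem~\ref{Main_TheoremTwo} already supplies a decentralized algorithm using only $R$-hop communication that runs in
\[
\mathcal{O}\left(\left(\frac{\alpha\kappa}{R} + \alpha R d_{\text{max}}\right)\log\left(\frac{1}{\epsilon}\right)\right)
\]
time steps for a \emph{generic} SDDM system with condition number $\kappa$. Consequently the whole task reduces to inserting the sharpened, graph-specific estimate of $\kappa$ and then checking that one of the two summands absorbs the other.

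First I would recall the condition-number bound stated in Section~\ref{Sec:ProbSetting} and established in~\cite{c11}: for the weighted Laplacian one has $\kappa = \mathcal{O}\left(n^3\,\frac{\bm{W}_{\text{max}}}{\bm{W}_{\text{min}}}\right)$, where the condition number is understood as the ratio of the largest to the smallest nonzero eigenvalue. Substituting this into the first summand of the Theorem~\ref{Main_TheoremTwo} bound gives
\[
\frac{\alpha\kappa}{R} = \mathcal{O}\left(\frac{n^3\alpha}{R}\frac{\bm{W}_{\text{max}}}{\bm{W}_{\text{min}}}\right),
\]
which is exactly the target expression; it remains only to show that the degree-dependent summand $\alpha R d_{\text{max}}$ does not dominate.

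The key step is the elementary estimate $R^2 d_{\text{max}} = \mathcal{O}\left(n^3\,\frac{\bm{W}_{\text{max}}}{\bm{W}_{\text{min}}}\right)$, since then $\alpha R d_{\text{max}} = \mathcal{O}\left(\frac{\alpha}{R}\,n^3\,\frac{\bm{W}_{\text{max}}}{\bm{W}_{\text{min}}}\right)$ and the second term is subsumed by the first. To justify it I would use that the maximal degree satisfies $d_{\text{max}} \le n-1$ and that, without loss of generality, the hop radius obeys $R \le \text{diam}(\mathcal{G}) \le n-1$ (any $R$ larger than the diameter already realizes full communication and reduces the problem to Theorem~\ref{Main_Theorem}). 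These two facts give $R^2 d_{\text{max}} \le n^2(n-1) \le n^3$, and since $\frac{\bm{W}_{\text{max}}}{\bm{W}_{\text{min}}} \ge 1$ the claimed inequality follows at once. Merging the two summands then yields the advertised complexity $\mathcal{O}\left(\frac{n^3\alpha}{R}\frac{\bm{W}_{\text{max}}}{\bm{W}_{\text{min}}}\log\frac{1}{\epsilon}\right)$.

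I expect the only delicate point to be the assumption that $R$ need not exceed the diameter: this is the modeling observation that makes the degree term collapse, and without it the bound would retain an additive $\alpha R d_{\text{max}}$ contribution. Everything else is a routine substitution of the Laplacian condition-number estimate into the already-proved Theorem~\ref{Main_TheoremTwo}, so I anticipate no substantial computational obstacle.
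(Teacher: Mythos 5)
Your proposal is correct and follows exactly the route the paper intends (the paper states the corollary without writing out a proof): specialize Theorem~\ref{Main_TheoremTwo} by inserting the Laplacian condition-number bound $\kappa = \mathcal{O}\left(n^{3}\frac{\bm{W}_{\text{max}}}{\bm{W}_{\text{min}}}\right)$ from Section~\ref{Sec:ProbSetting}. Your explicit absorption of the $\alpha R d_{\text{max}}$ term via $R \le \text{diam}(\mathcal{G}) \le n-1$, $d_{\text{max}} \le n-1$, and $\frac{\bm{W}_{\text{max}}}{\bm{W}_{\text{min}}} \ge 1$ is a step the paper leaves implicit, and your observation that some such restriction on $R$ is needed for the stated bound to hold is a legitimate (and correctly handled) point of care.
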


Similar to development of the full communication solver, the R-Hop version also requires two steps to attain the $\epsilon$-close approximation to $\bm{x}^{\star}$, i.e., the ``crude R-Hop'' and the ``exact R-Hop'' solutions. 

\begin{algorithm}[t!]
  \caption{$\text{RDistRSolve}\left(\mathcal{C}, [\bm{b}_0]_k, d, R\right)$}
  \label{Algo:DistRHop}
  \begin{algorithmic}[1]
	\STATE \textbf{Part One:}	
%
	\STATE $\{[\bm{C}_0]_{k1},\ldots,[\bm{C}_0]_{kn}\} = \text{f}_0\left([\bm{M}_0]_{k1},\ldots, [\bm{M}_0]_{kn}, R\right)$
	\STATE $\{[\bm{C}_1]_{k1},\ldots,[\bm{C}_1]_{kn}\} = \text{f}_1\left([\bm{M}_0]_{k1},\ldots, [\bm{M}_0]_{kn}, R\right)$
\\\hrulefill	
	\STATE \textbf{Part Two:}
	\FOR {$i=1$ to $d$}
	\STATE $l_{i-1} = \sfrac{2^{i-1}}{R}$
	\STATE $[\bm{u}^{(i-1)}_{1}]_k = [\bm{A}_0\bm{D}^{-1}_0\bm{b}_{i-1}]_k$
	\STATE $[\bm{u}^{(i-1)}_{l_{i-1}}]_k = \text{f}_2([\bm{u}^{(i-1)}_{1}]_k)$
			\STATE $[\bm{b}_i]_k = [\bm{b}_{i-1}]_k + [\bm{u}^{(i-1)}_{l_{i-1}}]_k$\\
		
	\ENDFOR 
		\\\hrulefill
	\STATE \textbf{Part Three:}
	\STATE $[\bm{x}_d]_k = \sfrac{[\bm{b}_d]_k}{[\bm{D}_0]_{kk}}$
	\FOR {$i=d-1$ to $1$}
	\STATE $l_i = \sfrac{2^i}{R}$
	\STATE $[\bm{\eta}^{(i+1)}_{1}]_k = [\bm{D}^{-1}_0\bm{A}_0\bm{x}_{i+1}]_k$
	\STATE $\left[\bm{\eta}^{i+1}_{l_i}\right]_k  = \text{f}_3([\bm{\eta}^{(i+1)}_{1}]_k)$

%

			\STATE $[\bm{x}_i]_k = \frac{1}{2}\left[\frac{[\bm{b}_i]_k}{[\bm{D}_0]_{kk}}  + [\bm{x}_{i+1}]_k + [\bm{\eta}^{i+1}_{l_i}]_k \right]$\\
	\ENDFOR 
	\STATE $[\bm{x}_0]_k = \frac{1}{2}\left[\frac{[\bm{b}_0]_k}{[\bm{D}_0]_{kk}} + [\bm{x}_1]_k + [\bm{D}^{-1}_0\bm{A}_0\bm{x}_1]_k \right]$
    \STATE \textbf{return} $[\bm{x}_0]_k$ 
  \end{algorithmic}
\end{algorithm}

\subsubsection{``Crude'' R-Hop Distributed Solver}
The ``crude R-Hop'' solver uses the same inverse approximated chain as that of the full communication version (see Equation~\ref{Eq:InverseChain}) to acquire a ``crude'' approximation for the $k^{th}$ component $\bm{x}_{0}$ while only requiring R-Hop communication between the nodes. Algorithm~\ref{Algo:DistRHop} represents the ``crude'' R-Hop solver requiring the inverse chain, $k^{th}$ component of $\bm{b}_{0}$, length of the inverse chain $d$, and the communication bound $R$ as inputs. Namely, each node $\bm{v}_{k} \in \mathcal{V}$ receives the $k^{th}$ row of $\bm{M}_{0}$ , $k^{th}$ component, $[\bm{b}_{0}]_{k}$, of $\bm{b}_{0}$, the length of the inverse chain, $d$, and the local communication bound\footnote{For simplicity, $R$ is assumed to be in the order of powers of 2, i.e., $R=2^{\rho}$.}  $R$ as inputs to output the $k^{th}$ component of the ``crude'' approximation to $\bm{x}^{\star}$.  Algorithm~\ref{Algo:DistRHop} operates in three major parts. Due to the need of the R-powers of $\bm{A}_{0}\bm{D}_{0}^{-1}$ and $\bm{D}_{0}\bm{A}_{0}^{-1}$, the first step is to compute such matrices in a distributed manner.

\begin{algorithm}[t!]
  \caption{$\text{f}_0\left([\bm{M}_0]_{k1},\ldots, [\bm{M}_0]_{kn}, R\right)$}
  \label{Alg_4}
  \begin{algorithmic}[1]
	\FOR {$l = 1$ to $R - 1$}
		\FOR {$j$ s.t.$\bm{v}_j\in \mathbb{N}_{l+1}(\bm{v}_k)$}
			\STATE
			$\left[(\bm{A}_0\bm{D}^{-1}_0)^{l+1}\right]_{kj} = \sum\limits_{r:\bm{v}_r\in \mathbb{N}_1(v_j)}\frac{[\bm{D}_0]_{rr}}{[\bm{D}_0]_{jj}}[(\bm{A}_0\bm{D}^{-1}_0)^l]_{kr}[\bm{A}_0\bm{D}^{-1}_0]_{jr}$
		\ENDFOR
	\ENDFOR
	\STATE \textbf{return} $\bm{c}_0 = \{[(\bm{A}_0\bm{D}^{-1}_0)^{R}]_{k1},\ldots,[(\bm{A}_0\bm{D}^{-1}_0)^{R}]_{kn} \}$
  \end{algorithmic}
\end{algorithm}

Given the inverse chain and the communication bound, R, $\text{f}_{0}(\cdot)$ and $\text{f}_{1}(\cdot)$ serve this cause as detailed in Algorithms~\ref{Alg_4} and~\ref{Alg_5}, respectively. Essentially, these algorithms execute  multiplications needed for determining $\left(\bm{A}\bm{D}_{0}^{-1}\right)^{R}$ and $\left(\bm{D}\bm{A}_{0}^{-1}\right)^{R}$ in a distributed fashion looping over the relevant hops of the network. For a node, $\bm{v}_{k} \in \mathcal{V}$, the $k^{th}$ component of these powers are returned to Algorithm~\ref{Algo:DistRHop} as $[\bm{C}_{0}]_{ki}=\left[\left(\bm{A}_{0}\bm{D}_{0}^{-1}\right)^{R}\right]_{ki}$ and $[\bm{C}_{1}]_{ki}=\left[\left(\bm{D}_{0}\bm{A}_{0}^{-1}\right)^{R}\right]_{ki}$ for $i=\{1,\dots, n\}$; see Part One in Algorithm~\ref{Algo:DistRHop}.

Similar to the full communication version, the second two parts of the ``crude R-Hop'' solver run two loops. In the first, the $k^{th}$ component of $\bm{b}_{i}$ is computed by looping forward through the inverse chain, while in the second the $k^{th}$ component of the crude solution is determined by looping backwards. 
\begin{algorithm}[t!]
  \caption{$\text{f}_1([\bm{M}_0]_{k1},\ldots, [\bm{M}_0]_{kn}, R)$}
  \label{Alg_5}
  \begin{algorithmic}[1]
	\FOR {$l = 1$ to $R - 1$}
		\FOR {$j$ s.t.$\bm{v}_j\in \mathbb{N}_{l+1}(\bm{v}_k)$}
			\STATE$
			\left[(\bm{D}^{-1}_0\bm{A}_0)^{l+1}\right]_{kj} = \sum\limits_{r:\bm{v}_r\in \mathbb{N}_1(\bm{v}_j)}\frac{[\bm{D}_0]_{jj}}{[\bm{D}_0]_{rr}}[(\bm{D}^{-1}_0\bm{A}_0)^l]_{kr}[\bm{D}^{-1}_0\bm{A}_0]_{jr}$
		\ENDFOR
	\ENDFOR
	\STATE \textbf{return} $\bm{c}_1 = \{[(\bm{D}^{-1}_0\bm{A}_0)^{R}]_{k1},\ldots,[(\bm{D}^{-1}_0\bm{A}_0)^{R}]_{kn} \}$
  \end{algorithmic}
\end{algorithm}

The second part of the solver is better depicted in the flow diagrams of Figures~\ref{Fig:FlowDiagramOne} and~\ref{Fig:FlowDiagramTwo}. Within the first loop running through the length of the inverse chain, the condition $i-1 < \rho$ is checked. In case this condition is true, $\bm{A}_{0}$, $\bm{D}_{0}$, and the previous iteration vector $\bm{b}_{i-1}$ are used to update $\bm{b}_{i}$ as shown in Figure~\ref{Fig:FlowDiagramOne}. 

\begin{equation*}
\left[\bm{b}_{i}\right]_{k}=\left[\bm{b}_{i-1}\right]_{k}+\left[\bm{u}_{2^{i-1}}^{(i-1)}\right]_{k}
\end{equation*}

This is performed using another loop constructing a series of $[\bm{u}_{j}^{(i-1)}]_{k}$
vectors for $j=1,\dots, 2^{i-1}$, used to update the $k^{th}$ component of $\bm{b}_{i}$ at the $i^{th}$ iteration:
At the next $i^{th}$ iteration, the condition $i-1 < \rho$ is checked again. In case this condition is met, the previous computations are executed again. Otherwise, the commands depicted in Figure~\ref{Fig:FlowDiagramTwo} run. Here, a temporary variable denoting the fraction to the communication bound $R$, $l_{i-1}=\frac{2^{i-1}}{R}$, is used to determine the upper iterate bound in $j$. Again throughout this loop, a series of $\bm{u}$ vectors used to update $[\bm{b}_{i}]_{k}$ are constructed. 

Having terminated the forward loop, the $k^{th}$ component of the ``crude'' solution is computed by looping backward through the inverse approximated chain (see part three in Algorithm~\ref{Algo:DistRHop}). For $i$ running backwards to $1$, an R-Hop condition, $\rho$, is checked. In case $i < \rho$, the following update is performed:
\begin{equation*}
[\bm{\eta}_{j}^{(i+1)}]_{k}=\left[\bm{D}^{-1}_{0}\bm{A}_{0}\bm{\eta}_{j-1}^{(i+1)}\right]_{k},
\end{equation*}
for $j=2,\dots, 2^{i}$, $i=d-1,\dots, 1$, and 
\begin{equation*}
\left[\bm{\eta}_{1}^{(i+1)}\right]_{k}=\left[\bm{D}^{-1}_{0}\bm{A}_{0}\bm{x}_{i+1}\right]_{k}. 
\end{equation*}
\begin{figure*}[t!]
\centering
\vspace{-.5em}
\subfigure[]{
	\label{Fig:FlowDiagramOne}
\includegraphics[width=0.43\textwidth]{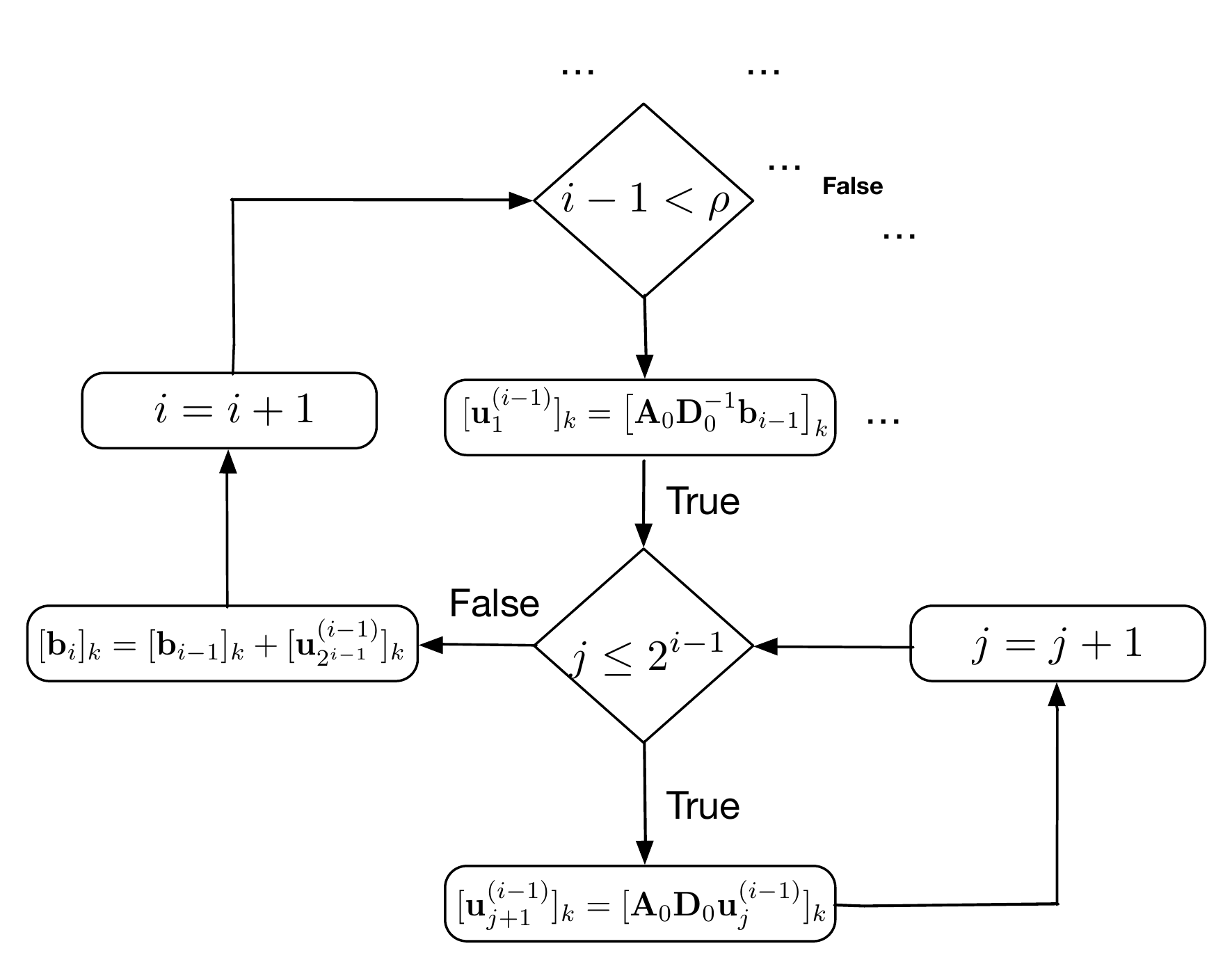}
}
\hfill\hspace{-.3in}\hfill
\subfigure[]{
	\label{Fig:FlowDiagramTwo}
\includegraphics[width=0.5\textwidth]{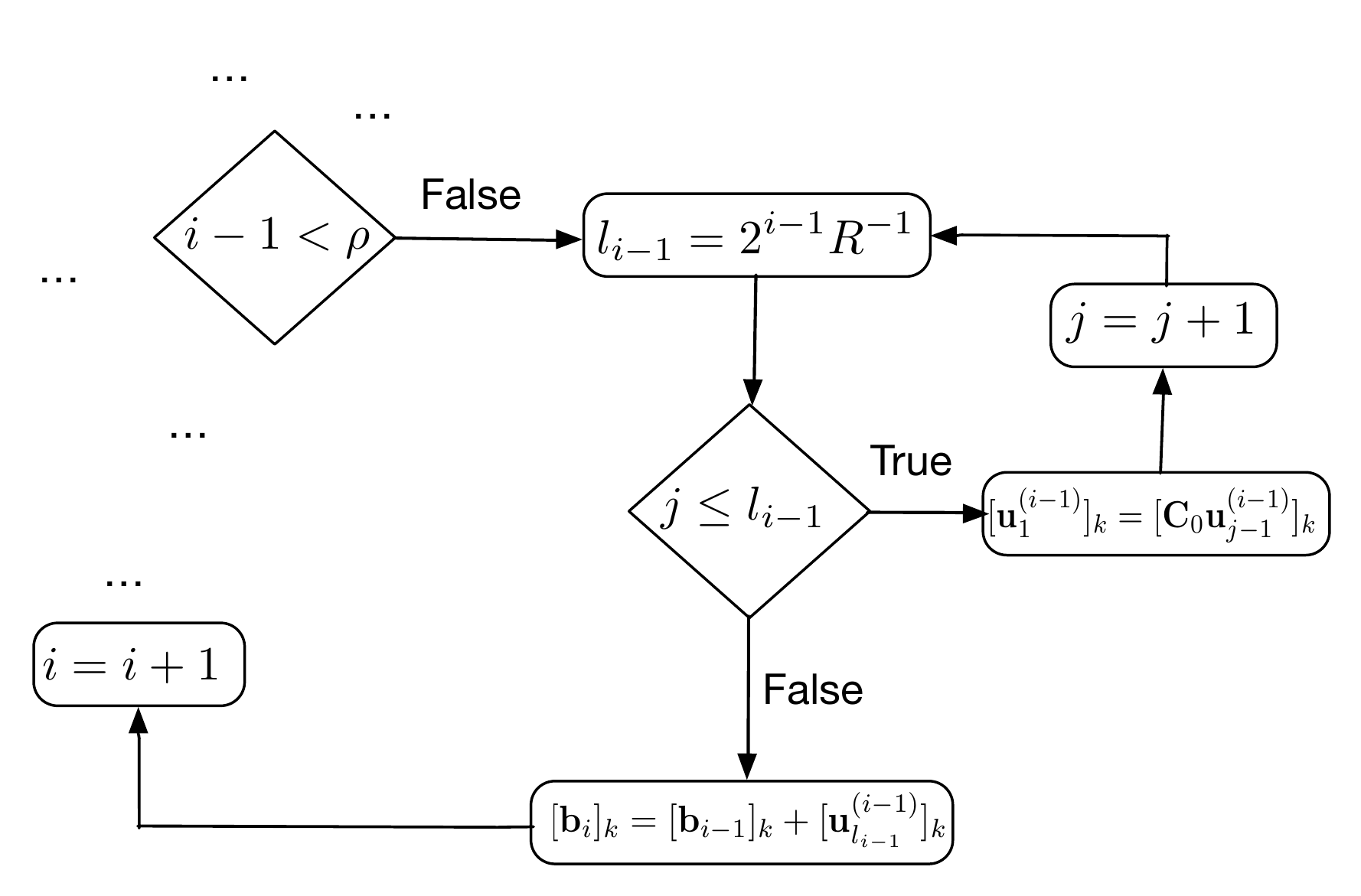}
}
\caption{The left figure depicts the first condition of part two in Algorithm~\ref{Algo:DistRHop} which has to be checked. In case $i-1 < \rho$, the computations shown in the figure are executed. The right figure handles the case when $i-1 \geq \rho$. Here, the computations shown in the figure are executed. Note, that the chains are constructed using the computations returned by $\text{f}_{0}(\cdot)$ and $\text{f}_{1}(\cdot)$.}
\end{figure*}



The role of $\bm{\eta}$ are backward intermediate solutions needed for updating the ``crude'' solution to $\bm{M}_{0}\bm{x}=\bm{b}_{0}$: 
\begin{equation*}
[\bm{x}_{i}]_{k}=\frac{1}{2}\left[\frac{[\bm{b}_{i}]_{k}}{[\bm{D}_{0}]_{kk}}+[\bm{x}_{1}]_{k}+[\bm{\eta}^{(i+1)}_{2^{i}}]\right]_{k},
\end{equation*}
for a node $\bm{v}_{k} \in \mathcal{V}$. In case $i \geq \rho$ a similar set of computations are executed for updating the crude solution using: 
\begin{equation*}
[\bm{x}_{i}]_{k}=\frac{1}{2}\left[\frac{[\bm{b}_{i}]_{k}}{[\bm{D}_{0}]_{kk}}+[\bm{x}_{1}]_{k}+[\bm{D}_{0}^{-1}\bm{A}\bm{x}_{1}]\right]_{k}.
\end{equation*}

\textbf{Analysis of Algorithm~\ref{Algo:DistRHop}} Similar to the previous section, we next provide the theoretical analysis needed for quantifying the performance of the crude R-Hop solver. The following Lemma shows that $\text{RDistRSolve}$ computes the $k^{th}$ component of the ``crude'' approximation of $\bm{x}^{\star}$ and provides the algorithm's time complexity:
 
\begin{lemma}\label{r_hop_rude_lemma}
Let $\bm{M}_0 = \bm{D}_0 - \bm{A}_0$ be the standard splitting and let $\bm{Z}^{\prime}_0$ be the operator defined by $\text{RDistRSolve}$, namely, $\bm{x}_0 = \bm{Z}^{\prime}_0\bm{b}_0$, then
$\bm{Z}^{\prime}_0\approx_{\epsilon_d} \bm{M}^{-1}_0$.  
Furthermore, $\text{RDistRSolve}$ requires 
\[\mathcal{O}\left(\frac{2^d}{R}\alpha + \alpha Rd_{max}\right),\] where $\alpha = \min\left\{n, \frac{\left(d^{R+1}_{\text{max}} - 1\right)}{\left(d_{\text{max}} - 1\right)}\right\}$ to arrive at $\bm{x}_{0}$. 
\end{lemma}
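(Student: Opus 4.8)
The plan is to split the statement into its two independent assertions---the approximation guarantee $\bm{Z}'_0 \approx_{\epsilon_d} \bm{M}^{-1}_0$ and the running-time bound---and to dispatch the first by reduction to the already-established full-communication result. My central observation is that $\text{RDistRSolve}$ and $\text{DistrRSolve}$ (Algorithm~\ref{Algo:DisRudeApprox}) realize \emph{exactly the same linear operator}; they differ only in how each matrix-vector product is routed through the network. Granting this, the operator $\bm{Z}'_0$ defined by $\text{RDistRSolve}$ coincides with the one analyzed in Lemma~\ref{Rude_Dec_Alg_guarantee_Lemma}, and the conclusion $\bm{Z}'_0 \approx_{\epsilon_d} \bm{M}^{-1}_0$ is immediate. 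So the real content of the correctness half is verifying this equivalence.

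To establish the equivalence I would argue inductively along both the forward and backward loops. Writing $R = 2^{\rho}$, the key algebraic identity is $\bigl((\bm{A}_0\bm{D}^{-1}_0)^{R}\bigr)^{2^{i-1}/R} = (\bm{A}_0\bm{D}^{-1}_0)^{2^{i-1}}$, valid whenever $2^{i-1} \ge R$; the factor $l_{i-1} = 2^{i-1}/R$ is then a positive integer, so the $l_{i-1}$-fold application of the precomputed $R$-hop power $\bm{C}_0$ (returned by $f_0$, Algorithm~\ref{Alg_4}) inside $f_2$ reproduces the single $2^{i-1}$-hop multiplication $[(\bm{A}_0\bm{D}^{-1}_0)^{2^{i-1}}\bm{b}_{i-1}]_k$ used in the forward loop of Algorithm~\ref{Algo:DisRudeApprox}. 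For the complementary regime $2^{i-1} < R$ (i.e. $i-1 < \rho$), the entire power $(\bm{A}_0\bm{D}^{-1}_0)^{2^{i-1}}$ already has a sparsity pattern contained in the $R$-hop neighborhood and is applied directly, as in Figure~\ref{Fig:FlowDiagramOne}. In both cases each update yields $\bm{b}_i = (\bm{I} + (\bm{A}_0\bm{D}^{-1}_0)^{2^{i-1}})\bm{b}_{i-1}$, which---using $\bm{A}_{i-1}\bm{D}^{-1}_{i-1} = (\bm{A}_0\bm{D}^{-1}_0)^{2^{i-1}}$ for the chain of Equation~\ref{Eq:InverseChain}---is precisely the forward recurrence of the full-communication solver; the identical argument with $(\bm{D}^{-1}_0\bm{A}_0)^{R}$ (from $f_1$, Algorithm~\ref{Alg_5}) and $f_3$ handles the backward sweep. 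Hence the two algorithms return the same $[\bm{x}_0]_k$ for every $k$, and the operators agree.

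For the running time I would account separately for the three parts of Algorithm~\ref{Algo:DistRHop}. Part One computes $(\bm{A}_0\bm{D}^{-1}_0)^{R}$ and $(\bm{D}^{-1}_0\bm{A}_0)^{R}$ via the loops of $f_0$ and $f_1$: the outer index runs $l = 1,\dots,R-1$, for each fixed $l$ there are at most $\alpha = \min\{n, (d^{R+1}_{\text{max}}-1)/(d_{\text{max}}-1)\}$ columns $j$ with $\bm{v}_j \in \mathbb{N}_{l+1}(\bm{v}_k)$, and each such entry is a sum over the at most $d_{\text{max}}$ one-hop neighbors of $\bm{v}_j$; multiplying gives $\mathcal{O}(\alpha R d_{\text{max}})$. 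Parts Two and Three perform, at iteration $i$, exactly $l_{i-1} = 2^{i-1}/R$ applications of an $R$-hop power, each of which costs a node $\bm{v}_k$ a weighted sum over its $R$-hop neighborhood, i.e. $\mathcal{O}(\alpha)$ operations. Summing the geometric series $\sum_{i=1}^{d} \tfrac{2^{i-1}}{R}\alpha = \tfrac{\alpha}{R}(2^{d}-1) = \mathcal{O}\!\left(\tfrac{2^{d}}{R}\alpha\right)$ (and noting the small-$i$ regime $2^{i-1}<R$ contributes only lower-order terms absorbed into the stated total) bounds both loops. Adding the three contributions gives the claimed $\mathcal{O}\!\left(\tfrac{2^{d}}{R}\alpha + \alpha R d_{\text{max}}\right)$.

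I expect the main obstacle to be the correctness-equivalence step, specifically the bookkeeping needed to confirm that every intermediate quantity $\bm{u}^{(i-1)}_{j}$ and $\bm{\eta}^{(i+1)}_{j}$ is computable using only $R$-hop messages while still reconstructing the globally-defined $2^{i-1}$-hop operators. This requires checking that the sparsity pattern of $(\bm{A}_0\bm{D}^{-1}_0)^{R}$ is contained in the $R$-hop neighborhood (so that $f_2$ and $f_3$ respect the communication constraint) and that the two regimes $i-1 < \rho$ and $i-1 \ge \rho$ stitch together into the single recurrence of Algorithm~\ref{Algo:DisRudeApprox}; the neighborhood-size bound $\alpha$ and the divisibility $R \mid 2^{i-1}$ are exactly what make this possible.
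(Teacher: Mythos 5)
Your proposal is correct and takes essentially the same route as the paper's proof: both establish the $r$-hop sparsity pattern of the powers of $\bm{A}_0\bm{D}^{-1}_0$ and $\bm{D}^{-1}_0\bm{A}_0$, show that the $l_{i-1}$-fold application of the precomputed $R$-hop power $\bm{C}_0$ reproduces the recurrences $\bm{b}_i = \left(\bm{I}+(\bm{A}_0\bm{D}^{-1}_0)^{2^{i-1}}\right)\bm{b}_{i-1}$ (and the analogous backward sweep) so that $\bm{Z}'_0$ coincides with the operator already covered by the inverse-chain guarantee, and both obtain the complexity by charging $\mathcal{O}(\alpha R d_{max})$ to $\text{f}_0/\text{f}_1$ and summing the geometric series over the two regimes $i-1<\rho$ and $i-1\ge\rho$. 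The only cosmetic difference is that you route the correctness half through the full-communication lemma for $\text{DistrRSolve}$ whereas the paper invokes the chain guarantee directly; these amount to the same reduction.
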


\begin{proof}
The proof of the above Lemma can be obtained by proving a collection of claims:
\begin{claim}\label{claim_1}
Matrices $\left(\bm{D}^{-1}_0\bm{A}_0\right)^r$ and $\left(\bm{A}_0\bm{D}^{-1}_0\right)^{r}$ have sparsity patterns corresponding to the $r$-Hop neighborhood for any $r\in \mathbb{N}$.
\end{claim}

The above claim is proved by induction on $R$. We start with the base case: for $R = 1$,
\begin{align*}
[\bm{A}_0\bm{D}^{-1}_0]_{ij} =
\begin{cases}
 &\frac{[\bm{A}_0]_{ij}}{[\bm{D}_0]_{ii}} \ \ \  \text{if } j: \bm{v}_j\in \mathbb{N}_{1}(\bm{v}_i) \\
 &0 \ \ \ \text{otherwise.}
\end{cases}
\end{align*}
Therefore, $\bm{A}_0\bm{D}^{-1}_0$ has a sparsity pattern corresponding to the $1$-Hop neighborhood. 
Assume that for all $1\le p\le R-1$, $\left(\bm{A}_0\bm{D}^{-1}_0\right)^p$ has a sparsity pattern corresponding to the $p-hop$ neighborhood. Consider, $\left(\bm{A}_0\bm{D}^{-1}_0\right)^{r}$
\begin{equation}\label{sparisty_lemma_2}
[(\bm{A}_0\bm{D}^{-1}_0)^{R}]_{ij} = \sum_{k=1}^{n}[(\bm{A}_0\bm{D}^{-1}_0)^{R-1}]_{ik}[\bm{A}_0\bm{D}^{-1}_0]_{kj}
\end{equation}
Since $\bm{A}_0\bm{D}^{-1}_0$ is non negative, then $[(\bm{A}_0\bm{D}^{-1}_0)^{R}]_{ij} \ne 0$ iff there exists $k$ such that $\bm{v}_k\in \mathbb{N}_{R-1}(\bm{v}_i)$ and $\bm{v}_k\in \mathbb{N}_1(\bm{v}_j)$, namely, $\bm{v}_j\in \mathbb{N}_{R}(\bm{v}_i)$. The proof can be done in a similar fashion for $\bm{D}^{-1}_0\bm{A}_0$.
\end{proof}

The next claim provides complexity guarantees for $\text{f}_{0}(\cdot)$ and $\text{f}_{1}(\cdot)$ described in Algorithms~\ref{Alg_4} and~\ref{Alg_5}, respectively. 
\begin{claim}\label{claim_2}
Algorithms~\ref{Alg_4} and~\ref{Alg_5} use only the R-hop information to compute the $k^{th}$ row of $\left(\bm{D}^{-1}_0\bm{A}_0\right)^{R}$ and $\left(\bm{A}_0\bm{D}^{-1}_0\right)^{R}$, respectively, in $\mathcal{O}\left(\alpha Rd_{max}\right)$ time steps, where $\alpha = \min\left\{n, \frac{\left(d^{R+1}_{\text{max}} - 1\right)}{\left(d_{\text{max}} - 1\right)}\right\}$.  
\end{claim}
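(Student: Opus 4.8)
The plan is to establish Claim~\ref{claim_2} in three stages: verify that the recursion in Algorithm~\ref{Alg_4} correctly builds successive powers of $\bm{A}_0\bm{D}^{-1}_0$, invoke Claim~\ref{claim_1} to confirm that every quantity accessed lies within the R-hop neighborhood of $\bm{v}_k$, and finally count the arithmetic operations. Since Algorithm~\ref{Alg_5} differs from Algorithm~\ref{Alg_4} only by exchanging $\bm{A}_0\bm{D}^{-1}_0$ for $\bm{D}^{-1}_0\bm{A}_0$ and swapping the roles of the two diagonal factors, I would carry out the argument for Algorithm~\ref{Alg_4} and then note that the reasoning for Algorithm~\ref{Alg_5} is identical.

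For correctness, first I would expand the identity $(\bm{A}_0\bm{D}^{-1}_0)^{l+1} = (\bm{A}_0\bm{D}^{-1}_0)^l(\bm{A}_0\bm{D}^{-1}_0)$ entrywise,
\begin{equation*}
[(\bm{A}_0\bm{D}^{-1}_0)^{l+1}]_{kj} = \sum_{r=1}^{n}[(\bm{A}_0\bm{D}^{-1}_0)^l]_{kr}\,[\bm{A}_0\bm{D}^{-1}_0]_{rj},
\end{equation*}
and then rewrite the last factor using the symmetry of $\bm{A}_0$: since $[\bm{A}_0]_{rj} = [\bm{A}_0]_{jr}$, one has $[\bm{A}_0\bm{D}^{-1}_0]_{rj} = [\bm{A}_0]_{rj}/[\bm{D}_0]_{jj} = \big([\bm{D}_0]_{rr}/[\bm{D}_0]_{jj}\big)[\bm{A}_0\bm{D}^{-1}_0]_{jr}$. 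Substituting this relation reproduces exactly the update performed in the inner loop of Algorithm~\ref{Alg_4}, so after the outer loop terminates at $l = R-1$ the stored quantities are precisely the entries of the $k^{th}$ row of $(\bm{A}_0\bm{D}^{-1}_0)^R$.

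For locality and complexity, I would apply Claim~\ref{claim_1}: the factor $[\bm{A}_0\bm{D}^{-1}_0]_{jr}$ vanishes unless $\bm{v}_r \in \mathbb{N}_1(\bm{v}_j)$, which justifies restricting the inner sum to $\mathbb{N}_1(\bm{v}_j)$ and bounds its length by $d_{max}$, while $[(\bm{A}_0\bm{D}^{-1}_0)^l]_{kr}$ is nonzero only for $\bm{v}_r \in \mathbb{N}_l(\bm{v}_k)$. Consequently every index touched at stage $l$ lies in $\mathbb{N}_{l+1}(\bm{v}_k) \subseteq \mathbb{N}_R(\bm{v}_k)$, so only R-hop information is ever required. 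The outer loop executes $R-1$ times, the loop over $j$ ranges over $\mathbb{N}_{l+1}(\bm{v}_k)$ whose cardinality is at most $\alpha = \min\{n, (d^{R+1}_{max}-1)/(d_{max}-1)\}$ — the geometric-series bound on the number of nodes reachable within $R$ hops of a vertex of degree at most $d_{max}$, capped by $n$ — and each such $j$ costs $\mathcal{O}(d_{max})$ operations. Multiplying yields $\mathcal{O}((R-1)\,\alpha\,d_{max}) = \mathcal{O}(\alpha R d_{max})$, as claimed.

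The hard part will be the locality claim rather than the operation count: one must verify that forming the $(l+1)$-th power at the targets $\bm{v}_j \in \mathbb{N}_{l+1}(\bm{v}_k)$ never calls for a matrix entry $[\bm{A}_0\bm{D}^{-1}_0]_{jr}$ with $\bm{v}_r$ outside the R-hop neighborhood of $\bm{v}_k$. This is exactly where Claim~\ref{claim_1} is essential, since the sparsity pattern forces $\bm{v}_r \in \mathbb{N}_l(\bm{v}_k) \cap \mathbb{N}_1(\bm{v}_j)$; hence every accessed edge weight connects nodes lying within $R$ hops of $\bm{v}_k$ and is available after the initial R-hop exchange of local data. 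Once locality is secured, the uniform neighborhood bound $\alpha$ makes the remaining counting routine.
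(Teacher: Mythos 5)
Your proposal is correct and follows essentially the same route as the paper's proof: expand $(\bm{A}_0\bm{D}^{-1}_0)^{l+1}$ entrywise, use the symmetry relation $[\bm{A}_0\bm{D}^{-1}_0]_{rj} = \frac{[\bm{D}_0]_{rr}}{[\bm{D}_0]_{jj}}[\bm{A}_0\bm{D}^{-1}_0]_{jr}$ to match the algorithm's update (the paper phrases this as the symmetry of $\bm{D}^{-1}_0\bm{A}_0\bm{D}^{-1}_0$, motivated by nodes only being able to transmit rows), invoke Claim~\ref{claim_1} for locality, and count $R$ iterations times at most $\alpha$ entries times $\mathcal{O}(d_{max})$ work per entry. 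The only difference is presentational: you derive the rescaling identity directly from the symmetry of $\bm{A}_0$, whereas the paper frames it as resolving the row-versus-column communication constraint, but the mathematical content is identical.
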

\begin{proof}
The proof will be given for $\text{f}_{0}(\cdot)$ described in Algorithm~\ref{Alg_4} as that for $\text{f}_{1}(\cdot)$ can be performed similarly. Due to Claim~\ref{claim_1}, we have
\begin{align}\label{eq_1}
\left[\left(\bm{A}_0\bm{D}^{-1}_0\right)^{l+1}\right]_{kj} &= \sum\limits_{r=1}^{n}\left[\left(\bm{A}_0\bm{D}^{-1}_0\right)^{l}\right]_{kr}\left[\bm{A}_0\bm{D}^{-1}_0\right]_{rj} = \sum\limits_{r: \bm{v}_r\in \mathbb{N}_1(\bm{v}_j)}\left[\left(\bm{A}_0\bm{D}^{-1}_0\right)^{l}\right]_{kr}\left[\bm{A}_0\bm{D}^{-1}_0\right]_{rj}
\end{align} 
Therefore at iteration $l+1$, $\bm{v}_k$ computes the $k^{th}$ row of $\left(\bm{A}_0\bm{D}^{-1}_0\right)^{l+1}$ using: 
\begin{enumerate}
\item[] (1) the $k^{th}$ row of $(\bm{A}_0\bm{D}^{-1}_0)^{l}$, and
\item[] (2) the $r^{th}$ column of $\bm{A}_0\bm{D}^{-1}_0$.
\end{enumerate}
Node $\bm{v}_{r}$, however, can only send the $r^{th}$ row of $\bm{A}_{0}\bm{D}^{-1}_{0}$ making $\bm{A}_{0}\bm{D}^{-1}_{0}$ non-symmetric. Noting that
$\sfrac{[\bm{A}_0\bm{D}^{-1}_0]_{rj}}{[\bm{D}_0]_{rr}} = \sfrac{[\bm{A}_0\bm{D}^{-1}_0]_{jr}}{[\bm{D}_0]_{jj}}$, since $\bm{D}^{-1}_0\bm{A}_0\bm{D}^{-1}$ is symmetric, leads to 
$[(\bm{A}_0\bm{D}^{-1}_0)^{l+1}]_{kj} = 
\sum\limits_{r: \bm{v}_r\in \mathbb{N}_1(\bm{v}_j)}\frac{[\bm{D}_0]_{rr}}{[\bm{D}_0]_{jj}}[(\bm{A}_0\bm{D}^{-1}_0)^{l}]_{kr}[\bm{A}_0\bm{D}^{-1}_0]_{jr}$. 
To prove the time complexity guarantee, at each iteration $\bm{v}_k$ computes at most $\alpha$ values, where $\alpha  = \min\left\{n, \frac{\left(d^{R+1}_{\text{max}} - 1\right)}{\left(d_{\text{max}} - 1\right)}\right\}$ is the upper bound on the size of the R-hop neighborhood $\forall \bm{v}\in \mathcal{V}$.  Each such computation requires at most $\mathcal{O}(d_{max})$ operations. Thus, the overall time complexity is given by $\mathcal{O}(\alpha Rd_{max})$.
\end{proof}

We are now ready to provide the proof of Lemma \ref{r_hop_rude_lemma}. 
\begin{proof}
From \textbf{Parts Two} and \textbf{Three} of Algorithm~\ref{Algo:DistRHop}, it is clear that node $\bm{v}_k$ computes $[\bm{b}_1]_k,[\bm{b}_2]_k,\ldots, [\bm{b}_d]_k$ and $[\bm{x}_d]_k, [\bm{x}_{d-1}]_k,\ldots, [\bm{x}_0]_k$, respectively. These are determined using the inverse approximated chain as follows
\begin{align}\label{eq_4}
\bm{b}_i &= (\bm{I} + (\bm{A}_{i-1}\bm{D}^{-1}_{i-1})\bm{b}_{i-1}  
=\bm{b}_{i-1} + (\bm{A}_0\bm{D}^{-1}_0)^{2^{i-1}}\bm{b}_{i-1} \\ \nonumber
\bm{x}_{i} &= \frac{1}{2}[\bm{D}^{-1}_i\bm{b}_i + (\bm{I} +\bm{D}^{-1}_i\bm{A}_i)x_{i+1}] =\frac{1}{2}[\bm{D}^{-1}_0\bm{b}_i + \bm{x}_{i+1} + (\bm{D}^{-1}_0\bm{A}_0)^{2^i}\bm{x}_{i+1}]
\end{align}

Considering the computation of $[\bm{b}_1]_k,\ldots, [\bm{b}_d]_k$ for $\rho> i - 1$, we have
\begin{align*}
[\bm{b}_{i}]_k &= [\bm{b}_{i-1}]_k + [(\bm{A}_0\bm{D}^{-1}_0)^{2^{i-1}}\bm{b}_{i-1}]_k = [\bm{b}_{i-1}]_k + [\underbrace{\bm{A}_0\bm{D}^{-1}_0 \ldots \bm{A}_0\bm{D}^{-1}_0}_{2^{i-1}}\bm{b}_{i-1}]_k \\
&= [\bm{b}_{i-1}]_k + [\underbrace{\bm{A}_0\bm{D}^{-1}_0 \ldots \bm{A}_0\bm{D}^{-1}_0}_{2^{i-1}-1}\bm{u}^{(i-1)}_1]_k \dots = [\bm{b}_{i-1}]_k + \left[\bm{u}^{(i-1)}_{2^{i-1}}\right]_k
\end{align*}
\text{with $\bm{u}^{(i-1)}_{j+1} = \bm{A}_0\bm{D}^{-1}_0\bm{u}^{(i-1)}_{j}$ for $j = 1,\ldots 2^{i-1}-1$.} Since $\bm{A}_0\bm{D}^{-1}_0$ has a sparsity pattern corresponding to 1-hop neighborhood (see Claim~\ref{claim_1}),  node $\bm{v}_k$ computes $\left[\bm{u}^{(i-1)}_{j+1}\right]_k$, based on $\bm{u}^{(i-1)}_{j}$, acquired from its 1-Hop neighbors. It is easy to see that $ \forall i \ \text{such that} \ i - 1 < \rho$ the computation of $[\bm{b}_i]_k$ requires $\mathcal{O}\left(2^{i-1}d_{max}\right)$ time steps. Thus, the computation of $[\bm{b}_1]_k, \ldots, [\bm{b}_{\rho}]_k$ requires $\mathcal{O}(2^{\rho}d_{\text{max}}) = \mathcal{O}(Rd_{\text{max}})$. Now, consider the computation of $[\bm{b}_i]_k$ but for $i - 1 \geq \rho$ 
\begin{align*}
[\bm{b}_{i}]_k &= [\bm{b}_{i-1}]_k + [(\bm{A}_0\bm{D}^{-1}_0)^{2^{i-1}}\bm{b}_{i-1}]_k  =[\bm{b}_{i-1}]_k + [\underbrace{\bm{C}_0\ldots \bm{C}_0}_{l_{i-1}}\bm{b}_{i-1}]_k \\
&=[\bm{b}_{i-1}]_k + [\underbrace{\bm{C}_0 \ldots \bm{C}_0}_{l_{i-1}-1}\bm{u}^{(i-1)}_1]_k  =[\bm{b}_{i-1}]_k + \left[\bm{u}^{(i-1)}_{l_{i-1}}\right]_k
\end{align*}
with $\bm{C}_0 = (\bm{A}_0\bm{D}^{-1}_0)^R$,  $l_{i-1} = \frac{2^{i-1}}{R}$, and $\bm{u}^{(i-1)}_{j+1} = \bm{C}_0\bm{u}^{(i-1)}_{j}$ for $j=1,\ldots, l_{i-1} - 1$.
Since $\bm{C}_0$ has a sparsity pattern corresponding to R-hop neighborhood (see Claim~\ref{claim_1}), node $\bm{v}_k$ computes $[\bm{u}^{(i-1)}_{j+1}]_k$ based on the components of $\bm{u}^{(i-1)}_{j}$ attained from its R-hop neighbors. For each $i$ such that $i - 1 \geq \rho $ the computing $[\bm{b}_i]_k$ requires $\mathcal{O}\left(\frac{2^{i-1}}{R}\alpha\right)$ time steps, where $\alpha = \min\left\{n, \frac{\left(d^{R+1}_{\text{max}} - 1\right)}{\left(d_{\text{max}} - 1\right)}\right\}$ being the upper bound on the number of nodes in the $R-$ hop neighborhood $\forall \ \bm{v}\in \mathcal{V}$. Therefore, the  overall computation of $[\bm{b}_{\rho + 1}]_k, [\bm{b}_{\rho + 2}]_k, \ldots, [\bm{b}_d]_k$ is achieved in $\mathcal{O}\left(\frac{2^d}{R}\alpha\right)$ time steps. Finally, the time complexity for the computation of all of the values $[\bm{b}_1]_k,[\bm{b}_2]_k,\ldots, [\bm{b}_d]_k$ is $\mathcal{O}\left(\frac{2^d}{R}\alpha + Rd_{max}\right)$. Similar analysis can be applied to determine the computational complexity of $[\bm{x}_d]_k,[\bm{x}_{d-1}]_k,\ldots, [\bm{x}_1]_k$, i.e., \textbf{Part Three} of Algorithm~\ref{Algo:DistRHop}. We arrive at
$\bm{Z}^{\prime}_0\approx_{\epsilon_d} \bm{M}^{-1}_0$. 
Finally, using Claim \ref{claim_2}, the time complexity of $\text{RDistRSolve}$ (Algorithm~\ref{Algo:DistRHop}) is $\mathcal{O}\left(\frac{2^d}{R}\alpha + \alpha Rd_{max}\right)$.
\end{proof} 
\subsubsection{``Exact'' R-Hop Distributed Solver}
Having developed an R-hop version which computes a ``crude'' approximation to the solution of $\bm{M}_{0}\bm{x}=\bm{b}_{0}$, now we provide an exact R-hop solver presented in Algorithm~\ref{Alg_ExactRHop}. Similar to $\text{RDistRSolve}$, each node $\bm{v}_k$ receives the $k^{th}$ row $\bm{M}_0$, $[\bm{b}_0]_k$, $d$, $R$, and a precision parameter $\epsilon$ as inputs, and outputs the $k^{th}$ component of the $\epsilon$ close approximation of vector $\bm{x}^{\star}$. 

\begin{algorithm}\label{Algo:EDistR}
  \caption{ \hspace{-.5em} $\text{EDistRSolve}\left(\{[\bm{M}_0]_{k1},\ldots, [\bm{M}_0]_{kn}\}, [\bm{b}_0]_k, d, R,\epsilon\right)$}
  \label{Alg_ExactRHop}
  \begin{algorithmic}
\STATE \textbf{Initialize}: $[\bm{y}_0]_k = 0$, and $[\bm{\chi}]_k = \text{RDistRSolve}(\{[M_0]_{k1},\ldots, [M_0]_{kn}\}, [b_0]_k, d, R)$	
	\FOR {$t=1$ to $q$}
		\STATE  \hspace{-2em} $[\bm{u}^{(1)}_{t}]_k = [\bm{D}_0]_{kk}[\bm{y}_{t-1}]_k - \sum_{j: \bm{v}_j\in \mathbb{N}_{1}(\bm{v}_k)}[\bm{A}_{0}]_{kj}[\bm{y}_{t-1}]_j$
		 \STATE \hspace{-2em} $[\bm{u}^{(2)}_{t}]_k = \text{RDistRSolve}(\{[\bm{M}_0]_{k1},\ldots, [\bm{M}_0]_{kn}\}, [\bm{u}^{(1)}_{t}]_k, d, R)$
		\STATE  \hspace{-2em} $[\bm{y}_t]_k = [\bm{y}_{t-1}]_k - [\bm{u}^{(2)}_{t}]_k + [\bm{\chi}]_k$ 
	\ENDFOR 
    \STATE \textbf{return} $[\tilde{\bm{x}}]_k = [\bm{y}_q]_k$
  \end{algorithmic}
\end{algorithm}

\textbf{Analysis of Algorithm~\ref{Alg_ExactRHop}:} The following Lemma shows that $\text{EDistRSolve}$ computes the $k^{th}$ component of the $\epsilon$ close approximation to $\bm{x}^{\star}$ and provides the time complexity analysis. 

\begin{lemma}\label{Dist_Exact_algorithm_guarantee_lemma_Bla}
Let $\bm{M}_0 = \bm{D}_0 - \bm{A}_0$ be the standard splitting. Further, let $\epsilon_d < \sfrac{1}{3}\ln2$. Then Algorithm~\ref{Alg_ExactRHop} requires $\mathcal{O}\left(\log\frac{1}{\epsilon}\right)$ iterations to return the $k^{th}$ component of the $\epsilon$ close approximation to $\bm{x}^{\star}$.  
\end{lemma}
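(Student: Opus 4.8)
The plan is to recognize that, at the level of linear operators, $\text{EDistRSolve}$ implements exactly the preconditioned Richardson iteration already analyzed for the parallel and full-communication solvers, so the task reduces to establishing a constant contraction factor. First I would unwind the updates in Algorithm~\ref{Alg_ExactRHop}. Writing $\bm{Z}_0'$ for the linear operator realized by $\text{RDistRSolve}$ (so that $\bm{\chi} = \bm{Z}_0'\bm{b}_0$ and $\bm{u}^{(2)}_t = \bm{Z}_0'\bm{u}^{(1)}_t = \bm{Z}_0'\bm{M}_0\bm{y}_{t-1}$, since line~3 computes $\bm{u}^{(1)}_t = \bm{M}_0\bm{y}_{t-1}$ componentwise using $\bm{M}_0 = \bm{D}_0 - \bm{A}_0$), the update $\bm{y}_t = \bm{y}_{t-1} - \bm{u}^{(2)}_t + \bm{\chi}$ becomes
\begin{equation*}
\bm{y}_t = (\bm{I} - \bm{Z}_0'\bm{M}_0)\bm{y}_{t-1} + \bm{Z}_0'\bm{b}_0.
\end{equation*}
Its fixed point is $\bm{x}^\star = \bm{M}_0^{-1}\bm{b}_0$ (using that $\bm{Z}_0'$ is nonsingular, which follows from $\bm{Z}_0'\approx_{\epsilon_d}\bm{M}_0^{-1}$ in Lemma~\ref{r_hop_rude_lemma}), so subtracting gives the error recursion
\begin{equation*}
\bm{y}_t - \bm{x}^\star = (\bm{I} - \bm{Z}_0'\bm{M}_0)(\bm{y}_{t-1} - \bm{x}^\star) = (\bm{I} - \bm{Z}_0'\bm{M}_0)^t(\bm{y}_0 - \bm{x}^\star).
\end{equation*}

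Next I would bound the contraction in the $\bm{M}_0$-norm. Since $\norm{\bm{I} - \bm{Z}_0'\bm{M}_0}_{\bm{M}_0}$ equals the spectral norm of the similar matrix $\bm{I} - \bm{M}_0^{1/2}\bm{Z}_0'\bm{M}_0^{1/2}$, I would invoke Lemma~\ref{r_hop_rude_lemma}, which gives $\bm{Z}_0'\approx_{\epsilon_d}\bm{M}_0^{-1}$, i.e. $e^{-\epsilon_d}\bm{M}_0^{-1}\preceq \bm{Z}_0' \preceq e^{\epsilon_d}\bm{M}_0^{-1}$. Conjugating by $\bm{M}_0^{1/2}$ yields $e^{-\epsilon_d}\bm{I}\preceq \bm{M}_0^{1/2}\bm{Z}_0'\bm{M}_0^{1/2}\preceq e^{\epsilon_d}\bm{I}$, so the eigenvalues of $\bm{I} - \bm{M}_0^{1/2}\bm{Z}_0'\bm{M}_0^{1/2}$ lie in $[1 - e^{\epsilon_d},\, 1 - e^{-\epsilon_d}]$ and hence
\begin{equation*}
\norm{\bm{I} - \bm{Z}_0'\bm{M}_0}_{\bm{M}_0} \le e^{\epsilon_d} - 1 =: \mu.
\end{equation*}
The hypothesis $\epsilon_d < \frac{1}{3}\ln 2$ forces $\mu < 2^{1/3} - 1 < 1$, a constant strictly below one.

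Finally, applying the error recursion with $\bm{y}_0 = 0$ gives $\norm{\bm{y}_t - \bm{x}^\star}_{\bm{M}_0} \le \mu^t \norm{\bm{x}^\star}_{\bm{M}_0}$, so to reach an $\epsilon$-approximate solution it suffices to take $q$ with $\mu^q \le \epsilon$, i.e. $q \ge \ln(1/\epsilon)/\ln(1/\mu) = \mathcal{O}\left(\log\frac{1}{\epsilon}\right)$, since $\ln(1/\mu)$ is a positive constant. I expect the only genuine obstacle to be the bookkeeping in the first step: verifying that the R-Hop componentwise updates of Algorithm~\ref{Alg_ExactRHop} really realize the single global linear operator $\bm{Z}_0'$ from Lemma~\ref{r_hop_rude_lemma} applied to the correct vectors. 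Once this operator-level identity is in place, the contraction estimate is identical to that of the full-communication solver (Lemma~\ref{Dist_Exact_algorithm_guarantee_lemma}) and ultimately of the parallel Richardson scheme (Lemma~\ref{Exact_Alg_guarantee_lemma}).
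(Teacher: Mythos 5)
Your proposal is correct and follows essentially the same route as the paper: identify the updates of Algorithm~\ref{Alg_ExactRHop} as the preconditioned Richardson iteration $\bm{y}_t = (\bm{I} - \bm{Z}'_0\bm{M}_0)\bm{y}_{t-1} + \bm{Z}'_0\bm{b}_0$ and invoke $\bm{Z}'_0 \approx_{\epsilon_d} \bm{M}_0^{-1}$ from Lemma~\ref{r_hop_rude_lemma}. The only difference is that the paper then cites Lemma~\ref{Exact_Alg_guarantee_lemma} (the parallel solver's iteration-count result from Peng--Spielman) as a black box, whereas you prove that contraction step explicitly via the spectral bound $\norm{\bm{I} - \bm{Z}'_0\bm{M}_0}_{\bm{M}_0} \le e^{\epsilon_d} - 1 < 2^{1/3} - 1 < 1$, which is a correct, self-contained rendering of the same argument.
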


The following Lemma provides the time complexity analysis of $\text{EDistRSolve}$:

\begin{lemma}\label{time_complexity_of_distresolve_Bla}
Let $\bm{M}_0 = \bm{D}_0 - \bm{A}_0$ be the standard splitting and let $\epsilon_d < \sfrac{1}{3}\ln 2 $, then $\text{EDistRSolve}$ requires 
$\mathcal{O}\left(\left(\sfrac{2^d}{R}\alpha + \alpha Rd_{max}\right)\log\left(\sfrac{1}{\epsilon}\right)\right)$ time steps. Moreover, for each node $\bm{v}_k$, $\text{EDistRSolve}$ only uses information from the R-hop neighbors.
\end{lemma}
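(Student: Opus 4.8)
The plan is to combine the per-iteration cost of $\text{RDistRSolve}$ (already established in Lemma~\ref{r_hop_rude_lemma}) with the iteration count of the outer Richardson loop (established in Lemma~\ref{Dist_Exact_algorithm_guarantee_lemma_Bla}), and then verify separately that every step of $\text{EDistRSolve}$ respects the $R$-hop communication restriction. First I would observe that each pass through the outer loop (lines~2--6 of Algorithm~\ref{Alg_ExactRHop}) performs three operations: the matrix-vector product $[\bm{u}^{(1)}_t]_k = [\bm{D}_0]_{kk}[\bm{y}_{t-1}]_k - \sum_{j:\bm{v}_j\in\mathbb{N}_1(\bm{v}_k)}[\bm{A}_0]_{kj}[\bm{y}_{t-1}]_j$, a single call to $\text{RDistRSolve}$, and the vector update $[\bm{y}_t]_k = [\bm{y}_{t-1}]_k - [\bm{u}^{(2)}_t]_k + [\bm{\chi}]_k$. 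The first operation touches only the $1$-hop neighborhood and therefore costs $\mathcal{O}(d_{\max})$; the final update is $\mathcal{O}(1)$; and by Lemma~\ref{r_hop_rude_lemma} the $\text{RDistRSolve}$ call dominates at $\mathcal{O}\!\left(\frac{2^d}{R}\alpha + \alpha R d_{\max}\right)$ time steps. Thus the per-iteration cost is $\mathcal{O}\!\left(\frac{2^d}{R}\alpha + \alpha R d_{\max}\right)$.

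Next I would invoke Lemma~\ref{Dist_Exact_algorithm_guarantee_lemma_Bla}, whose hypothesis $\epsilon_d < \frac{1}{3}\ln 2$ is exactly the standing assumption of the present lemma, to conclude that the loop terminates after $q = \mathcal{O}\!\left(\log\frac{1}{\epsilon}\right)$ iterations. Multiplying the per-iteration cost by the iteration count yields the claimed total of $\mathcal{O}\!\left(\left(\frac{2^d}{R}\alpha + \alpha R d_{\max}\right)\log\frac{1}{\epsilon}\right)$ time steps. Note that the initialization cost, which is a single $\text{RDistRSolve}$ call for $[\bm{\chi}]_k$, is absorbed into this bound since it is cheaper than the full loop by a factor of $\log\frac{1}{\epsilon}$.

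For the second assertion---that only $R$-hop information is used---I would argue that the only primitives appearing in the loop are (i) the $1$-hop product forming $[\bm{u}^{(1)}_t]_k$, which by definition accesses only $\mathbb{N}_1(\bm{v}_k)\subseteq\mathbb{N}_R(\bm{v}_k)$, and (ii) the call to $\text{RDistRSolve}$, which by the locality conclusion of Lemma~\ref{r_hop_rude_lemma} uses only $R$-hop neighbors. Since the remaining update is purely local to node $\bm{v}_k$, no information beyond the $R$-hop neighborhood is ever required, establishing the decentralized guarantee.

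The main obstacle I anticipate is not in the arithmetic but in justifying that Lemma~\ref{r_hop_rude_lemma} applies verbatim to the intermediate call $\text{RDistRSolve}(\{[\bm{M}_0]_{k\cdot}\},[\bm{u}^{(1)}_t]_k,d,R)$: Lemma~\ref{r_hop_rude_lemma} is stated for input $[\bm{b}_0]_k$, and one must verify that its cost and locality bounds are independent of the particular right-hand side, depending only on the structural quantities $d$, $R$, $\alpha$, and $d_{\max}$. Because the cost of $\text{RDistRSolve}$ is driven entirely by the sparsity patterns of the powers $(\bm{A}_0\bm{D}_0^{-1})^r$ and $(\bm{D}_0^{-1}\bm{A}_0)^r$ (Claim~\ref{claim_1}) rather than by the entries of the vector being solved against, this independence holds, and the argument goes through.
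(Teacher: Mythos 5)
Your proposal is correct and follows essentially the same route as the paper's own proof: per-iteration cost from Lemma~\ref{r_hop_rude_lemma}, iteration count from Lemma~\ref{Dist_Exact_algorithm_guarantee_lemma_Bla}, and locality from the fact that every primitive in the loop is at most $R$-hop. Your additional care in bounding the $1$-hop product, the initialization call, and the independence of the cost from the right-hand side only makes explicit what the paper leaves implicit.
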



\textbf{Length of the Inverse Chain:} Again these introduced algorithms depend on the length of the inverse approximated chain, $d$. The analysis in Section~\ref{Sec:Length} can be applied again to determine the $d =  \lceil \log \left(2\ln\left(\frac{\sqrt[3]{2}}{\sqrt[3]{2} - 1}\right)\kappa\right)\rceil$ as the length of the inverse chain. 



\subsection{Comparison to Existing Literature}
As mentioned before, the proposed solver is a distributed version of the parallel SDDM solver of~\cite{c11}. Our approach is capable of acquiring $\epsilon$-close solutions for arbitrary $\epsilon>0$ in $\mathcal{O}\left(n^{3}\frac{\bm{\alpha}}{R}\frac{\bm{W}_{\text{max}}}{\bm{W}_{\text{min}}}\log\left(\frac{1}{\epsilon}\right)\right)$, with $n$ the number of nodes in graph $\mathcal{G}$, $\bm{W}_{\text{max}}$ and $\bm{W}_{\text{min}}$ denoting the largest and smaller weights of the edges in $\mathcal{G}$, respectively, $\bm{\alpha}=\min\left\{n,\frac{d_{\text{max}}^{R+1}-1}{d_{\text{max}}-1}\right\}$ representing the upper bound on the size of the R-Hop neighborhood $\forall \bm{v} \in \mathcal{V}$, and $\epsilon \in (0,\frac{1}{2}]$ as the precision parameter. After developing the full communication version, we proposed a generalization to the R-Hop case where communication is restricted. 

Our method is faster than state-of-the-art methods for iteratively solving linear systems. Typical linear methods, such as Jacobi iteration~\cite{c16}, are guaranteed to converge if the matrix is \emph{strictly} diagonally dominant. We proposed a distributed algorithm that generalizes this setting, where it is guaranteed to converge in the SDD/SDDM scenario. Furthermore, the time complexity of linear techniques is $\mathcal{O}(n^{1+\beta} \log n)$, hence, a case of strictly diagonally dominant matrix $\bm{M}_{0}$ can be easily constructed to lead to a complexity of $\mathcal{O}(n^{4}\log n)$. Consequently, our approach not only generalizes the assumptions made by linear methods, but is also faster by a factor of $\log n$. Furthermore, such algorithms require average consensus to decentralize vector norm computations. Contrary to these methods which lead to additional approximation errors to the real solution, our approach resolves these issues by eliminating the need for such a consensus framework. 

In centralized solvers, nonlinear methods (e.g., conjugate gradient descent~\cite{c37,c36}, etc.) typically offer computational advantages over linear methods (e.g., Jacobi Iteration) for iteratively solving linear systems. These techniques, however, can not be easily decentralized. For instance, the stopping criteria for nonlinear methods require the computation of weighted norms of residuals (e.g., $||\bm{p}_{k}||_{\bm{M}_{0}}$ with $\bm{p}_{k}$ being the search direction at iteration $k$). To the best of our knowledge, the distributed computation of weighted norms is difficult. Namely using the approach in~\cite{c38}, this requires the calculation of the top singular value of $\bm{M}_{0}$ which amounts to a power iteration on $\bm{M}_{0}^{\mathsf{T}}\bm{M}_{0}$ leading to the loss of sparsity. Furthermore, conjugate gradient methods require global computations of inner products. 

Another existing method which we compare our results to is the recent work of the authors~\cite{c34} where a local and asynchronous solution for solving systems of linear equations is considered. In their work, the authors derive a complexity bound, for one component of the solution vector, of $\mathcal{O}\left(\min\left(d\epsilon^{\frac{\ln d}{\ln ||\bm{G}||_{2}}}, \frac{d n \ln \epsilon}{\ln ||\bm{G}||_{2}}\right)\right)$, with $\epsilon$ being the precision parameter, $d$ a constant bound on the maximal degree of $\mathcal{G}$, and $\bm{G}$ is defined as $\bm{x}=\bm{G}\bm{x}+\bm{z}$ which can be directly mapped to $\bm{A}\bm{x} = \bm{b}$. The relevant scenario to our work is when $\bm{A}$ is PSD and $\bm{G}$ is symmetric. Here, the bound on the number of multiplications is given by $\mathcal{O}\left(\min\left(d^{\frac{\kappa(\bm{A})+1}{2}\ln\frac{1}{\epsilon}}, \frac{\kappa(\bm{A})+1}{2}n d \ln\frac{1}{\epsilon}\right)\right)$, with $\kappa(\bm{A})$ being the condition number of $\bm{A}$. In the general case, when the degree depends on the number of nodes (i.e., $d = d(n)$), the minimum in the above bound will be the result of the second term ( $\frac{\kappa(\bm{A})+1}{2}n d \ln\frac{1}{\epsilon}$) leading to $\mathcal{O}\left(d(n) n \kappa(\bm{A})\ln \frac{1}{\epsilon}\right)$. Consequently, in such a general setting, our approach outperforms~\cite{c34} by a factor of $d(n)$. 

\textbf{Special Cases:} To better understand the complexity of the proposed SDDM solvers, next we detail the complexity for three specific graph structures. Before deriving these special cases, however, we first note the following simple yet useful connection between weighted and unweighted Laplacians of a graph $\mathcal{G}$. Denoting by $\bm{B}$ the incidence matrix of $\mathcal{G}$ and $\bm{W}$ a diagonal matrix with edge weights as diagonal elements, we can write: 
\begin{align*}
\mathcal{L}_{\mathcal{G}}&=\bm{B}^{\mathsf{T}}\bm{B} \ \ \ \text{and} \ \ \ \mathcal{L}_{\mathcal{G}}^{(\text{weighted})} = \bm{B}^{\mathsf{T}}\bm{W}\bm{B}.
\end{align*}
Hence, we can easily establish:
\begin{align*}
\mu_{n}\left(\mathcal{L}_{\mathcal{G}}^{(\text{weighted})}\right)\leq \bm{w}_{\text{max}}\mu_{n}\left(\mathcal{L}_{\mathcal{G}}\right) \ \ \ \ \ \ \ \ \ \ \text{and} \ \ \ \ \ \ \ \ \ \ \mu_{2}\left(\mathcal{L}_{\mathcal{G}}^{(\text{weighted})}\right) \geq \bm{w}_{\text{min}}\left(\mathcal{L}_{\mathcal{G}}\right).
\end{align*}

This implies that the condition number of the weighted Laplacian satisfies: 
\begin{align*}
\kappa\left(\mathcal{L}_{\mathcal{G}}^{(\text{weighted})}\right)=\frac{\mu_{n}\left(\mathcal{L}_{\mathcal{G}}^{(\text{weighted})}\right)}{\mu_{2}\left(\mathcal{L}_{\mathcal{G}}^{(\text{weighted})}\right)} \leq \frac{\bm{w}_{\text{max}}}{\bm{w}_{\text{min}}}\kappa\left(\mathcal{L}_{\mathcal{G}}\right),
\end{align*}
with $\bm{w}_{\text{min}}$ and $\bm{w}_{\text{max}}$ are the minimal and maximal edge weights of $\mathcal{G}$. Using the above, we now consider four different graph topologies: 
\subsubsection{Path Graph}
Similar to the hitting time of a Markov chain on a path graph which is given by $\mathcal{O}(n^{2})$, the time complexity of the R-Hop SDDM solver is given by\footnote{Due to space constraints, the proofs can be found in the appendix.}: 
\begin{corollary}
Given a path graph $\mathcal{P}_{n}$ with $n$ nodes, the time complexity of the R-Hop SDDM solver is given by:
\begin{equation*}
\text{T}_{\text{SDDM}}(\mathcal{P}_{n})=\mathcal{O}\left(n^{2} \log \frac{1}{\epsilon}\right),
\end{equation*}
for any $\epsilon > 0$ and for $k=m=\mathcal{O}(\sqrt{n})$.
\end{corollary}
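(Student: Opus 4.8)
The plan is to specialize the general R-Hop complexity of Theorem~\ref{Main_TheoremTwo}, namely $\mathcal{O}\left(\left(\frac{\alpha\kappa}{R} + \alpha R d_{\text{max}}\right)\log\frac{1}{\epsilon}\right)$, to the path graph $\mathcal{P}_n$ by inserting the three graph-specific quantities $d_{\text{max}}$, $\kappa$, and $\alpha$, and then choosing the hop radius $R$ so that the two resulting terms balance. First I would record the trivial structural fact: every vertex of $\mathcal{P}_n$ has at most two neighbours, so $d_{\text{max}} = 2 = \mathcal{O}(1)$.

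Next I would pin down the condition number. Rather than invoking the generic bound $\kappa = \mathcal{O}\!\left(n^3 \tfrac{\bm{W}_{\text{max}}}{\bm{W}_{\text{min}}}\right)$ from Section~\ref{Sec:ProbSetting}, I would use the exact spectrum of the unit-weight path Laplacian, whose eigenvalues are $\lambda_j = 2\left(1 - \cos\frac{\pi j}{n}\right)$ for $j = 0,\ldots,n-1$. The smallest nonzero eigenvalue is $\lambda_1 = 2\left(1-\cos\frac{\pi}{n}\right) = \Theta(n^{-2})$ and the largest is $\Theta(1)$, so $\kappa(\mathcal{P}_n) = \Theta(n^2)$. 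This is the analytic heart of the corollary and mirrors the stated analogy with the $\Theta(n^2)$ hitting time of the random walk on a path.

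The second crucial observation, and the step I expect to be the real obstacle, is that the generic neighbourhood bound $\alpha = \min\{n,(d_{\text{max}}^{R+1}-1)/(d_{\text{max}}-1)\} = \min\{n, 2^{R+1}-1\}$ is far too loose for a path and must be replaced by the true size of the R-hop ball. On $\mathcal{P}_n$ a node reaches at most $R$ vertices on each side, so $|\mathbb{N}_R(\bm{v}_k)| \le 2R+1$ and hence $\alpha = \mathcal{O}(\min\{n,R\}) = \mathcal{O}(R)$ (an easy induction on $R$, or directly from the definition of graph distance on a line). Using the exponential bound instead would force $\alpha = n$ already at $R = \Theta(\sqrt n)$ and inflate the first term to $n^{2.5}$, destroying the estimate; establishing this linear-in-$R$ bound is exactly what makes the path graph special.

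Finally I would substitute. With $d_{\text{max}} = \mathcal{O}(1)$, $\kappa = \Theta(n^2)$ and $\alpha = \mathcal{O}(R)$ the bound collapses to $\mathcal{O}\left(\left(\frac{R\cdot n^2}{R} + R\cdot R\right)\log\frac{1}{\epsilon}\right) = \mathcal{O}\left((n^2 + R^2)\log\frac{1}{\epsilon}\right)$, where the factor $1/R$ exactly cancels the growth of $\alpha$ in the first term. Taking the hop radius $R = \Theta(\sqrt n)$ (consistent with the parametrization $k=m=\mathcal{O}(\sqrt n)$ in the statement) makes the second term $R^2 = \mathcal{O}(n) \ll n^2$, so both terms are dominated by $n^2$ and the total cost is $\mathcal{O}\left(n^2 \log\frac{1}{\epsilon}\right)$, as claimed. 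I would close by noting that in fact any $R = \mathcal{O}(n)$ keeps $R^2 = \mathcal{O}(n^2)$, so the conclusion is robust to the precise choice of the balancing radius.
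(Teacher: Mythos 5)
Your derivation is correct, and it is the route the paper's machinery dictates: specialize the R-Hop complexity of Theorem~\ref{Main_TheoremTwo} by inserting path-specific values of $d_{\text{max}}$, $\kappa$, and $\alpha$. The essential ingredient is your spectral computation $\kappa(\mathcal{P}_n)=\Theta(n^{2})$ via $\lambda_j = 2\left(1-\cos\frac{\pi j}{n}\right)$; the paper's generic bound $\mathcal{O}\left(n^{3}\frac{\bm{W}_{\text{max}}}{\bm{W}_{\text{min}}}\right)$ would only give $\mathcal{O}\left(n^{3}\log\frac{1}{\epsilon}\right)$, so this step cannot be skipped.

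Two remarks on the details. First, your replacement of the generic $\alpha=\min\left\{n,\frac{d_{\text{max}}^{R+1}-1}{d_{\text{max}}-1}\right\}$ by the true ball size $2R+1$ is legitimate, but note that it is not an application of Theorem~\ref{Main_TheoremTwo} as literally stated; it rests on the fact that in the proofs of Lemma~\ref{r_hop_rude_lemma} and Lemma~\ref{time_complexity_of_distresolve_Bla} the quantity $\alpha$ enters only as an upper bound on the number of nodes in the R-hop neighborhood, so any valid graph-specific bound may be substituted. You should cite those lemmas rather than the theorem. Second, your claim that this linear-in-$R$ bound is ``the real obstacle'' is an artifact of fixing $R=\Theta(\sqrt{n})$: taking $R=\mathcal{O}(1)$, even the loose exponential formula gives $\alpha=\mathcal{O}(1)$, and the theorem as stated already yields $\mathcal{O}\left(\frac{\alpha\kappa}{R}\log\frac{1}{\epsilon}\right)=\mathcal{O}\left(n^{2}\log\frac{1}{\epsilon}\right)$. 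So the corollary follows with or without your refinement, and the only indispensable step is $\kappa=\Theta(n^{2})$. Relatedly, the parameters $k=m=\mathcal{O}(\sqrt{n})$ in the statement are most plausibly the grid dimensions carried over from the following corollary (where $\mathcal{G}_{k\times m}=\mathcal{P}_{k}\times\mathcal{P}_{m}$), not the hop radius; your reading of them as $R=\Theta(\sqrt{n})$ is a guess, but as you correctly observe, the conclusion is insensitive to the choice of $R$ up to $\mathcal{O}(n)$, so this ambiguity does not affect the validity of your argument.
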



\subsubsection{Grid Graph} Recognizing that a grid graph $\mathcal{G}_{k \times m}$ can be represented as a product of two path graphs, $\mathcal{G}_{k \times m}=\mathcal{P}_{k} \times \mathcal{P}_{m}$, our solver's computational time can be summarized by: 
\begin{corollary}
Given a grid graph, $\mathcal{G}_{k \times m}$, the time complexity of the distributed SDDM-solver can be bounded by: 
\begin{equation*}
\text{T}_{\text{SDDM}}\left(\mathcal{G}_{k \times m}\right) = \mathcal{O}\left(n\log\frac{1}{\epsilon}\right),
\end{equation*}
for any $\epsilon>0$. 
\end{corollary}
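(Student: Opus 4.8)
The plan is to use the Cartesian-product structure $\mathcal{G}_{k\times m}=\mathcal{P}_k\times\mathcal{P}_m$ to obtain the spectrum of the grid Laplacian in closed form, read off its condition number, and substitute it into the R-Hop complexity bound of Theorem~\ref{Main_TheoremTwo} with $R$ and $d_{\max}$ chosen appropriately. I would work in the balanced regime $k=m=\Theta(\sqrt{n})$ (so $n=km$), which is the setting in which the stated $\mathcal{O}\!\left(n\log\frac{1}{\epsilon}\right)$ bound is attainable; for a degenerate aspect ratio the grid behaves spectrally like a path and the bound would instead read $\mathcal{O}\!\left(n^2\log\frac{1}{\epsilon}\right)$, matching the path-graph corollary.

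First I would write the grid Laplacian as the Kronecker sum $\bm{L}_{\mathcal{G}_{k\times m}}=\bm{L}_{\mathcal{P}_k}\otimes\bm{I}_m+\bm{I}_k\otimes\bm{L}_{\mathcal{P}_m}$, whose eigenvalues are exactly the pairwise sums $\lambda_i(\mathcal{P}_k)+\lambda_j(\mathcal{P}_m)$. Using the path spectrum $\lambda_\ell(\mathcal{P}_p)=4\sin^2\!\left(\frac{\ell\pi}{2p}\right)$, the largest grid eigenvalue is at most $\lambda_{\max}(\mathcal{P}_k)+\lambda_{\max}(\mathcal{P}_m)\le 8=\mathcal{O}(1)$, and the smallest nonzero eigenvalue is $\min\{\lambda_2(\mathcal{P}_k),\lambda_2(\mathcal{P}_m)\}$. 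Lower-bounding the algebraic connectivity with $\sin x\ge \frac{2x}{\pi}$ gives $\lambda_2(\mathcal{P}_p)\ge 4/p^2$, so the smallest nonzero grid eigenvalue is $\Omega\!\left(1/\max(k,m)^2\right)$. Combining the two bounds yields $\kappa\!\left(\bm{L}_{\mathcal{G}_{k\times m}}\right)=\mathcal{O}\!\left(\max(k,m)^2\right)=\mathcal{O}(n)$ in the balanced case, and I would absorb weighted edges through the relation $\kappa(\mathcal{L}^{(\mathrm{weighted})}_{\mathcal{G}})\le\frac{\bm{w}_{\max}}{\bm{w}_{\min}}\kappa(\mathcal{L}_{\mathcal{G}})$ established just above, treating the weight ratio as a constant.

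Next I would specialize Theorem~\ref{Main_TheoremTwo}. The grid has bounded degree $d_{\max}=4=\mathcal{O}(1)$, and taking the hop radius to be a constant power of two, $R=\mathcal{O}(1)$, forces $\alpha=\min\{n,\frac{d_{\max}^{R+1}-1}{d_{\max}-1}\}=\mathcal{O}(1)$. Substituting $\alpha=\mathcal{O}(1)$, $d_{\max}=\mathcal{O}(1)$, $R=\mathcal{O}(1)$ and $\kappa=\mathcal{O}(n)$ into $\mathcal{O}\!\left(\left(\frac{\alpha\kappa}{R}+\alpha R d_{\max}\right)\log\frac{1}{\epsilon}\right)$ collapses the first term to $\mathcal{O}(\kappa)=\mathcal{O}(n)$ and the second to $\mathcal{O}(1)$, producing the claimed $\mathcal{O}\!\left(n\log\frac{1}{\epsilon}\right)$. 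I would also confirm that a constant $R$ is optimal: for the grid the R-Hop ball actually contains $\alpha=\Theta(R^2)$ nodes, so the dominant term scales like $\frac{\alpha\kappa}{R}=\Theta(R\kappa)=\Theta(Rn)$, which only grows with $R$; hence no larger hop radius can improve the bound.

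The crux is the spectral step: the whole complexity rests on the condition-number estimate, so I must pin down a matching lower bound on the algebraic connectivity of the balanced grid, namely $\lambda_2=\Omega(1/n)$, not merely an upper bound on the extreme eigenvalues. Once the spectrum is controlled two-sidedly, the remainder is bookkeeping — plugging constants into the already-proven bound of Theorem~\ref{Main_TheoremTwo} and checking the trivial optimization over $R$. A secondary care point is to state the balance hypothesis $k=m=\Theta(\sqrt{n})$ explicitly, since the bound genuinely degrades for elongated grids.
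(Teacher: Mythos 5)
Your proof is correct and follows essentially the same route as the paper: exploiting the product decomposition $\mathcal{G}_{k\times m}=\mathcal{P}_{k}\times\mathcal{P}_{m}$ to control the grid Laplacian spectrum, concluding $\kappa=\mathcal{O}(n)$ in the balanced regime, and substituting into the R-Hop bound of Theorem~\ref{Main_TheoremTwo} with constant $R$, $d_{\max}$, and $\alpha$. Your explicit statement of the balance hypothesis $k=m=\Theta(\sqrt{n})$ (which the paper misplaces in the path-graph corollary) and your check that a constant hop radius $R$ is optimal are useful clarifications of details the paper leaves implicit.
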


\subsubsection{Scale-Free Networks (Polya-Urn Graphs)} Using the results developed in~\cite{c39,c40} the total time complexity of the distributed R-Hop solver is bounded by: 
\begin{corollary}
Given a scale-free network, $\mathcal{G}_{\text{SN}(n)}$, the time complexity of the R-Hop SDD solver for R=1 is given by: 
\begin{equation*}
\text{T}_{\text{SDD}}\left(\mathcal{L}_{\mathcal{G}_{\text{SN}(n)}}\right)=\mathcal{O}\left(n^{2} \log n \log \frac{1}{\epsilon}\right).
\end{equation*}
\end{corollary}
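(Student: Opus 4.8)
The plan is to specialize the general running-time bound for the R-Hop exact solver (Lemma~\ref{time_complexity_of_distresolve_Bla}, equivalently Theorem~\ref{Main_TheoremTwo}) to the case $R=1$ and then substitute the structural and spectral parameters known for the Polya-urn scale-free model from~\cite{c39,c40}. Setting $R=1$ in $\mathcal{O}\left(\left(\sfrac{2^d}{R}\alpha + \alpha R d_{max}\right)\log\left(\sfrac{1}{\epsilon}\right)\right)$ and recalling from Lemma~\ref{eps_d_lemma} that $2^d = \Theta(\kappa)$, the cost collapses to
\[
\mathcal{O}\left(\left(\alpha\kappa + \alpha d_{max}\right)\log\tfrac{1}{\epsilon}\right), \qquad \alpha = \min\left\{n, d_{max}+1\right\} = d_{max}+1,
\]
where I used that $\frac{d_{max}^{2}-1}{d_{max}-1}=d_{max}+1$ and that a sparse scale-free graph has $d_{max} \ll n$. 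Everything therefore reduces to estimating $d_{max}$ and the condition number $\kappa = \mu_n(\mathcal{L})/\mu_2(\mathcal{L})$ of the \emph{unweighted} Laplacian (so $W_{max}/W_{min}=1$).

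Next I would import the two probabilistic facts established in~\cite{c39,c40} for $\mathcal{G}_{\text{SN}(n)}$: with high probability the maximum degree satisfies $d_{max} = \Theta(\sqrt{n})$ and the diameter satisfies $\text{diam}(\mathcal{G}_{\text{SN}(n)}) = \Theta(\log n)$. The degree statement gives $\alpha = \Theta(\sqrt{n})$ directly, and through the standard sandwich $d_{max} \le \mu_n(\mathcal{L}) \le 2 d_{max}$ it also fixes the largest eigenvalue at $\mu_n(\mathcal{L}) = \Theta(\sqrt{n})$. For the algebraic connectivity I would invoke Mohar's diameter lower bound, $\mu_2(\mathcal{L}) = \Omega\!\left(\tfrac{1}{n\,\text{diam}(\mathcal{G})}\right) = \Omega\!\left(\tfrac{1}{n\log n}\right)$, which yields $\kappa = \mu_n/\mu_2 = O\!\left(n^{3/2}\log n\right)$.

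Finally I would substitute these estimates back into the specialized complexity: $\alpha\kappa = \Theta(\sqrt{n})\cdot O(n^{3/2}\log n) = O(n^2\log n)$ dominates the term $\alpha d_{max} = \Theta(n)$, so the total becomes $\mathcal{O}\left(n^2\log n\,\log\tfrac{1}{\epsilon}\right)$, as claimed. The main obstacle is the spectral step: pinning $\kappa$ needs both the upper estimate $\mu_n = \Theta(d_{max})$ and, more delicately, a matching lower bound on $\mu_2$ for a \emph{random} graph, which is precisely where the diameter concentration of~\cite{c39,c40} and the Mohar/Cheeger machinery must be applied with care. The degree and diameter scalings are the essential inputs, and any slack in the $\mu_2$ bound propagates directly into the exponent of the final complexity.
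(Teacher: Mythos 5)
Your proof is correct and takes what is clearly the paper's intended route: specialize the R-Hop bound of Theorem~\ref{Main_TheoremTwo} (equivalently Lemma~\ref{time_complexity_of_distresolve_Bla} with $2^{d}=\Theta(\kappa)$) at $R=1$ to $\mathcal{O}\left(\left(\alpha\kappa+\alpha d_{\max}\right)\log\frac{1}{\epsilon}\right)$, then use the scale-free degree and diameter facts from~\cite{c39,c40} together with $\mu_{n}(\mathcal{L})\leq 2d_{\max}$ and Mohar's bound $\mu_{2}(\mathcal{L})\geq 4/\left(n\,\mathrm{diam}(\mathcal{G})\right)$ to obtain $\alpha=\Theta(\sqrt{n})$ and $\kappa=\mathcal{O}\left(n^{3/2}\log n\right)$, so that $\alpha\kappa=\mathcal{O}\left(n^{2}\log n\right)$ dominates $\alpha d_{\max}=\Theta(n)$ and the stated bound follows. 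The paper defers this corollary's proof to an appendix that does not in fact contain it, but your derivation is the only sensible instantiation of the paper's general machinery and reproduces the claimed complexity exactly.
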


\subsubsection{$d$- Regular Ramanujan Expanders} For $d$ regular Ramanujan expanders in which $d$ does not depend on $n$, we have $\mu_{n}\left(\mathcal{L}_{\text{RExp}(d)}\right) \leq 2d$ and $\mu_{2}\left(\mathcal{L}_{\text{RExp}(d)}\right) \geq d-2\sqrt{d-1}$. Hence, the time complexity of the SDD-solver is given by constant time.

The developed R-Hop distributed SDDM solver is a fundamental contribution with wide ranging applicability. Next, we develop one such application from. We apply our solver for proposing an efficient and accurate distributed Newton method for network flow optimization. Namely, the distributed SDDM solver is used for computing the Newton direction in a distributed fashion up-to any arbitrary $\epsilon >0$. This results in a novel distributed Newton method outperforming state-of-the-art techniques in both computational complexity and accuracy. 

\section{Distributed Newton Method for Network Flow Optimization}\label{Sec:Newton}


Conventional methods for distributed network optimization are based on sub-gradient descent in either the primal
or dual domains, see. For a large class of problems, these techniques yield iterations that can be
implemented in a distributed fashion using only local information. Their applicability, however, is limited by increasingly slow convergence rates. Second order Newton methods~\cite{c4} are known to overcome this limitation leading to improved convergence rates.

Unfortunately, computing exact Newton directions based only on local information is challenging. Specifically, to determine the Newton direction, the inverse of the dual Hessian is needed. Determining this inverse, however, requires global information. Consequently, authors in~\cite{c5,c6,c7} proposed approximate algorithms for determining these Newton iterates in a distributed fashion. Accelerated Dual Descent (ADD)~\cite{c5}, for instance, exploits the fact that the dual Hessian is the weighted Laplacian of the network and performs a truncated Neumann expansion of the inverse to determine a local approximate to the exact direction. ADD allows for a tradeoff between accurate Hessian approximations and communication costs through the N-Hop design, where increased N allows for more accurate inverse approximations arriving at increased cost, and lower values of N reduce accuracy but improve computational times. Though successful, the effectiveness of these approaches highly depend on the accuracy of the truncated Hessian inverse which is used to approximate the Newton direction. As shown later, the approximated iterate can resemble high variation to the
real Newton direction, decreasing the applicability of these techniques.

\textbf{Contributions:} Exploiting the sparsity pattern of the dual Hessian, here we tackle the above problem and propose a Newton method for network optimization that is both faster and more accurate. Using the above developed solvers for SDDM linear equations, we approximate the Newton direction up-to any arbitrary precision $\epsilon> 0$. This leads to a distributed second-order method which performs almost identically the exact Newton method.  Contrary to current distributed Newton methods, our algorithm is the first which is capable of attaining an $\epsilon$-close approximation to the Newton direction up to any arbitrary $\epsilon >0$. We analyze the properties of the proposed algorithm and show that, similar to conventional Newton methods, superlinear
convergence within a neighborhood of the optimal value
is attained. 

We finally demonstrate the effectiveness of the approach in a set of experiments on randomly generated and Barbell networks. Namely, we show that our method is capable of significantly outperforming state-of-the-art methods in both the convergence speeds and in the accuracy of approximating the Newton direction.

\subsection{Network Flow Optimization}

We consider a network represented by a directed graph $\mathcal{G}=\left(\mathcal{N},\mathcal{E}\right)$ with node set $\mathcal{N}=\{1,\dots, N\}$ and edge set $\mathcal{E}=\{1,\dots, E\}$. The flow vector is denoted by $\bm{x}=\left[x^{(e)}\right]_{e\in\mathcal{E}}$, with $x^{(e)}$ representing the flow on edge $e$. The flow conservation conditions at nodes can be compactly represented as 
\begin{equation*}
\bm{A}\bm{x}=\bm{b},
\end{equation*}
where $\bm{A}$ is the $N \times E$ node-edge incidence matrix of $\mathcal{G}$ defined as 

\begin{displaymath}
   \bm{A}_{i,j} = \left\{
     \begin{array}{lr}
       1 & \text{if edge $j$ leaves node $i$} \\
      - 1 & \text{if edge $j$ enters node $i$} \\
      0 & \text{otherwise,}
     \end{array}
   \right.
\end{displaymath} 
and the vector $\bm{b} \in \bm{1}^{\perp}$ denotes the external source, i.e., $b^{(i)} > 0$ (or $b^{(i) } < 0$) indicates $b^{(i)}$ units of external flow enters (or leaves) node $i$. A cost function $\bm{\Phi}_{e}: \mathbb{R} \rightarrow \mathbb{R}$ is associated with each edge $e$. Namely, $\bm{\Phi}_{e}(x^{(e)})$ denotes the cost on edge $e$ as a function of the edge flow $x^{(e)}$. We assume that the cost functions $\bm{\Phi}_{e}$ are strictly convex and twice differentiable. Consequently, the minimum cost network optimization problem can be written as 
\begin{align}
\label{Eq:OptimAll}
&\min_{\bm{x}} \sum_{e=1}^{E} \bm{\Phi}_{e}(\bm{x}^{(e)}) \\ \nonumber
&\text{s.t. $\bm{A}\bm{x}=\bm{b}$}
\end{align}

Our goal is to investigate Newton type methods for solving the problem in~\ref{Eq:OptimAll} in a distributed fashion. Before diving into these details, however, we next present basic ingredients needed for the remainder of the paper. 

\subsection{Dual Subgradient Method} 
The dual subgradient method optimizes the problem in Equation~\ref{Eq:OptimAll} by descending in the dual domain. The Lagrangian, $l(\cdot): \mathbb{R}^{E} \times \mathbb{R}^{N} \rightarrow \mathbb{R}$, is given by 
\begin{equation*}
l (\bm{x},\bm{\lambda}) = -\sum_{e=1}^{E} \bm{\Phi}_{e}({x}^{(e)}) + \bm{\lambda}^{\mathsf{T}} (\bm{A}\bm{x}-\bm{b}). 
\end{equation*}

The dual function $q(\bm{\lambda})$ is then derived as 
\begin{align*}
q(\bm{\lambda}) &= \inf_{\bm{x} \in \mathbb{R}^{E}} l(\bm{x},\bm{\lambda}) = \inf_{\bm{x} \in \mathbb{R}^{E}} \left(-\sum_{e=1}^{E} \bm{\Phi}_{e} (x^{(e)}) + \bm{\lambda}^{\mathsf{T}}\bm{A}\bm{x}\right) - \bm{\lambda}^{\mathsf{T}}\bm{b} \\ 
& = \sum_{e=1}^{E} \inf_{x^{(e)} \in \mathbb{R}} \left(-\bm{\Phi}_{e}(x^{(e)}) + \left(\bm{\lambda}^{\mathsf{T}}\bm{A}\right)^{(e)}x^{(e)}\right) - \bm{\lambda}^{\mathsf{T}}\bm{b}.
\end{align*}
Hence, it can be clearly seen that the evaluation of the dual function $q(\bm{\lambda})$ decomposes into E one-dimensional optimization problems. We assume that each of these optimization problems have an optimal solution, which is unique by the strict convexity of the functions $\bm{\Phi}_{e}$. Denoting the solutions by $x^{(e)} (\bm{\lambda})$ and using the first order optimality conditions, it can be seen that for each edge, e, $x^{(e)}(\bm{\lambda})$ is given by\footnote{Note that if the dual is not continuously differentiable, the a generalized Hessian can be used.}
\begin{equation}
\label{Eq:MapBack}
x^{(e)}(\bm{\lambda})=[\dot{\bm{\Phi}}_{e}]^{-1}\left(\lambda^{(i)}-\lambda^{(j)}\right),
\end{equation}
where $i \in  \mathcal{N}$ and $j \in \mathcal{N}$ denote the source and destining nodes of edge $e=(i,j)$, respectively (see~\cite{c6} for details). Therefore, for an edge $e$, the evaluation of $x^{(e)}(\bm{\lambda})$ can be performed based on local information about the edge's cost function and the dual variables of the incident nodes, $i$ and $j$. 

The dual problem is defined as $\min_{\bm{\lambda}\in \mathbb{R}^{N}} q(\bm{\lambda})$. Since the dual function is convex, the optimization problem can be solved using gradient descent according to
\begin{equation}
\label{Eq:GD}
\bm{\lambda}_{k+1} = \bm{\lambda}_{k} - \alpha_{k}\bm{g}_{k} \hfill \ \ \text{for all $k \geq 0$,}
\end{equation}
with $k$ being the iteration index, and $\bm{g}_{k}=\bm{g}\left(\bm{\lambda}_{k}\right)=\nabla q(\bm{\lambda}_{k})$ denoting the gradient of the dual function evaluated at $\bm{\lambda}=\bm{\lambda}_{k}$. Importantly, the computation of the gradient can be performed as $\bm{g}_{k}=\bm{A}\bm{x}\left(\bm{\lambda}_{k}\right)-\bm{b}$, with $\bm{x}(\lambda_{k})$ being a vector composed of $x^{(e)}(\bm{\lambda}_{k})$ as determined by Equation~\ref{Eq:MapBack}. Further, due to the sparsity pattern of the incidence matrix $\bm{A}$, the $i^{th}$  element, $g_{k}^{(i)}$, of the gradient $\bm{g}_{k}$ can be computed as 
\begin{equation}
\label{Eq:GDDist}
g_{k}^{(i)}=\sum_{e=(i,j)} x^{(e)}(\bm{\lambda}_{k}) - \sum_{e=(j,i)} x^{(e)}(\bm{\lambda}_{k}) - b^{(i)}.
\end{equation}

Clearly, the algorithm in Equation~\ref{Eq:GD} can be implemented in a distributed fashion, where each node, $i$, maintains information about its dual, $\bm{\lambda}_{k}^{(i)}$, and primal, $x^{(e)}(\bm{\lambda}_{k})$, iterates of the outgoing edges $e=(i,j)$. Gradient components can then be evaluated as per~\ref{Eq:GDDist} using only local information. Dual variables can then be updated using~\ref{Eq:GD}. Given the updated dual variables, the primal variables can be computed using~\ref{Eq:MapBack}. 

Although the distributed implementation avoids the cost and fragility of collecting all information at centralized location, practical applicability of gradient descent is hindered by slow convergence rates. This motivates the consideration of Newton methods discussed next. 

\subsection{Newton's Method for Dual Descent}
Newton's method is a descent algorithm along a scaled version of the gradient. Its iterates are typically given by 
\begin{equation}
\bm{\lambda}_{k+1} = \bm{\lambda}_{k}+\alpha_{k}\bm{d}_{k} \hfill \ \ \  \text{for all $k \geq 0$,}
\end{equation}
with $\bm{d}_{k}$ being the Newton direction at iteration $k$, and $\alpha_{k}$ denoting the step size. The Newton direction satisfies
\begin{equation}
\label{Eq:Hessian}
\bm{H}_{k}\bm{d}_{k}=-\bm{g}_{k},
\end{equation} 
with $\bm{H}_{k}=\bm{H}(\bm{\lambda}_{k})=\nabla^{2}q(\bm{\lambda}_{k})$ being the Hessian of the dual function at the current iteration $k$.

\subsubsection{Properties of the Dual and Assumptions}
Here, we detail some assumptions needed by our approach. We also derive essential Lemmas quantifying properties of the dual Hessian. 
\begin{assumption}
The graph, $\mathcal{G}$, is connected, non-bipartite and has algebraic connectivity lower bound by a constant $\bm{\omega}$. 
\end{assumption}

\begin{assumption}\label{Ass:Two}
The cost functions, $\bm{\Phi}_{e}(\cdot)$, in Equation~\ref{Eq:OptimAll} are 
\begin{enumerate}
\item twice continuously differentiable satisfying 
\begin{equation*}
\gamma \leq \ddot{\bm{\Phi}}_{e}(\cdot) \leq \Gamma,
\end{equation*}
with $\gamma$ and $\Gamma$ are constants; and
\item Lipschitz Hessian invertible for all edges $e\in \mathcal{E}$
\begin{equation*}
\left|\frac{1}{\ddot{\bm{\Phi}}_{e}(\bm{x})} - \frac{1}{\ddot{\bm{\Phi}}_{e}(\hat{\bm{x}})}\right| \leq {\delta}\left|\bm{x}-\hat{\bm{x}}\right|.
\end{equation*}
\end{enumerate}
\end{assumption}

The following two lemmas~\cite{c5,c6} quantify essential properties of the dual Hessian which we exploit through our algorithm to determine the approximate Newton direction. 

\begin{lemma}\label{lemma:Crap}
The dual objective $q(\bm{\lambda})=\bm{\lambda}^{\mathsf{T}}(\bm{A}\bm{x}(\bm{\lambda})-b)-\sum_{e}\bm{\Phi}_{e}(\bm{x}(\lambda))$ abides by the following two properties~\cite{c5,c6}:
\begin{enumerate}
\item The dual Hessian, $\bm{H}(\bm{\lambda})$, is a weighted Laplacian of $\mathcal{G}$:
\begin{equation*}
\bm{H}(\bm{\lambda})=\nabla^{2}q(\bm{\lambda})=\bm{A}\left[\nabla^{2}f(\bm{x}(\bm{\lambda}))\right]^{-1}\bm{A}^{\mathsf{T}}. 
\end{equation*}
\item The dual Hessian $\bm{H}(\bm{\lambda})$ is Lispshitz  continuous with respect to the Laplacian norm (i.e., $||\cdot||_{\mathcal{L}}$) where $\mathcal{L}$ is the unweighted laplacian satisfying $\mathcal{L}=\bm{A}\bm{A}^{\mathsf{T}}$ with $\bm{A}$ being the incidence matrix of $\mathcal{G}$. Namely, $\forall \bm{\lambda}, \bar{\bm{\lambda}}$: 
\begin{equation*}
||\bm{H}(\bar{\bm{\lambda}})-\bm{H}(\bm{\lambda})||_{\mathcal{L}} \leq B ||\bar{\bm{\lambda}}-\bm{\lambda}||_{\mathcal{L}},
\end{equation*}
with $B=\frac{\mu_{n}(\mathcal{L})\bm{\delta}}{\gamma \sqrt{\mu_{2}(\mathcal{L})}}$ where $\mu_{n}(\mathcal{L})$ and $\mu_{2}(\mathcal{L})$ denote the largest and second smallest eigenvalues of the Laplacian $\mathcal{L}$. 
\end{enumerate}
\end{lemma}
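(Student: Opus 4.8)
The plan is to prove the two parts separately: part~1 is implicit differentiation of the dual optimality map, and part~2 reduces the Hessian perturbation to a diagonal perturbation conjugated by the incidence matrix, which is then measured in the Laplacian norm. For part~1 I would start from the gradient already recorded in the text, $\nabla q(\bm{\lambda}) = \bm{A}\bm{x}(\bm{\lambda}) - \bm{b}$, and differentiate once more in $\bm{\lambda}$; since the only $\bm{\lambda}$-dependence sits in $\bm{x}(\bm{\lambda})$, this gives $\nabla^{2} q(\bm{\lambda}) = \bm{A}\,\partial_{\bm{\lambda}}\bm{x}(\bm{\lambda})$. To evaluate the Jacobian I would differentiate the edgewise first-order optimality condition underlying Equation~\ref{Eq:MapBack}, namely $\dot{\bm{\Phi}}_{e}(x^{(e)}(\bm{\lambda})) = (\bm{A}^{\mathsf{T}}\bm{\lambda})^{(e)}$. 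Stacking over edges yields $\nabla^{2} f(\bm{x}(\bm{\lambda}))\,\partial_{\bm{\lambda}}\bm{x} = \bm{A}^{\mathsf{T}}$ with $\nabla^{2} f = \mathrm{diag}(\ddot{\bm{\Phi}}_{e})$ positive diagonal by Assumption~\ref{Ass:Two}. Inverting the diagonal and substituting gives $\bm{H}(\bm{\lambda}) = \bm{A}[\nabla^{2} f(\bm{x}(\bm{\lambda}))]^{-1}\bm{A}^{\mathsf{T}}$; because $[\nabla^{2} f]^{-1}$ is positive diagonal with entries $1/\ddot{\bm{\Phi}}_{e} \in [1/\Gamma, 1/\gamma]$, this is exactly the weighted Laplacian of $\mathcal{G}$ with edge weights $1/\ddot{\bm{\Phi}}_{e}(x^{(e)}(\bm{\lambda}))$, which proves the first claim.

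For part~2 I would apply part~1 at $\bar{\bm{\lambda}}$ and at $\bm{\lambda}$ and subtract, obtaining $\bm{H}(\bar{\bm{\lambda}}) - \bm{H}(\bm{\lambda}) = \bm{A}\,\bm{\Delta}\,\bm{A}^{\mathsf{T}}$, where $\bm{\Delta}$ is the diagonal matrix of weight differences $\Delta^{(e)} = 1/\ddot{\bm{\Phi}}_{e}(x^{(e)}(\bar{\bm{\lambda}})) - 1/\ddot{\bm{\Phi}}_{e}(x^{(e)}(\bm{\lambda}))$. Two Lipschitz estimates bound $\bm{\Delta}$ entrywise: the Lipschitz-Hessian-inverse hypothesis of Assumption~\ref{Ass:Two} gives $|\Delta^{(e)}| \le \delta\,|x^{(e)}(\bar{\bm{\lambda}}) - x^{(e)}(\bm{\lambda})|$, and since $[\dot{\bm{\Phi}}_{e}]^{-1}$ has derivative $1/\ddot{\bm{\Phi}}_{e} \le 1/\gamma$ the primal map is $1/\gamma$-Lipschitz, so $|x^{(e)}(\bar{\bm{\lambda}}) - x^{(e)}(\bm{\lambda})| \le \tfrac{1}{\gamma}\,|(\bm{A}^{\mathsf{T}}(\bar{\bm{\lambda}} - \bm{\lambda}))^{(e)}|$. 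Maximizing over edges gives the spectral estimate $\|\bm{\Delta}\|_{2} \le \tfrac{\delta}{\gamma}\,\|\bm{A}^{\mathsf{T}}(\bar{\bm{\lambda}} - \bm{\lambda})\|_{\infty}$.

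The last step is to carry $\|\bm{\Delta}\|_{2}$ into the claimed Laplacian-norm bound on $\bm{A}\bm{\Delta}\bm{A}^{\mathsf{T}}$ by an operator-norm chain of the form $\|\bm{A}\bm{\Delta}\bm{A}^{\mathsf{T}}\|_{\mathcal{L}} \le (\text{factors from }\bm{A})\,\|\bm{\Delta}\|_{2}\,\|\bar{\bm{\lambda}} - \bm{\lambda}\|$. Here the incidence-matrix factors contribute $\mu_{n}(\mathcal{L})$, through the identities $\bm{A}\bm{A}^{\mathsf{T}} = \mathcal{L}$, $\|\bm{A}\|_{2}^{2} = \mu_{n}(\mathcal{L})$, and the tight conjugation $\|\bm{A}^{\mathsf{T}}\bm{v}\|_{2} = \|\bm{v}\|_{\mathcal{L}}$; the factor $\delta/\gamma$ is inherited from the two Lipschitz estimates; and the factor $1/\sqrt{\mu_{2}(\mathcal{L})}$ arises when converting a Euclidean norm of $\bar{\bm{\lambda}} - \bm{\lambda}$ into the Laplacian norm on the admissible subspace via $\|\bm{v}\|_{2} \le \|\bm{v}\|_{\mathcal{L}}/\sqrt{\mu_{2}(\mathcal{L})}$, valid for $\bm{v} \in \bm{1}^{\perp}$. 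Assembling these reproduces $B = \mu_{n}(\mathcal{L})\,\delta/(\gamma\sqrt{\mu_{2}(\mathcal{L})})$, in agreement with~\cite{c5,c6}.

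The hard part will be the bookkeeping of the matrix Laplacian seminorm in this final step: $\mathcal{L}$ is singular, so every conversion must be performed on $\bm{1}^{\perp}$ (equivalently through the pseudoinverse $\mathcal{L}^{\dagger}$), and one must first check that $\bm{H}(\bar{\bm{\lambda}}) - \bm{H}(\bm{\lambda})$ maps $\bm{1}^{\perp}$ into itself so that the restriction is well defined. Obtaining exactly the stated constant, rather than one with a shifted power of $\mu_{n}(\mathcal{L})$ or $\mu_{2}(\mathcal{L})$, hinges on the precise definition of $\|\cdot\|_{\mathcal{L}}$ adopted for matrices in~\cite{c5,c6} and on selecting the tightest inequality at each conjugation; this is where I would concentrate the care, treating the constant itself as inherited from those references.
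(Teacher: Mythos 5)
Your proposal is correct and, in structure, it is essentially the paper's own proof: you write $\bm{H}(\bar{\bm{\lambda}})-\bm{H}(\bm{\lambda})=\bm{A}\bm{\Delta}\bm{A}^{\mathsf{T}}$ with $\bm{\Delta}$ diagonal, bound the conjugation by $\|\bm{A}\bm{\Delta}\bm{A}^{\mathsf{T}}\|_{\mathcal{L}}\le\mu_{n}(\mathcal{L})\|\bm{\Delta}\|_{2}$ (the paper does this via the quadratic-form computation using $\mathcal{L}=\bm{A}\bm{A}^{\mathsf{T}}$ and $\mu_{n}(\bm{A}^{\mathsf{T}}\bm{A})=\mu_{n}(\mathcal{L})$), and control $\|\bm{\Delta}\|_{2}$ by chaining the $\delta$-Lipschitz hypothesis on $1/\ddot{\bm{\Phi}}_{e}$ with the $1/\gamma$-Lipschitz continuity of $[\dot{\bm{\Phi}}_{e}]^{-1}$. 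Two differences are worth recording. First, for part 1 the paper writes nothing beyond a citation to Lemma 1 of \cite{c6}; your implicit-differentiation sketch is exactly the argument behind that citation. Second, your edgewise estimate $|x^{(e)}(\bar{\bm{\lambda}})-x^{(e)}(\bm{\lambda})|\le\tfrac{1}{\gamma}\,|(\bm{A}^{\mathsf{T}}(\bar{\bm{\lambda}}-\bm{\lambda}))^{(e)}|$ is a genuine streamlining: since $\bm{A}^{\mathsf{T}}\bm{1}=\bm{0}$, translation invariance of the primal map is automatic, whereas the paper needs a separate three-case claim decomposing $\bar{\bm{\lambda}}-\bm{\lambda}$ into $\bm{1}^{\perp}\oplus\mathrm{Span}\{\bm{1}\}$ precisely to handle the nullspace of the seminorm (this also settles your worry about restricting to $\bm{1}^{\perp}$). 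One caution on the final assembly, which you rightly identify as the delicate step: if you finish with the tight identity $\|\bm{A}^{\mathsf{T}}\bm{v}\|_{2}=\|\bm{v}\|_{\mathcal{L}}$ you obtain the Lipschitz bound with constant $\mu_{n}(\mathcal{L})\delta/\gamma$, which is \emph{not} the stated $B$ and does not imply it whenever $\mu_{2}(\mathcal{L})>1$; to land on $B=\mu_{n}(\mathcal{L})\delta/(\gamma\sqrt{\mu_{2}(\mathcal{L})})$ you must follow the paper's chain instead, i.e.\ bound the edge difference by the Euclidean norm of the $\bm{1}^{\perp}$-component of $\bar{\bm{\lambda}}-\bm{\lambda}$ and then use $\|\bm{v}\|_{2}\le\|\bm{v}\|_{\mathcal{L}}/\sqrt{\mu_{2}(\mathcal{L})}$ on $\bm{1}^{\perp}$. (Strictly speaking that route costs an extra $\sqrt{2}$, since $\max_{e}|v_{i}-v_{j}|\le\sqrt{2}\|\bm{v}\|_{2}$ is the tight inequality; the paper's own proof silently drops this factor, so your suspicion that the advertised constant requires care is warranted even against the reference derivation.)
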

\begin{proof}
See Appendix.
\end{proof}

The following lemma follows from the above and is needed in the analysis later: 
\begin{lemma}\label{lemma:B}
If the dual Hessian $\bm{H}(\bm{\lambda})$ is Lipschitz continuous with respect to the Laplacian norm $||\cdot||_{\mathcal{L}}$ (i.e., Lemma~\ref{lemma:Crap}), then for any $\bm{\lambda}$ and $\hat{\bm{\lambda}}$ we have 
\begin{equation*}
||\nabla q(\hat{\bm{\lambda}})-\nabla q({\bm{\lambda}}) - \bm{H}(\bm{\lambda})(\hat{\bm{\lambda}}-\bm{\lambda})||_{\mathcal{L}} \leq \frac{B}{2}||\hat{\bm{\lambda}} - \bm{\lambda}||_{\mathcal{L}}^{2}.
\end{equation*}
\end{lemma}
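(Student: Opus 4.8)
The plan is to prove the bound via the integral (Taylor remainder) form of the fundamental theorem of calculus, exactly as in the classical analysis of Newton's method, but carried out in the degenerate Laplacian seminorm $\|\cdot\|_{\mathcal{L}}$. First I would parametrize the segment between $\bm{\lambda}$ and $\hat{\bm{\lambda}}$ by $\bm{\lambda}_t = \bm{\lambda} + t(\hat{\bm{\lambda}} - \bm{\lambda})$ for $t \in [0,1]$, and observe that since $\bm{H} = \nabla^2 q$ is the derivative of the gradient map $\nabla q$, the fundamental theorem of calculus gives
\[
\nabla q(\hat{\bm{\lambda}}) - \nabla q(\bm{\lambda}) = \int_0^1 \bm{H}(\bm{\lambda}_t)(\hat{\bm{\lambda}} - \bm{\lambda})\, dt.
\]
Subtracting the constant term $\bm{H}(\bm{\lambda})(\hat{\bm{\lambda}} - \bm{\lambda}) = \int_0^1 \bm{H}(\bm{\lambda})(\hat{\bm{\lambda}} - \bm{\lambda})\,dt$ from both sides then expresses the quantity to be bounded as the single integral of $[\bm{H}(\bm{\lambda}_t) - \bm{H}(\bm{\lambda})](\hat{\bm{\lambda}} - \bm{\lambda})$.

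Next I would move the Laplacian seminorm inside the integral. Using the triangle inequality for integrals and the consistency between the matrix Laplacian norm of Lemma~\ref{lemma:Crap} and the vector seminorm, i.e. $\|Mv\|_{\mathcal{L}} \le \|M\|_{\mathcal{L}}\,\|v\|_{\mathcal{L}}$, I obtain
\[
\|\nabla q(\hat{\bm{\lambda}}) - \nabla q(\bm{\lambda}) - \bm{H}(\bm{\lambda})(\hat{\bm{\lambda}} - \bm{\lambda})\|_{\mathcal{L}} \le \int_0^1 \|\bm{H}(\bm{\lambda}_t) - \bm{H}(\bm{\lambda})\|_{\mathcal{L}}\,\|\hat{\bm{\lambda}} - \bm{\lambda}\|_{\mathcal{L}}\, dt.
\]
The Lipschitz hypothesis of Lemma~\ref{lemma:Crap} bounds $\|\bm{H}(\bm{\lambda}_t) - \bm{H}(\bm{\lambda})\|_{\mathcal{L}} \le B\|\bm{\lambda}_t - \bm{\lambda}\|_{\mathcal{L}} = Bt\|\hat{\bm{\lambda}} - \bm{\lambda}\|_{\mathcal{L}}$, so the integrand becomes $Bt\|\hat{\bm{\lambda}} - \bm{\lambda}\|_{\mathcal{L}}^2$, and evaluating $\int_0^1 t\, dt = \tfrac{1}{2}$ yields the claimed factor $B/2$.

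The main obstacle to flag is that $\|\cdot\|_{\mathcal{L}}$ is only a seminorm, since the unweighted Laplacian $\mathcal{L} = \bm{A}\bm{A}^{\mathsf{T}}$ is singular with kernel $\operatorname{span}(\bm{1})$. I would therefore verify carefully that the two tools used above, namely the triangle inequality under the integral sign and the consistency $\|Mv\|_{\mathcal{L}} \le \|M\|_{\mathcal{L}}\,\|v\|_{\mathcal{L}}$, genuinely hold for this seminorm: the triangle inequality survives for seminorms, and the matrix norm in Lemma~\ref{lemma:Crap} should be read as the operator seminorm induced on $\bm{1}^{\perp}$, on which $\mathcal{L}$ is positive definite. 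Since $\nabla q$ maps into $\bm{1}^{\perp}$ (because $\bm{b} \in \bm{1}^{\perp}$ and every $\bm{A}\bm{x}$ lies in $\bm{1}^{\perp}$) and the Newton increments are taken in that subspace, all vectors involved lie where $\|\cdot\|_{\mathcal{L}}$ behaves like a genuine norm, so the degeneracy causes no harm; the only remaining routine check is that $\nabla q$ is absolutely continuous along the segment, which follows from the Lipschitz continuity of $\bm{H}$ established in Lemma~\ref{lemma:Crap}.
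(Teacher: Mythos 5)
Your proof follows exactly the same route as the paper's: parametrize the segment, apply the fundamental theorem of calculus to $\nabla q$, subtract $\bm{H}(\bm{\lambda})(\hat{\bm{\lambda}}-\bm{\lambda})$, move the seminorm inside the integral, apply a consistency inequality together with the Lipschitz bound of Lemma~\ref{lemma:Crap}, and evaluate $\int_0^1 t\,dt=\tfrac{1}{2}$ to obtain the factor $\tfrac{B}{2}$.

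The gap is in the one step you flagged but then dismissed with an invalid justification: the consistency inequality $\|\bm{M}\bm{v}\|_{\mathcal{L}} \le \|\bm{M}\|_{\mathcal{L}}\,\|\bm{v}\|_{\mathcal{L}}$. With $\|\bm{M}\|_{\mathcal{L}}$ read, as you propose, as the operator seminorm induced over $\bm{1}^{\perp}$, this inequality is \emph{false} for a general matrix $\bm{M}$ whenever $\bm{v}$ has a component along $\bm{1}$: the right-hand side is blind to that component (since $\mathcal{L}\bm{1}=\bm{0}$), while $\bm{M}\bm{v}$ need not be. Your way out --- asserting that ``all vectors involved lie in $\bm{1}^{\perp}$'' --- does not hold here, because the lemma quantifies over arbitrary $\bm{\lambda},\hat{\bm{\lambda}}$, so the input vector $\hat{\bm{\lambda}}-\bm{\lambda}$ on which the matrices act can have a nonzero $\bm{1}$ component; the observation that $\nabla q$ maps into $\bm{1}^{\perp}$ is true but beside the point, since the degeneracy concerns the vector being multiplied, not where the result lands. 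What actually rescues the step is a property of the \emph{matrices}: each $\bm{H}(\cdot)$ is a weighted Laplacian, so $\bm{H}(\bm{\lambda}_t)\bm{1}=\bm{H}(\bm{\lambda})\bm{1}=\bm{0}$, and hence the difference $\bm{H}(\bm{\lambda}_t)-\bm{H}(\bm{\lambda})$ annihilates $\mathrm{span}\{\bm{1}\}$. Writing $\hat{\bm{\lambda}}-\bm{\lambda}=\bm{u}_1+\bm{u}_2$ with $\bm{u}_1\in\bm{1}^{\perp}$ and $\bm{u}_2\in\mathrm{span}\{\bm{1}\}$, one gets
\begin{equation*}
\left[\bm{H}(\bm{\lambda}_t)-\bm{H}(\bm{\lambda})\right](\hat{\bm{\lambda}}-\bm{\lambda})=\left[\bm{H}(\bm{\lambda}_t)-\bm{H}(\bm{\lambda})\right]\bm{u}_1, \qquad \|\hat{\bm{\lambda}}-\bm{\lambda}\|_{\mathcal{L}}=\|\bm{u}_1\|_{\mathcal{L}},
\end{equation*}
so consistency follows by applying the definition of the operator seminorm to $\bm{u}_1$ alone. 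This is precisely the three-case claim the paper proves inside its own proof of this lemma; once that claim (or equivalently the observation $\bm{H}(\cdot)\bm{1}=\bm{0}$) is inserted, your argument goes through unchanged.
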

\begin{proof}
See Appendix. 
\end{proof}

As detailed in~\cite{c6}, the exact computation of the inverse of the Hessian needed for determining the Newton direction can not be attained exactly in a distributed fashion. Authors in~\cite{c5,c6} proposed approximation techniques for computing this direction. The effectiveness of these algorithms, however, highly depends on the accuracy of such an approximation. In this work, we propose a distributed approximator for the Newton direction capable of acquiring $\epsilon$-close solutions for any arbitrary $\epsilon$. Our results show that this new algorithm is capable of significantly surpassing others in literature where its performance accurately traces that of the standard centralized Newton approach. 
\subsection{Accurate Distributed Newton Methods}
Using the results of the distributed R-Hop solver, we propose a novel technique requiring only R-Hop communication for the distributed approximation of the Newton direction. Given the results of Lemma~\ref{lemma:Crap}, we can determine the approximate Newton direction by solving a system of linear equations represented by an SDD matrix\footnote{For ease of presentation, we refrain some of the proofs to the appendix.} with $\bm{M}_{0}=\bm{H}_{k}=\bm{H}(\bm{\lambda}_{k})$.  

Formally, we consider the following iteration scheme:
\begin{equation}
\bm{\lambda}_{k+1}=\bm{\lambda}_{k} + \alpha_{k}\tilde{\bm{d}}_{k},
\end{equation}
with $k$ representing the iteration number, $\alpha_{k}$ the step-size, and $\tilde{\bm{d}}_{k}$ denoting the approximate Newton direction. We determine $\tilde{\bm{d}}_{k}$ by solving $\bm{H}_{k}\bm{d}_{k}=-\bm{g}_{k}$ using Algorithm~\ref{Algo:EDistR}. It is easy to see that our approximation of the Newton direction, $\tilde{\bm{d}}_{k}$, satisfies
\begin{align*}
||\tilde{\bm{d}}_{k} - \bm{d}_{k}||_{\bm{H}_{k}} &\leq \epsilon ||\bm{d}_{k}||_{\bm{H}_{k}} \  \ \ \text{with} \ \ \ \tilde{\bm{d}}_{k} =-\bm{Z}_{k}\bm{g}_{k},
\end{align*}
where $\bm{Z}_{k}$ approximates $\bm{H}^{\dagger}_{k}$ according to the routine of Algorithm~\ref{Algo:EDistR}. The accuracy of this approximation is quantified in the following Lemma 

\begin{lemma}\label{Lemma:Bla}
Let $\bm{H}_{k}=\bm{H}(\bm{\lambda}_{k})$ be the Hessian of the dual function, then for any arbitrary $\epsilon > 0$ we have  
\begin{equation*}
e^{-\epsilon^{2}} \bm{v}^{\mathsf{T}} \bm{H}_{k}^{\dagger} \bm{v} \leq \bm{v}^{\mathsf{T}}\bm{Z}_{k}\bm{v}\leq e^{\epsilon^{2}}\bm{v}^{\mathsf{T}}\bm{H}_{k}^{\dagger}\bm{v}, \ \ \ \ \ \ \forall \bm{v} \in \bm{1}^{\perp}.
\end{equation*}
\end{lemma}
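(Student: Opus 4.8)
The plan is to exploit the fact that Algorithm~\ref{Alg_ExactRHop} ($\text{EDistRSolve}$) is precisely a preconditioned Richardson iteration whose preconditioner is the crude operator $\bm{Z}'_0$ of Lemma~\ref{r_hop_rude_lemma}, and then to reduce the desired two-sided Loewner bound to a spectral-radius estimate for the Richardson error operator restricted to $\bm{1}^{\perp}$. First I would write out the operator the algorithm realises: starting from $[\bm{y}_0]_k=0$ and unrolling the loop in lines~2--6 (line~3 applies $\bm{H}_k=\bm{D}_0-\bm{A}_0$, line~4 applies $\bm{Z}'_0$ via $\text{RDistRSolve}$, and line~5 accumulates the crude term $\bm{\chi}=\bm{Z}'_0\bm{b}_0$ with $\bm{b}_0=-\bm{g}_k$), the returned vector satisfies $\tilde{\bm{d}}_k=-\bm{Z}_k\bm{g}_k$ with
\begin{equation*}
\bm{Z}_k=\sum_{i=0}^{q-1}\left(\bm{I}-\bm{Z}'_0\bm{H}_k\right)^{i}\bm{Z}'_0.
\end{equation*}
A telescoping identity then yields the key algebraic step $\bm{I}-\bm{Z}_k\bm{H}_k=\left(\bm{I}-\bm{Z}'_0\bm{H}_k\right)^{q}$.

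Next I would pass to the subspace $\bm{1}^{\perp}$, where $\bm{H}_k$ — being the weighted Laplacian of the connected graph $\mathcal{G}$ by Lemma~\ref{lemma:Crap} — is symmetric positive definite, so that $\bm{H}_k^{1/2}$, $\bm{H}_k^{-1/2}$ and $\bm{H}_k^{\dagger}$ are all well defined there and $\bm{g}_k\in\bm{1}^{\perp}$. Conjugating by $\bm{H}_k^{1/2}$ and using $\bm{H}_k^{\dagger}\bm{H}_k=\bm{I}$ on $\bm{1}^{\perp}$, I would rewrite $\bm{H}_k^{1/2}\bm{Z}_k\bm{H}_k^{1/2}=\bm{I}-\bm{W}^{q}$, where $\bm{W}=\bm{I}-\bm{H}_k^{1/2}\bm{Z}'_0\bm{H}_k^{1/2}$ is symmetric (here $\bm{Z}'_0$ is symmetric positive semidefinite, since $\approx_{\epsilon_d}$ in Lemma~\ref{r_hop_rude_lemma} is defined between such matrices). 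From $\bm{Z}'_0\approx_{\epsilon_d}\bm{H}_k^{\dagger}$, part~(5) of Lemma~\ref{approx_lemma_facts} (conjugation by $\bm{H}_k^{1/2}$) gives $e^{-\epsilon_d}\bm{I}\preceq\bm{H}_k^{1/2}\bm{Z}'_0\bm{H}_k^{1/2}\preceq e^{\epsilon_d}\bm{I}$ on $\bm{1}^{\perp}$; hence every eigenvalue of $\bm{W}$ has magnitude at most $e^{\epsilon_d}-1$, and the hypothesis $\epsilon_d<\frac{1}{3}\ln 2$ forces $e^{\epsilon_d}-1<2^{1/3}-1<1$, so $\|\bm{W}^{q}\|_2\le(e^{\epsilon_d}-1)^{q}$.

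Consequently $\bm{H}_k^{1/2}\bm{Z}_k\bm{H}_k^{1/2}$ has all eigenvalues in $\left[1-(e^{\epsilon_d}-1)^{q},\,1+(e^{\epsilon_d}-1)^{q}\right]$. The final step is to choose the iteration count $q$ — the same $q=\mathcal{O}(\log\frac{1}{\epsilon})$ guaranteed by Lemma~\ref{Dist_Exact_algorithm_guarantee_lemma_Bla} — large enough that $(e^{\epsilon_d}-1)^{q}\le 1-e^{-\epsilon^{2}}$, so the eigenvalue interval lies inside $[e^{-\epsilon^{2}},e^{\epsilon^{2}}]$; this is where the squared precision $\epsilon^{2}$ enters, and it is consistent with the $\epsilon$-approximate direction $\|\tilde{\bm{d}}_k-\bm{d}_k\|_{\bm{H}_k}\le\epsilon\|\bm{d}_k\|_{\bm{H}_k}$ via Lemma~\ref{lemma_approx_matrix_inverse}, which turns an $\approx_{\epsilon^{2}}$ inverse into a $\sqrt{2^{\epsilon^{2}}(e^{\epsilon^{2}}-1)}\approx\epsilon$ solution. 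Conjugating the resulting bound $e^{-\epsilon^{2}}\bm{I}\preceq\bm{H}_k^{1/2}\bm{Z}_k\bm{H}_k^{1/2}\preceq e^{\epsilon^{2}}\bm{I}$ back by $\bm{H}_k^{-1/2}$ (again part~(5) of Lemma~\ref{approx_lemma_facts}) yields $e^{-\epsilon^{2}}\bm{H}_k^{\dagger}\preceq\bm{Z}_k\preceq e^{\epsilon^{2}}\bm{H}_k^{\dagger}$ on $\bm{1}^{\perp}$, which is exactly the claimed quadratic-form inequality. The main obstacle I anticipate is bookkeeping the singularity of $\bm{H}_k$ throughout: since the SDDM solver was built for positive definite matrices, one must carefully verify that $\bm{Z}'_0$ and $\bm{Z}_k$ leave $\bm{1}^{\perp}$ invariant and that the pseudoinverse identities used in both conjugations hold on that subspace, as the spectral-radius argument breaks down on the kernel $\mathrm{span}(\bm{1})$.
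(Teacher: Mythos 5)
Your proposal is correct in outline, but it is a genuinely different argument from the one in the paper. The paper never unrolls the Richardson recursion: it handles the singularity of $\bm{H}_k$ by a rank-one regularization, setting $\bm{H}_{k,\delta}=\bm{H}_k+\delta\bm{1}\bm{1}^{\mathsf{T}}$ for $\delta>0$, invoking the solver's Loewner guarantee $e^{-\epsilon^{2}}\bm{v}^{\mathsf{T}}\bm{H}_{k,\delta}^{-1}\bm{v}\leq\bm{v}^{\mathsf{T}}\bm{Z}_{k,\delta}\bm{v}\leq e^{\epsilon^{2}}\bm{v}^{\mathsf{T}}\bm{H}_{k,\delta}^{-1}\bm{v}$ for the now nonsingular system as a black box, then using $\bm{H}_{k,\delta}^{-1}=\bm{H}_k^{\dagger}+\frac{1}{\delta}\bm{1}\bm{1}^{\mathsf{T}}$, whose $\frac{1}{\delta}$ term is annihilated by quadratic forms over $\bm{1}^{\perp}$, and finally letting $\delta\to 0$ with $\bm{Z}_{k,\delta}\to\bm{Z}_k$. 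You instead re-derive the black box itself: the telescoping identity $\bm{I}-\bm{Z}_k\bm{H}_k=\left(\bm{I}-\bm{Z}_0'\bm{H}_k\right)^{q}$, the spectral bound $\|\bm{W}^{q}\|_{2}\leq(e^{\epsilon_d}-1)^{q}$ with $e^{\epsilon_d}-1<\sqrt[3]{2}-1<1$, and the choice of $q$ so that $(e^{\epsilon_d}-1)^{q}\leq 1-e^{-\epsilon^{2}}$ (for the upper bound you also need the small observation $2-e^{-\epsilon^{2}}\leq e^{\epsilon^{2}}$, which holds since $e^{x}+e^{-x}\geq 2$) are all sound, and they make explicit what the paper leaves implicit, namely where the squared precision $\epsilon^{2}$ and the iteration count $q=\mathcal{O}\left(\log\frac{1}{\epsilon}\right)$ come from. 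The trade-off is this: the paper's regularization confines all pseudo-inverse bookkeeping to one algebraic identity and a limit, but it rests on two unproved assertions --- that the SDDM solver guarantee applies to $\bm{H}_{k,\delta}$, which is no longer an M-matrix (adding $\delta\bm{1}\bm{1}^{\mathsf{T}}$ destroys the sign pattern of the off-diagonal entries), and that $\bm{Z}_{k,\delta}\to\bm{Z}_k$; your route avoids any limiting argument but needs the crude guarantee in pseudo-inverse form, $\bm{Z}_0'\approx_{\epsilon_d}\bm{H}_k^{\dagger}$ restricted to $\bm{1}^{\perp}$, which Lemma~\ref{r_hop_rude_lemma} (proved for nonsingular SDDM matrices) does not literally supply. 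That is exactly the gap you flag at the end, and it is the same singularity issue the paper's $\delta$-trick is engineered to sidestep, so the two proofs are of comparable rigor: yours is self-contained and quantitative, the paper's is shorter but leans on its solver lemmas beyond their stated hypotheses.
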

\begin{proof}
See Appendix. 
\end{proof}

\subsubsection{Convergence Guarantees}
Given such an accurate approximation, next we analyze the iteration scheme of our proposed method showing that similar to standard Newton methods, we achieve superlinear convergence within a neighborhood of the optimal value. We start by analyzing the change in the Laplacian norm of the gradient between two successive iterations 
\begin{lemma}
Consider the following iteration scheme $\bm{\lambda}_{k+1}=\bm{\lambda}_{k} + \alpha_{k}\tilde{\bm{d}}_{k}$ with $\alpha_{k} \in (0,1]$, then, for any arbitrary $\epsilon>0$, the Laplacian norm of the gradient, $||\bm{g}_{k+1}||_{\mathcal{L}}$, follows:
\begin{align}\label{Eq:NormG}
||\bm{g}_{k+1}||_{\mathcal{L}} &\leq \left[1-\alpha_{k} +\alpha_{k}\epsilon\frac{\mu_{n}(\mathcal{L})}{\mu_{2}(\mathcal{L})}\sqrt{\frac{\Gamma}{\gamma}}\right]||\bm{g}_{k}||_{\mathcal{L}}  + \frac{\alpha_{k}^{2}B\Gamma^{2}(1+\epsilon)^{2}}{2\mu^{2}_{2}(\mathcal{L})}||\bm{g}_{k}||_{\mathcal{L}}^{2},
\end{align}
with $\mu_{n}(\mathcal{L})$ and $\mu_{2}(\mathcal{L})$ being the largest and second smallest eigenvalues of $\mathcal{L}$, $\Gamma$ and $\gamma$ denoting the upper and lower bounds on the dual's Hessian, and $B \in \mathbb{R}$ is defined in Lemma~\ref{lemma:B}. 
\end{lemma}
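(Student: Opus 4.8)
The plan is to treat the update $\bm{\lambda}_{k+1}=\bm{\lambda}_{k}+\alpha_{k}\tilde{\bm{d}}_{k}$ as a perturbed Newton step and expand $\bm{g}_{k+1}=\nabla q(\bm{\lambda}_{k+1})$ around $\bm{\lambda}_{k}$. Applying Lemma~\ref{lemma:B} with $\hat{\bm{\lambda}}=\bm{\lambda}_{k+1}$ and $\bm{\lambda}=\bm{\lambda}_{k}$, so that $\hat{\bm{\lambda}}-\bm{\lambda}=\alpha_{k}\tilde{\bm{d}}_{k}$, gives $||\bm{g}_{k+1}-\bm{g}_{k}-\alpha_{k}\bm{H}_{k}\tilde{\bm{d}}_{k}||_{\mathcal{L}}\le\tfrac{B}{2}\alpha_{k}^{2}||\tilde{\bm{d}}_{k}||_{\mathcal{L}}^{2}$. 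Writing $\bm{g}_{k+1}=\bm{g}_{k}+\alpha_{k}\bm{H}_{k}\tilde{\bm{d}}_{k}+\bm{e}$ with $||\bm{e}||_{\mathcal{L}}\le\tfrac{B\alpha_{k}^{2}}{2}||\tilde{\bm{d}}_{k}||_{\mathcal{L}}^{2}$ already isolates the desired quadratic remainder, so I then only need to control the ``linear'' part $\bm{g}_{k}+\alpha_{k}\bm{H}_{k}\tilde{\bm{d}}_{k}$ and the factor $||\tilde{\bm{d}}_{k}||_{\mathcal{L}}$.

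For the linear part I introduce the exact Newton direction $\bm{d}_{k}$ satisfying $\bm{H}_{k}\bm{d}_{k}=-\bm{g}_{k}$ and use $\bm{g}_{k}+\alpha_{k}\bm{H}_{k}\tilde{\bm{d}}_{k}=(1-\alpha_{k})\bm{g}_{k}+\alpha_{k}\bm{H}_{k}(\tilde{\bm{d}}_{k}-\bm{d}_{k})$. Since $\alpha_{k}\in(0,1]$, the triangle inequality yields
\begin{equation*}
||\bm{g}_{k+1}||_{\mathcal{L}}\le(1-\alpha_{k})||\bm{g}_{k}||_{\mathcal{L}}+\alpha_{k}||\bm{H}_{k}(\tilde{\bm{d}}_{k}-\bm{d}_{k})||_{\mathcal{L}}+\tfrac{B\alpha_{k}^{2}}{2}||\tilde{\bm{d}}_{k}||_{\mathcal{L}}^{2}.
\end{equation*}
Comparing this with~\eqref{Eq:NormG} reduces the proof to two estimates: the \emph{linear} bound $||\bm{H}_{k}(\tilde{\bm{d}}_{k}-\bm{d}_{k})||_{\mathcal{L}}\le\epsilon\tfrac{\mu_{n}(\mathcal{L})}{\mu_{2}(\mathcal{L})}\sqrt{\tfrac{\Gamma}{\gamma}}||\bm{g}_{k}||_{\mathcal{L}}$ and the \emph{step-size} bound $||\tilde{\bm{d}}_{k}||_{\mathcal{L}}\le\tfrac{\Gamma(1+\epsilon)}{\mu_{2}(\mathcal{L})}||\bm{g}_{k}||_{\mathcal{L}}$.

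The engine for both is the spectral sandwiching of $\bm{H}_{k}$ by $\mathcal{L}$. From Assumption~\ref{Ass:Two} the diagonal matrix $\nabla^{2}f(\bm{x}(\bm{\lambda}_{k}))$ obeys $\gamma\bm{I}\preceq\nabla^{2}f\preceq\Gamma\bm{I}$, hence $\tfrac{1}{\Gamma}\bm{I}\preceq[\nabla^{2}f]^{-1}\preceq\tfrac{1}{\gamma}\bm{I}$; conjugating by $\bm{A}$ and invoking $\bm{H}_{k}=\bm{A}[\nabla^{2}f]^{-1}\bm{A}^{\mathsf{T}}$ and $\mathcal{L}=\bm{A}\bm{A}^{\mathsf{T}}$ from Lemma~\ref{lemma:Crap} gives $\tfrac{1}{\Gamma}\mathcal{L}\preceq\bm{H}_{k}\preceq\tfrac{1}{\gamma}\mathcal{L}$, and therefore on $\bm{1}^{\perp}$ the dual ordering $\gamma\mathcal{L}^{\dagger}\preceq\bm{H}_{k}^{\dagger}\preceq\Gamma\mathcal{L}^{\dagger}$. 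I will also use $\lambda_{\max}(\bm{H}_{k})\le\mu_{n}(\mathcal{L})/\gamma$ and, since $\bm{g}_{k}\in\bm{1}^{\perp}$ and $\tilde{\bm{d}}_{k},\bm{d}_{k}\in\bm{1}^{\perp}$, the spectral pinchings $||\bm{u}||_{\mathcal{L}}^{2}\le\mu_{n}(\mathcal{L})||\bm{u}||_{2}^{2}$ and $||\bm{g}_{k}||_{\mathcal{L}^{\dagger}}\le\mu_{2}^{-1}(\mathcal{L})||\bm{g}_{k}||_{\mathcal{L}}$ (the latter by chaining $\mathcal{L}^{\dagger}\preceq\mu_{2}^{-1}(\mathcal{L})\bm{I}$ on $\bm{1}^{\perp}$ with $||\bm{g}_{k}||_{2}^{2}\le\mu_{2}^{-1}(\mathcal{L})||\bm{g}_{k}||_{\mathcal{L}}^{2}$). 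Finally I combine the approximation guarantee $||\tilde{\bm{d}}_{k}-\bm{d}_{k}||_{\bm{H}_{k}}\le\epsilon||\bm{d}_{k}||_{\bm{H}_{k}}$ with the identity $||\bm{d}_{k}||_{\bm{H}_{k}}=||\bm{g}_{k}||_{\bm{H}_{k}^{\dagger}}$, which follows from $\bm{d}_{k}=-\bm{H}_{k}^{\dagger}\bm{g}_{k}$ and $\bm{H}_{k}^{\dagger}\bm{H}_{k}\bm{H}_{k}^{\dagger}=\bm{H}_{k}^{\dagger}$.

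With these tools the step-size bound is immediate: $||\tilde{\bm{d}}_{k}||_{\bm{H}_{k}}\le(1+\epsilon)||\bm{d}_{k}||_{\bm{H}_{k}}$, then $||\tilde{\bm{d}}_{k}||_{\mathcal{L}}^{2}\le\Gamma||\tilde{\bm{d}}_{k}||_{\bm{H}_{k}}^{2}$, and $||\bm{d}_{k}||_{\bm{H}_{k}}=||\bm{g}_{k}||_{\bm{H}_{k}^{\dagger}}\le\sqrt{\Gamma}||\bm{g}_{k}||_{\mathcal{L}^{\dagger}}\le\sqrt{\Gamma}\,\mu_{2}^{-1}(\mathcal{L})||\bm{g}_{k}||_{\mathcal{L}}$ together give the factor $\Gamma(1+\epsilon)/\mu_{2}(\mathcal{L})$. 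I expect the genuine obstacle to be the linear estimate on $||\bm{H}_{k}(\tilde{\bm{d}}_{k}-\bm{d}_{k})||_{\mathcal{L}}$, where one must avoid accruing spurious powers of $\bm{H}_{k}$; the clean route, with $\bm{w}=\tilde{\bm{d}}_{k}-\bm{d}_{k}$, is $||\bm{H}_{k}\bm{w}||_{\mathcal{L}}^{2}\le\mu_{n}(\mathcal{L})\,\bm{w}^{\mathsf{T}}\bm{H}_{k}^{2}\bm{w}\le\mu_{n}(\mathcal{L})\lambda_{\max}(\bm{H}_{k})||\bm{w}||_{\bm{H}_{k}}^{2}$, which yields $||\bm{H}_{k}\bm{w}||_{\mathcal{L}}\le\mu_{n}(\mathcal{L})\gamma^{-1/2}||\bm{w}||_{\bm{H}_{k}}$; substituting $||\bm{w}||_{\bm{H}_{k}}\le\epsilon||\bm{g}_{k}||_{\bm{H}_{k}^{\dagger}}$ and the $\mathcal{L}^{\dagger}$-to-$\mathcal{L}$ comparison produces precisely $\epsilon\tfrac{\mu_{n}(\mathcal{L})}{\mu_{2}(\mathcal{L})}\sqrt{\tfrac{\Gamma}{\gamma}}$. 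Assembling both estimates into the displayed triangle inequality then gives~\eqref{Eq:NormG}.
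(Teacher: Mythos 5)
Your proposal is correct and follows essentially the same route as the paper's proof: the Lipschitz/Taylor bound from Lemma~\ref{lemma:B}, the split $\tilde{\bm{d}}_{k}=\bm{d}_{k}+(\tilde{\bm{d}}_{k}-\bm{d}_{k})$ combined with $\bm{H}_{k}\bm{d}_{k}=-\bm{g}_{k}$ to get the $(1-\alpha_{k})\bm{g}_{k}+\alpha_{k}\bm{H}_{k}(\tilde{\bm{d}}_{k}-\bm{d}_{k})$ decomposition, and the identical spectral chain $\|\bm{H}_{k}\bm{w}\|_{\mathcal{L}}^{2}\le\mu_{n}(\mathcal{L})\lambda_{\max}(\bm{H}_{k})\|\bm{w}\|_{\bm{H}_{k}}^{2}$ with $\|\bm{d}_{k}\|_{\bm{H}_{k}}^{2}=\bm{g}_{k}^{\mathsf{T}}\bm{H}_{k}^{\dagger}\bm{g}_{k}\le\Gamma\mu_{2}^{-2}(\mathcal{L})\|\bm{g}_{k}\|_{\mathcal{L}}^{2}$. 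The only difference is cosmetic: you derive the bound $\|\tilde{\bm{d}}_{k}\|_{\mathcal{L}}\le\Gamma(1+\epsilon)\mu_{2}^{-1}(\mathcal{L})\|\bm{g}_{k}\|_{\mathcal{L}}$ explicitly and are more careful about the restriction to $\bm{1}^{\perp}$, both of which the paper asserts with less detail.
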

\begin{proof}
See Appendix. 
\end{proof}

At this stage, we are ready to present the main results quantifying the convergence phases exhibited by our approach:
\begin{theorem}
Let $\gamma$, $\Gamma$, $B$ be the constants defined in Assumption~\ref{Ass:Two} and Lemma~\ref{lemma:Crap}, $\mu_{n}(\mathcal{L})$ and $\mu_{2}(\mathcal{L})$ representing the largest and second smallest eigenvalues of the normalized laplacian $\mathcal{L}$, $\epsilon \in \left(0, \frac{\mu_{2}(\mathcal{L}}{\mu_{n}(\mathcal{L})}\sqrt{\frac{\Gamma}{\gamma}}\right)$ the precision parameter for the SDDM solver, and letting the optimal step-size parameter $\alpha^{*}=\frac{e^{-\epsilon^{2}}}{(1+\epsilon)^{2}}\left(\frac{\gamma}{\Gamma}\frac{\mu_{2}(\mathcal{L})}{\mu_{n}(\mathcal{L})}\right)^{2}$. Then the proposed algorithm given by the $\bm{\lambda}_{k+1}=\bm{\lambda}_{k}+\alpha^{*}\tilde{\bm{d}}_{k}$ exhibits the following three phases of convergence:
\begin{enumerate}
\item \textbf{Strict Decreases Phase:} While $||\bm{g}_{k}||_{\mathcal{L}} \geq \eta_{1}$:
\begin{equation*}
q(\bm{\lambda}_{k+1})-q(\bm{\lambda}_{k}) \leq -\frac{1}{2} \frac{e^{-2\epsilon^{2}}}{(1+\epsilon)^{2}}\frac{\gamma^{3}}{\Gamma^{2}}\frac{\mu_{2}^{2}(\mathcal{L})}{\mu_{n}^{4}(\mathcal{L})}\eta_{1}^{2}.
\end{equation*} 
\item \textbf{Quadratic Decrease Phase:} While $\eta_{0} \leq ||\bm{g}_{k}||_{\mathcal{L}}\leq \eta_{1}$:
\begin{equation*}
||\bm{g}_{k+1}||_{\mathcal{L}} \leq \frac{1}{\eta_{1}}||\bm{g}_{k}||_{\mathcal{L}}^{2}.
\end{equation*}
\item \textbf{Terminal Phase:} When $||\bm{g}_{k}||_{\mathcal{L}} \leq \eta_{0}$: 
\begin{equation*}
||\bm{g}_{k+1}||_{\mathcal{L}}\leq \sqrt{\left[1-\alpha^{*}+\alpha^{*}\epsilon\frac{\mu_{n}(\mathcal{L})}{\mu_{2}(\mathcal{L})}\sqrt{\frac{\Gamma}{\gamma}}\right]}||\bm{g}_{k}||_{\mathcal{L}},
\end{equation*}
\end{enumerate}
where $\eta_{0}=\frac{\bm{\xi}(1-\bm{\xi})}{\bm{\zeta}}$ and $\eta_{1}=\frac{1-\bm{\xi}}{\bm{\zeta}}$, with 
\begin{align}\label{Eq:Aux}
\bm{\xi}&=\sqrt{\left[1-\alpha^*+\alpha^*\epsilon\frac{\mu_{n}(\mathcal{L})}{\mu_{2}(\mathcal{L})}\sqrt{\frac{\Gamma}{\gamma}}\right]} \ \ \ \text{with} \ \ \ \bm{\zeta}=\frac{B(\alpha^{*}\Gamma(1+\epsilon))^{2}}{2\mu_{2}^{2}(\mathcal{L})}
\end{align}
\end{theorem}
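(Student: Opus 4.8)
The plan is to read all three phases off a single master recurrence. Substituting the optimal step $\alpha^{*}$ together with the definitions of $\bm{\xi}$ and $\bm{\zeta}$ from~\eqref{Eq:Aux} into the per-iteration bound~\eqref{Eq:NormG}, the contraction collapses to
\begin{equation*}
||\bm{g}_{k+1}||_{\mathcal{L}} \leq \bm{\xi}^{2}||\bm{g}_{k}||_{\mathcal{L}} + \bm{\zeta}||\bm{g}_{k}||_{\mathcal{L}}^{2}.
\end{equation*}
The first step is to confirm that the admissible range of $\epsilon$ forces the linear coefficient $\bm{\xi}^{2}<1$ (equivalently $\epsilon\tfrac{\mu_{n}(\mathcal{L})}{\mu_{2}(\mathcal{L})}\sqrt{\Gamma/\gamma}<1$), so that $0<\bm{\xi}<1$ and consequently $0<\eta_{0}=\bm{\xi}\eta_{1}<\eta_{1}$; every threshold below is then well defined and correctly ordered.

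Phases two and three are pure algebra on this recurrence. For the quadratic phase I would factor the right-hand side as $||\bm{g}_{k}||_{\mathcal{L}}\big(\bm{\xi}^{2}+\bm{\zeta}||\bm{g}_{k}||_{\mathcal{L}}\big)$ and observe that the target bound $\tfrac{1}{\eta_{1}}||\bm{g}_{k}||_{\mathcal{L}}^{2}=\tfrac{\bm{\zeta}}{1-\bm{\xi}}||\bm{g}_{k}||_{\mathcal{L}}^{2}$ is equivalent, after cancelling one factor of $||\bm{g}_{k}||_{\mathcal{L}}$, to $\bm{\xi}^{2}\leq\bm{\zeta}\tfrac{\bm{\xi}}{1-\bm{\xi}}||\bm{g}_{k}||_{\mathcal{L}}$, i.e.\ to $||\bm{g}_{k}||_{\mathcal{L}}\geq\bm{\xi}(1-\bm{\xi})/\bm{\zeta}=\eta_{0}$, which is precisely the lower hypothesis of the phase. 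For the terminal phase, the bound $||\bm{g}_{k}||_{\mathcal{L}}\leq\eta_{0}$ lets me replace the quadratic term by $\bm{\zeta}\eta_{0}=\bm{\xi}(1-\bm{\xi})$, giving $||\bm{g}_{k+1}||_{\mathcal{L}}\leq(\bm{\xi}^{2}+\bm{\xi}(1-\bm{\xi}))||\bm{g}_{k}||_{\mathcal{L}}=\bm{\xi}||\bm{g}_{k}||_{\mathcal{L}}$, the claimed linear rate.

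The strict-decrease phase is the only step needing genuinely new work, namely a descent inequality for $q$. Since Lemma~\ref{lemma:Crap} gives $\bm{H}(\bm{\lambda})=\bm{A}[\nabla^{2}f]^{-1}\bm{A}^{\mathsf{T}}$ and Assumption~\ref{Ass:Two} bounds $\gamma\leq\ddot{\bm{\Phi}}_{e}\leq\Gamma$, the dual Hessian obeys $\tfrac{1}{\Gamma}\mathcal{L}\preceq\bm{H}(\bm{\lambda})\preceq\tfrac{1}{\gamma}\mathcal{L}$ uniformly on $\bm{1}^{\perp}$. The upper inequality yields the quadratic upper model
\begin{equation*}
q(\bm{\lambda}_{k+1})-q(\bm{\lambda}_{k})\leq \alpha^{*}\bm{g}_{k}^{\mathsf{T}}\tilde{\bm{d}}_{k}+\frac{(\alpha^{*})^{2}}{2\gamma}||\tilde{\bm{d}}_{k}||_{\mathcal{L}}^{2}.
\end{equation*}
Using $\tilde{\bm{d}}_{k}=-\bm{Z}_{k}\bm{g}_{k}$, Lemma~\ref{Lemma:Bla}, and the pseudoinverse inequality $\bm{H}_{k}^{\dagger}\succeq\gamma\mathcal{L}^{\dagger}$ I would lower bound $-\bm{g}_{k}^{\mathsf{T}}\tilde{\bm{d}}_{k}=\bm{g}_{k}^{\mathsf{T}}\bm{Z}_{k}\bm{g}_{k}\geq e^{-\epsilon^{2}}\tfrac{\gamma}{\mu_{n}^{2}(\mathcal{L})}||\bm{g}_{k}||_{\mathcal{L}}^{2}$, while the direction bound $||\tilde{\bm{d}}_{k}||_{\mathcal{L}}\leq\tfrac{(1+\epsilon)\Gamma}{\mu_{2}(\mathcal{L})}||\bm{g}_{k}||_{\mathcal{L}}$ (the same estimate that produces $\bm{\zeta}$ in the proof of~\eqref{Eq:NormG}) controls the curvature term. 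The right-hand side is then a quadratic in the step whose unconstrained minimiser is exactly $\alpha^{*}$; evaluating there leaves precisely half of the linear descent, i.e.\ $-\tfrac{1}{2}\tfrac{e^{-2\epsilon^{2}}}{(1+\epsilon)^{2}}\tfrac{\gamma^{3}}{\Gamma^{2}}\tfrac{\mu_{2}^{2}(\mathcal{L})}{\mu_{n}^{4}(\mathcal{L})}||\bm{g}_{k}||_{\mathcal{L}}^{2}$, and invoking $||\bm{g}_{k}||_{\mathcal{L}}\geq\eta_{1}$ gives the stated constant decrease.

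The main obstacle is this last descent estimate: pinning down the smoothness constant $1/\gamma$, importing the sharp bound on $||\tilde{\bm{d}}_{k}||_{\mathcal{L}}$, and verifying that the prescribed $\alpha^{*}$ is exactly the minimiser of the quadratic model so that the characteristic factor $\tfrac{1}{2}$ emerges. A secondary, recurring care point is that all Loewner and pseudoinverse manipulations (e.g.\ $\bm{H}_{k}^{\dagger}\succeq\gamma\mathcal{L}^{\dagger}$ and the eigenvalue conversions between $||\cdot||$ and $||\cdot||_{\mathcal{L}}$) are valid only on $\bm{1}^{\perp}$, which is legitimate here because $\bm{g}_{k}=\bm{A}\bm{x}(\bm{\lambda}_{k})-\bm{b}\in\bm{1}^{\perp}$ and $\tilde{\bm{d}}_{k}$ lies in the same subspace.
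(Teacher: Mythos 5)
Your proposal is correct and follows essentially the same route as the paper's proof: the same master recurrence $||\bm{g}_{k+1}||_{\mathcal{L}} \leq \bm{\xi}^{2}||\bm{g}_{k}||_{\mathcal{L}} + \bm{\zeta}||\bm{g}_{k}||_{\mathcal{L}}^{2}$ handles the quadratic and terminal phases by the identical algebra, and your strict-decrease argument (quadratic upper model with smoothness $1/\gamma$, the bounds $\bm{g}_{k}^{\mathsf{T}}\tilde{\bm{d}}_{k} \leq -e^{-\epsilon^{2}}\frac{\gamma}{\mu_{n}^{2}(\mathcal{L})}||\bm{g}_{k}||_{\mathcal{L}}^{2}$ and $||\tilde{\bm{d}}_{k}||_{\mathcal{L}} \leq \frac{(1+\epsilon)\Gamma}{\mu_{2}(\mathcal{L})}||\bm{g}_{k}||_{\mathcal{L}}$, then minimizing the resulting quadratic in $\alpha$ to recover $\alpha^{*}$ and the factor $\frac{1}{2}$) is exactly the paper's derivation, with only cosmetic differences such as deriving the inner-product bound via $\bm{H}_{k}^{\dagger}\succeq\gamma\mathcal{L}^{\dagger}$ rather than the paper's eigenvalue chain, and your explicit (and welcome) checks that $0<\bm{\xi}<1$ and that all manipulations live on $\bm{1}^{\perp}$.
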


\begin{proof}
We will proof the above theorem by handling each of the cases separately. We start by considering the case when $||\bm{g}_{k}||_{\mathcal{L}} > \eta_{1}$ (i.e., \textbf{Strict Decrease Phase}). We have:
\begin{align*}
q(\lambda_{k+1}) & = q(\bm{\lambda}_{k}) + \bm{g}_{k}^{\mathsf{T}}(\bm{\lambda}_{k+1}-\bm{\lambda}_{k}) +\frac{1}{2}(\bm{\lambda}_{k+1}-\bm{\lambda}_{k})^{\mathsf{T}}\bm{H}(\bm{z})(\bm{\lambda}_{k+1}-\bm{\lambda}_{k}) \\ 
& = q(\bm{\lambda}_{k}) + \alpha_{k} \bm{g}_{k}^{\mathsf{T}}\tilde{\bm{d}}_{k} +\frac{\bm{\alpha}_{k}^{2}}{2} \tilde{\bm{d}}_{k}^{\mathsf{T}}\bm{H}(\bm{z})\tilde{\bm{d}}_{k} \leq q(\bm{\lambda}_{k}) + \alpha_{k}\bm{g}_{k}^{\mathsf{T}}\tilde{\bm{d}}_{k}+\frac{\alpha_{k}^{2}}{2\gamma}\tilde{\bm{d}}^{\mathsf{T}}_{k} \mathcal{L}\tilde{\bm{d}}_{k},
\end{align*} 
where the last steps holds since $\bm{H}(\cdot) \preceq \frac{1}{\gamma}\mathcal{L}$. Noticing that $||\tilde{\bm{d}}_{k}||_{\mathcal{L}}^{2} \leq \frac{\Gamma^{2}(1+\epsilon)^{2}}{\mu_{2}^{2}(\mathcal{L})}||\bm{g}_{k}||_{\mathcal{L}}^{2}$ (see Appendix), the only remaining step needed is to evaluate $\bm{g}_{k}^{\mathsf{T}}\tilde{\bm{d}}_{k}$. Knowing that $\tilde{\bm{d}}_{k} = -\bm{Z}_{k}\bm{g}_{k}$, we recognize
\begin{align*}
\bm{g}_{k}^{\mathsf{T}}\tilde{\bm{d}}_{k} &= - \bm{g}_{k}^{\mathsf{T}}\bm{Z}_{k}\bm{g}_{k} \leq e^{-\epsilon^{2}}\bm{g}_{k}^{\mathsf{T}}\bm{H}_{k}^{\dagger}\bm{g}_{k} \ (\text{Lemma~\ref{Lemma:Bla}}) \\
&\leq -\frac{e^{-\epsilon^{2}}}{\mu_{n}(\bm{H}_{k})}\bm{g}_{k}^{\mathsf{T}}\bm{g}_{k} \leq -\frac{e^{-\epsilon}}{\mu_{n}(\mathcal{L})}\bm{g}_{k}^{\mathsf{T}}\bm{g}_{k} \leq -\frac{e^{-\epsilon^{2}}\gamma}{\mu_{n}(\mathcal{L})}\frac{\bm{g}_{k}^{\mathsf{T}}\mathcal{L}\bm{g}_{k}}{\mu_{n}(\mathcal{L})} = \frac{e^{-\epsilon^{2}}\gamma}{\mu_{n}^{2}(\mathcal{L})}||\bm{g}_{k}||_{\mathcal{L}}^{2},
\end{align*}
where the last step follows from the fact that $\forall \bm{v}\in \mathbb{R}^{n}: \bm{v}^{\mathsf{T}}\bm{v} \geq \frac{\bm{v}^{\mathsf{T}}\mathcal{L}\bm{v}}{\mu_{n}(\mathcal{L})}$. Therefore, we can write
\begin{equation*}
q(\bm{\lambda}_{k+1})-q(\bm{\lambda}_{k}) \leq -\left[\alpha_{k}\frac{e^{-\epsilon^{2}}\gamma}{\mu_{n}^{2}(\mathcal{L})}-\alpha_{k}^{2}\frac{\Gamma^{2}(1+\epsilon)^{2}}{2\gamma\mu_{2}^{2}(\mathcal{L})}\right]||\bm{g}_{k}||_{\mathcal{L}}^{2}.
\end{equation*}
It is easy to see that $\alpha_{k}=\alpha^{*}=\frac{e^{-\epsilon^{2}}}{(1+\epsilon)^{2}}\left(\frac{\gamma}{\Gamma}\frac{\mu_{2}(\mathcal{L})}{\mu_{n}(\mathcal{L})}\right)^{2}$ minimizes the right-hand-side of the above equation. Using $||\bm{g}_{k}||_{\mathcal{L}}$ gives the constant decrement in the dual function between two successive iterations as 
\begin{equation*}
q(\bm{\lambda}_{k+1})-q(\bm{\lambda}_{k}) \leq -\frac{1}{2} \frac{e^{-2\epsilon^{2}}}{(1+\epsilon)^{2}}\frac{\gamma^{3}}{\Gamma^{2}}\frac{\mu_{2}^{2}(\mathcal{L})}{\mu_{n}^{4}(\mathcal{L})}\eta_{1}^{2}.
\end{equation*}
Considering the case when $\eta_{0} \leq ||\bm{g}_{k}||_{\mathcal{L}}^{2}\eta_{1}$ (i.e., \textbf{Quadratic Decrease Phase}), Equation~\ref{Eq:NormG} can be rewritten as
\begin{equation*}
||\bm{g}_{k+1}||_{\mathcal{L}}\leq \bm{\xi}^{2}||\bm{g}_{k}||_{\mathcal{L}} + \bm{\zeta}||\bm{g}_{k}||_{\mathcal{L}}^{2},
\end{equation*}
with $\bm{\xi}$ and $\bm{\zeta}$ defined as in Equation~\ref{Eq:Aux}. Further, noticing that since $||\bm{g}_{k}||_{\mathcal{L}} \geq \eta_{0}$ then $||\bm{g}_{k}||_{\mathcal{L}}\leq \frac{1}{\eta_{0}}||\bm{g}_{k}||_{\mathcal{L}}^{2}=\frac{\bm{\zeta}}{\bm{\xi}(1-\bm{\xi})}||\bm{g}_{k}||_{\mathcal{L}}^{2}$. Consequently the quadratic decrease phase is finalized by
\begin{align*}
||\bm{g}_{k+1}||_{\mathcal{L}} \leq \bm{\zeta}\left(\frac{\bm{\xi}}{1-\bm{\xi}}+1\right)||\bm{g}_{k}||_{\mathcal{L}}^{2}&=\frac{\bm{\zeta}}{1-\bm{\xi}}||\bm{g}_{k}||_{\mathcal{L}}^{2} =\frac{1}{\eta_{1}}||\bm{g}_{k}||_{\mathcal{L}}^{2}.
\end{align*}
Finally, we handle the case where $||\bm{g}_{k}||_{\mathcal{L}}\leq \eta_0$ (i.e., \textbf{Terminal Phase}). Since $||\bm{g}_{k}||_{\mathcal{L}}^{2} \leq \eta_{0}||\bm{g}_{k}||_{\mathcal{L}}$, it is easy to see that 
\begin{align*}
||\bm{g}_{k+1}||_{\mathcal{L}} &\leq (\bm{\xi}^{2}+\bm{\zeta}\eta_{0})||\bm{g}_{k}||_{\mathcal{L}}=(\bm{\xi}^{2}+\bm{\xi}(1-\bm{\xi})) ||\bm{g}_{k}||_{\mathcal{L}} \\
& =\bm{\xi}||\bm{g}_{k}||_{\mathcal{L}} = \sqrt{\left[1-\alpha^*+\alpha^*\epsilon\frac{\mu_{n}(\mathcal{L})}{\mu_{2}(\mathcal{L})}\sqrt{\frac{\Gamma}{\gamma}}\right]}||\bm{g}_{k}||_{\mathcal{L}}.
\end{align*}
\end{proof}

\subsubsection{Iteration Count and Message Complexity}
Having proved the three convergence phases of our algorithm, we next analyze the number of iterations needed by each phase. These results are summarized in the following lemma: 

\begin{lemma}\label{Lemma:Iter}
Consider the algorithm given by the following iteration protocol: $\bm{\lambda}_{k+1}=\bm{\lambda}_{k}+\alpha^{*}\tilde{\bm{d}}_{k}$. Let $\bm{\lambda}_{0}$ be the initial value of the dual variable, and $q^{*}$ be the optimal value of the dual function. Then, the number of iterations needed by each of the three phases satisfy: 
\begin{enumerate}
\item The {\textbf{strict decrease phase}} requires the following number of iterations to achieve the quadratic phase: 
\begin{equation*}
N_{1} \leq C_{1} \frac{\mu_{n}(\mathcal{L})^{2}}{\mu_{2}^{3}(\mathcal{L})} \left[1-\epsilon\frac{\mu_{n}(\mathcal{L})}{\mu_{2}(\mathcal{L})}\sqrt{\frac{\Gamma}{\gamma}}\right]^{-2},
\end{equation*}
where $C_{1}=C_{1}\left(\epsilon, \gamma, \Gamma, \bm{\delta}, q(\bm{\lambda}_{0}), q^{\star} \right)=2\bm{\delta}^{2}(1+\epsilon)^{2}\left[q(\bm{\lambda}_{0})-q^{\star}\right]\frac{\Gamma^{2}}{\gamma}$.
\item The \textbf{quadratic decrease phase} requires the following number of iterations to terminate: 
\begin{equation*}
N_{2} = \log_{2}\left[\frac{\frac{1}{2}\log_{2}\left(\left[1-\alpha^{*}\left(1-\epsilon\frac{\mu_{n}(\mathcal{L})}{\mu_{2}(\mathcal{L})}\sqrt{\frac{\Gamma}{\gamma}}\right)\right]\right)}{\log_{2}(r)}\right],
\end{equation*}
where $r=\frac{1}{\eta_{1}}||\bm{g}_{k^{\prime}}||_{\mathcal{L}}$, with $k^{\prime}$ being the first iteration of the quadratic decrease phase. 
\item The radius of the {\textbf{terminal phase}} is characterized by:
\begin{equation*}
\rho_{\text{terminal}} \leq \frac{2\left[1-\epsilon\frac{\mu_{n}(\mathcal{L})}{\mu_{2}(\mathcal{L})}\sqrt{\frac{\Gamma}{\gamma}}\right]}{e^{-\epsilon^{2}\gamma\bm{\delta}}}\mu_{n}(\mathcal{L})\sqrt{\mu_{2}(\mathcal{L})}.
\end{equation*}
\end{enumerate}
\end{lemma}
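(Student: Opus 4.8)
The plan is to treat the three phases separately: in each case I would convert the \emph{per-iteration} guarantee supplied by the preceding convergence theorem into a \emph{count} (or, for the terminal phase, a radius), and then substitute the definitions of $\alpha^{\star}$, $B$, $\xi$, $\zeta$, $\eta_0$ and $\eta_1$ to collapse everything to the stated closed forms. Throughout I would use that the dual function is bounded below by its optimum $q^{\star}$, that $B=\frac{\mu_{n}(\mathcal{L})\delta}{\gamma\sqrt{\mu_{2}(\mathcal{L})}}$ from Lemma~\ref{lemma:Crap}, and the two identities $\xi^{2}=1-\alpha^{\star}\!\left(1-\epsilon\frac{\mu_{n}(\mathcal{L})}{\mu_{2}(\mathcal{L})}\sqrt{\tfrac{\Gamma}{\gamma}}\right)$ and $\eta_{0}/\eta_{1}=\xi$, both immediate from the definitions in Equation~\ref{Eq:Aux}.

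For the \textbf{strict decrease phase}, the key observation is that every iteration with $\|\bm{g}_{k}\|_{\mathcal{L}}\ge\eta_{1}$ lowers $q$ by at least the fixed amount $\Delta=\frac{1}{2}\frac{e^{-2\epsilon^{2}}}{(1+\epsilon)^{2}}\frac{\gamma^{3}}{\Gamma^{2}}\frac{\mu_{2}^{2}(\mathcal{L})}{\mu_{n}^{4}(\mathcal{L})}\eta_{1}^{2}$. Since $q$ cannot fall below $q^{\star}$, the number of such iterations obeys $N_{1}\le\big(q(\bm{\lambda}_{0})-q^{\star}\big)/\Delta$. I would then substitute $\eta_{1}=(1-\xi)/\zeta$, use $1-\xi=(1-\xi^{2})/(1+\xi)\ge(1-\xi^{2})/2$ together with the first identity to replace $1-\xi^{2}$ by $\alpha^{\star}\big(1-\epsilon\frac{\mu_{n}}{\mu_{2}}\sqrt{\Gamma/\gamma}\big)$, and expand $\zeta$ and $B$; the nested powers of $\alpha^{\star}$, $\Gamma$, $(1+\epsilon)$ and of the eigenvalues cancel down to the advertised $C_{1}\frac{\mu_{n}^{2}(\mathcal{L})}{\mu_{2}^{3}(\mathcal{L})}\big[1-\epsilon\frac{\mu_{n}}{\mu_{2}}\sqrt{\Gamma/\gamma}\big]^{-2}$ with $C_{1}=2\delta^{2}(1+\epsilon)^{2}[q(\bm{\lambda}_{0})-q^{\star}]\frac{\Gamma^{2}}{\gamma}$.

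For the \textbf{quadratic decrease phase}, I would set $r_{k}=\|\bm{g}_{k}\|_{\mathcal{L}}/\eta_{1}$, turning the guarantee $\|\bm{g}_{k+1}\|_{\mathcal{L}}\le\eta_{1}^{-1}\|\bm{g}_{k}\|_{\mathcal{L}}^{2}$ into the clean recursion $r_{k+1}\le r_{k}^{2}$, so after $m$ steps from the first quadratic index $k^{\prime}$ one has $r_{k^{\prime}+m}\le r_{k^{\prime}}^{2^{m}}$. The phase ends once $\|\bm{g}_{k}\|_{\mathcal{L}}\le\eta_{0}$, i.e. $r_{k}\le\eta_{0}/\eta_{1}=\xi$; imposing $r_{k^{\prime}}^{2^{m}}\le\xi$ and taking logarithms twice (the inequality flips legitimately because $r_{k^{\prime}}<1$) gives $m\ge\log_{2}\!\big(\log_{2}\xi/\log_{2}r_{k^{\prime}}\big)$, and writing $\log_{2}\xi=\tfrac12\log_{2}\xi^{2}$ with the identity for $\xi^{2}$ reproduces the stated $N_{2}$ with $r=r_{k^{\prime}}$.

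For the \textbf{terminal phase}, the region is exactly $\{\|\bm{g}_{k}\|_{\mathcal{L}}\le\eta_{0}\}$, so its radius is $\rho_{\text{terminal}}=\eta_{0}=\xi(1-\xi)/\zeta$. Bounding $\xi(1-\xi)\le 1-\xi\le 1-\xi^{2}$ and substituting $\zeta=B(\alpha^{\star}\Gamma(1+\epsilon))^{2}/(2\mu_{2}^{2}(\mathcal{L}))$, $B$, and $\alpha^{\star}$ yields $\rho_{\text{terminal}}\le\frac{2[1-\epsilon\frac{\mu_{n}}{\mu_{2}}\sqrt{\Gamma/\gamma}]}{e^{-\epsilon^{2}}\gamma\delta}\mu_{n}(\mathcal{L})\sqrt{\mu_{2}(\mathcal{L})}$ (confirming that the exponent in the statement should read $e^{-\epsilon^{2}}\gamma\delta$). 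The conceptual content of all three parts is light; \textbf{the main obstacle is purely the algebraic bookkeeping}, namely tracking the deeply nested constants $\alpha^{\star},B,\xi,\zeta$ and choosing the right elementary bounds (e.g. $1+\xi\le2$, $\xi\le1$) so the exact expressions collapse cleanly. A secondary point to check is that the logarithms defining $N_{2}$ remain positive and well-defined, which needs $0<r_{k^{\prime}}<\xi<1$, i.e. that the quadratic phase is genuinely entered with $\|\bm{g}_{k^{\prime}}\|_{\mathcal{L}}<\eta_{1}$.
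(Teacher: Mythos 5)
Your proposal matches the paper's own proof in all three parts: the strict-decrease count comes from dividing $q(\bm{\lambda}_{0})-q^{\star}$ by the constant per-iteration decrement and applying $(1-\bm{\xi})^{2}\geq\frac{1}{4}(1-\bm{\xi}^{2})^{2}$ with $1-\bm{\xi}^{2}=\alpha^{*}\bigl(1-\epsilon\frac{\mu_{n}(\mathcal{L})}{\mu_{2}(\mathcal{L})}\sqrt{\frac{\Gamma}{\gamma}}\bigr)$, the quadratic phase uses the same normalized recursion $r_{k+1}\leq r_{k}^{2}$ with a double logarithm against the threshold $\eta_{0}/\eta_{1}=\bm{\xi}$, and the terminal radius is $\eta_{0}\leq(1-\bm{\xi}^{2})/\bm{\zeta}$ followed by substitution of $\bm{\zeta}$, $B$, and $\alpha^{*}$. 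You also correctly identify that the denominator in the stated terminal-phase bound should read $e^{-\epsilon^{2}}\gamma\bm{\delta}$ rather than $e^{-\epsilon^{2}\gamma\bm{\delta}}$, which is exactly what the paper's own derivation produces.
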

\begin{proof}
See Appendix. 
\end{proof}

Given the above result, the total message complexity can then be derived as:
\begin{equation*}
\mathcal{O}\left(\left(N_{1}+N_{2}\right)n\beta\left(\kappa(\bm{H}_{k})\frac{1}{R}+R\text{d}_{\max}\right)\log\left(\frac{1}{\epsilon}\right)\right).
\end{equation*} 

\subsubsection{Comparison to Existing Literature}
Recent progress towards distributed second-order methods applied to network flow optimization adopt an approximation scheme based on the properties of the Hessian. Most of these methods (e.g.,~\cite{c7}) handle the simpler consensus setting and thus are not directly applicable to our more general setting. Closest to this work is that proposed in~\cite{c5}, where the authors approximate the Newton direction by truncating the Neumann expansion of the pseudo-inverse of the Hessian. This truncation, however, introduces additional error to the computation of the Newton direction leading to inaccurate results especially on large networks. The primary contrast to this work is that our method is capable of acquiring $\epsilon$-close (for any arbitrary $\epsilon$) approximation to the Newton direction. In fact, the proposed method is almost identical to the exact Newton direction computed in a centralized manner (see Section~\ref{Sec:ExpNewton}). 

Next, we formally derive the iteration counts for our method on four-special cases as benchmark comparisons. Since the main contribution (due to the presence of $\log \log (\cdot)$ term in $N_{2}$) in the iteration count is given by the strict decrease phase (see Lemma~\ref{Lemma:Iter}), we develop these results in terms of $N_{1}$. Also note that the corollaries provided below are substantially harder to acquire when compared to the standard Newton analysis due to the dependence of some constants, e.g, $m$ and $M$ on the graph structure. Opposed to the analysis performed by other methods, our analysis explicitly handles such dependencies leading to more realistic and accurate mathematical insights. This explains the relatively high dependency on $n$ in some cases. Note, that in such case where these relations (i.e., parameter dependency on the graph structure) were not taken into account, substantial decrease in the complexity can be achieved at the compensate of accurate mathematical description.  

\textbf{Path Graph:} 
\begin{corollary}
Given a path graph $\mathcal{P}_{n}$ with $n$ nodes, the strict-decrease phase of the distributed Newton method is given by:
\begin{equation*}
N_{1}\left(\mathcal{P}_{n}\right)=\mathcal{O}\left(n^{6}\left[1-\epsilon\frac{4n^{2}}{\pi^{2}}\sqrt{\frac{\Gamma}{\gamma}}\right]^{-2}\right)
\end{equation*}
\end{corollary}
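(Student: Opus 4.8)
The plan is to specialize the general strict-decrease iteration bound established in Lemma~\ref{Lemma:Iter}, namely
\[
N_{1} \leq C_{1} \frac{\mu_{n}(\mathcal{L})^{2}}{\mu_{2}^{3}(\mathcal{L})} \left[1-\epsilon\frac{\mu_{n}(\mathcal{L})}{\mu_{2}(\mathcal{L})}\sqrt{\frac{\Gamma}{\gamma}}\right]^{-2},
\]
to the case in which $\mathcal{L}$ is the unweighted Laplacian of the path graph $\mathcal{P}_{n}$. Because the constant $C_{1}=C_{1}(\epsilon,\gamma,\Gamma,\bm{\delta},q(\bm{\lambda}_{0}),q^{\star})$ carries no dependence on $n$, the entire computation reduces to inserting the extremal eigenvalues of the path Laplacian and tracking their growth with $n$.

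First I would recall the closed-form spectrum of the path Laplacian: its eigenvalues are $\lambda_{k}=2-2\cos\!\left(\frac{k\pi}{n}\right)=4\sin^{2}\!\left(\frac{k\pi}{2n}\right)$ for $k=0,1,\ldots,n-1$. The zero eigenvalue $\lambda_{0}=0$ corresponds to the all-ones vector and is excluded by the convention that the condition number uses the smallest \emph{nonzero} eigenvalue. Hence the relevant extremal eigenvalues are
\[
\mu_{2}(\mathcal{L})=4\sin^{2}\!\left(\frac{\pi}{2n}\right), \qquad \mu_{n}(\mathcal{L})=4\cos^{2}\!\left(\frac{\pi}{2n}\right).
\]
Next I would extract their asymptotics: by $\sin x = x + O(x^{3})$ one gets $\mu_{2}(\mathcal{L})=\frac{\pi^{2}}{n^{2}}(1+o(1))=\Theta(n^{-2})$, while $\mu_{n}(\mathcal{L})\to 4$, so $\mu_{n}(\mathcal{L})=\Theta(1)$. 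Substituting into the prefactor gives
\[
\frac{\mu_{n}(\mathcal{L})^{2}}{\mu_{2}^{3}(\mathcal{L})}=\frac{\Theta(1)}{\Theta(n^{-6})}=\Theta(n^{6}),
\]
and into the bracket gives the ratio $\frac{\mu_{n}(\mathcal{L})}{\mu_{2}(\mathcal{L})}=\frac{4n^{2}}{\pi^{2}}(1+o(1))$, which is precisely the quantity appearing inside $[\,\cdot\,]^{-2}$. Absorbing $C_{1}$ together with the leading eigenvalue constants into $\mathcal{O}(\cdot)$, while retaining the explicit ratio $\frac{4n^{2}}{\pi^{2}}$ inside the bracket (since it multiplies the precision $\epsilon$ and so cannot be hidden in the order symbol), yields the claimed bound $N_{1}(\mathcal{P}_{n})=\mathcal{O}\!\left(n^{6}\left[1-\epsilon\frac{4n^{2}}{\pi^{2}}\sqrt{\frac{\Gamma}{\gamma}}\right]^{-2}\right)$.

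The main obstacle lies less in the algebra than in bookkeeping about which quantities are genuinely $n$-independent and in respecting the admissibility constraint. The convergence theorem requires $\epsilon$ to lie in an interval whose width scales as $\frac{\mu_{2}(\mathcal{L})}{\mu_{n}(\mathcal{L})}=\Theta(n^{-2})$; consequently the precision parameter must itself shrink like $n^{-2}$ for the bracketed factor to remain positive and for the bound to be meaningful. I would make this dependence explicit, since otherwise the $[\,\cdot\,]^{-2}$ term blows up. The remaining care is to confirm that $C_{1}$ — in particular the dual suboptimality gap $q(\bm{\lambda}_{0})-q^{\star}$ and the constants $\gamma,\Gamma,\bm{\delta}$ — is independent of $n$ for the path-graph instance, so that its absorption into $\mathcal{O}(\cdot)$ is legitimate and the stated order in $n$ is faithful.
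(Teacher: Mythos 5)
Your proposal is correct and follows exactly the route the paper intends: specializing the strict-decrease bound of Lemma~\ref{Lemma:Iter} by inserting the unweighted path-Laplacian eigenvalues $\mu_{2}(\mathcal{L})=4\sin^{2}\left(\frac{\pi}{2n}\right)=\Theta(n^{-2})$ and $\mu_{n}(\mathcal{L})=4\cos^{2}\left(\frac{\pi}{2n}\right)=\Theta(1)$, which yields the $n^{6}$ prefactor and the ratio $\frac{\mu_{n}(\mathcal{L})}{\mu_{2}(\mathcal{L})}\approx\frac{4n^{2}}{\pi^{2}}$ inside the bracket. Your additional remarks---that $\epsilon$ must shrink like $n^{-2}$ for the bracket to stay positive, and that $C_{1}$ must be checked to be $n$-independent---are sound points of care that the paper leaves implicit.
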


\textbf{Grid Graph:} 
\begin{corollary}
For a grid graph $\mathcal{G}_{k \times m}$ the strict-decrease phase of the distributed Newton method is given by:
\begin{equation*}
N_{1}\left(\mathcal{G}_{k \times m}\right)=\mathcal{O}\left(n^{3}\left[1-\epsilon\frac{8n^{2}}{\pi^{2}\left(k^{2}+\frac{n^{2}}{k^{2}}\right)}\sqrt{\frac{\Gamma}{\gamma}}\right]^{-2}\right)
\end{equation*}
\end{corollary}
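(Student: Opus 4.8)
The plan is to specialize the general iteration bound of Lemma~\ref{Lemma:Iter} to the grid topology, so that essentially all of the work reduces to a spectral computation. Recall that Lemma~\ref{Lemma:Iter} gives
\[
N_{1} \leq C_{1}\,\frac{\mu_{n}(\mathcal{L})^{2}}{\mu_{2}^{3}(\mathcal{L})}\left[1-\epsilon\frac{\mu_{n}(\mathcal{L})}{\mu_{2}(\mathcal{L})}\sqrt{\frac{\Gamma}{\gamma}}\right]^{-2},
\]
where $C_{1}=2\bm{\delta}^{2}(1+\epsilon)^{2}\left[q(\bm{\lambda}_{0})-q^{\star}\right]\tfrac{\Gamma^{2}}{\gamma}$ does not scale with $n$ for a fixed problem family and is therefore absorbed into the $\mathcal{O}(\cdot)$. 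Hence the entire task is to estimate the largest eigenvalue $\mu_{n}(\mathcal{L})$ and the spectral gap $\mu_{2}(\mathcal{L})$ of the unweighted Laplacian of $\mathcal{G}_{k\times m}$, after which the corollary follows by direct substitution.

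First I would exploit the product structure $\mathcal{G}_{k\times m}=\mathcal{P}_{k}\times\mathcal{P}_{m}$. Writing the grid Laplacian as the Kronecker sum $\mathcal{L}_{\mathcal{G}_{k\times m}}=\mathcal{L}_{\mathcal{P}_{k}}\otimes\bm{I}_{m}+\bm{I}_{k}\otimes\mathcal{L}_{\mathcal{P}_{m}}$, its spectrum is the set of pairwise sums $\{\lambda_{a}+\mu_{b}\}$ of the Laplacian eigenvalues of the two path factors. I would then recall the closed form for the path spectrum, namely $4\sin^{2}(\tfrac{i\pi}{2k})$ for $\mathcal{P}_{k}$ and analogously for $\mathcal{P}_{m}$, which already underpins the Path Graph corollary. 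From this product spectrum the largest eigenvalue is attained at the top of both factors, giving $\mu_{n}(\mathcal{L})=4\cos^{2}(\tfrac{\pi}{2k})+4\cos^{2}(\tfrac{\pi}{2m})\to 8$, while the gap arises from the two lowest nonzero modes $(a,b)\in\{(1,0),(0,1)\}$.

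Using the small-angle expansion $4\sin^{2}(\tfrac{\pi}{2k})\approx \pi^{2}/k^{2}$, these two modes contribute $\pi^{2}/k^{2}$ and $\pi^{2}/m^{2}$; writing the gap in the combined form $\mu_{2}(\mathcal{L})\approx\pi^{2}(\tfrac{1}{k^{2}}+\tfrac{1}{m^{2}})=\tfrac{\pi^{2}(k^{2}+m^{2})}{n^{2}}$ after the substitution $m=n/k$ (so $n=km$) reproduces exactly the bracketed factor $\tfrac{8n^{2}}{\pi^{2}(k^{2}+n^{2}/k^{2})}$ when plugged into $\mu_{n}/\mu_{2}$. For the prefactor I would substitute into $\mu_{n}^{2}/\mu_{2}^{3}=\tfrac{64 n^{6}}{\pi^{6}(k^{2}+m^{2})^{3}}$ and invoke the elementary inequality $k^{2}+m^{2}\geq 2km=2n$ (AM--GM), which bounds this by $\tfrac{8n^{3}}{\pi^{6}}=\mathcal{O}(n^{3})$, with equality at the balanced grid $k=m=\sqrt{n}$ showing the estimate is tight.

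The main obstacle I anticipate is pinning down $\mu_{2}(\mathcal{L})$ correctly, since this quantity sits in the denominator and controls both the prefactor and the bracket. The genuine gap is the \emph{minimum} of the two candidate modes, so I must justify replacing it by the combined scale $\pi^{2}(k^{2}+m^{2})/n^{2}$: this is exact up to a constant factor for the balanced grid (where the two modes coincide) and is the regime in which the stated $\mathcal{O}(n^{3})$ form is the intended reading, and I must also verify that the small-angle approximation is valid for large $k,m$. Once the gap estimate is fixed, everything else is routine substitution into the bound of Lemma~\ref{Lemma:Iter}.
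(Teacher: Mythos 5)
Your proposal is correct and follows exactly the route the paper intends: specialize the $N_{1}$ bound of Lemma~\ref{Lemma:Iter} to the grid by writing $\mathcal{G}_{k\times m}=\mathcal{P}_{k}\times\mathcal{P}_{m}$, taking the Laplacian spectrum as pairwise sums of the path eigenvalues $4\sin^{2}(i\pi/2k)$, so that $\mu_{n}(\mathcal{L})\to 8$, $\mu_{2}(\mathcal{L})\approx\pi^{2}(k^{2}+m^{2})/n^{2}$, and the $\mathcal{O}(n^{3})$ prefactor follows from $k^{2}+m^{2}\geq 2n$. Your caveat about the spectral gap (the true $\mu_{2}$ is the minimum of the two lowest modes, so the combined form and hence the stated bound are faithful only up to constants in the balanced regime $k\approx m\approx\sqrt{n}$) is exactly the reading the paper itself adopts, as its path-graph corollary's proviso $k=m=\mathcal{O}(\sqrt{n})$ indicates.
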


\textbf{Scale-Free Graph:} 
\begin{corollary}
For a scale free grid graph $\mathcal{G}_{\text{SN}(n)}$ with $n$, the strict-decrease phase is given by:
\begin{equation*}
N_{1}\left(\mathcal{G}_{\text{SN}(n)}\right)=\mathcal{O}\left(n^{4}\log n\left[1-\epsilon\frac{n^{\frac{3}{2}}\log n}{4}\sqrt{\frac{\Gamma}{\gamma}}\right]^{-2}\right)
\end{equation*}
\end{corollary}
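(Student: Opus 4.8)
The plan is to treat this corollary as a direct specialization of the general strict-decrease iteration bound already established in Lemma~\ref{Lemma:Iter}, namely
\begin{equation*}
N_{1} \leq C_{1} \frac{\mu_{n}(\mathcal{L})^{2}}{\mu_{2}^{3}(\mathcal{L})} \left[1-\epsilon\frac{\mu_{n}(\mathcal{L})}{\mu_{2}(\mathcal{L})}\sqrt{\frac{\Gamma}{\gamma}}\right]^{-2},
\end{equation*}
where $C_{1}=2\bm{\delta}^{2}(1+\epsilon)^{2}[q(\bm{\lambda}_{0})-q^{\star}]\frac{\Gamma^{2}}{\gamma}$ carries no dependence on the graph size $n$ (it is a function only of the per-edge cost constants $\gamma,\Gamma,\bm{\delta}$, the precision $\epsilon$, and the initial suboptimality gap). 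Hence the entire corollary reduces to inserting the two graph-dependent spectral quantities of the unweighted Laplacian $\mathcal{L}=\bm{A}\bm{A}^{\mathsf{T}}$ of a scale-free (Polya-urn) network: the largest eigenvalue $\mu_{n}(\mathcal{L})$ and the algebraic connectivity $\mu_{2}(\mathcal{L})$, and then folding $C_{1}$ into the $\mathcal{O}(\cdot)$.

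First I would import the spectral estimates for Polya-urn / scale-free graphs from~\cite{c39,c40}. These give, with high probability over the random graph construction, a maximal degree $d_{\max}=\Theta(\sqrt{n})$ together with
\begin{equation*}
\mu_{2}(\mathcal{L})=\Theta\left(\frac{\log n}{n}\right), \qquad \mu_{n}(\mathcal{L})=\Theta\left(\sqrt{n}\,(\log n)^{2}\right),
\end{equation*}
equivalently a condition number $\kappa(\mathcal{L})=\mu_{n}(\mathcal{L})/\mu_{2}(\mathcal{L})=\Theta(n^{3/2}\log n)$. This last ratio is exactly the one underlying the scale-free SDD-solver corollary proved earlier (where $\frac{\alpha\kappa}{R}$ with $\alpha=\Theta(\sqrt n)$ and $R=1$ yielded the $n^{2}\log n$ bound), so the two special-case results are mutually consistent and can share the same spectral input.

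The final step is pure substitution. For the leading prefactor,
\begin{equation*}
\frac{\mu_{n}(\mathcal{L})^{2}}{\mu_{2}^{3}(\mathcal{L})}=\Theta\!\left(\frac{n\,(\log n)^{4}}{(\log n)^{3}/n^{3}}\right)=\Theta\!\left(n^{4}\log n\right),
\end{equation*}
while the bracketed ratio becomes $\frac{\mu_{n}(\mathcal{L})}{\mu_{2}(\mathcal{L})}=\Theta(n^{3/2}\log n)$, whose numerical constant is absorbed into the $\frac{1}{4}$ appearing inside the stated bound. Collecting these yields
\begin{equation*}
N_{1}(\mathcal{G}_{\text{SN}(n)})=\mathcal{O}\!\left(n^{4}\log n\left[1-\epsilon\frac{n^{\frac{3}{2}}\log n}{4}\sqrt{\frac{\Gamma}{\gamma}}\right]^{-2}\right),
\end{equation*}
as claimed, valid whenever $\epsilon$ lies in the admissible range of the governing theorem so that the bracket stays positive.

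I expect the only genuine obstacle to be establishing (or correctly citing) the sharp spectral bounds for the sparse power-law graph, especially the algebraic connectivity $\mu_{2}(\mathcal{L})$: for scale-free networks $\mu_{2}$ is notoriously delicate and is precisely where the logarithmic factors originate, and because the spectrum is random the resulting statement is necessarily a high-probability one. Everything after that input is deterministic arithmetic inside Lemma~\ref{Lemma:Iter}.
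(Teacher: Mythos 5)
Your proposal is correct and takes essentially the same route as the paper's (implicit) derivation: since the paper supplies no separate argument for this corollary, the only available proof is precisely your substitution of the Polya-urn spectral estimates into the strict-decrease bound of Lemma~\ref{Lemma:Iter}, and your arithmetic, $\mu_{n}^{2}(\mathcal{L})/\mu_{2}^{3}(\mathcal{L})=\Theta\left(n^{4}\log n\right)$ together with $\mu_{n}(\mathcal{L})/\mu_{2}(\mathcal{L})=\Theta\left(n^{3/2}\log n\right)$, reproduces the stated bound exactly; indeed $\mu_{2}(\mathcal{L})=\Theta(\log n/n)$ and $\mu_{n}(\mathcal{L})=\Theta\left(\sqrt{n}\,(\log n)^{2}\right)$ are the \emph{unique} spectral inputs compatible with both the prefactor and the bracket, so they must also be what the authors used. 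One caveat worth noting: your side remark that $d_{\max}=\Theta(\sqrt{n})$ is incompatible with $\mu_{n}(\mathcal{L})=\Theta\left(\sqrt{n}\,(\log n)^{2}\right)$, since for any graph $\mu_{n}(\mathcal{L})\leq 2d_{\max}$; this tension, however, is inherited from the paper's own pair of scale-free corollaries (the SDDM one needs $\alpha\approx d_{\max}\approx\sqrt{n}$) rather than introduced by your argument.
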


\textbf{d-Regular Ramanujan Expanders:} For such expanders the iteration count for the strict-decrease phase can be bounded by a constant. 

\subsection{Experiments and Results}\label{Sec:ExpNewton}
We evaluated the proposed distributed second-order method in three sets of experiments on randomly generated and Barbell networks. The goal was to assess the performance on networks exhibiting good and bad mixing times. We compared our algorithm's performance to: 1) exact-newton computed in a centralized fashion, 2) Accelerated Dual Descent (ADD) with two different splittings~\cite{c5,c6}, 3) dual sub-gradients, and 4) the fully distributed algorithms for convex optimization~\cite{c41} (FDA). An $\epsilon$ of $\frac{1}{10,000}$, a feasibility threshold of $10^{-5}$, and an R-Hop of 1 were provided to our SDDM solver for determining the approximate Newton direction. For all other methods free parameters were chosen as specified by the relevant papers. Feasibility and objective values were used as performance measures.

\subsubsection{Experiments \& Results on Random Graphs} 
Two experiments on small (20 nodes, 60 edges) and large (50 nodes, 150 edges) random graphs were conducted. The random graphs were constructed in such a way that  edges were drawn uniformly at random. The flow vectors, $\bm{b}$, were chosen to place source and sink nodes $\text{diam}(\mathcal{G})$ apart.  
\begin{figure*}[t!]
\centering
\vspace{-.5em}
\subfigure[]{
	\label{fig:PerfSM}
\includegraphics[width=0.49\textwidth]{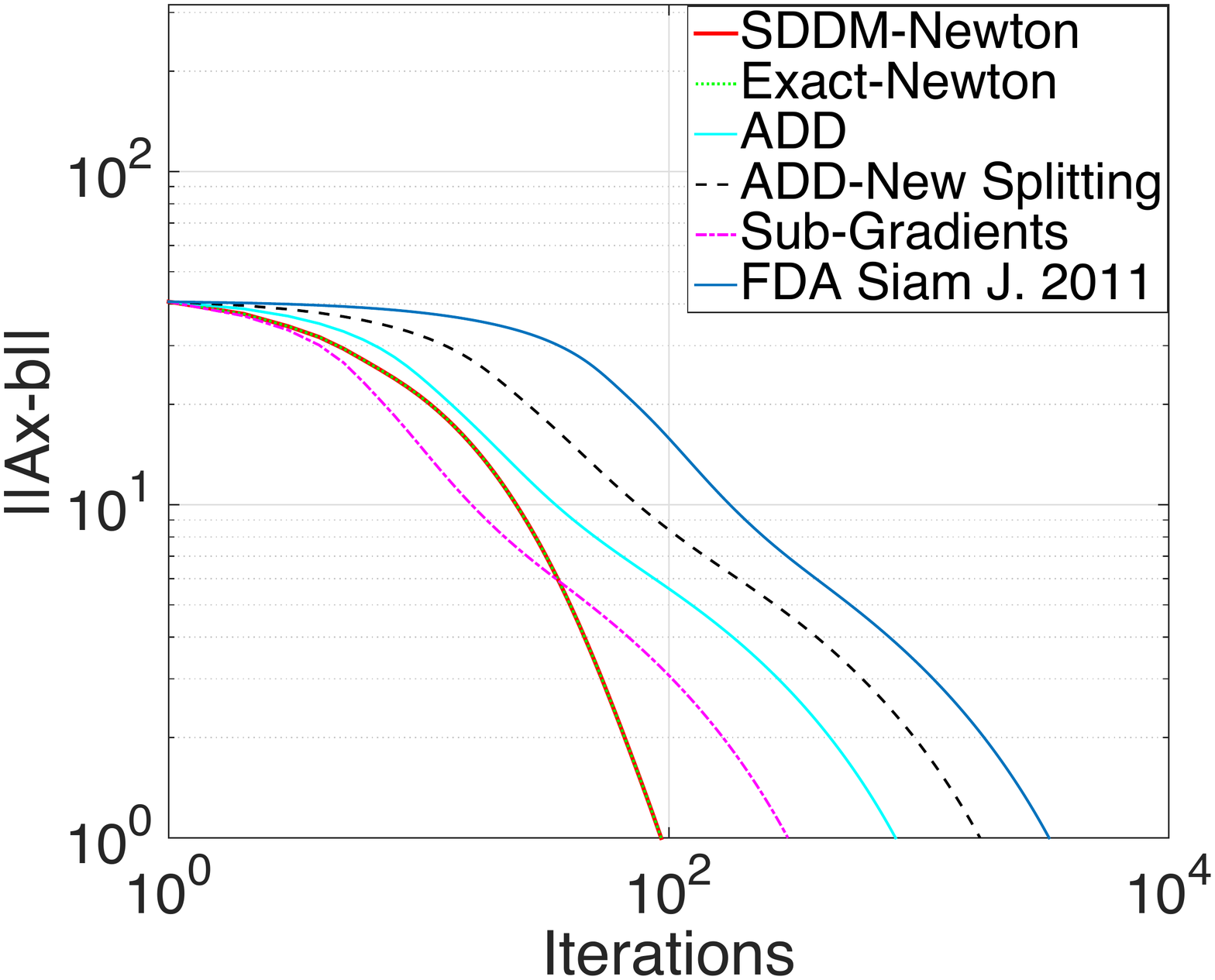}
}
\hfill\hspace{-.3in}\hfill
\subfigure[]{
	\label{fig:PerfCP}
\includegraphics[width=0.5\textwidth]{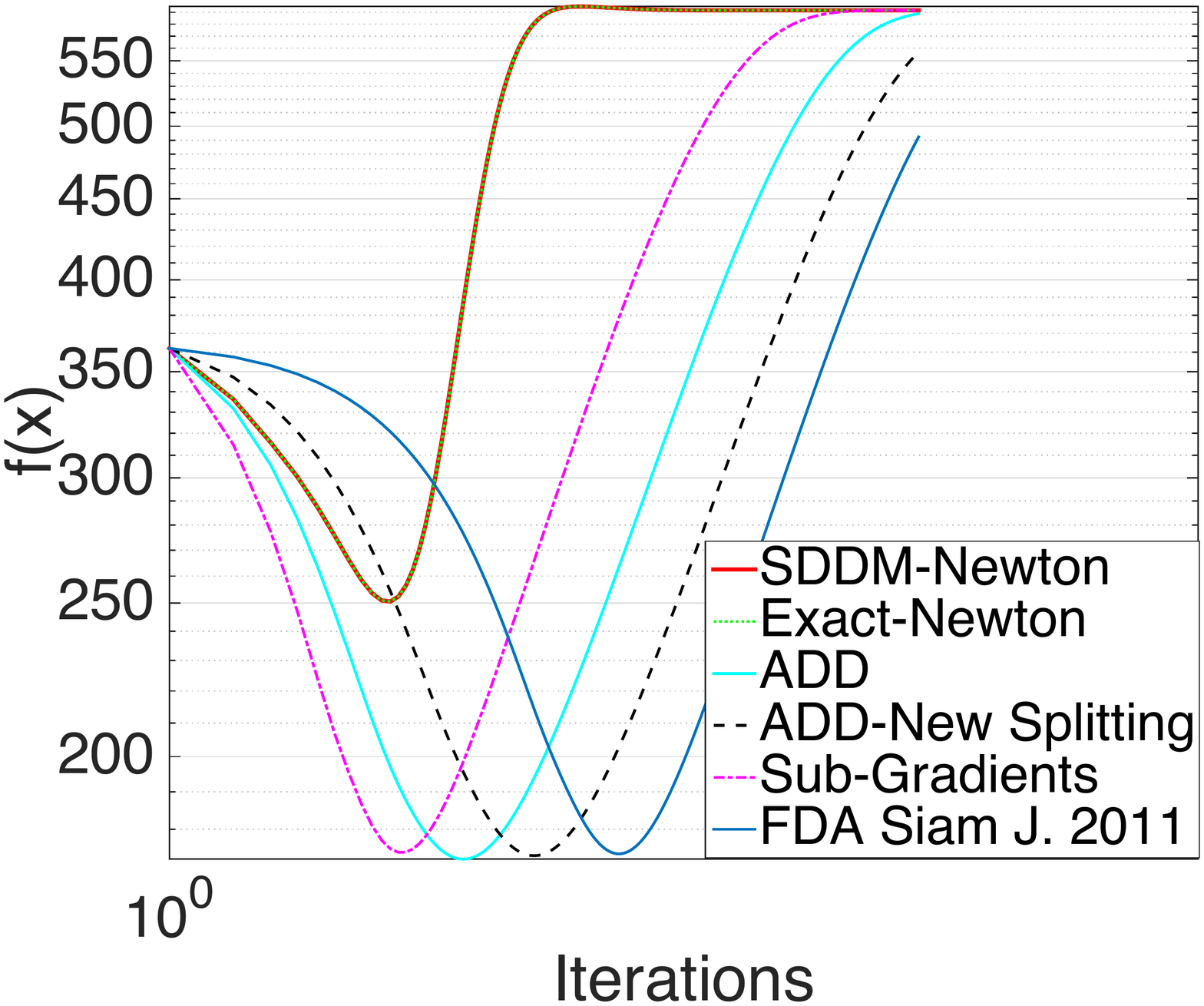}
}
\hfill\hspace{-.3in}\hfill
\subfigure[]{
	\label{fig:TrajSM}
\includegraphics[width=0.49\textwidth]{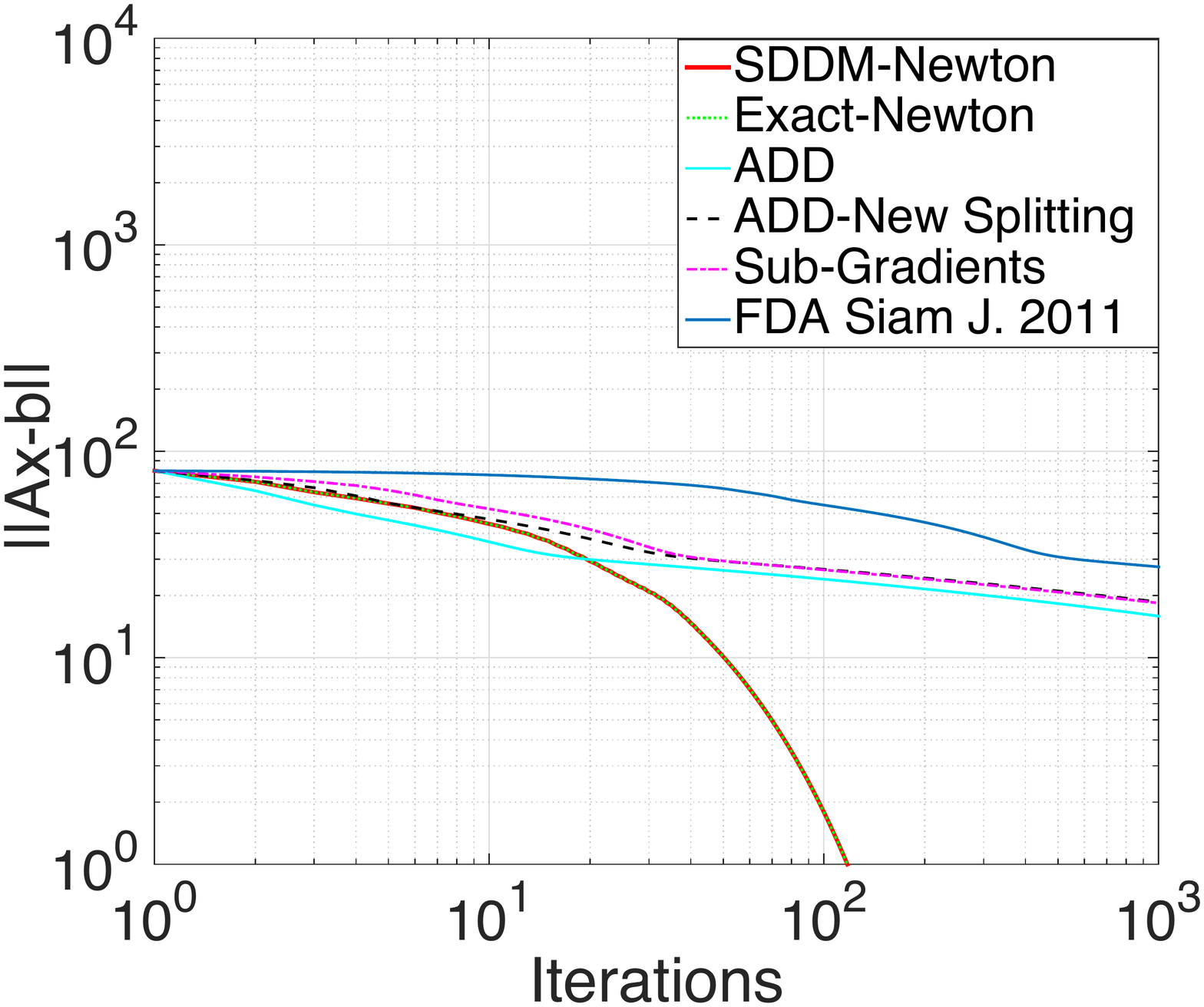}
}
\hfill\hspace{-.3in}\hfill
\subfigure[]{
	\label{fig:TrajCP}
\includegraphics[width=0.5\textwidth]{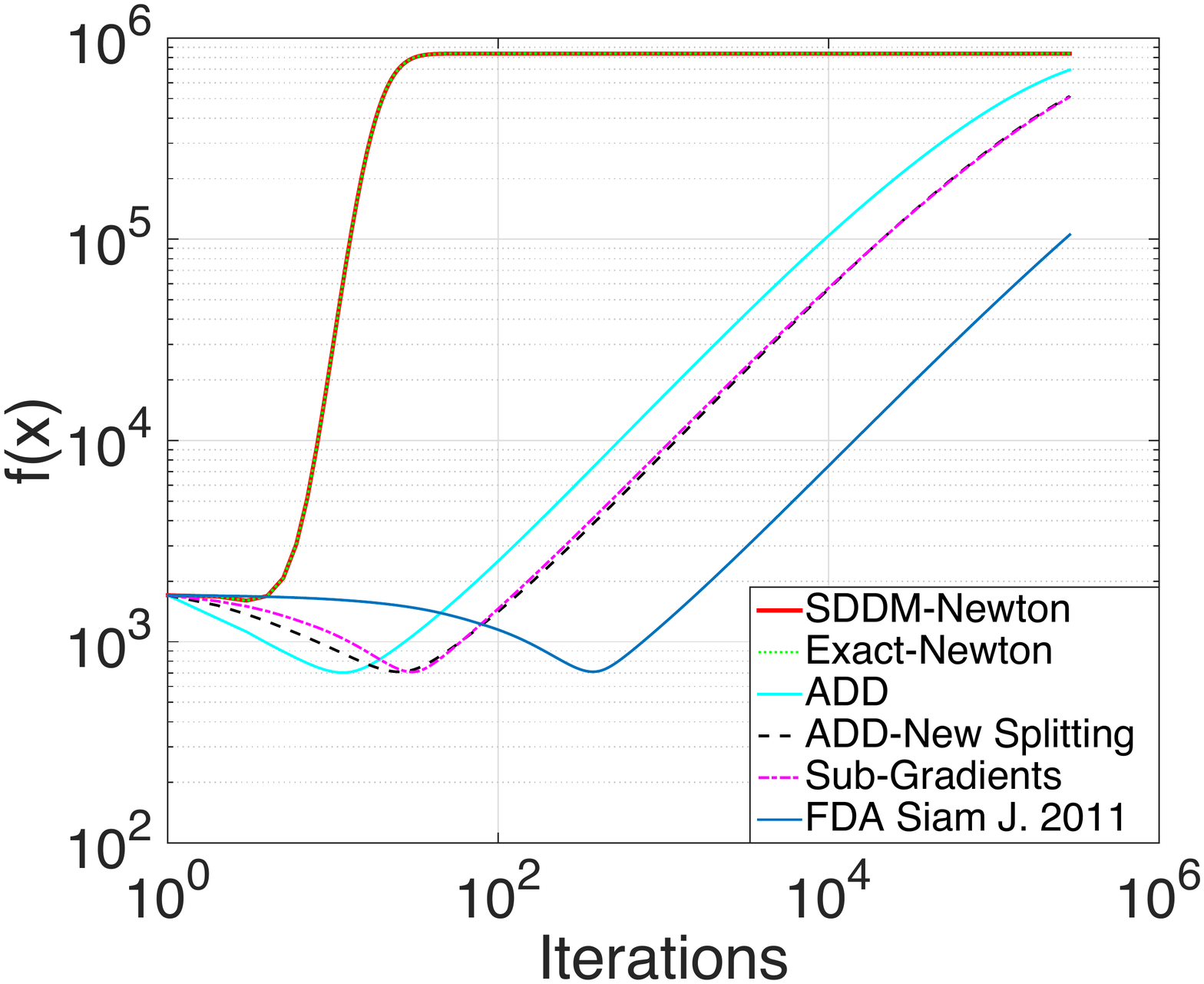}
}

\caption{Performance metrics on two randomly generated networks, showing the primal objective, $f\left(\bm{x}_{k}\right)$, and feasibility $||\bm{A}\bm{x}_{k}-\bm{b}||$ as a function of the number of iterations $k$ on a loglog scale. On a relatively small network (i.e., $20$ nodes and $60$ edges- Figures (a) and (b)) we outperform all method by an order of magnitude and trace the trajectory of exact Newton computed in a centralized fashion. On larger networks (i.e., 50 nodes and 150 edges-Figures (c) and (d)), SDDM-Newton is superior to all other algorithms, where it converges 5 orders of magnitude faster.}
\label{Fig:ResBenchmark}
\end{figure*}

Results summarizing the primal objective value, $f([\bm{x}]^{(e)})=\exp([\bm{x}]^{(e)})+\exp(-[\bm{x}]^{(e)})$, and feasibility, $||\bm{A}\bm{x}_{k} - \bm{b}||$, on both networks are shown in Figure~3. On small networks (i.e., 20 nodes and 60 edges), all algorithms perform relatively well. Clearly, our proposed approach (titled SDDM-Newton in the figures) outperforms, ADD, Sub-gradients, and the approach in~\cite{c41} with about an order of magnitude. Another interesting realization is that SDDM-Newton accurately tracks the exact Newton method with its direction computed in a centralized fashion. The reason for such positive results, is that our algorithm is capable of approximating the Newton direction up-to-any arbitrary $\epsilon >0$ while abiding by the R-Hop constraint. For larger networks, Figures~\ref{fig:TrajSM} and~\ref{fig:TrajCP}, SDDM-Newton is highly superior compared to other approaches. Here, our algorithm is capable of converging in about 3-5 orders of magnitude faster, i.e., we converge in about 3000 iterations as opposed to 6000 for ADD which showed the best performance among the other techniques. It is worth noting that we are again capable of tracing exact Newton due to the accuracy of our approximation.

\subsubsection{Experiments \& Results on Bar Bell Graphs} 
In the second set of experiments we assessed the performance of SDDM-Newton on a barbell graph, Figure~\ref{Fig:BarBellConstructions}, of 60 nodes. The graph consisted of two 20 node cliques connected by a 20 nodes line graph is depicted in. We assign directed edges on the graph arbitrary and solve the network flow optimization problem with a cost of $f([\bm{x}]^{(e)})=\exp([\bm{x}]^{(e)})+\exp(-[\bm{x}]^{(e)})$. 

\begin{wrapfigure}{l}{0.5\textwidth}
  \begin{center}
    \includegraphics[width=0.5\textwidth]{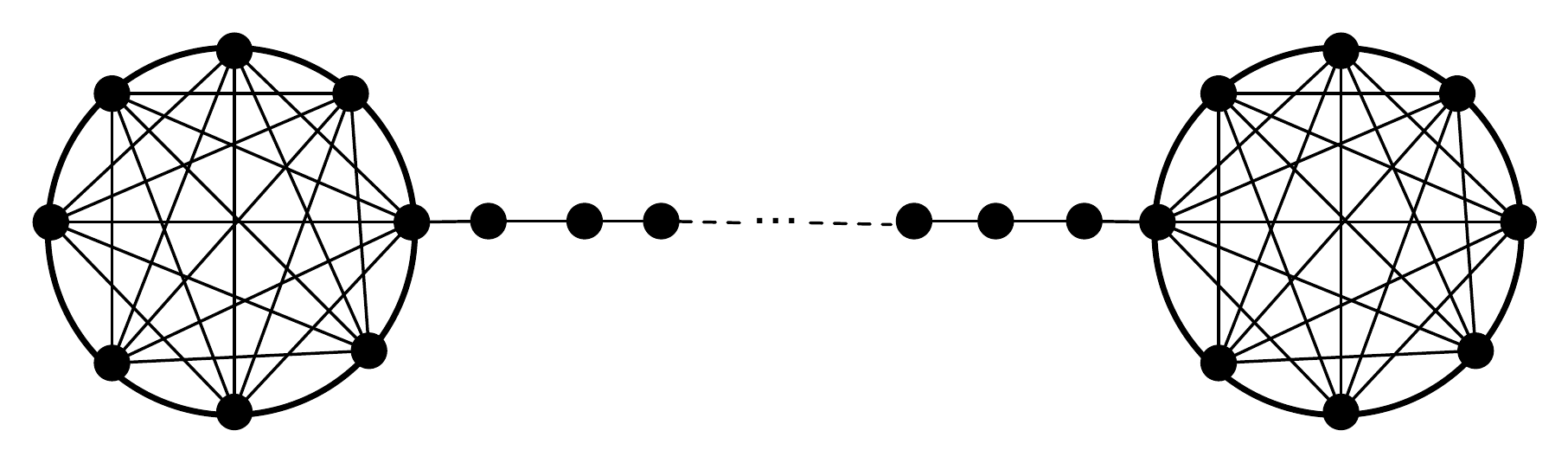}
  \end{center}
  \caption{A high-level depiction of a barbell graph showing two cliques connected by a line graph.}
  \label{Fig:BarBellConstructions}
\end{wrapfigure}

Results in Figure~\ref{fig:PerfSMTw} and~\ref{fig:PerfCPTw} demonstrate the superiority of our algorithm to state-of-the-art methods. Here, again we are capable of converting faster than other techniques in about 2 magnitudes faster. It is worth noting that the second-best performing algorithm is ADD which is capable of converging in almost 3000 iterations. 

\begin{figure*}[t!]
\centering
\vspace{-.5em}
\subfigure[]{
	\label{fig:PerfSMTw}
\includegraphics[width=0.5\textwidth]{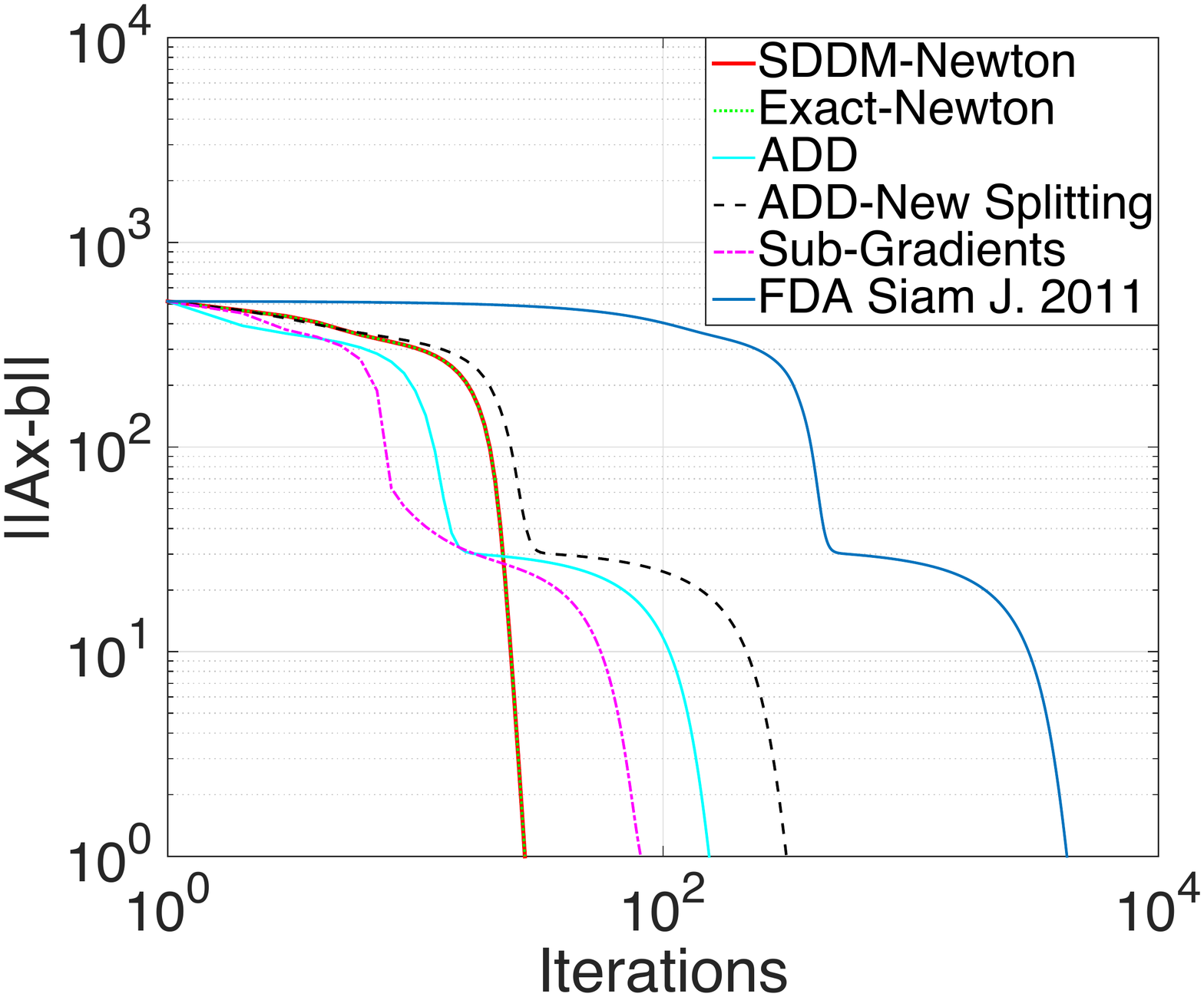}
}
\hfill\hspace{-.3in}\hfill
\subfigure[]{
	\label{fig:PerfCPTw}
\includegraphics[width=0.49\textwidth]{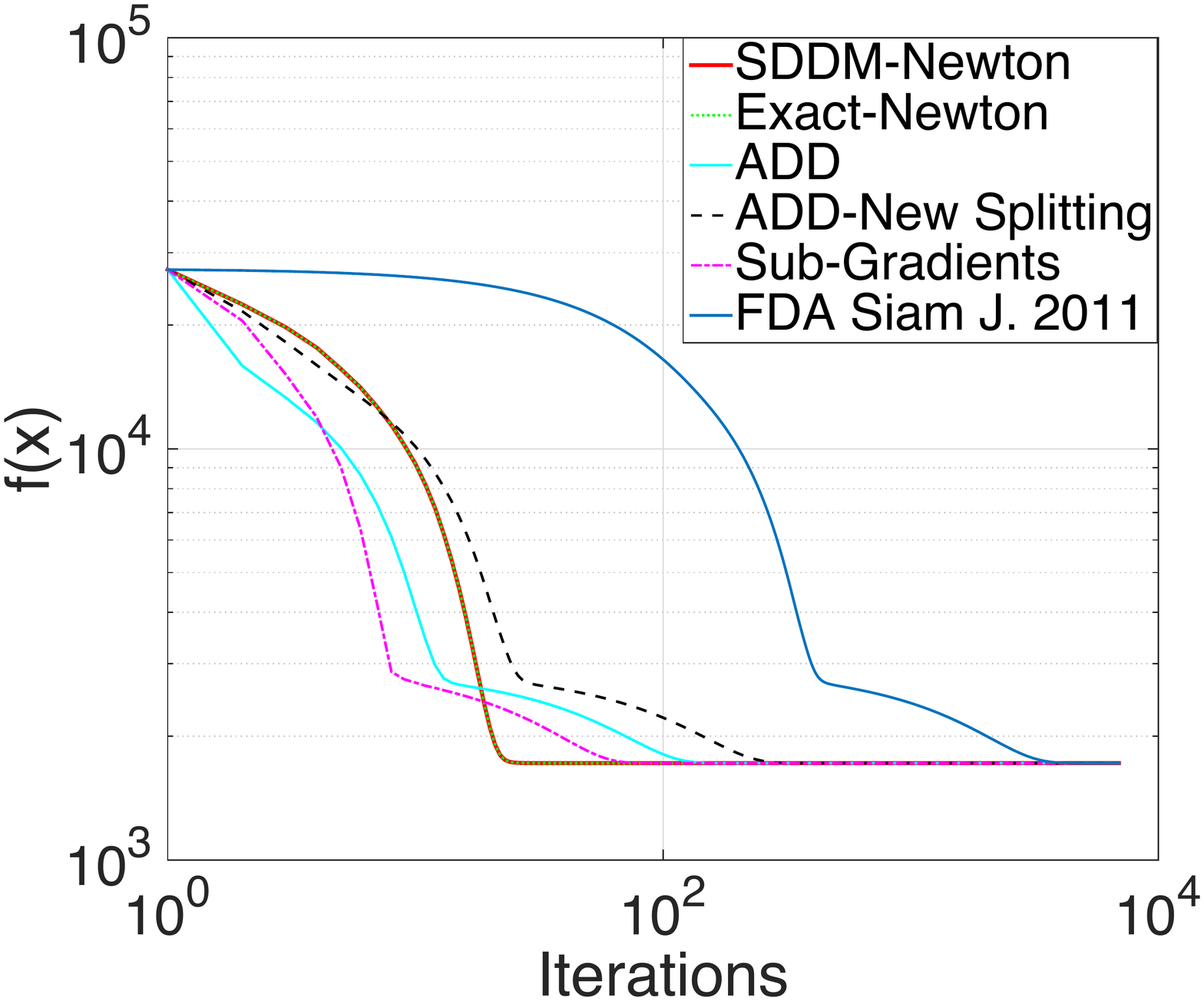}
}
\caption{Feasibility and objective results on a 60 node bar graph. Again in these sets of experiments, our method is capable of outperforming all other methods. Furthermore, SDDM-Newton is again capable of tracing the exact Newton method.}
\end{figure*}
Finally, we repeated the same experiments on a larger bar bell network formed of 120 nodes (two cliques with 40 nodes connected by a 40 node line graph). These results, demonstrated in Figures~\ref{fig:PerfSMTwo} and~\ref{fig:PerfCPTwo}, again validate the previous performance measures showing even better performance in both the objective value and feasibility. 
\begin{figure*}[t!]
\centering
\vspace{-.5em}
\subfigure[]{
	\label{fig:PerfSMTwo}
\includegraphics[width=0.5\textwidth]{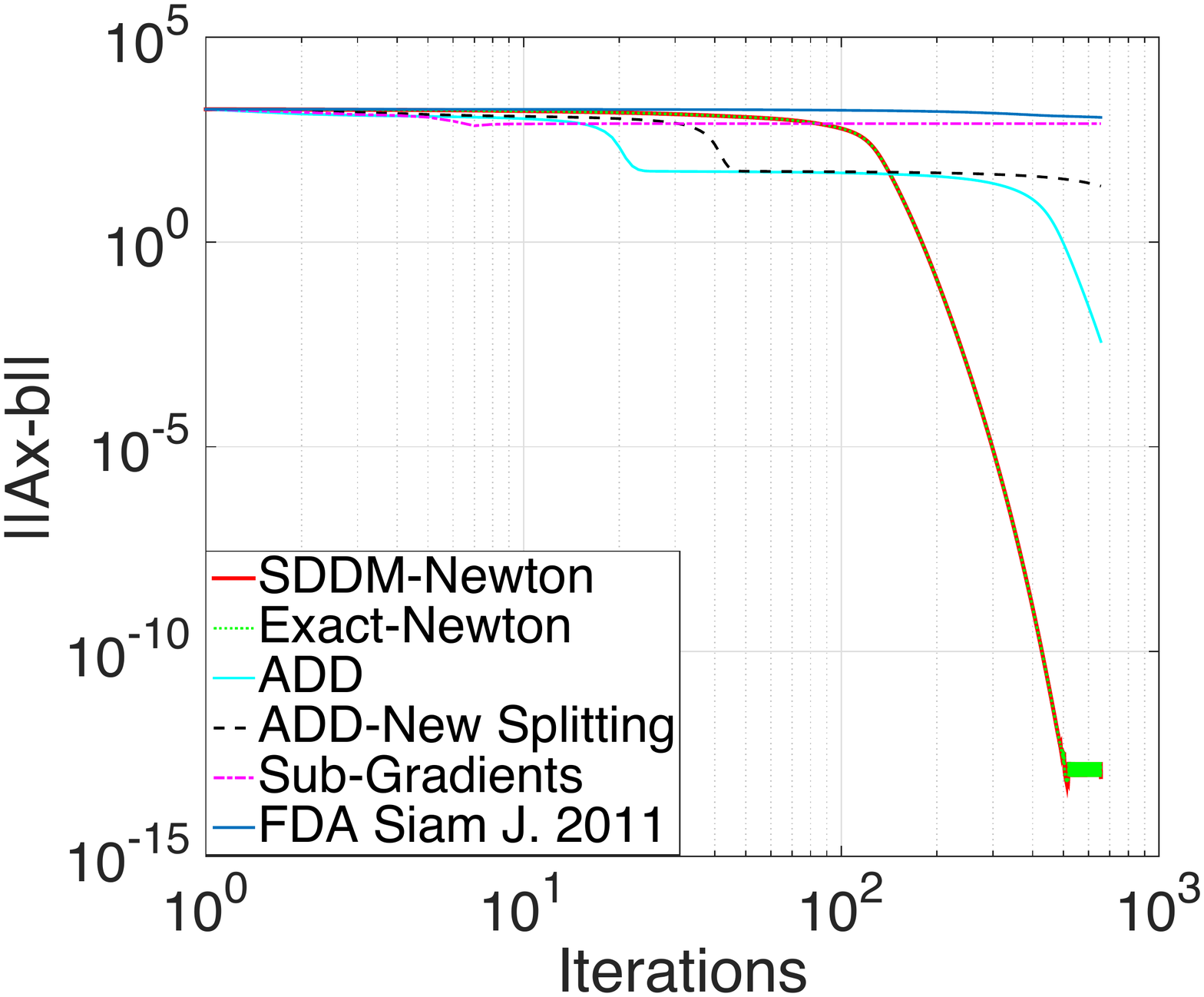}
}
\hfill\hspace{-.3in}\hfill
\subfigure[]{
	\label{fig:PerfCPTwo}
\includegraphics[width=0.49\textwidth]{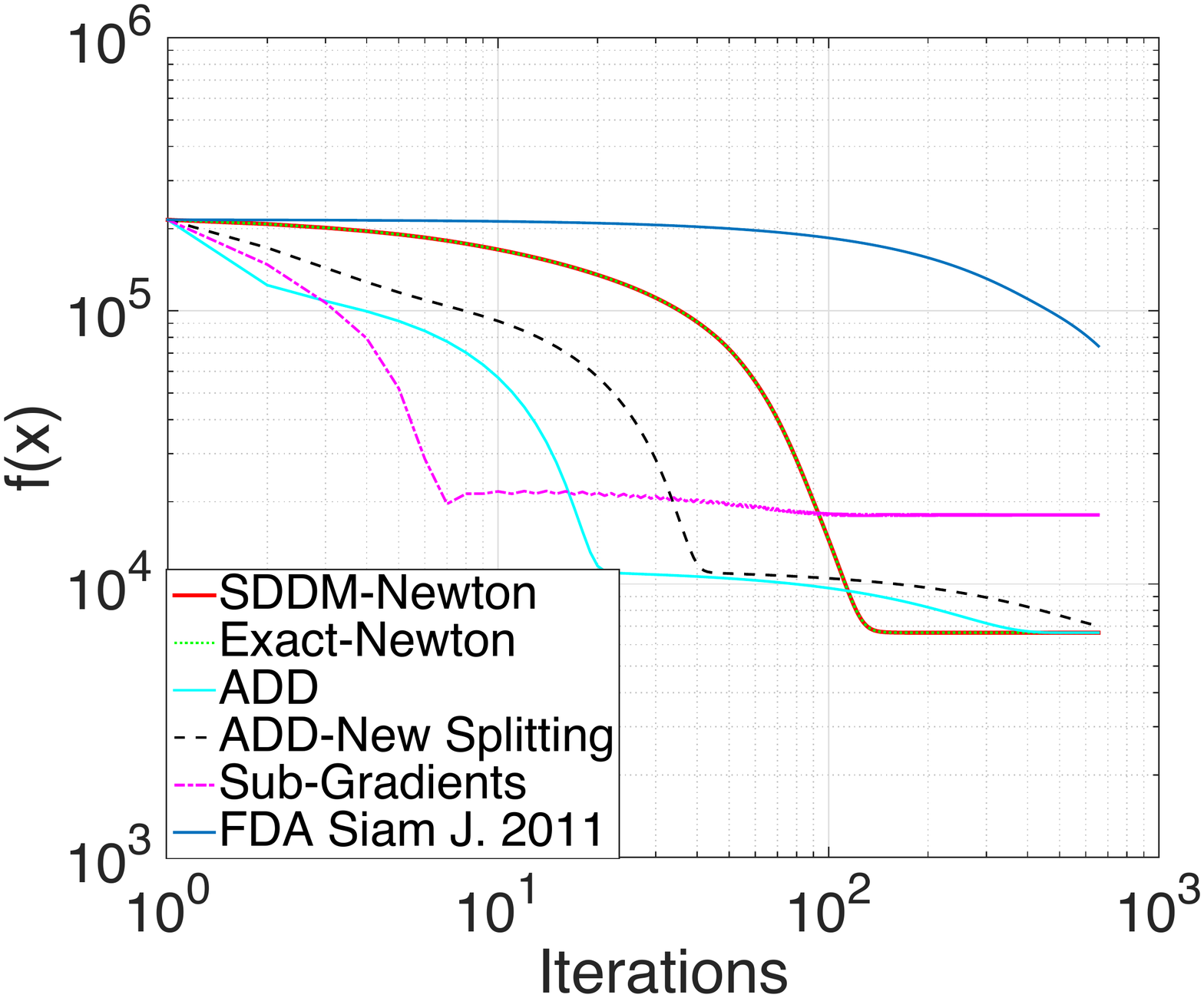}
}
\caption{Feasibility and objective results on a 120 node bar graph. Again in these sets of experiments, our method is capable of outperforming all other methods. Furthermore, SDDM-Newton is again capable of tracing the exact Newton method.}
\end{figure*}

\subsubsection{Measuring Message Complexity}
Though successful, the experiments performed in the previous section show accuracy improvements without demonstrating per-iteration message complexities needed. To have a fair comparison to state-of-the-art methods, in this section we report such results on four different network topologies. Here, we show that our method requires a relatively slight increase in message complexity to trace the exact Newton direction.

These per-iteration values were determined by deriving bounds to each of the benchmark algorithms. For all methods except SDDM-Newton and that in~\cite{c5}, such complexity can be bounded by $\mathcal{O}(d_{\text{max}})$ with $d_{\text{max}}$ being the maximal degree. As for SDDM-Newton, the per-iteration complexity is upper-bounded by $\mathcal{O}(\bm{\kappa}(\mathcal{L}_{\mathcal{G}}) d_{\text{max}})$, with $\bm{\kappa}(\mathcal{L}_{\mathcal{G}})$ being the condition number of the graph Laplacian. For the algorithm in~\cite{c5} the message complexity satisfies $\mathcal{O}(\bm{\kappa}(\mathcal{L}_{\mathcal{G}}) d_{\text{max}} \log n)$ with $n$ being the total number of nodes in $\mathcal{G}$. Immediately, we recognize that our algorithm is faster by a factor of $\log n$ compared to that in~\cite{c5}. Compared to other techniques, however, our method is slower by a factor of $\bm{\kappa}(\mathcal{L}_{\mathcal{G}})$. 
\begin{wrapfigure}{l}{0.6\textwidth}
\begin{center}
\includegraphics[width=0.6\textwidth, height=2.1in]{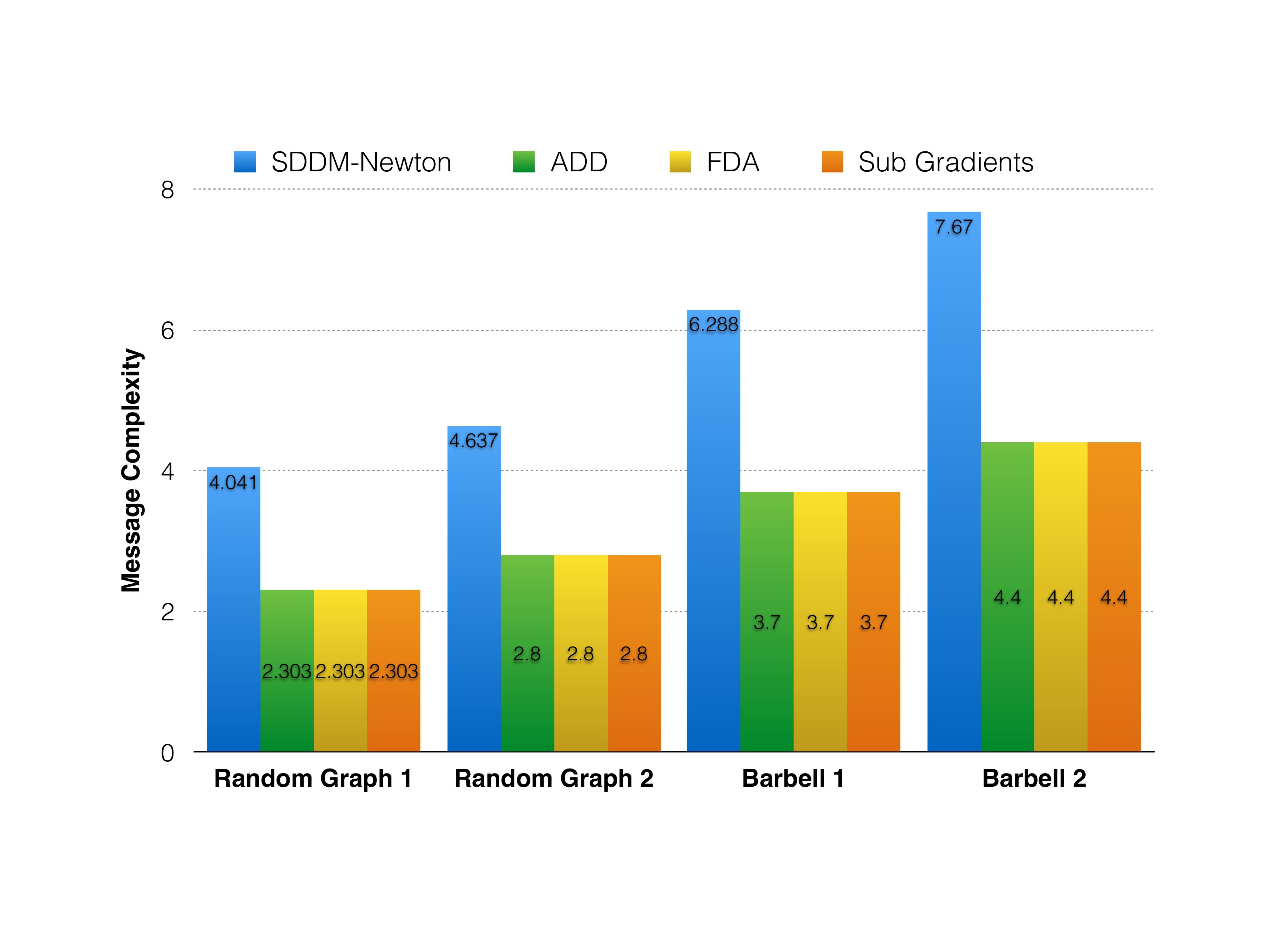}
\end{center}
\caption{Message complexity results comparing our method to state-of-the-art techniques on four different network topologies. Graphs 1 and 2 correspond to random networks with 20 nodes, 60 edge and 100 nodes and 500 edge, respectively. Graphs 3 and 4 report message complexities needed for two barbell graphs. }
\label{Fig:barbell}
\end{wrapfigure}

To better quantify such a difference, we perform four sets of experiments on two randomly generated and two barbell networks with varying size. The random networks consisted of 20 nodes, 60 edges, and 120 nodes and 500 edges, respectively. Moreover, the barbell graphs followed the same construction of the previous section with sizes varying from 60 to 120 nodes. Comparison results showing the logarithm of the message complexity are reported in the bar graph of Figure~\ref{Fig:barbell}. These demonstrate that SDDM-Newton requires a slight increase in the message complexity to trace the exact Newton direction. 

\section{Conclusion}

In this paper we proposed a distributed solver for linear systems described by SDDM matrices. Our approach distributes that in~\cite{c11} by proposing the usage of an inverse approximated chain which can be computed in a distributed fashion. Precisely, two solvers were proposed. The first required full communication in the network, while the second restricts communication to the R-Hop neighborhood between the nodes. 

We applied our solver to network flow optimization. This resulted in an efficient and accurate distributed second-order method capable of tracing exact Newton computed in a centralized fashion. We showed that similar to standard Newton, our methods are capable of achieving superlinear convergence in a neighborhood of the optimal solution. We extensively evaluated the proposed method on both randomly generated and barbell graphs. Results demonstrate that our method outperforms state-of-the-art techniques. 


\section{SDDM Solver Proofs}
In this appendix, we provide the complete and comprehensive proofs specific for the distributed SDDM solver. 
\begin{lemma}
Let $\bm{Z}_0\approx_{\epsilon}\bm{M}^{-1}_0$, and $\tilde{\bm{x}} = \bm{Z}_0\bm{b}_0$. Then $\tilde{\bm{x}}$ is $\sqrt{2^{\epsilon}(e^{\epsilon} - 1)}$ approximate solution of $\bm{M}_{0}\bm{x}=\bm{b}_{0}$.
\end{lemma}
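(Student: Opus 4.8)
The plan is to pass to the exact solution $\bm{x}^{\star} = \bm{M}_0^{-1}\bm{b}_0$ and the approximation $\tilde{\bm{x}} = \bm{Z}_0\bm{b}_0$, so that the error is $\tilde{\bm{x}} - \bm{x}^{\star} = (\bm{Z}_0 - \bm{M}_0^{-1})\bm{b}_0$, and then to rewrite both sides of the target inequality as Rayleigh quotients of one symmetric matrix. Expanding the $\bm{M}_0$-norm gives $\left|\left|\tilde{\bm{x}} - \bm{x}^{\star}\right|\right|_{\bm{M}_0}^2 = \bm{b}_0^{\mathsf{T}}(\bm{Z}_0 - \bm{M}_0^{-1})\bm{M}_0(\bm{Z}_0 - \bm{M}_0^{-1})\bm{b}_0$, while $\left|\left|\bm{x}^{\star}\right|\right|_{\bm{M}_0}^2 = \bm{b}_0^{\mathsf{T}}\bm{M}_0^{-1}\bm{b}_0$. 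Since $\bm{M}_0$ is SPD, the square root $\bm{M}_0^{1/2}$ exists, and I would conjugate everything by it.

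The key reduction is to define $\bm{E} = \bm{M}_0^{1/2}\bm{Z}_0\bm{M}_0^{1/2}$. By part (5) of Lemma~\ref{approx_lemma_facts} with $\bm{V} = \bm{M}_0^{1/2}$, the hypothesis $\bm{Z}_0 \approx_{\epsilon}\bm{M}_0^{-1}$ yields $\bm{E}\approx_{\epsilon}\bm{I}$, i.e. $e^{-\epsilon}\bm{I}\preceq\bm{E}\preceq e^{\epsilon}\bm{I}$, so every eigenvalue $\mu$ of $\bm{E}$ lies in $[e^{-\epsilon}, e^{\epsilon}]$. Writing $\bm{Z}_0 - \bm{M}_0^{-1} = \bm{M}_0^{-1/2}(\bm{E} - \bm{I})\bm{M}_0^{-1/2}$ and substituting, the middle factor collapses via $\bm{M}_0^{-1/2}\bm{M}_0\bm{M}_0^{-1/2} = \bm{I}$, so the numerator matrix becomes $\bm{M}_0^{-1/2}(\bm{I} - \bm{E})^2\bm{M}_0^{-1/2}$. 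Setting $\bm{w} = \bm{M}_0^{-1/2}\bm{b}_0$, the quantity I must bound is exactly $\bm{w}^{\mathsf{T}}(\bm{I}-\bm{E})^2\bm{w}\,/\,(\bm{w}^{\mathsf{T}}\bm{w})$, which is at most $\lambda_{\max}\!\left((\bm{I}-\bm{E})^2\right)$.

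The problem is now scalar: maximize $(1-\mu)^2$ over $\mu\in[e^{-\epsilon},e^{\epsilon}]$. Since $e^{\epsilon}-1 \ge 1 - e^{-\epsilon}$ for $\epsilon\ge 0$, the maximum occurs at $\mu = e^{\epsilon}$ and equals $(e^{\epsilon}-1)^2$, giving $\left|\left|\tilde{\bm{x}}-\bm{x}^{\star}\right|\right|_{\bm{M}_0}^2\le(e^{\epsilon}-1)^2\left|\left|\bm{x}^{\star}\right|\right|_{\bm{M}_0}^2$. To match the stated constant I would close with the elementary inequality $e^{\epsilon}-1\le 2^{\epsilon}$ (valid on the relevant range $\epsilon\in(0,\tfrac12]$), which upgrades $(e^{\epsilon}-1)^2$ to $2^{\epsilon}(e^{\epsilon}-1)$; taking square roots yields the claimed factor $\sqrt{2^{\epsilon}(e^{\epsilon}-1)}$.

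I expect the main obstacle to be conceptual rather than computational: $\bm{Z}_0$ and $\bm{M}_0^{-1}$ need not commute, so one cannot diagonalize them simultaneously and read the error off eigenvalue by eigenvalue. The symmetric conjugation by $\bm{M}_0^{\pm1/2}$ is what resolves this, collapsing the whole expression into a function of the single symmetric matrix $\bm{E}$ whose spectrum is pinned to $[e^{-\epsilon}, e^{\epsilon}]$ by the $\approx_{\epsilon}$ relation; everything after that is a one-variable estimate. The only subtlety to flag is that the stated bound is genuinely looser than the natural factor $e^{\epsilon}-1$ and leans on $e^{\epsilon}-1\le 2^{\epsilon}$, so it is an upper bound that holds for small $\epsilon$ rather than a tight identity.
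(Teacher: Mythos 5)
Your proof is correct, but it takes a genuinely different route from the paper's. The paper never forms $\bm{M}_0^{1/2}$: it expands $\|\bm{x}^{\star}-\tilde{\bm{x}}\|^2_{\bm{M}_0}$ into the three quadratic forms $(\bm{x}^{\star})^{\mathsf{T}}\bm{M}_0\bm{x}^{\star}$, $\tilde{\bm{x}}^{\mathsf{T}}\bm{M}_0\tilde{\bm{x}}$, and $-2(\bm{x}^{\star})^{\mathsf{T}}\bm{M}_0\tilde{\bm{x}}$, bounds the first two against $\bm{b}_0^{\mathsf{T}}\bm{Z}_0\bm{b}_0$ using the Loewner inequalities implied by $\bm{Z}_0\approx_{\epsilon}\bm{M}_0^{-1}$ (and the inverse relation $\bm{M}_0\approx_{\epsilon}\bm{Z}_0^{-1}$), obtains $\|\bm{x}^{\star}-\tilde{\bm{x}}\|^2_{\bm{M}_0}\le 2(e^{\epsilon}-1)\,\bm{b}_0^{\mathsf{T}}\bm{Z}_0\bm{b}_0$, and then converts back via $\bm{b}_0^{\mathsf{T}}\bm{Z}_0\bm{b}_0\le e^{\epsilon}\|\bm{x}^{\star}\|^2_{\bm{M}_0}$, ending with the factor $\sqrt{2e^{\epsilon}(e^{\epsilon}-1)}$. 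Your conjugation by $\bm{M}_0^{\pm 1/2}$ instead collapses everything into the Rayleigh quotient of $(\bm{I}-\bm{E})^2$ with $\bm{E}=\bm{M}_0^{1/2}\bm{Z}_0\bm{M}_0^{1/2}$, yielding the bound $(e^{\epsilon}-1)\|\bm{x}^{\star}\|_{\bm{M}_0}$, which is strictly tighter than both the paper's derived constant and the stated one; what the paper's route buys in exchange is that it stays entirely within the $\approx_{\alpha}$ calculus of Lemma~\ref{approx_lemma_facts} and its constant needs no restriction on $\epsilon$. Two remarks worth flagging. First, the paper's own proof actually establishes $\sqrt{2e^{\epsilon}(e^{\epsilon}-1)}$, which is \emph{weaker} than the literal statement $\sqrt{2^{\epsilon}(e^{\epsilon}-1)}$ — the ``$2^{\epsilon}$'' is almost certainly a typo for $2e^{\epsilon}$ — so your argument is, strictly speaking, the one that proves the lemma as written (your closing inequality $e^{\epsilon}-1\le 2^{\epsilon}$ holds up to $\epsilon\approx 1.19$, covering the paper's range $\epsilon\in(0,\tfrac12]$, and under the typo-corrected constant you need only the trivial $e^{\epsilon}-1\le 2e^{\epsilon}$, valid for all $\epsilon>0$). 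Second, your concern about non-commutativity is handled correctly: the conjugated matrix $\bm{E}$ is symmetric, part (5) of Lemma~\ref{approx_lemma_facts} with $\bm{V}=\bm{M}_0^{1/2}$ pins its spectrum to $[e^{-\epsilon},e^{\epsilon}]$, and the rest is indeed a one-variable estimate.
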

\begin{proof}
Let $\bm{x}^{\star}\in \mathbb{R}^{n}$ be the solution of $\bm{M}_0\bm{x}=\bm{b}_{0}$, then
\begin{align}\label{app_solut_for_approx_inv}
||\bm{x}^{\star} - \tilde{\bm{x}}||^2_{\bm{M}_0} =(\bm{x}^{\star} - \tilde{\bm{x}})^{\mathsf{T}}\bm{M}_0(\bm{x}^{\star} - \tilde{\bm{x}}) = (\bm{x}^{\star})^{\mathsf{T}}\bm{M}_0\bm{x}^{\star} + (\tilde{\bm{x}})^{\mathsf{T}}\bm{M}_0\tilde{\bm{x}} - 2(\bm{x}^{\star})^{\mathsf{T}}\bm{M}_0\tilde{\bm{x}}
\end{align}
Now, consider each term in (\ref{app_solut_for_approx_inv}) separately:
\begin{enumerate}
\item $(\bm{x}^{\star})^{\mathsf{T}}\bm{M}_0\tilde{\bm{x}} = \bm{b}^{\mathsf{T}}_0\bm{M}^{-1}_0\bm{M}_0\bm{Z}_0\bm{b}_0 = \bm{b}^{\mathsf{T}}_0\bm{Z}_0\bm{b}_0$

\item $(\bm{x}^{\star})^{\mathsf{T}}\bm{M}_0\bm{x}^{\star} = \bm{b}^{\mathsf{T}}_0\bm{M}^{-1}_0\bm{M}_0\bm{M}^{-1}_0\bm{b}_0 = \bm{b}^{\mathsf{T}}_0\bm{M}^{-1}_0\bm{b}_0 \le e^{\epsilon}\bm{b}^{\mathsf{T}}_0\bm{Z}_0\bm{b}_0$

\item $\tilde{\bm{x}}^{\mathsf{T}}\bm{M}_0\tilde{\bm{x}} = \bm{b}^{\mathsf{T}}_0\bm{Z}_0\bm{M}_0\bm{Z}_0\bm{b}_0 \le e^{\epsilon}\bm{b}^{\mathsf{T}}_0\bm{Z}_0\bm{b}_0$
\end{enumerate}
Note that in the last step we used the fact that $\bm{Z}_0\approx_{\epsilon}\bm{M}^{-1}_0$ implies $\bm{M}_0 \approx_{\epsilon}\bm{Z}^{-1}_0$. Therefore, (\ref{app_solut_for_approx_inv}) can be rewritten as:
\begin{equation}\label{inter_1}
||\bm{x}^{\star} - \tilde{\bm{x}}||^2_{\bm{M}_0} \le 2(e^{\epsilon} - 1)\bm{b}^{\mathsf{T}}_0\bm{Z}_0\bm{b}_0
\end{equation}
Combining (\ref{inter_1}) with $\bm{b}^{\mathsf{T}}_0\bm{Z}_0\bm{b}_0 = (\bm{x}^{\star})^{\mathsf{T}}\bm{M}_0\bm{Z}_0\bm{M}_0\bm{x}^{\star}  \le e^{\epsilon}(\bm{x}^{\star})^{\mathsf{T}}\bm{M}_0\bm{x}^{\star}$:
\begin{align*}
&||\bm{x}^{\star} - \tilde{\bm{x}}||^2_{\bm{M}_0} \le 2(e^{\epsilon} - 1)e^{\epsilon}(\bm{x}^{\star})^{\mathsf{T}}\bm{M}_0\bm{x}^{\star} = 2(e^{\epsilon} - 1)e^{\epsilon}||\bm{x}^{\star}||^2_{\bm{M}_0}
\end{align*}
\end{proof}
\begin{lemma}
Let $\bm{M}_0 = \bm{D}_0 - \bm{A}_0$ be the standard splitting of $\bm{M}_{0}$. Let $\bm{Z}^{\prime}_0$ be the operator defined by $\text{DistrRSolve}([\{[\bm{M}_0]_{k1},\ldots, [\bm{M}_0]_{kn}\}, [\bm{b}_0]_k, d)$ (i.e., $\bm{x}_0 = \bm{Z}^{\prime}_0\bm{b}_0$). Then
\begin{equation*}
\bm{Z}^{\prime}_0\approx_{\epsilon_d} \bm{M}^{-1}_0
\end{equation*}
Moreover, Algorithm~\ref{Algo:DisRudeApprox} requires  $\mathcal{O}\left(dn^2\right)$ time steps. 
\end{lemma}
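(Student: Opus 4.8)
The plan is to reduce the accuracy claim $\bm{Z}_0'\approx_{\epsilon_d}\bm{M}_0^{-1}$ to the guarantee already available for the parallel solver (Lemma~\ref{Rude_Alg_guarantee_Lemma}), by showing that $\text{DistrRSolve}$ is merely a component-wise, distributed realization of $\text{ParallelRSolve}$ executed on the chain $\mathcal{C}=\{\bm{A}_0,\bm{D}_0,\ldots,\bm{A}_d,\bm{D}_d\}$ of Equation~\ref{Eq:InverseChain}. It has already been checked that $\mathcal{C}$ is an inverse approximated chain with $\epsilon_0=\epsilon_1=\cdots=\epsilon_{d-1}=0$ and only $\epsilon_d$ possibly nonzero, so Lemma~\ref{Rude_Alg_guarantee_Lemma} applied to the operator $\bm{Z}_0$ returned by the parallel routine gives $\bm{Z}_0\approx_{\sum_{i=0}^{d}\epsilon_i}\bm{M}_0^{-1}$, i.e.\ $\bm{Z}_0\approx_{\epsilon_d}\bm{M}_0^{-1}$. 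It therefore suffices to prove the operator identity $\bm{Z}_0'=\bm{Z}_0$.

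To establish $\bm{Z}_0'=\bm{Z}_0$, I would first record the two algebraic facts specific to this chain: since $\bm{D}_i=\bm{D}_0$ and $\bm{A}_i=\bm{D}_0(\bm{D}_0^{-1}\bm{A}_0)^{2^i}$, a short telescoping computation yields $\bm{A}_{i-1}\bm{D}_{i-1}^{-1}=(\bm{A}_0\bm{D}_0^{-1})^{2^{i-1}}$ and $\bm{D}_i^{-1}\bm{A}_i=(\bm{D}_0^{-1}\bm{A}_0)^{2^i}$. Substituting these into the parallel recurrences $\bm{b}_i=(\bm{I}+\bm{A}_{i-1}\bm{D}_{i-1}^{-1})\bm{b}_{i-1}$ and $\bm{x}_i=\tfrac12[\bm{D}_i^{-1}\bm{b}_i+(\bm{I}+\bm{D}_i^{-1}\bm{A}_i)\bm{x}_{i+1}]$ reproduces precisely the $k$-th components computed in lines~7 and~15 of Algorithm~\ref{Algo:DisRudeApprox}. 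Two further points remain. First, the sums in the algorithm are restricted to hop neighborhoods; I would justify this truncation with the sparsity fact (Claim~\ref{claim_1}) that $(\bm{A}_0\bm{D}_0^{-1})^r$ and $(\bm{D}_0^{-1}\bm{A}_0)^r$ vanish outside the $r$-hop neighborhood, so the omitted terms are identically zero. Second, the matrix powers are built by repeated squaring (line~5) rather than direct multiplication; I would justify the recurrence by noting that $\bm{D}_0^{-1}(\bm{A}_0\bm{D}_0^{-1})^m=\bm{D}_0^{-1/2}(\bm{D}_0^{-1/2}\bm{A}_0\bm{D}_0^{-1/2})^m\bm{D}_0^{-1/2}$ is symmetric, which gives $[(\bm{A}_0\bm{D}_0^{-1})^m]_{rj}=\tfrac{[\bm{D}_0]_{rr}}{[\bm{D}_0]_{jj}}[(\bm{A}_0\bm{D}_0^{-1})^m]_{jr}$ and lets node $\bm{v}_k$ form $[(\bm{A}_0\bm{D}_0^{-1})^{2m}]_{kj}=\sum_r[(\bm{A}_0\bm{D}_0^{-1})^m]_{kr}[(\bm{A}_0\bm{D}_0^{-1})^m]_{rj}$ using only rows of the previous power that neighboring nodes can transmit. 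Together these identities show that every quantity produced by $\text{DistrRSolve}$ equals the corresponding component of $\text{ParallelRSolve}$, hence $\bm{Z}_0'=\bm{Z}_0$ and $\bm{Z}_0'\approx_{\epsilon_d}\bm{M}_0^{-1}$.

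For the running time I would isolate the dominant cost, which comes from forming the powers $(\bm{A}_0\bm{D}_0^{-1})^{2^{i-1}}$ and $(\bm{D}_0^{-1}\bm{A}_0)^{2^i}$ in the inner loops of the forward and backward passes. At each of the $d$ levels, node $\bm{v}_k$ computes at most $n$ row entries, and each entry is an inner product requiring an $\mathcal{O}(n)$ summation over $r$, contributing $\mathcal{O}(n^2)$ per level and $\mathcal{O}(dn^2)$ overall. The remaining vector updates cost only $\mathcal{O}(n)$ per level, i.e.\ $\mathcal{O}(dn)$ in total, and are absorbed into the previous bound, so $\text{DistrRSolve}$ runs in $\mathcal{O}(dn^2)$ time steps.

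The hard part will be the bookkeeping in the equivalence step: one must confirm that the squaring recurrence indexed by $2^{i-1}$ genuinely reconstructs $\bm{A}_{i-1}\bm{D}_{i-1}^{-1}$ at every level, and that the diagonal rescaling in line~5 is exactly the symmetry identity needed so a node can assemble its own row of the squared matrix from rows---rather than columns---communicated by its neighbors. Once the sparsity pattern of Claim~\ref{claim_1} and the symmetry of $\bm{D}_0^{-1}(\bm{A}_0\bm{D}_0^{-1})^m$ are in place, the remainder is a routine substitution into Lemma~\ref{Rude_Alg_guarantee_Lemma}.
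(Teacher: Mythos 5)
Your proposal is correct and follows essentially the same route as the paper's own proof: both reduce the approximation guarantee to Lemma~\ref{Rude_Alg_guarantee_Lemma} applied to the chain $\mathcal{C}$ of Equation~\ref{Eq:InverseChain}, both rely on the $r$-hop sparsity of $(\bm{A}_0\bm{D}_0^{-1})^r$ and the symmetry of $\bm{D}_0^{-1}(\bm{A}_0\bm{D}_0^{-1})^m$ to let nodes reconstruct columns from transmitted rows, and both obtain the $\mathcal{O}(dn^2)$ bound by counting $\mathcal{O}(n^2)$ work per level of the chain.
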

\begin{proof}
The proof commences by showing that $\left(\bm{D}^{-1}_0\bm{A}_0\right)^r$ and $\left(\bm{A}_0\bm{D}^{-1}_0\right)^{-r}$ have a sparsity pattern corresponding to the r-hop neighborhood for any $r\in \mathbb{N}$. This case be shown using induction as follows
\begin{enumerate}
\item If $r = 1$, we have
\begin{equation*}
[\bm{A}_0\bm{D}^{-1}_0]_{ij} = \begin{cases} \frac{[\bm{A}_0]_{ij}}{[\bm{D}_0]_{ii}} &\mbox{if } j: \bm{v}_j\in \mathbb{N}_{1}(\bm{v}_i), \\ 
0 & \mbox{otherwise }. \end{cases}
\end{equation*}
Therefore, $\bm{A}_0\bm{D}^{-1}_0$ has sparsity pattern corresponding to the 1-Hop neighborhood.
\end{enumerate}
\begin{enumerate}
\item[2.] Assume that $(\bm{A}_0\bm{D}^{-1}_0)^p$ has a sparsity patter corresponding to the p-hop neighborhood for all $p: 1\le p\le r-1$.
\end{enumerate}
\begin{enumerate}
\item[3.] Now, consider $(\bm{A}_0\bm{D}^{-1}_0)^{r}$, where
\begin{equation}\label{sparisty_lemma_Bla}
[(\bm{A}_0\bm{D}^{-1}_0)^{r}]_{ij} = \sum_{k=1}^{n}[(\bm{A}_0\bm{D}^{-1}_0)^{r-1}]_{ik}[\bm{A}_0\bm{D}^{-1}_0]_{kj}
\end{equation}
Since $\bm{A}_0\bm{D}^{-1}_0$ is non negative then it is easy to see that $[(\bm{A}_0\bm{D}^{-1}_0)^{r}]_{ij}\ne 0$ if and only if there exists $k$ such that $\bm{v}_k\in \mathbb{N}_{r-1}(\bm{v}_i)$ and $\bm{v}_k\in \mathbb{N}_1(\bm{v}_j)$ (i.e., $\bm{v}_j\in \mathbb{N}_{r}(\bm{v}_i)$).
\end{enumerate}
For $\bm{D}^{-1}_0\bm{A}_0$, the same results can be derived similarly.

Please notice that in \textbf{Part One} of $\text{DistrRSolve}$ algorithm node $\bm{v}_k$ computes (in a distributed fashion) the components $[\bm{b}_1]_k$ to $[\bm{b}_d]_{k}$ using the inverse approximated chain $\mathcal{C} = \{\bm{A}_0,\bm{D}_0,\bm{A}_1, \bm{D}_1,\ldots, \bm{A}_d, \bm{D}_d\}$. Formally, 
\begin{align*}
\bm{b}_{i} &= \left[\bm{I} + \left(\bm{A}_0\bm{D}^{-1}_0\right)^{2^{i-1}}\right]\bm{b}_{i-1} = \bm{b}_{i-1} + \left(\bm{A}_0\bm{D}^{-1}_0\right)^{2^{i-2}}\cdot\left(\bm{A}_0\bm{D}^{-1}_0\right)^{2^{i-2}}\bm{b}_{i-1}
\end{align*}
Clearly, at the $i^{th}$ iteration node $\bm{v}_k$ requires the $k^{th}$ row of $\left(\bm{A}_{0}\bm{D}^{-1}_{0}\right)^{2^{i-2}}$ (i.e., the $k^{th}$ row from the previous iteration) in addition to the $j^{th}$ row of $\left(\bm{A}_0\bm{D}^{-1}_0\right)^{2^{i-2}}$ from all nodes $\bm{v}_j\in \mathbb{N}_{2^{i-1}}\left(\bm{v}_k\right)$ to compute the $k^{th}$ row of $\left(\bm{A}_{0}\bm{D}^{-1}_{0}\right)^{2^{i-1}}$. 

For computing $\left[\left(\bm{A}_0\bm{D}^{-1}_0\right)^{2^{i-1}}\right]_{kj}$, node $\bm{v}_k$ requires the $k^{th}$ row and $j^{th}$ column of $\left(\bm{A}_0\bm{D}^{-1}_0\right)^{2^{i-2}}$. The problem, however, is that node $\bm{v}_j$ can only send the $j^{th}$ row of $\left(\bm{A}_0\bm{D}^{-1}_0\right)^{2^{i-2}}$ which can be easily seen not to be see that symmetric. To overcome this issue, node $\bm{v}_k$ has to compute the $j^{th}$ column of $\left(\bm{A}_0\bm{D}^{-1}_0\right)^{2^{i-2}}$ based on its $j^{th}$ row.  The fact that $\bm{D}^{-1}_0\left(\bm{A}_0\bm{D}^{-1}_0\right)^{2^{i-2}}$ is symmetric, manifests that for $r = 1,\ldots, n$

\begin{equation*}
\frac{\left[\left(\bm{A}_0\bm{D}^{-1}_0\right)^{2^{i-2}}\right]_{rj}}{[\bm{D}_0]_{rr}} = \frac{\left[\left(\bm{A}_0\bm{D}^{-1}_0\right)^{2^{i-2}}\right]_{jr}}{[\bm{D}_0]_jj}
\end{equation*}
Hence, for all $r=1,\ldots,n$
\begin{equation}\label{columns_from_rowsTwo}
\left[\left(\bm{A}_0\bm{D}^{-1}_0\right)^{2^{i-2}}\right]_{rj} = \frac{[\bm{D}_0]_{rr}}{[\bm{D}_0]_{jj}}\left[\left(\bm{A}_0\bm{D}^{-1}_0\right)^{2^{i-2}}\right]_{jr}
\end{equation}     
Now, lets analyze the time complexity of computing components ${[b_1]_k, [b_2]_k,\ldots, [b_d]_k}$.\\\newline
 \textbf{Time Complexity Analysis:} At each iteration $i$, node $\bm{v}_k$ receives the $j^{th}$ row of $\left(\bm{A}_0\bm{D}^{-1}_0\right)^{2^{i-2}}$ from all nodes $\bm{v}_j\in \mathbb{N}_{2^{i-1}}(\bm{v}_k)$. using Equation~\ref{columns_from_rowsTwo}, node $\bm{v}_{k}$ computes the corresponding columns as well as the product of these columns with the $k^{th}$ row of $\left(\bm{A}_0\bm{D}^{-1}_0\right)^{2^{i-2}}$. Therefore, the time complexity at the $i^{th}$ iteration is $\mathcal{O}\left(n^2 + \text{diam}\left(\mathcal{G}\right)\right)$, where $n^2$ is responsible for the $k^{th}$ row computation, and $\text{diam}\left(\mathcal{G}\right)$ represents the communication cost between the nodes.  Using the fact that $\text{diam}\left(\mathbb{G}\right)\le n$, the total complexity of $\textbf{Part One}$ in $\text{DistrRSolve}$ algorithm is $\mathcal{O}\left(dn^2\right)$.

In \textbf{Part Two}, node $\bm{v}_k$ computes (in a distributed fashion) ${[\tilde{\bm{x}}_{d-1}]_k, [\tilde{\bm{x}}_{d-2}]_k,\ldots, [\tilde{\bm{x}}_{0}]_k}$ using the same inverse approximated chain $\mathcal{C} = \{\bm{A}_0,\bm{D}_0,\bm{A}_1, \bm{D}_1,\ldots, \bm{A}_d, \bm{D}_d\}$. 
\begin{align}
\bm{x}_{i} = \frac{1}{2}\bm{D}^{-1}_0\bm{b}_{i} &+ \frac{1}{2}\left[\bm{I} + (\bm{D}^{-1}_0\bm{A}_0)^{2^i}\right]\bm{x}_{i+1} = \frac{1}{2}\bm{D}^{-1}_0\bm{b}_{i} + \frac{1}{2}\bm{x}_{i+1}  \\\nonumber
&\hspace{3em}+\frac{1}{2}\left(\bm{D}^{-1}_0\bm{A}_0\right)^{2^{i-1}}\left(\bm{D}^{-1}_0\bm{A}_0\right)^{2^{i-1}}\bm{x}_{i+1}
\end{align}
for $i=d-1, \ldots, 1$. Thus,
\begin{equation*}
\bm{x}_{0} = \frac{1}{2}\bm{D}^{-1}_0\bm{b}_{0} + \frac{\bm{x}_1}{2} + \frac{1}{2}\left(\bm{D}^{-1}_0\bm{A}_0\right)\bm{x}_{1}
\end{equation*}
Similar to the analysis of \textbf{Part One} of $\text{DistrRSolve}$ algorithm the time complexity of \textbf{Part Two} as well as the time complexity of the whole algorithm is $\mathcal{O}\left(dn^2\right)$.

Finally, using Lemma~\ref{Rude_Alg_guarantee_Lemma} for the inverse approximated chain $\mathcal{C} = \{\bm{A}_0,\bm{D}_0, \bm{A}_1, \bm{D}_1,\ldots, \bm{A}_d, \bm{D}_d\}$ yields:

\begin{equation*}
\bm{Z}^{\prime}_0\approx_{\epsilon_d} \bm{M}^{-1}_0.
\end{equation*}
\end{proof}

\begin{lemma}
Let $\bm{M}_0 = \bm{D}_0 - \bm{A}_0$ be the standard splitting. Further, let $\epsilon_d < \frac{1}{3}\ln2$ in the nverse approximated chain $\mathcal{C} = \{\bm{A}_0,\bm{D}_0,\bm{A}_1, \bm{D}_1,\ldots, \bm{A}_d, \bm{D}_d\}$. Then $\text{DistrESolve}(\{[\bm{M}_0]_{k1},\ldots, [\bm{M}_0]_{kn}\}, [b_0]_k, d, \epsilon)$ requires $\mathcal{O}\left(\log\frac{1}{\epsilon}\right)$ iterations to return the $k^{th}$ component of the $\epsilon$ close approximation for $\bm{x}^{\star}$.

\end{lemma}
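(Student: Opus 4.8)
The plan is to reduce the claim to the already-established parallel result, Lemma~\ref{Exact_Alg_guarantee_lemma}, by showing that $\text{DistrESolve}$ is a faithful distributed realization of $\text{ParallelESolve}$ (Algorithm~\ref{Algo:Inv2}) in which the crude operator $\bm{Z}_0$ of $\text{ParallelRSolve}$ is replaced by the operator $\bm{Z}^{\prime}_0$ produced by $\text{DistrRSolve}$. Once this identification is in place, the iteration count $q = \mathcal{O}(\log\frac{1}{\epsilon})$ is inherited directly, since the convergence rate in Lemma~\ref{Exact_Alg_guarantee_lemma} depends only on the quality of the crude operator, namely on $\sum_{i=1}^{d}\epsilon_i < \frac{1}{3}\ln 2$.

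First I would verify the line-by-line correspondence between Algorithm~\ref{Alg_ExactBla} and Algorithm~\ref{Algo:Inv2}. Line~3 computes
\[
\left[\bm{u}^{(1)}_t\right]_k = [\bm{D}_0]_{kk}[\bm{y}_{t-1}]_k - \sum_{j: \bm{v}_j \in \mathbb{N}_1(\bm{v}_k)}[\bm{A}_0]_{kj}[\bm{y}_{t-1}]_j,
\]
which, because $\bm{M}_0 = \bm{D}_0 - \bm{A}_0$ and $\bm{A}_0$ has a $1$-hop sparsity pattern, is exactly the $k$-th entry of $\bm{M}_0\bm{y}_{t-1}$; line~4 applies $\bm{Z}^{\prime}_0$ to this vector by calling $\text{DistrRSolve}$; and line~5 performs the Richardson update $\bm{y}_t = \bm{y}_{t-1} - \bm{Z}^{\prime}_0 \bm{M}_0 \bm{y}_{t-1} + \bm{Z}^{\prime}_0 \bm{b}_0$. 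Assembling these per-node computations over all $\bm{v}_k \in \mathcal{V}$ yields precisely the global recursion $\bm{y}_t = (\bm{I} - \bm{Z}^{\prime}_0 \bm{M}_0)\bm{y}_{t-1} + \bm{Z}^{\prime}_0 \bm{b}_0$ driven by the preconditioner $\bm{Z}^{\prime}_0$, identical in form to that of $\text{ParallelESolve}$.

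Next I would invoke Lemma~\ref{Rude_Dec_Alg_guarantee_Lemma}, which guarantees $\bm{Z}^{\prime}_0 \approx_{\epsilon_d} \bm{M}^{-1}_0$, i.e. the distributed crude operator attains the same approximation quality that Lemma~\ref{Rude_Alg_guarantee_Lemma} provides for the parallel one. For the chain $\mathcal{C} = \{\bm{A}_0, \bm{D}_0, \ldots, \bm{A}_d, \bm{D}_d\}$ all intermediate approximation constants vanish, $\epsilon_0 = \cdots = \epsilon_{d-1} = 0$, so $\sum_{i=1}^{d}\epsilon_i = \epsilon_d$, and the hypothesis $\epsilon_d < \frac{1}{3}\ln 2$ is exactly the precondition of Lemma~\ref{Exact_Alg_guarantee_lemma}. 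Applying that lemma to the global recursion gives $q = \mathcal{O}(\log\frac{1}{\epsilon})$ iterations for an $\epsilon$-close approximation of $\bm{x}^{\star}$ in the $\bm{M}_0$-norm, and since $[\bm{y}_q]_k$ is the $k$-th component of this global iterate, each node returns the $k$-th component of the $\epsilon$-close solution as claimed.

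The hard part will be making the equivalence rigorous rather than merely formal: I must confirm that lines~3--4 introduce no approximation error beyond that already carried by $\bm{Z}^{\prime}_0$, so that the distributed iterate $\bm{y}_q$ coincides exactly with the iterate the parallel scheme would produce and the $\bm{M}_0$-norm contraction analysis transfers verbatim. This is clean precisely because each node holds the full row $[\bm{M}_0]_{k\cdot}$ and $\text{DistrRSolve}$ realizes the fixed linear operator $\bm{Z}^{\prime}_0$ exactly (Lemma~\ref{Rude_Dec_Alg_guarantee_Lemma}); consequently no consensus or norm-estimation step is needed inside the loop, and the only source of inexactness is the single, controlled factor $\epsilon_d$.
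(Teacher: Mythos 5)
Your proposal is correct and follows essentially the same route as the paper's own proof: identify the per-node updates of $\text{DistrESolve}$ with the preconditioned Richardson recursion $\bm{y}_t = (\bm{I} - \bm{Z}^{\prime}_0\bm{M}_0)\bm{y}_{t-1} + \bm{Z}^{\prime}_0\bm{b}_0$, invoke Lemma~\ref{Rude_Dec_Alg_guarantee_Lemma} for $\bm{Z}^{\prime}_0 \approx_{\epsilon_d} \bm{M}^{-1}_0$, and then apply Lemma~\ref{Exact_Alg_guarantee_lemma}. Your write-up is in fact somewhat more careful than the paper's, since you explicitly verify the line-by-line correspondence and note that the intermediate chain constants vanish so that $\sum_{i=1}^{d}\epsilon_i = \epsilon_d$, points the paper leaves implicit.
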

\begin{proof}
Notice that iterations in $\text{DistrESolve}$ corresponds to Preconditioned Richardson Iteration:
\begin{equation*}
\bm{y}_t = \left[\bm{I} - \bm{Z}'_0\bm{M}_0\right]\bm{y}_{t-1} + \bm{Z}_0\bm{b}_0
\end{equation*}
where $\bm{Z}'_0$ is the operator defined by $\text{DistrRSolve}$ and $\bm{y}_0 = \bm{0}$. Therefore, from Lemma \ref{Rude_Dec_Alg_guarantee_Lemma}:
\begin{equation*}
\bm{Z}'_0 \approx_{\epsilon_d} \bm{M}^{-1}_0 
\end{equation*}
Finally, applying Lemma \ref{Exact_Alg_guarantee_lemma} gives that $\text{DistrESolve}$ algorithm needs $\mathcal{O}\left(\log\frac{1}{\epsilon}\right)$ iterations to $k^{th}$ component of the $\epsilon$ approximated solution for $\bm{x}^{\star}$.
\end{proof}

\begin{lemma}
Let $\bm{M}_0 =\bm{D}_0 - \bm{A}_0$ be the standard splitting. Further, let $\epsilon_d < \frac{1}{3}\ln2$ in the inverse approximated chain $\mathcal{C} = \{\bm{A}_0,\bm{D}_0,\bm{A}_1, \bm{D}_1,\ldots, \bm{A}_d, \bm{D}_d\}$. Then, $\text{DistrESolve}(\{[\bm{M}_0]_{k1},\ldots, [\bm{M}_0]_{kn}\}, [\bm{b}_0]_k, d, \epsilon)$ requires $\mathcal{O}\left(dn^2\log(\frac{1}{\epsilon})\right)$ time steps.
\end{lemma}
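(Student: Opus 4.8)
The plan is to prove this complexity bound by simply combining two results already established in the excerpt: the iteration count of $\text{DistrESolve}$ from Lemma~\ref{Dist_Exact_algorithm_guarantee_lemma} and the per-call cost of $\text{DistrRSolve}$ from Lemma~\ref{Rude_Dec_Alg_guarantee_Lemma}. Since the hypothesis $\epsilon_d < \tfrac{1}{3}\ln 2$ is exactly what is needed to invoke Lemma~\ref{Dist_Exact_algorithm_guarantee_lemma}, I would first quote that lemma to fix the number of outer iterations at $q = \mathcal{O}\!\left(\log\frac{1}{\epsilon}\right)$. The remaining task is then purely to bound the work performed inside a single iteration $t$ of the loop in lines~2--6 of Algorithm~\ref{Alg_ExactBla}, and to account for the one-time initialization.

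Next I would inspect the three operations carried out per iteration. Line~3 computes $[\bm{u}^{(1)}_t]_k = [\bm{D}_0]_{kk}[\bm{y}_{t-1}]_k - \sum_{j:\bm{v}_j\in\mathbb{N}_1(\bm{v}_k)}[\bm{A}_0]_{kj}[\bm{y}_{t-1}]_j$, which is a single application of $\bm{M}_0$ restricted to the $1$-hop neighborhood; this costs each node only $\mathcal{O}(n)$ arithmetic plus communication bounded by $\text{diam}(\mathcal{G})\le n$, hence $\mathcal{O}(n)$ overall and is dominated by the other terms. Line~4 is a full call to $\text{DistrRSolve}$ on the input $[\bm{u}^{(1)}_t]_k$, which by Lemma~\ref{Rude_Dec_Alg_guarantee_Lemma} requires $\mathcal{O}(dn^2)$ time steps. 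Line~5 is a vector combination costing $\mathcal{O}(1)$ per node. Thus the dominant per-iteration cost is the $\text{DistrRSolve}$ subroutine at $\mathcal{O}(dn^2)$.

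Putting the pieces together, the initialization (the single $\text{DistrRSolve}$ call producing $[\chi]_k$) contributes a one-time $\mathcal{O}(dn^2)$, and each of the $q = \mathcal{O}\!\left(\log\frac{1}{\epsilon}\right)$ iterations contributes $\mathcal{O}(dn^2)$. Multiplying the per-iteration cost by the iteration count gives
\begin{equation*}
\mathcal{O}(dn^2)\cdot\mathcal{O}\!\left(\log\tfrac{1}{\epsilon}\right) = \mathcal{O}\!\left(dn^2\log\tfrac{1}{\epsilon}\right),
\end{equation*}
which absorbs the initialization term and yields the claimed bound.

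The only genuinely delicate point I expect is the accounting in the second paragraph: one must confirm that the overhead \emph{outside} the $\text{DistrRSolve}$ calls — namely the matrix--vector product of line~3 and its associated communication — does not exceed $\mathcal{O}(dn^2)$. Since $d \ge 1$ and each such product is at most $\mathcal{O}(n)$ per iteration, this is comfortably subsumed, so the argument reduces to a clean multiplication of the two cited lemmas rather than any new estimate. I would therefore keep the proof short, emphasizing that the result is a direct corollary of Lemmas~\ref{Rude_Dec_Alg_guarantee_Lemma} and~\ref{Dist_Exact_algorithm_guarantee_lemma}.
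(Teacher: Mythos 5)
Your proposal is correct and follows the same route as the paper: the paper's own proof simply notes that each of the $\mathcal{O}\left(\log\frac{1}{\epsilon}\right)$ iterations (from Lemma~\ref{Dist_Exact_algorithm_guarantee_lemma}) calls $\text{DistrRSolve}$ at cost $\mathcal{O}(dn^2)$ (from Lemma~\ref{Rude_Dec_Alg_guarantee_Lemma}) and multiplies the two bounds. Your version is slightly more careful — explicitly checking that the line~3 matrix--vector product and the initialization are dominated — but this is the same argument, just with the bookkeeping made visible.
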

\begin{proof}
Each iteration of $\text{DistrESolve}$ algorithm calls $\text{DistRSolve}$ routine, therefore, using the above the total time complexity of f $\text{DistrESolve}$ algorithm is $\mathcal{O}\left(dn^2\log(\frac{1}{\epsilon})\right)$ time steps
\end{proof}

\begin{lemma}
Let $\bm{M}_0 = \bm{D}_0 - \bm{A}_0$ be the standard splitting. Further, let $\epsilon_d < \sfrac{1}{3}\ln2$. Then Algorithm~8 requires $\mathcal{O}\left(\log\frac{1}{\epsilon}\right)$ iterations to return the $k^{th}$ component of the $\epsilon$ close approximation to $\bm{x}^{\star}$.  
\end{lemma}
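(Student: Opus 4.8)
The plan is to recognize that the loop in $\text{EDistRSolve}$ (Algorithm~\ref{Alg_ExactRHop}) is precisely a preconditioned Richardson iteration, algebraically identical to the full-communication routine $\text{DistrESolve}$ and to $\text{ParallelESolve}$ (Algorithm~\ref{Algo:Inv2}), so that the convergence count already available from~\cite{c11} can be reused. Writing the per-node updates of Algorithm~\ref{Alg_ExactRHop} in vector form, line~3 computes $\bm{u}^{(1)}_t=\bm{M}_0\bm{y}_{t-1}$ (since $[\bm{M}_0]_{kk}=[\bm{D}_0]_{kk}$ and the off-diagonal entries are $-[\bm{A}_0]_{kj}$, with $\bm{A}_0$ having the $1$-hop sparsity pattern), line~4 applies the $\text{RDistRSolve}$ operator $\bm{Z}^{\prime}_0$ to obtain $\bm{u}^{(2)}_t=\bm{Z}^{\prime}_0\bm{M}_0\bm{y}_{t-1}$, and line~5 gives $\bm{y}_t=\bm{y}_{t-1}-\bm{Z}^{\prime}_0\bm{M}_0\bm{y}_{t-1}+\bm{\chi}$ with $\bm{\chi}=\bm{Z}^{\prime}_0\bm{b}_0$. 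Hence
\begin{equation*}
\bm{y}_t = \left(\bm{I}-\bm{Z}^{\prime}_0\bm{M}_0\right)\bm{y}_{t-1} + \bm{Z}^{\prime}_0\bm{b}_0, \qquad \bm{y}_0=\bm{0},
\end{equation*}
which is exactly the preconditioned Richardson scheme analyzed in~\cite{c11}.

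Next I would invoke Lemma~\ref{r_hop_rude_lemma}, which guarantees that the R-Hop crude operator satisfies $\bm{Z}^{\prime}_0\approx_{\epsilon_d}\bm{M}^{-1}_0$; this is the same operator (defined by the inverse chain $\mathcal{C}=\{\bm{A}_0,\bm{D}_0,\ldots,\bm{A}_d,\bm{D}_d\}$ of Equation~\ref{Eq:InverseChain}) as the one produced by $\text{DistrRSolve}$, with the R-Hop restriction affecting only how the computation is distributed, not its algebraic output. Because the chain has $\epsilon_0=\cdots=\epsilon_{d-1}=0$, the total approximation exponent is $\sum_{i=0}^{d}\epsilon_i=\epsilon_d$, so the hypothesis $\epsilon_d<\tfrac{1}{3}\ln2$ is precisely the condition $\sum_{i=1}^{d}\epsilon_i<\tfrac{1}{3}\ln2$ required by Lemma~\ref{Exact_Alg_guarantee_lemma}. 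Applying that lemma to the Richardson iteration with preconditioner $\bm{Z}^{\prime}_0$ then yields that $q=\mathcal{O}\!\left(\log\tfrac{1}{\epsilon}\right)$ iterations suffice to drive $\bm{y}_q$ to an $\epsilon$-close solution of $\bm{M}_0\bm{x}=\bm{b}_0$, and therefore $[\tilde{\bm{x}}]_k=[\bm{y}_q]_k$ is the $k^{th}$ component of an $\epsilon$-close approximation to $\bm{x}^{\star}$.

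The main obstacle is the first step: carefully verifying that the distributed, R-Hop-restricted updates of Algorithm~\ref{Alg_ExactRHop} reproduce the global iterate above without any approximation error beyond what is already encoded in $\bm{Z}^{\prime}_0$. This reduces to checking that line~4 realizes the full action of $\bm{Z}^{\prime}_0$ on $\bm{u}^{(1)}_t$ using only R-Hop communication — which follows because the correctness argument of Lemma~\ref{r_hop_rude_lemma} is stated for an arbitrary right-hand side and thus applies verbatim to the input $\bm{u}^{(1)}_t$ in place of $\bm{b}_0$ — and that line~3 is computable from $1$-hop data alone. Once these identifications are secured, the iteration count is inherited directly from the parallel analysis and no new spectral estimates are needed; the threshold $\epsilon_d<\tfrac{1}{3}\ln2$, established in Lemma~\ref{eps_d_lemma}, is what keeps the Richardson operator $\bm{I}-\bm{Z}^{\prime}_0\bm{M}_0$ contractive in the $\bm{M}_0$-norm and hence underwrites the geometric convergence.
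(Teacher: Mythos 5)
Your proposal is correct and follows essentially the same route as the paper's own proof: both identify the loop of Algorithm~8 as the preconditioned Richardson iteration $\bm{y}_{t} = \left[\bm{I} - \bm{Z}^{\prime}_0\bm{M}_0\right]\bm{y}_{t-1} + \bm{Z}^{\prime}_0\bm{b}_0$ with $\bm{Z}^{\prime}_0$ the $\text{RDistRSolve}$ operator, invoke Lemma~\ref{r_hop_rude_lemma} to get $\bm{Z}^{\prime}_0\approx_{\epsilon_d}\bm{M}^{-1}_0$, and then apply the iteration-count guarantee of Lemma~\ref{Exact_Alg_guarantee_lemma}. Your additional bookkeeping (checking that $\epsilon_0=\cdots=\epsilon_{d-1}=0$ makes $\epsilon_d<\tfrac{1}{3}\ln 2$ exactly the hypothesis of that lemma, and that the R-Hop subroutine acts correctly on arbitrary right-hand sides) only makes explicit what the paper leaves implicit.
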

\begin{proof}
Please note that the iterations of $\text{EDistRSolve}$ correspond to a distributed version of the  preconditioned Richardson iteration scheme
\begin{equation*}
\bm{y}_{t} = [\bm{I} - \bm{Z}^{\prime}_0\bm{M}_0]\bm{y}_{t-1} + \bm{Z}^{\prime}_0\bm{b}_0
\end{equation*}
with $\bm{y}_0 = 0$ and $\bm{Z}^{\prime}_0$ being the operator defined by $\text{RDistRSolve}$. From Lemma~3.8 it is clear that
$\bm{Z}'_0 \approx_{\epsilon_d}\bm{M}^{-1}_0$. 
Applying Lemma~2.12, provides that $\text{EDistRSolve}$ requires $ \mathcal{O}\left(\log\sfrac{1}{\epsilon}\right)$ iterations to return the $k^{th}$ component of the $\epsilon$ close approximation to $\bm{x}^{\star}$. Finally, since $\text{EDistRSolve}$ uses procedure $\text{RDistRSolve}$ as a subroutine, it follows that for each node $\bm{v}_k$ only communication between the R-hope neighbors is allowed. 
\end{proof}

\begin{lemma}
Let $\bm{M}_0 = \bm{D}_0 - \bm{A}_0$ be the standard splitting and let $\epsilon_d < \sfrac{1}{3}\ln 2 $, then $\text{EDistRSolve}$ requires 
$\mathcal{O}\left(\left(\sfrac{2^d}{R}\alpha + \alpha Rd_{max}\right)\log\left(\sfrac{1}{\epsilon}\right)\right)$ time steps. Moreover, for each node $\bm{v}_k$, $\text{EDistRSolve}$ only uses information from the R-Hop neighbors.
\end{lemma}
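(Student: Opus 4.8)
The plan is to compose the iteration-count bound of the preceding lemma with the single-call cost of the crude R-Hop solver. First I would invoke Lemma~\ref{Dist_Exact_algorithm_guarantee_lemma_Bla}, which under the hypothesis $\epsilon_d < \frac{1}{3}\ln 2$ guarantees that $\text{EDistRSolve}$ terminates after $q = \mathcal{O}\left(\log\frac{1}{\epsilon}\right)$ passes through the main loop of Algorithm~\ref{Alg_ExactRHop}. This reduces the problem to bounding the cost of a single iteration of that loop together with the one-time initialization.

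Next I would tally the work performed inside a single iteration. Each iteration calls $\text{RDistRSolve}$ once, as does the initialization, to form $[\bm{u}^{(2)}_t]_k$; by Lemma~\ref{r_hop_rude_lemma} each such call costs $\mathcal{O}\left(\frac{2^d}{R}\alpha + \alpha R d_{max}\right)$ time steps, with $\alpha = \min\left\{n, \frac{d_{max}^{R+1}-1}{d_{max}-1}\right\}$. The only other operations are the formation of $[\bm{u}^{(1)}_t]_k$, a single weighted combination of $[\bm{y}_{t-1}]_j$ over the $1$-Hop neighbours of $\bm{v}_k$ costing $\mathcal{O}(d_{max})$, and the scalar update of $[\bm{y}_t]_k$ costing $\mathcal{O}(1)$. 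Both are dominated by the $\text{RDistRSolve}$ call, so the per-iteration cost is $\mathcal{O}\left(\frac{2^d}{R}\alpha + \alpha R d_{max}\right)$. Multiplying by the iteration count $q$ yields the claimed total of $\mathcal{O}\left(\left(\frac{2^d}{R}\alpha + \alpha R d_{max}\right)\log\frac{1}{\epsilon}\right)$ time steps.

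For the locality claim I would observe that the dominant subroutine $\text{RDistRSolve}$ communicates only within the R-Hop neighbourhood (Lemma~\ref{r_hop_rude_lemma}), while the step forming $[\bm{u}^{(1)}_t]_k$ reads only $1$-Hop data, which is contained in the R-Hop neighbourhood since $R \geq 1$; hence every node $\bm{v}_k$ draws information exclusively from its R-Hop neighbours throughout the execution. Since the argument is purely a composition of two already-established bounds, there is no deep obstacle. The one point requiring care is confirming that the auxiliary per-iteration work — in particular the $1$-Hop computation of $[\bm{u}^{(1)}_t]_k$ — neither exceeds the $\text{RDistRSolve}$ cost nor enlarges the communication radius beyond R hops; once this domination is verified, the multiplicative combination of the iteration count and the single-call complexity is immediate.
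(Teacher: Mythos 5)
Your proposal is correct and follows essentially the same route as the paper's own proof: invoke Lemma~\ref{Dist_Exact_algorithm_guarantee_lemma_Bla} for the $\mathcal{O}\left(\log\frac{1}{\epsilon}\right)$ iteration count, invoke Lemma~\ref{r_hop_rude_lemma} for the per-iteration cost of the $\text{RDistRSolve}$ subroutine, and multiply, with locality inherited from the subroutine. Your version is in fact slightly more careful than the paper's, since you explicitly verify that the auxiliary $1$-Hop work for $[\bm{u}^{(1)}_t]_k$ is dominated in cost and contained within the R-Hop radius, steps the paper leaves implicit.
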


\begin{proof}
Notice that at each iteration $\text{EDistRSolve}$ calls  $\text{RDistRSolve}$ as a subroutine, therefore, for each node $v_k$ only R-hop communication is allowed. Lemma~3.8 gives that the time complexity of each iteration is $\mathcal{O}\left(\frac{2^d}{R}\alpha + \alpha Rd_{max}\right)$, and using Lemma~3.9 immediately gives that the time complexity of $\mathcal{O}\left(\left(\sfrac{2^d}{R}\alpha + \alpha Rd_{max}\right)\log\left(\sfrac{1}{\epsilon}\right)\right)$.
\end{proof}

\section{Distributed Newton Lemmas}
\begin{lemma}
The dual objective $q(\bm{\lambda})=\bm{\lambda}^{\mathsf{T}}(\bm{A}\bm{x}(\bm{\lambda})-b)-\sum_{e}\bm{\Phi}_{e}(\bm{x}(\lambda))$ abides by the following two properties~[6]:
\begin{enumerate}
\item The dual Hessian, $\bm{H}(\bm{\lambda})$, is a weighted Laplacian of $\mathcal{G}$:
\begin{equation*}
\bm{H}(\bm{\lambda})=\nabla^{2}q(\bm{\lambda})=\bm{A}\left[\nabla^{2}f(\bm{x}(\bm{\lambda}))\right]^{-1}\bm{A}^{\mathsf{T}} 
\end{equation*}
\item The dual Hessian $\bm{H}(\bm{\lambda})$ is Lispshitz  continuous with respect to the Laplacian norm (i.e., $||\cdot||_{\mathcal{L}}$) where $\mathcal{L}$ is the unweighted laplacian satisfying $\mathcal{L}=\bm{A}\bm{A}^{\mathsf{T}}$ with $\bm{A}$ being the incidence matrix of $\mathcal{G}$. Namely, $\forall \bm{\lambda}, \bar{\bm{\lambda}}$: 
\begin{equation*}
||\bm{H}(\bar{\bm{\lambda}})-\bm{H}(\bm{\lambda})||_{\mathcal{L}} \leq B ||\bar{\bm{\lambda}}-\bm{\lambda}||_{\mathcal{L}}
\end{equation*}
with $B=\frac{\mu_{n}(\mathcal{L})\delta}{\gamma \sqrt{\mu_{2}(\mathcal{L})}}$ where $\mu_{n}(\mathcal{L})$ and $\mu_{2}(\mathcal{L})$ denote the largest and second smallest eigenvalues of the Laplacian $\mathcal{L}$. 
\end{enumerate}
\end{lemma}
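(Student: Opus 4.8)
The plan is to establish the two properties in turn, both resting on the explicit form of the dual minimizer $\bm{x}(\bm{\lambda})$ recorded in Equation~\ref{Eq:MapBack}. For Property~1, I would first compute the dual gradient: since the inner infimum defining $q(\bm{\lambda})$ is attained at $\bm{x}(\bm{\lambda})$ (uniquely, by strict convexity), Danskin's envelope theorem gives $\nabla q(\bm{\lambda}) = \bm{A}\bm{x}(\bm{\lambda}) - \bm{b}$, whence $\bm{H}(\bm{\lambda}) = \nabla^2 q(\bm{\lambda}) = \bm{A}\,\partial\bm{x}(\bm{\lambda})/\partial\bm{\lambda}$. Next I would differentiate the componentwise map $x^{(e)}(\bm{\lambda}) = [\dot{\bm{\Phi}}_e]^{-1}\big((\bm{A}^{\mathsf{T}}\bm{\lambda})^{(e)}\big)$, using the inverse-function identity $\big([\dot{\bm{\Phi}}_e]^{-1}\big)'(y) = 1/\ddot{\bm{\Phi}}_e(x^{(e)})$, to obtain $\partial\bm{x}/\partial\bm{\lambda} = [\nabla^2 f(\bm{x}(\bm{\lambda}))]^{-1}\bm{A}^{\mathsf{T}}$, where $f(\bm{x})=\sum_e\bm{\Phi}_e(x^{(e)})$ has diagonal Hessian $\nabla^2 f = \mathrm{diag}(\ddot{\bm{\Phi}}_e(x^{(e)}))$. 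This gives $\bm{H}(\bm{\lambda}) = \bm{A}[\nabla^2 f(\bm{x}(\bm{\lambda}))]^{-1}\bm{A}^{\mathsf{T}}$. Because Assumption~\ref{Ass:Two} guarantees $\gamma \le \ddot{\bm{\Phi}}_e \le \Gamma$, the middle factor is a positive diagonal matrix of edge weights $w_e = 1/\ddot{\bm{\Phi}}_e(x^{(e)}) > 0$, and expanding $\bm{A}\,\mathrm{diag}(w_e)\,\bm{A}^{\mathsf{T}}$ entrywise exhibits it as the weighted Laplacian of $\mathcal{G}$, proving Property~1.

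For Property~2 I would read the matrix $\mathcal{L}$-norm as the operator norm induced by $||\bm{v}||_{\mathcal{L}} = \sqrt{\bm{v}^{\mathsf{T}}\mathcal{L}\bm{v}}$; this is precisely the reading under which Lemma~\ref{lemma:B} follows by integrating $\bm{H}$ along the segment $\bm{\lambda} + t(\hat{\bm{\lambda}}-\bm{\lambda})$. Writing $\bm{\Delta H} = \bm{H}(\bar{\bm{\lambda}}) - \bm{H}(\bm{\lambda}) = \bm{A}\bm{\Delta D}\bm{A}^{\mathsf{T}}$, where $\bm{\Delta D} = [\nabla^2 f(\bm{x}(\bar{\bm{\lambda}}))]^{-1} - [\nabla^2 f(\bm{x}(\bm{\lambda}))]^{-1}$ is diagonal, and using $\mathcal{L} = \bm{A}\bm{A}^{\mathsf{T}}$, I would bound, for any test vector $\bm{v}$,
\begin{equation*}
||\bm{\Delta H}\bm{v}||_{\mathcal{L}} = ||\bm{A}^{\mathsf{T}}\bm{\Delta H}\bm{v}||_{2} = ||\bm{A}^{\mathsf{T}}\bm{A}\,\bm{\Delta D}\,\bm{A}^{\mathsf{T}}\bm{v}||_{2} \le \mu_{n}(\mathcal{L})\,||\bm{\Delta D}\,\bm{A}^{\mathsf{T}}\bm{v}||_{2},
\end{equation*}
using that $\bm{A}^{\mathsf{T}}\bm{A}$ and $\bm{A}\bm{A}^{\mathsf{T}}$ share nonzero eigenvalues, so $||\bm{A}^{\mathsf{T}}\bm{A}||_{2} = \mu_{n}(\mathcal{L})$, together with $||\bm{A}^{\mathsf{T}}\bm{v}||_{2} = ||\bm{v}||_{\mathcal{L}}$.

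It then remains to control the diagonal perturbation. Entrywise, Assumption~\ref{Ass:Two}(2) gives $|\bm{\Delta D}_{ee}| \le \delta\,|x^{(e)}(\bar{\bm{\lambda}}) - x^{(e)}(\bm{\lambda})|$, and since $[\dot{\bm{\Phi}}_e]^{-1}$ is $1/\gamma$-Lipschitz (its derivative is $1/\ddot{\bm{\Phi}}_e \le 1/\gamma$), one has $|x^{(e)}(\bar{\bm{\lambda}}) - x^{(e)}(\bm{\lambda})| \le \tfrac{1}{\gamma}\,|(\bm{A}^{\mathsf{T}}(\bar{\bm{\lambda}}-\bm{\lambda}))^{(e)}|$. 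Combining these, factoring $\max_{e}|\bm{\Delta D}_{ee}|$ out of the previous display, and converting the edge-difference vector to the Laplacian norm through the Poincar\'e-type inequality $\mu_{2}(\mathcal{L})\,||\bar{\bm{\lambda}}-\bm{\lambda}||_{2}^{2} \le ||\bar{\bm{\lambda}}-\bm{\lambda}||_{\mathcal{L}}^{2}$, valid on $\bm{1}^{\perp}$ where the dual increments and gradients live, assembles the stated constant $B = \mu_{n}(\mathcal{L})\delta/\big(\gamma\sqrt{\mu_{2}(\mathcal{L})}\big)$ and yields $||\bm{\Delta H}||_{\mathcal{L}} \le B\,||\bar{\bm{\lambda}}-\bm{\lambda}||_{\mathcal{L}}$.

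The main obstacle is Property~2, specifically the bookkeeping of the constant $B$: the argument shuttles between three norms ($||\cdot||_{\infty}$ on the edge differences $\bm{A}^{\mathsf{T}}(\bar{\bm{\lambda}}-\bm{\lambda})$, the Euclidean norm, and $||\cdot||_{\mathcal{L}}$), and one must keep every vector restricted to $\bm{1}^{\perp}$ so that $\mathcal{L}$ is positive definite there and $\mu_{2}(\mathcal{L})$ enters with the correct sign and exponent. A secondary point, needed to license the envelope-theorem computation in Property~1, is justifying that $\bm{x}(\bm{\lambda})$ is continuously differentiable and $q$ is twice differentiable; this follows from strict convexity together with $\gamma \le \ddot{\bm{\Phi}}_e$ via the implicit function theorem applied to the first-order conditions.
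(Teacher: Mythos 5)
Your proposal is correct and follows essentially the same route as the paper's own proof: write $\bm{H}(\bar{\bm{\lambda}})-\bm{H}(\bm{\lambda})=\bm{A}\left[\bm{W}(\bar{\bm{\lambda}})-\bm{W}(\bm{\lambda})\right]\bm{A}^{\mathsf{T}}$ with a diagonal middle factor, bound the induced $\mathcal{L}$-operator norm by $\mu_{n}(\mathcal{L})$ times the largest diagonal perturbation, control that perturbation via the Lipschitz condition of Assumption 2 combined with the $1/\gamma$-Lipschitzness of $[\dot{\bm{\Phi}}_{e}]^{-1}$, and pass to the Laplacian norm through the $\mu_{2}(\mathcal{L})$ Poincar\'e inequality on $\bm{1}^{\perp}$, assembling $B=\mu_{n}(\mathcal{L})\delta/(\gamma\sqrt{\mu_{2}(\mathcal{L})})$. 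The only differences are cosmetic: you sketch a derivation of Property 1 where the paper simply cites [6], and you annihilate the $\mathrm{span}\{\bm{1}\}$ component automatically by working with the edge differences $\bm{A}^{\mathsf{T}}(\bar{\bm{\lambda}}-\bm{\lambda})$ (since $\bm{A}^{\mathsf{T}}\bm{1}=\bm{0}$), whereas the paper handles exactly the bookkeeping concern you flag by an explicit three-case decomposition of $\bar{\bm{\lambda}}-\bm{\lambda}$ into its $\bm{1}^{\perp}$ and $\mathrm{span}\{\bm{1}\}$ parts.
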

\begin{proof}
For the first part see Lemma 1 in [6]. So, lets prove the second part:\\
Lets denote $\boldsymbol{W(\lambda)} = [\nabla^2f(x(\lambda))]^{-1}$, then:
\begin{equation}\label{eq_n_1}
\boldsymbol{H(\bar{\lambda})} - \boldsymbol{H(\lambda)} = \boldsymbol{A[W(\bar{\lambda}) - W(\lambda)]A^{\mathsf{T}}}
\end{equation}
Using that $||\boldsymbol{S}||_{\mathcal{L}} = \sup_{\boldsymbol{v}\in \bf{1}^{\perp}}\frac{||\boldsymbol{Sv}||_{\mathcal{L}}}{||\boldsymbol{v}||_{\mathcal{L}}}$ lets fix some $\boldsymbol{v}\in \bf{1}^{\perp}$ and consider the expression $||\boldsymbol{A[W(\bar{\lambda}) - W(\lambda)]A^{\mathsf{T}}v}||^2_{\mathcal{L}}$:
\begin{align*}
&\boldsymbol{||A[W(\bar{\lambda}) - W(\lambda)]A^{\mathsf{T}}v||}^2_{\mathcal{L}} = \\\nonumber
&\boldsymbol{v^{\mathsf{T}}A[W(\bar{\lambda}) - W(\lambda)]A^{\mathsf{T}}\mathcal{L}A[W(\bar{\lambda}) - W(\lambda)]A^{\mathsf{T}}v} = ^{1}\\\nonumber
&\boldsymbol{v^{\mathsf{T}}A[W(\bar{\lambda}) - W(\lambda)](A^{\mathsf{T}}A)^2[W(\bar{\lambda}) - W(\lambda)]A^{\mathsf{T}}v} \le^{^2}\\\nonumber
& \mu^2_n(\mathcal{L})\boldsymbol{v^{\mathsf{T}}A[W(\bar{\lambda}) - W(\lambda)]^2A^{\mathsf{T}}v} \le \\\nonumber
&\mu^2_n(\mathcal{L})\mu^2_n(\boldsymbol{|W(\bar{\lambda}) - W(\lambda)|})\boldsymbol{v^{\mathsf{T}}AA^{\mathsf{T}}v}  \\
&\hspace{7em}= 
\mu^2_n(\mathcal{L})\mu^2_n(\boldsymbol{|W(\bar{\lambda}) - W(\lambda)|})||\boldsymbol{v}||^2_{\mathcal{L}}
\end{align*}
We used in step ($^1$) $\mathcal{L} = \boldsymbol{AA^{\mathsf{T}}}$, and in step ($^2$) we used that $\mu_n(\boldsymbol{A^{\mathsf{T}}A}) = \mu_n(\boldsymbol{AA^{\mathsf{T}}}) = \mu_n(\mathcal{L})$. Therefore, we have:
\begin{equation}\label{eq_n_2}
||\boldsymbol{A[W(\bar{\lambda}) - W(\lambda)]A^{\mathsf{T}}}||_{\mathcal{L}} \le \mu_n(\mathcal{L})\mu_n(\boldsymbol{|W(\bar{\lambda}) - W(\lambda)|})
\end{equation}
Now, we upper bound the expression $\mu_n(\boldsymbol{|W(\bar{\lambda}) - W(\lambda)|})$:
\begin{align}\label{eq_n_3}
&\mu_n(\boldsymbol{|W(\bar{\lambda}) - W(\lambda)|}) \le \max_{e\in \mathcal{E}}\left|\frac{1}{\ddot{\boldsymbol{\Phi}}_e(\boldsymbol{x_e(\bar{\lambda})})} - \frac{1}{\ddot{\boldsymbol{\Phi}}_e(\boldsymbol{x_e(\lambda)})}\right| \le \\\nonumber
&\delta \max_{e\in \mathcal{E}}|\boldsymbol{x_e(\bar{\lambda}) - x_e(\lambda)}|
\end{align}
In the last transition we used Assumption 2. Now, using formulae for the derivative of the inverse function we have:
\begin{equation}
\left|\frac{\partial}{\partial \lambda_i}[\dot{\boldsymbol{\Phi}}]^{-1}(\boldsymbol{\lambda)}\right| = \frac{1}{\ddot{\boldsymbol{\Phi}}([\dot{\boldsymbol{\Phi}}]^{-1}(\boldsymbol{\lambda}))}\le \frac{1}{\gamma}
\end{equation}  
Hence, $[\dot{\boldsymbol{\Phi}}]^{-1}(\boldsymbol{\lambda)}$ is bounded, and therefore $[\dot{\boldsymbol{\Phi}}]^{-1}(\boldsymbol{\lambda)}$ is Lipshittz continuous with constant $L' = \frac{1}{\gamma}$. Now, because $\boldsymbol{x_e(\lambda)} = [\dot{\boldsymbol{\Phi}}]^{-1}(\lambda_i - \lambda_j)$, we have that $\boldsymbol{x_e(\lambda)}$ is Lipshitz continuous with corresponding constant $L'$. Hence, $\forall e\in \mathcal{E}$:
\begin{equation}\label{eq_n_4}
|\boldsymbol{x_e(\bar{\lambda}) - x_e(\lambda)}| \le \frac{1}{\gamma}||\boldsymbol{\bar{\lambda} - \lambda}||_2
\end{equation}
Now we are ready to prove the following

\begin{claim}
For all $e\in \mathcal{E}$ and for any $\boldsymbol{\bar{\lambda}, \lambda}$:
\begin{equation}\label{eq_n_5}
|\boldsymbol{x_e(\bar{\lambda}) - x_e(\lambda)}| \le \frac{1}{\gamma}\frac{||\boldsymbol{\bar{\lambda} - \lambda}||_{\mathcal{L}}}{\sqrt{\mu_2(\mathcal{L})}}
\end{equation}
\end{claim}

\begin{proof}
Consider three cases:
\begin{enumerate}
\item $\boldsymbol{\bar{\lambda} - \lambda} \in \bf{1}^{\perp}$. In this case, using that $\forall \boldsymbol{v}\in \bf{1}^{\perp}:$\\ 
$ \boldsymbol{v^{\mathsf{T}}v} \le \frac{\boldsymbol{v^{\mathsf{T}}\mathcal{L}v}}{\mu_2(\mathcal{L})}$ in (\ref{eq_n_4}):
\begin{equation*}
|\boldsymbol{x_e(\bar{\lambda}) - x_e(\lambda)}| \le \frac{1}{\gamma}||\boldsymbol{\bar{\lambda} - \lambda}||_2 \le \frac{1}{\gamma}\frac{\boldsymbol{||\bar{\lambda} - \lambda}||_{\mathcal{L}}}{\sqrt{\mu_2(\mathcal{L})}}
\end{equation*}
\item $\boldsymbol{\bar{\lambda} - \lambda} \in Span\{\bf{1}\}$ In this case $||\boldsymbol{\bar{\lambda} - \lambda}||_{\mathcal{L}} = 0$, and we have $\boldsymbol{\bar{\lambda}} = \boldsymbol{\lambda} + \alpha \bf{1}$, hence $\bar{\lambda}_i - \bar{\lambda}_j = \lambda_i + \alpha - (\lambda_j + \alpha) = \lambda_i - \lambda_j$, which gives:
\begin{equation*}
\boldsymbol{x_e(\bar{\lambda})} = [\dot{\boldsymbol{\Phi}}]^{-1}(\bar{\lambda}_i - \bar{\lambda}_j) = [\dot{\boldsymbol{\Phi}}]^{-1}(\lambda_i - \lambda_j) = \boldsymbol{x_e(\lambda)}
\end{equation*}
Therefore,
\begin{equation*}
|\boldsymbol{x_e(\bar{\lambda}) - x_e(\lambda)}| = 0 = \frac{1}{\gamma}\frac{||\boldsymbol{\bar{\lambda} - \lambda}||_{\mathcal{L}}}{\sqrt{\mu_2(\mathcal{L})}}.
\end{equation*}
Consequently, (\ref{eq_n_5}) is valid.
\item $\boldsymbol{\bar{\lambda} - \lambda} = \boldsymbol{u_1 + u_2}$, where $\boldsymbol{u_1}\in \bf{1}^{\perp},$ $\boldsymbol{u_2}\in Span\{\bf{1}\}$. In this case $||\boldsymbol{\bar{\lambda} - \lambda}||_{\mathcal{L}} = ||\boldsymbol{u_1}||_{\mathcal{L}}$, and $\bar{\lambda}_i - \bar{\lambda}_j = \lambda_i - \lambda_j + u_1(i) - u_1(j)$. Notice that the same expression for $\bar{\lambda}_i - \bar{\lambda}_j$ will be in the case when $\boldsymbol{\bar{\lambda}} = \boldsymbol{\lambda + u_1}$. Hence, using the first case which proves the claim:
\begin{align*}
&|\boldsymbol{x_e(\bar{\lambda}) - x_e(\lambda)}|  = |[\dot{\boldsymbol{\Phi}}]^{-1}(\bar{\lambda}_i - \bar{\lambda}_j) - [\dot{\boldsymbol{\Phi}}]^{-1}(\lambda_i - \lambda_j)| = \\\nonumber
&|[\dot{\boldsymbol{\Phi}}]^{-1}(\lambda_i - \lambda_j + u_1(i) - u_1(j))  - [\dot{\boldsymbol{\Phi}}]^{-1}(\lambda_i - \lambda_j)| \le \frac{1}{\gamma}||\boldsymbol{u_1}||_2 \le 
\frac{1}{\gamma}\frac{||\boldsymbol{u_1}||_{\mathcal{L}}}{\sqrt{\mu_2(\mathcal{L})}} = \frac{1}{\gamma}\frac{||\boldsymbol{\bar{\lambda} - \lambda}||_{\mathcal{L}}}{\sqrt{\mu_2(\mathcal{L})}}
\end{align*}

\end{enumerate}
\end{proof}
Combining the above claim with (\ref{eq_n_3}) gives:
\begin{equation}\label{eq_n_6}
\mu_n(|\boldsymbol{W(\bar{\lambda}) - W(\lambda)}|) \le \frac{\delta}{\gamma}\frac{||\boldsymbol{\bar{\lambda} - \lambda}||_{\mathcal{L}}}{\sqrt{\mu_2(\mathcal{L})}}
\end{equation}
Using (\ref{eq_n_6}) in (\ref{eq_n_2}) leads us to:
\begin{equation*}
||\boldsymbol{H(\bar{\lambda}) - H(\lambda)}||_{\mathcal{L}} \le B||\boldsymbol{\bar{\lambda} - \lambda}||_{\mathcal{L}},
\end{equation*}
where $B = \frac{\mu_n(\mathcal{L})\delta}{\gamma\sqrt{\mu_2(\mathcal{L})}}$.
\end{proof}

\begin{lemma}
If the dual Hessian $\bm{H}(\bm{\lambda})$ is Lipschitz continuous with respect to the Laplacian norm $||\cdot||_{\mathcal{L}}$ (i.e., Lemma 7), then for any $\bm{\lambda}$ and $\hat{\bm{\lambda}}$ we have 
\begin{equation*}
||\nabla q(\hat{\bm{\lambda}})-\nabla q({\bm{\lambda}}) - \bm{H}(\bm{\lambda})(\boldsymbol{\hat{\lambda}}-\bm{\lambda})||_{\mathcal{L}} \leq \frac{B}{2}||\hat{\bm{\lambda}} - \bm{\lambda}||_{\mathcal{L}}^{2}.
\end{equation*}
\end{lemma}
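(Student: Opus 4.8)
The plan is to invoke the integral (fundamental-theorem-of-calculus) representation of the gradient difference and then control the Taylor remainder using the Lipschitz bound of Lemma~\ref{lemma:Crap}. Writing $\bm{\Delta}=\hat{\bm{\lambda}}-\bm{\lambda}$ and parametrising the segment by $\bm{\lambda}_{t}=\bm{\lambda}+t\bm{\Delta}$ for $t\in[0,1]$, the twice-differentiability of $q$ gives $\nabla q(\hat{\bm{\lambda}})-\nabla q(\bm{\lambda})=\int_{0}^{1}\bm{H}(\bm{\lambda}_{t})\bm{\Delta}\,dt$. Subtracting $\bm{H}(\bm{\lambda})\bm{\Delta}=\int_{0}^{1}\bm{H}(\bm{\lambda})\bm{\Delta}\,dt$ produces the clean remainder identity
\[
\nabla q(\hat{\bm{\lambda}})-\nabla q(\bm{\lambda})-\bm{H}(\bm{\lambda})\bm{\Delta}=\int_{0}^{1}\left[\bm{H}(\bm{\lambda}_{t})-\bm{H}(\bm{\lambda})\right]\bm{\Delta}\,dt,
\]
which isolates exactly the quantity whose $\mathcal{L}$-norm must be estimated.

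Next I would push the Laplacian norm inside the integral by the triangle inequality, $\left\|\int_{0}^{1}[\cdots]\bm{\Delta}\,dt\right\|_{\mathcal{L}}\le\int_{0}^{1}\|[\bm{H}(\bm{\lambda}_{t})-\bm{H}(\bm{\lambda})]\bm{\Delta}\|_{\mathcal{L}}\,dt$, and then bound each integrand submultiplicatively by the induced operator norm, $\|[\bm{H}(\bm{\lambda}_{t})-\bm{H}(\bm{\lambda})]\bm{\Delta}\|_{\mathcal{L}}\le\|\bm{H}(\bm{\lambda}_{t})-\bm{H}(\bm{\lambda})\|_{\mathcal{L}}\,\|\bm{\Delta}\|_{\mathcal{L}}$. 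Applying the Lipschitz estimate of Lemma~\ref{lemma:Crap} to the first factor yields $\|\bm{H}(\bm{\lambda}_{t})-\bm{H}(\bm{\lambda})\|_{\mathcal{L}}\le B\|\bm{\lambda}_{t}-\bm{\lambda}\|_{\mathcal{L}}=Bt\,\|\bm{\Delta}\|_{\mathcal{L}}$, so the integrand is at most $Bt\|\bm{\Delta}\|_{\mathcal{L}}^{2}$. Integrating $\int_{0}^{1}Bt\,dt=\tfrac{B}{2}$ then delivers $\tfrac{B}{2}\|\bm{\Delta}\|_{\mathcal{L}}^{2}$, the claimed bound.

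The main subtlety, and the step I would treat most carefully, is that $\|\cdot\|_{\mathcal{L}}$ is only a seminorm on $\mathbb{R}^{N}$ (it vanishes on $\mathrm{Span}\{\bm{1}\}$) and that the induced operator norm appearing in Lemma~\ref{lemma:Crap} is a supremum over $\bm{v}\in\bm{1}^{\perp}$. To legitimise the submultiplicative inequality $\|\bm{S}\bm{v}\|_{\mathcal{L}}\le\|\bm{S}\|_{\mathcal{L}}\|\bm{v}\|_{\mathcal{L}}$ for $\bm{S}=\bm{H}(\bm{\lambda}_{t})-\bm{H}(\bm{\lambda})$ and an arbitrary $\bm{v}=\bm{\Delta}$, I would use that every dual Hessian has the form $\bm{A}[\cdots]\bm{A}^{\mathsf{T}}$ with $\bm{A}^{\mathsf{T}}\bm{1}=\bm{0}$, so $\bm{S}\bm{1}=\bm{0}$; decomposing $\bm{\Delta}=\bm{\Delta}_{\perp}+c\bm{1}$ then gives $\bm{S}\bm{\Delta}=\bm{S}\bm{\Delta}_{\perp}$ while $\|\bm{\Delta}\|_{\mathcal{L}}=\|\bm{\Delta}_{\perp}\|_{\mathcal{L}}$, so the supremum defining the operator norm applies directly to $\bm{\Delta}_{\perp}\in\bm{1}^{\perp}$. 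The same annihilation of $\bm{1}$ shows the remainder vector lies in $\bm{1}^{\perp}$, where $\|\cdot\|_{\mathcal{L}}$ is a genuine norm, so every step is well posed; integrability of $t\mapsto\bm{H}(\bm{\lambda}_{t})$ is secured by the smoothness of the $\bm{\Phi}_{e}$ in Assumption~\ref{Ass:Two}.
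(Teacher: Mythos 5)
Your proposal is correct and follows essentially the same route as the paper's proof: the fundamental-theorem-of-calculus representation of $\nabla q(\hat{\bm{\lambda}})-\nabla q(\bm{\lambda})$, the remainder identity obtained by subtracting $\int_{0}^{1}\bm{H}(\bm{\lambda})(\hat{\bm{\lambda}}-\bm{\lambda})\,dt$, the triangle inequality, the submultiplicative bound, and the Lipschitz estimate integrated against $t$. Your careful justification of $\|\bm{S}\bm{v}\|_{\mathcal{L}}\le\|\bm{S}\|_{\mathcal{L}}\|\bm{v}\|_{\mathcal{L}}$ via the decomposition $\bm{\Delta}=\bm{\Delta}_{\perp}+c\bm{1}$ and the fact that $\bm{H}(\cdot)\bm{1}=\bm{0}$ is exactly the content of the paper's intermediate claim, which it proves by the equivalent three-case analysis ($\bm{v}\in\bm{1}^{\perp}$, $\bm{v}\in\mathrm{Span}\{\bm{1}\}$, and the mixed case).
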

\begin{proof}
We apply the result of Fundamental Theorem of Calculus for the gradient $\nabla q$ which implies for any vectors $\boldsymbol{\lambda}$ and $\boldsymbol{\hat{\lambda}}$ in $\mathbb{R}^{n}$ we can write 
\begin{equation}\label{fundamental_result2}
\nabla q(\boldsymbol{\hat{\lambda}})= \nabla q(\boldsymbol{\lambda})+\int_{0}^1 \boldsymbol{H}(\boldsymbol{\lambda}+t(\boldsymbol{\hat{\lambda}-\lambda)}) (\boldsymbol{\hat{\lambda}-\lambda})\ dt,
\end{equation}
We proceed by adding and subtracting $\boldsymbol{H(\lambda)(\hat{\lambda}-\lambda)}$ to the integral in the right hand side of (\ref{fundamental_result2}). It follows that 
\begin{align}\label{taylor_first_two terms_44}
&\nabla q(\boldsymbol{\hat{\lambda}})= \nabla q(\boldsymbol{\lambda})+ \\\nonumber
&\int_{0}^1 \left[\boldsymbol{H}(\boldsymbol{\lambda}+t(\boldsymbol{\hat{\lambda}-\lambda}))-\boldsymbol{H(\lambda)}\right] (\boldsymbol{\hat{\lambda}-\lambda})+\boldsymbol{H(\lambda)(\hat{\lambda}-\lambda)}\ dt.
\end{align}
we can separate the integral in (\ref{taylor_first_two terms_44}) into two integrals as
\begin{align}\label{taylor_first_two terms_45}
&\nabla q(\boldsymbol{\hat{\lambda}})= \nabla q(\boldsymbol{\lambda})+\int_{0}^1 \left[\boldsymbol{H}(\boldsymbol{\lambda}+t(\boldsymbol{\hat{\lambda}-\lambda)})-\boldsymbol{H(\lambda)}\right] (\boldsymbol{\hat{\lambda}-\lambda})\ dt\\\nonumber
&+ \int_{0}^1 \boldsymbol{H(\lambda)(\hat{\lambda}-\lambda)}\ dt.
\end{align}
The second integral in the right hand side of (\ref{taylor_first_two terms_45}) does not depend on $t$ and we can simplify the integral as $ \boldsymbol{H(\lambda)(\hat{\lambda}-\lambda)}$. This simplification implies that we can rewrite (\ref{taylor_first_two terms_45}) as
\begin{align}\label{taylor_first_two terms_46}
&\nabla q(\boldsymbol{\hat{\lambda}})= \nabla q(\boldsymbol{\lambda})+ \boldsymbol{H(\lambda)(\hat{\lambda}-\lambda)}+\\\nonumber
&\int_{0}^1 \left[\boldsymbol{H}(\boldsymbol{\lambda}+t(\boldsymbol{\hat{\lambda}-\lambda}))-\boldsymbol{H(\lambda)}\right] (\boldsymbol{\hat{\lambda}-\lambda})\ dt,
\end{align}
By rearranging terms in (\ref{taylor_first_two terms_46}) and taking the norm of both sides we obtain 
\begin{align}\label{taylor_first_two terms_47}
&\|\nabla q(\boldsymbol{\hat{\lambda}})= \nabla q(\boldsymbol{\lambda}) - \boldsymbol{H(\lambda)(\hat{\lambda}-\lambda)}\|_{\mathcal{L}}= \\\nonumber
&\left|\left|\int_{0}^1 \left[\boldsymbol{H}(\boldsymbol{\lambda}+t(\boldsymbol{\hat{\lambda}-\lambda}))-\boldsymbol{H(\lambda)}\right] (\boldsymbol{\hat{\lambda}-\lambda})\ dt\right|\right|_{\mathcal{L}} \le
\\\nonumber 
&\int_{0}^1\left|\left|\left[\boldsymbol{H}(\boldsymbol{\lambda}+t(\boldsymbol{\hat{\lambda}-\lambda}))-\boldsymbol{H(\lambda)}\right] (\boldsymbol{\hat{\lambda}-\lambda}) \right|\right|_{\mathcal{L}} dt
\end{align}

Now we are ready to prove the following
\begin{claim}
Let $\boldsymbol{H(\lambda)}$ be the Hessian of the dual function $q(\boldsymbol{\lambda})$. Then, for any $\boldsymbol{v}\in \mathbb{R}^n$:
\begin{equation}\label{eq_n_7}
\left|\left|\left[\boldsymbol{H(\bar{\lambda})-H(\lambda)}\right]\boldsymbol{v} \right|\right|_{\mathcal{L}} \le \left|\left|\boldsymbol{H}(\boldsymbol{\bar{\lambda}})-\boldsymbol{H(\lambda)}\right|\right|_{\mathcal{L}} \left|\left|\boldsymbol{v} \right|\right|_{\mathcal{L}}
\end{equation}
\end{claim}
\begin{proof}
Consider three cases:
\begin{enumerate}
\item $\boldsymbol{v}\in \bf{1}^{\perp}$. In this case (\ref{eq_n_7}) follows immediately from the definition: $||\boldsymbol{S}||_{\mathcal{L}} = \sup_{\boldsymbol{v}\in \bf{1}^{\perp}}\frac{||\boldsymbol{Sv}||_{\mathcal{L}}}{||\boldsymbol{v}||_{\mathcal{L}}}$. 
\item $\boldsymbol{v}\in Span\{\bf{1}\}$. In this case $||\boldsymbol{v}||_{\mathcal{L}} = 0$ and $[\boldsymbol{H(\bar{\lambda}) - H(\lambda)}]\boldsymbol{v} = \boldsymbol{H(\bar{\lambda})v} - \boldsymbol{H(\lambda)v} = \bf{0} - \bf{0} = \bf{0}$ (because $\boldsymbol{H}(\cdot)\bf{1} = \bf{0}$). Hence, (\ref{eq_n_7}) is correct
\item $\boldsymbol{v} = \boldsymbol{u_1} + \boldsymbol{u_2}$, where $u_1\in \bf{1}^{\perp}$,$ u_2\in Span\{\bf{1}\}$. In this case $||\boldsymbol{v}||_{\mathcal{L}} = ||\boldsymbol{u_1}||_{\mathcal{L}}$, and
\begin{align*}
&\left|\left|\left[\boldsymbol{H(\bar{\lambda})-H(\lambda)}\right]\boldsymbol{v} \right|\right|_{\mathcal{L}} = \left|\left|\left[\boldsymbol{H(\bar{\lambda})-H(\lambda)}\right](\boldsymbol{u_1 + u_2}) \right|\right|_{\mathcal{L}} = \\\nonumber
&\left|\left|\left[\boldsymbol{H(\bar{\lambda})-H(\lambda)}\right]\boldsymbol{u_1} \right|\right|_{\mathcal{L}} \le^{^1} ||\boldsymbol{H(\bar{\lambda}) - H(\lambda)}||_{\mathcal{L}}||\boldsymbol{u_1}||_{\mathcal{L}} = \\\nonumber
&||\boldsymbol{H(\bar{\lambda}) - H(\lambda)}||_{\mathcal{L}}||\boldsymbol{v}||_{\mathcal{L}} 
\end{align*}
where in step ($^1$) we used the first case result.\\
This proves the claim
\end{enumerate}
\end{proof}
Applying the above claim to (\ref{taylor_first_two terms_47}) gives:
\begin{align*}
&\|\nabla q(\boldsymbol{\hat{\lambda}})- \nabla q(\boldsymbol{\lambda})-\boldsymbol{H(\lambda)(\hat{\lambda}-\lambda)}\|_{\mathcal{L}} \le \\\nonumber
&\int_{0}^1||\boldsymbol{H}(\boldsymbol{\lambda} + t(\boldsymbol{\hat{\lambda} - \lambda})) - \boldsymbol{H(\lambda)}||_{\mathcal{L}}t||\boldsymbol{\hat{\lambda} - \lambda}||_{\mathcal{L}}dt \le^{2}\\\nonumber
&\int_{0}^1B||\boldsymbol{\hat{\lambda} - \lambda}||_{\mathcal{L}}t||\boldsymbol{\hat{\lambda} - \lambda}||_{\mathcal{L}}dt = B||\boldsymbol{\hat{\lambda} - \lambda}||^2_{\mathcal{L}}\int_{0}^1tdt  \\
& = \frac{B}{2}||\boldsymbol{\hat{\lambda} - \lambda}||^2_{\mathcal{L}}
\end{align*}
where in step ($^1$) we used the fact that $\boldsymbol{H}(\cdot)$ is Lipshitz continuous with respect to the laplacian norm $||\cdot||_{\mathcal{L}}$.
\end{proof}

\begin{lemma}
Let $\bm{H}_{k}=\bm{H}(\bm{\lambda}_{k})$ be the Hessian of the dual function, then for any arbitrary $\epsilon > 0$ we have  
\begin{equation*}
e^{-\epsilon^{2}} \bm{v}^{\mathsf{T}} \bm{H}_{k}^{\dagger} \bm{v} \leq \bm{v}^{\mathsf{T}}\bm{Z}_{k}\bm{v}\leq e^{\epsilon^{2}}\bm{v}^{\mathsf{T}}\bm{H}_{k}^{\dagger}\bm{v}, \ \ \ \ \ \ \forall \bm{v} \in \bm{1}^{\perp}
\end{equation*}

\end{lemma}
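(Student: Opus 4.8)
The plan is to reduce the statement to the spectral approximation guarantee already established for the distributed exact solver, while being careful that $\bm{H}_k$ is a \emph{singular} weighted Laplacian rather than an SDDM matrix, so that every approximation claim is read off only on the subspace $\bm{1}^{\perp}$.

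First I would invoke Lemma~\ref{lemma:Crap} to record that $\bm{H}_k = \bm{A}[\nabla^2 f(\bm{x}(\bm{\lambda}_k))]^{-1}\bm{A}^{\mathsf{T}}$ is a weighted Laplacian of $\mathcal{G}$; it is therefore symmetric positive semidefinite and, since $\mathcal{G}$ is connected, its null space is exactly $\mathrm{span}\{\bm{1}\}$. Consequently $\bm{H}_k$ is positive definite on $\bm{1}^{\perp}$, and $\bm{H}_k^{\dagger}$ acts there as the genuine inverse while vanishing on $\mathrm{span}\{\bm{1}\}$. Writing the standard splitting $\bm{H}_k = \bm{D}_0 - \bm{A}_0$, the row-sum-zero property of a Laplacian forces $\bm{D}_0^{-1}\bm{A}_0\bm{1} = \bm{1}$, so $\rho(\bm{D}_0^{-1}\bm{A}_0) = 1$ on the full space. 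The essential observation is that, by the bound $|\lambda_i| \le 1 - \frac{1}{\kappa}$ established inside Lemma~\ref{eps_d_lemma}, every \emph{other} eigenvalue is strictly smaller than $1$ in magnitude, so $(\bm{D}_0^{-1}\bm{A}_0)^{2^k}$ still contracts to $0$ when restricted to $\bm{1}^{\perp}$. This is precisely what the inverse chain of Equation~\ref{Eq:InverseChain} needs in order to remain a valid approximated chain on that subspace.

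Next I would run the distributed exact solver $\text{EDistRSolve}$ (Algorithm~\ref{Alg_ExactRHop}) on $\bm{H}_k$ using the chain length $d$ prescribed by Lemma~\ref{eps_d_lemma}, so that $\epsilon_d < \frac{1}{3}\ln 2$. By Lemma~\ref{r_hop_rude_lemma} the crude operator satisfies $\bm{Z}_0' \approx_{\epsilon_d} \bm{H}_k^{\dagger}$ on $\bm{1}^{\perp}$, and by the preconditioned Richardson analysis underlying Lemma~\ref{Dist_Exact_algorithm_guarantee_lemma_Bla} the iteration $\bm{y}_t = [\bm{I} - \bm{Z}_0'\bm{H}_k]\bm{y}_{t-1} + \bm{Z}_0'\bm{b}$ produces an operator $\bm{Z}_k$ whose error operator is $(\bm{I} - \bm{Z}_0'\bm{H}_k)^{q}$; since this contracts geometrically on $\bm{1}^{\perp}$, after $q = \mathcal{O}(\log\frac{1}{\theta})$ iterations one obtains $\bm{Z}_k \approx_{\theta}\bm{H}_k^{\dagger}$ for \emph{any} target precision $\theta > 0$. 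Taking $\theta = \epsilon^2$ and unfolding Definition~\ref{Def:Ordering} restricted to $\bm{v}\in\bm{1}^{\perp}$ yields exactly $e^{-\epsilon^2}\bm{v}^{\mathsf{T}}\bm{H}_k^{\dagger}\bm{v} \le \bm{v}^{\mathsf{T}}\bm{Z}_k\bm{v} \le e^{\epsilon^2}\bm{v}^{\mathsf{T}}\bm{H}_k^{\dagger}\bm{v}$. The choice $\theta=\epsilon^2$ is not arbitrary: composed with Lemma~\ref{lemma_approx_matrix_inverse}, it makes $\tilde{\bm{d}}_k = -\bm{Z}_k\bm{g}_k$ an $\mathcal{O}(\epsilon)$-accurate Newton direction, which is precisely what the downstream convergence analysis consumes.

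The main obstacle is the singularity just described: every ingredient of the SDDM machinery quoted above was proved for positive-definite SDDM matrices, whereas $\bm{H}_k$ is only positive \emph{semi}definite with a one-dimensional kernel. The delicate part of the write-up is therefore to verify that the splitting, the chain of Equation~\ref{Eq:InverseChain}, the crude operator $\bm{Z}_0'$, and the Richardson error operator $\bm{I} - \bm{Z}_0'\bm{H}_k$ all leave $\bm{1}^{\perp}$ invariant and fix $\mathrm{span}\{\bm{1}\}$ in the kernel, so that each ``$\approx$'' statement is legitimately a statement about quadratic forms evaluated only at $\bm{v}\in\bm{1}^{\perp}$; and that the contraction $|\lambda_i|\le 1-\frac{1}{\kappa}$ is exactly what rescues convergence on that subspace even though $\rho(\bm{D}_0^{-1}\bm{A}_0) = 1$ globally.
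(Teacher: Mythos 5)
Your plan takes a genuinely different route from the paper, and it breaks at precisely the step you defer as ``the delicate part'': the invariance of $\bm{1}^{\perp}$ under the solver's operators is false, so the restricted-subspace reading of the SDDM lemmas cannot be obtained by restriction. The matrix $\bm{D}_0^{-1}\bm{A}_0$ does fix $\bm{1}$ (because $\bm{A}_0\bm{1}=\bm{D}_0\bm{1}$), but its \emph{left} eigenvector at eigenvalue $1$ is the degree vector $\bm{D}_0\bm{1}$, not $\bm{1}$; the subspace $\bm{1}^{\perp}$ is invariant under $\bm{D}_0^{-1}\bm{A}_0$ if and only if $\bm{A}_0\bm{D}_0^{-1}\bm{1}\in\mathrm{span}\{\bm{1}\}$, which fails for every non-regular graph (on the unit-weight path with three vertices, $\bm{A}_0\bm{D}_0^{-1}\bm{1}=(1/2,\,2,\,1/2)^{\mathsf{T}}$). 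The invariant complement of $\mathrm{span}\{\bm{1}\}$ is the weighted subspace $(\bm{D}_0\bm{1})^{\perp}$, and that is where your spectral-gap contraction actually lives. The defect propagates to the composite operators: the recursion defining the crude operator starts from $\bm{D}_0^{-1}$, which maps $\bm{1}\mapsto\bm{D}_0^{-1}\bm{1}\notin\mathrm{span}\{\bm{1}\}$, so in general $\bm{Z}_0'\bm{1}\notin\mathrm{span}\{\bm{1}\}$; consequently, for $\bm{v}\in\bm{1}^{\perp}$ one has $\bm{1}^{\mathsf{T}}\left(\bm{I}-\bm{Z}_0'\bm{H}_k\right)\bm{v}=-\left(\bm{Z}_0'\bm{1}\right)^{\mathsf{T}}\bm{H}_k\bm{v}$ (using symmetry of $\bm{Z}_0'$), which need not vanish, i.e.\ the Richardson error operator pushes $\bm{1}^{\perp}$ out of itself. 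Note also that condition (3) of Definition~\ref{Def:InvChain} is genuinely false for the singular Laplacian as a Loewner statement: $\bm{D}_d-\bm{A}_d$ annihilates $\bm{1}$ while $e^{-\epsilon_d}\bm{D}_d$ is positive definite, so $e^{-\epsilon_d}\bm{D}_d\preceq\bm{D}_d-\bm{A}_d$ cannot hold. Hence there is no version of Lemma~\ref{r_hop_rude_lemma} ``on $\bm{1}^{\perp}$'' available to invoke; every chain lemma would have to be re-proved relative to the decomposition $\mathrm{span}\{\bm{1}\}\oplus(\bm{D}_0\bm{1})^{\perp}$, and even then the target statement concerns a different subspace, $\bm{1}^{\perp}$.

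The missing idea --- and the paper's actual proof --- is a regularization-and-limit argument that bypasses all invariance questions. Fix $\delta>0$ and run the solver on $\bm{H}_{k,\delta}=\bm{H}_k+\delta\bm{1}\bm{1}^{\mathsf{T}}$, which is symmetric positive definite (nonsingular), so the exact-solver guarantee holds on all of $\mathbb{R}^n$: $e^{-\epsilon^2}\bm{v}^{\mathsf{T}}\bm{H}_{k,\delta}^{-1}\bm{v}\le\bm{v}^{\mathsf{T}}\bm{Z}_{k,\delta}\bm{v}\le e^{\epsilon^2}\bm{v}^{\mathsf{T}}\bm{H}_{k,\delta}^{-1}\bm{v}$. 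Since $\bm{H}_{k,\delta}$ has the same eigenvectors as $\bm{H}_k$, its inverse equals $\bm{H}_k^{\dagger}$ plus a multiple of $\bm{1}\bm{1}^{\mathsf{T}}$, and that extra term is invisible in the quadratic form whenever $\bm{v}\in\bm{1}^{\perp}$; restricting the inequality to $\bm{1}^{\perp}$ therefore replaces $\bm{H}_{k,\delta}^{-1}$ by $\bm{H}_k^{\dagger}$ exactly. Letting $\delta\to0$ and using $\bm{Z}_{k,\delta}\to\bm{Z}_k$ gives the lemma. If you insist on a direct argument, you would first have to rebuild the chain analysis on $(\bm{D}_0\bm{1})^{\perp}$ (equivalently, symmetrize by $\bm{D}_0^{-1/2}$ and work on $(\bm{D}_0^{1/2}\bm{1})^{\perp}$) and then separately transfer the conclusion to $\bm{1}^{\perp}$; the regularization route is far shorter and is what the paper does.
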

\begin{proof}
Lets $\{\mu^{(k)}_i\}^{n}_{i=1}$ be the collection of eigenvalues of $\boldsymbol{H_k}$ and $\{\boldsymbol{u^{(k)}}_i\}$ are corresponding eigenvectors. Then
\begin{equation}
\boldsymbol{H_k} = \sum_{i=2}^{n}\mu^{(k)}_i\boldsymbol{u^{(k)}}_i\boldsymbol{u^{(k) \mathsf{T}}}_i \hspace{1cm} \boldsymbol{H^{\dagger}_k} = \sum_{i=2}^{n}\frac{1}{\mu^{(k)}_i}\boldsymbol{u^{(k)}}_i\boldsymbol{u^{(k) \mathsf{T}}}_i
\end{equation}  
where we use $\mu^{(k)}_1 = 0$ and $\boldsymbol{u^{(k)}}_1 = \bf{1}$. Now lets fix some $\delta > 0$ and consider the matrix $\boldsymbol{H}_{k,\delta} = \sum_{i=2}^{n}\mu^{(k)}_i\boldsymbol{u^{(k)}}_i\boldsymbol{u^{(k) \mathsf{T}}}_i + \delta \bf{11}^{\mathsf{T}} = $ $ \boldsymbol{H_k} + \delta \bf{11}^{\mathsf{T}}$. The corresponding linear system will have the form:
\begin{equation}\label{add_system}
\boldsymbol{H}_{k,\delta}\boldsymbol{d_k} = -\boldsymbol{g_k}
\end{equation}
and the operator $\boldsymbol{Z}_{k,\delta}$ defined by by \textit{EDistRSolve} routine for (\ref{add_system}) satisfies:
\begin{equation}\label{approx_expression_delta}
e^{-\epsilon^2}\boldsymbol{v}^{\mathsf{T}}\boldsymbol{H}^{-1}_{k,\delta}\boldsymbol{v} \le \boldsymbol{v}^{\mathsf{T}}\boldsymbol{Z}_{k,\delta}\boldsymbol{v}\le e^{\epsilon^2}\boldsymbol{v}^{\mathsf{T}}\boldsymbol{H}^{-1}_{k,\delta}\boldsymbol{v}, \hspace{1cm} \forall \boldsymbol{v}\in \mathbb{R}^n
\end{equation}
Notice that $\boldsymbol{H}^{-1}_{k,\delta} =  \sum_{i=2}^{n}\frac{1}{\mu^{(k)}_i}boldsymbol{u^{(k)}}_i\boldsymbol{u^{(k) \mathsf{T}}}_i + \frac{1}{\delta}\bf{11}^{\mathsf{T}} = $ $ \boldsymbol{H^{\dagger}_k} + \frac{1}{\delta}\bf{11}^{\mathsf{T}}$. Hence, taking $\boldsymbol{v}\in \bf{1}^{\perp}$ in (\ref{approx_expression_delta}):
\begin{equation}\label{approx_1_perp}
e^{-\epsilon^2}\boldsymbol{v}^{\mathsf{T}}\boldsymbol{H^{\dagger}_{k}}\boldsymbol{v} \le \boldsymbol{v}^{\mathsf{T}}\boldsymbol{Z}_{k,\delta}\boldsymbol{v}\le e^{\epsilon^2}\boldsymbol{v}^{\mathsf{T}}\boldsymbol{H^{\dagger}_{k}}\boldsymbol{v}, \hspace{1cm} \forall \boldsymbol{v}\in \bf{1}^{\perp}
\end{equation} 
The last step is to take the limit $\delta \to 0$ in (\ref{approx_1_perp}) and notice that $\boldsymbol{Z}_{k,\delta} \to \boldsymbol{Z_k}$:
\begin{equation*}
e^{-\epsilon^2}\boldsymbol{v}^{\mathsf{T}}\boldsymbol{H^{\dagger}_{k}}\boldsymbol{v} \le \boldsymbol{v}^{\mathsf{T}}\boldsymbol{Z_{k}}\boldsymbol{v}\le e^{\epsilon^2}\boldsymbol{v}^{\mathsf{T}}\boldsymbol{H^{\dagger}_{k}}\boldsymbol{v}, \hspace{1cm} \forall \boldsymbol{v}\in \bf{1}^{\perp}
\end{equation*} 
\end{proof}

\begin{lemma}
Consider the following iteration scheme $\bm{\lambda}_{k+1}=\bm{\lambda}_{k} + \alpha_{k}\tilde{\bm{d}}_{k}$ with $\alpha_{k} \in (0,1]$, then, for any arbitrary $\epsilon>0$, the Laplacian norm of the gradient, $||\bm{g}_{k+1}||_{\mathcal{L}}$, follows:
\begin{align}\label{Eq:NormGA}
||\bm{g}_{k+1}||_{\mathcal{L}} &\leq \left[1-\alpha_{k} +\alpha_{k}\epsilon\frac{\mu_{n}(\mathcal{L})}{\mu_{2}(\mathcal{L})}\sqrt{\frac{\Gamma}{\gamma}}\right]||\bm{g}_{k}||_{\mathcal{L}} +  \frac{\alpha_{k}^{2}B\Gamma^{2}(1+\epsilon)^{2}}{2\mu^{2}_{2}(\mathcal{L})}||\bm{g}_{k}||_{\mathcal{L}}^{2}
\end{align}
with $\mu_{n}(\mathcal{L})$ and $\mu_{2}(\mathcal{L})$ being the largest and second smallest eigenvalues of $\mathcal{L}$, $\Gamma$ and $\gamma$ are constants from Assumption 2, and $B \in \mathbb{R}$ is defined previously. 
\end{lemma}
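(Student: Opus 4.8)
The plan is to control $\bm{g}_{k+1}=\nabla q(\bm{\lambda}_{k+1})$ by comparing it against its first-order expansion about $\bm{\lambda}_k$ and then splitting the resulting error into a \emph{linear} piece (scaling like $\|\bm{g}_k\|_{\mathcal{L}}$) and a \emph{quadratic} piece (scaling like $\|\bm{g}_k\|_{\mathcal{L}}^2$), which will become the two terms of the claim. First I would invoke Lemma~\ref{lemma:B} with $\hat{\bm{\lambda}}=\bm{\lambda}_{k+1}$ and $\bm{\lambda}=\bm{\lambda}_k$; since $\bm{\lambda}_{k+1}-\bm{\lambda}_k=\alpha_k\tilde{\bm{d}}_k$ and $\nabla q(\bm{\lambda}_k)=\bm{g}_k$, this gives $\|\bm{g}_{k+1}-\bm{g}_k-\alpha_k\bm{H}_k\tilde{\bm{d}}_k\|_{\mathcal{L}}\le\tfrac{\alpha_k^2 B}{2}\|\tilde{\bm{d}}_k\|_{\mathcal{L}}^2$, so by the triangle inequality
\begin{equation*}
\|\bm{g}_{k+1}\|_{\mathcal{L}}\le \|\bm{g}_k+\alpha_k\bm{H}_k\tilde{\bm{d}}_k\|_{\mathcal{L}}+\frac{\alpha_k^2 B}{2}\|\tilde{\bm{d}}_k\|_{\mathcal{L}}^2.
\end{equation*}

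For the quadratic summand the target is $\|\tilde{\bm{d}}_k\|_{\mathcal{L}}^2\le\frac{\Gamma^2(1+\epsilon)^2}{\mu_2^2(\mathcal{L})}\|\bm{g}_k\|_{\mathcal{L}}^2$. The chain I would use is: the sandwich $\tfrac{1}{\Gamma}\mathcal{L}\preceq\bm{H}_k\preceq\tfrac{1}{\gamma}\mathcal{L}$ (immediate from the Laplacian form in Lemma~\ref{lemma:Crap} together with $\gamma\le\ddot{\bm{\Phi}}_e\le\Gamma$ from Assumption~\ref{Ass:Two}), which yields $\mathcal{L}\preceq\Gamma\bm{H}_k$ and hence $\|\tilde{\bm{d}}_k\|_{\mathcal{L}}^2\le\Gamma\|\tilde{\bm{d}}_k\|_{\bm{H}_k}^2$; then the a priori estimate $\|\tilde{\bm{d}}_k-\bm{d}_k\|_{\bm{H}_k}\le\epsilon\|\bm{d}_k\|_{\bm{H}_k}$ (stated just before Lemma~\ref{Lemma:Bla}) giving $\|\tilde{\bm{d}}_k\|_{\bm{H}_k}\le(1+\epsilon)\|\bm{d}_k\|_{\bm{H}_k}$; and finally $\|\bm{d}_k\|_{\bm{H}_k}^2=\bm{g}_k^{\mathsf{T}}\bm{H}_k^{\dagger}\bm{g}_k$ (from $\bm{d}_k=-\bm{H}_k^{\dagger}\bm{g}_k$ with $\bm{g}_k\in\bm{1}^{\perp}$), which by order-reversal of the pseudo-inverse on $\bm{1}^{\perp}$ is $\le\Gamma\,\bm{g}_k^{\mathsf{T}}\mathcal{L}^{\dagger}\bm{g}_k\le\frac{\Gamma}{\mu_2^2(\mathcal{L})}\|\bm{g}_k\|_{\mathcal{L}}^2$. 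Multiplying these three bounds delivers the quadratic coefficient exactly.

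For the linear summand the key algebraic identity, using $\bm{g}_k=-\bm{H}_k\bm{d}_k$ and $\tilde{\bm{d}}_k=-\bm{Z}_k\bm{g}_k$, is
\begin{equation*}
\bm{g}_k+\alpha_k\bm{H}_k\tilde{\bm{d}}_k=(1-\alpha_k)\bm{g}_k+\alpha_k\bm{H}_k(\tilde{\bm{d}}_k-\bm{d}_k),
\end{equation*}
so that, because $\alpha_k\in(0,1]$ makes $1-\alpha_k\ge0$, the triangle inequality gives $\|\bm{g}_k+\alpha_k\bm{H}_k\tilde{\bm{d}}_k\|_{\mathcal{L}}\le(1-\alpha_k)\|\bm{g}_k\|_{\mathcal{L}}+\alpha_k\|\bm{H}_k(\tilde{\bm{d}}_k-\bm{d}_k)\|_{\mathcal{L}}$. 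It then remains to pass from the $\bm{H}_k$-norm of the direction error to the $\mathcal{L}$-norm of its image: I would establish $\|\bm{H}_k\bm{e}\|_{\mathcal{L}}\le\frac{\mu_n(\mathcal{L})}{\sqrt{\gamma}}\|\bm{e}\|_{\bm{H}_k}$ for $\bm{e}\in\bm{1}^{\perp}$, using $\mathcal{L}\preceq\mu_n(\mathcal{L})\bm{I}$ on $\bm{1}^{\perp}$, then $\bm{H}_k^2\preceq\mu_n(\bm{H}_k)\bm{H}_k$ with $\mu_n(\bm{H}_k)\le\mu_n(\mathcal{L})/\gamma$. Applying this with $\bm{e}=\tilde{\bm{d}}_k-\bm{d}_k$ and using $\|\tilde{\bm{d}}_k-\bm{d}_k\|_{\bm{H}_k}\le\epsilon\|\bm{d}_k\|_{\bm{H}_k}\le\epsilon\frac{\sqrt{\Gamma}}{\mu_2(\mathcal{L})}\|\bm{g}_k\|_{\mathcal{L}}$ produces $\|\bm{H}_k(\tilde{\bm{d}}_k-\bm{d}_k)\|_{\mathcal{L}}\le\epsilon\frac{\mu_n(\mathcal{L})}{\mu_2(\mathcal{L})}\sqrt{\tfrac{\Gamma}{\gamma}}\|\bm{g}_k\|_{\mathcal{L}}$, which is precisely the bracketed linear coefficient. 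Summing the two contributions yields the stated inequality.

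The hard part will be the repeated spectral comparisons restricted to $\bm{1}^{\perp}$: the order-reversal $\bm{H}_k\succeq\tfrac{1}{\Gamma}\mathcal{L}\Rightarrow\bm{H}_k^{\dagger}\preceq\Gamma\mathcal{L}^{\dagger}$ and the eigenvalue bound $\bm{g}_k^{\mathsf{T}}\mathcal{L}^{\dagger}\bm{g}_k\le\mu_2^{-2}(\mathcal{L})\|\bm{g}_k\|_{\mathcal{L}}^2$ are valid only because $\bm{H}_k$ and $\mathcal{L}$ share the kernel $\mathrm{span}\{\bm{1}\}$ (connectedness, Assumption~1) and because $\bm{g}_k\in\bm{1}^{\perp}$ (true since $\bm{g}_k=\bm{A}\bm{x}(\bm{\lambda}_k)-\bm{b}$ with $\bm{b}\in\bm{1}^{\perp}$ and $\mathrm{range}(\bm{A})\subseteq\bm{1}^{\perp}$). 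I would track these projections carefully throughout so that no term silently acquires a component along $\bm{1}$, since that is where an otherwise routine estimate could fail.
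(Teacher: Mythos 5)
Your proposal is correct and follows essentially the same route as the paper's proof: the Lipschitz bound of Lemma~\ref{lemma:B} plus triangle inequality, the splitting $\tilde{\bm{d}}_k=\bm{d}_k+(\tilde{\bm{d}}_k-\bm{d}_k)$ so that $\bm{g}_k+\alpha_k\bm{H}_k\tilde{\bm{d}}_k=(1-\alpha_k)\bm{g}_k+\alpha_k\bm{H}_k(\tilde{\bm{d}}_k-\bm{d}_k)$, and the identical spectral estimates $\|\bm{H}_k\bm{e}\|_{\mathcal{L}}\le\frac{\mu_n(\mathcal{L})}{\sqrt{\gamma}}\|\bm{e}\|_{\bm{H}_k}$ and $\bm{g}_k^{\mathsf{T}}\bm{H}_k^{\dagger}\bm{g}_k\le\frac{\Gamma}{\mu_2^2(\mathcal{L})}\|\bm{g}_k\|_{\mathcal{L}}^2$ yielding the same two coefficients. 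If anything, you supply details the paper leaves implicit (the derivation of $\|\tilde{\bm{d}}_k\|_{\mathcal{L}}^2\le\frac{\Gamma^2(1+\epsilon)^2}{\mu_2^2(\mathcal{L})}\|\bm{g}_k\|_{\mathcal{L}}^2$ and the kernel/$\bm{1}^{\perp}$ bookkeeping), which only strengthens the argument.
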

\begin{proof}
Because the dual function $q(\bm{\lambda})$ has Hessian which is Lipschitz continuous with respect to the laplacian norm $||\cdot||_L$, we can write:
\begin{equation}
||g_{k+1} - g_{k} - H_k(\lambda_{k+1} - \lambda_k)||_\mathcal{L} \le \frac{B}{2}||\lambda_{k+1} - \lambda_k||^2_\mathcal{L}
\end{equation}
Using $\lambda_{k+1} = \lambda_k + \alpha_k\tilde{d}_k$ can be rewritten as:
\begin{equation}
||g_{k+1} - g_k - \alpha_kH_k\tilde{d}_k||_\mathcal{L} \le \frac{\alpha^2_kB}{2}||\tilde{d}_k||^2_\mathcal{L}
\end{equation}
Therefore,
\begin{equation}\label{eq_n_12}
||\tilde{d}_k||^2_\mathcal{L} \le \frac{\Gamma^2(1 + \epsilon)^2}{\mu^2_2(L)}||g_k||^2_\mathcal{L}
\end{equation}
Since $||g_k + \alpha_kH_k\tilde{d}_k||_\mathcal{L}$. Let $\tilde{d}_k = d_k + c_k$, then:
\begin{equation}\label{eq_n_13}
||c_k||_{H_k} \le \epsilon||d_k||_{H_k}
\end{equation}
and
\begin{align}\label{eq_n_14}
&||g_k + \alpha_kH_k\tilde{d}_k|||_\mathcal{L} = ||g_k + \alpha_kH_K(d_k + c_k)||_\mathcal{L} =  \\\nonumber
&||g_k - \alpha_kg_k + \alpha_kH_kc_k||_L \le (1 - \alpha_k)||g_k||_\mathcal{L} + \alpha_k||H_kc_k||_\mathcal{L}
\end{align}
Therefore, we need to evaluate $||H_kc_k||_\mathcal{L}$:
\begin{align*}
&||H_kc_k||^2_\mathcal{L} = c_k^TH_kLH_kc_k \le \mu_n(L)c^T_kH^2_kc_k \\\nonumber
&\le \mu_n(\mathcal{L})\mu_n(H_k)c^T_kH_kc_k \le^{^1} \mu_n(L)\frac{\mu_n(L)}{\gamma}\epsilon^2||d_k||^2_{H_k} = \\\nonumber
&\frac{\mu^2_n(\mathcal{L})\epsilon^2}{\gamma}d^\mathsf{T}_kH_kd_k = \frac{\mu^2_n(\mathcal{L})\epsilon^2}{\gamma}g^\mathsf{T}_kH^{\dagger}_kH_kH^{\dagger}_kg_k = \\\nonumber
&\frac{\mu^2_n(L)\epsilon^2}{\gamma}g^\mathsf{T}_kH^{\dagger}_kg_k \le^{2} \frac{\mu^2_n(\mathcal{L})\epsilon^2}{\gamma}\frac{\Gamma}{\mu^2_2(L)}||g_k||^2_\mathcal{L} = \epsilon^2\frac{\mu^2_n(L)}{\mu^2_2(\mathcal{L})}\frac{\Gamma}{\gamma}||g_k||^2_\mathcal{L}
\end{align*}
Hence,
\begin{equation}\label{eq_n_15}
||H_kc_k||_\mathcal{L} \le \epsilon\frac{\mu_n(\mathcal{L})}{\mu_2(\mathcal{L})}\sqrt{\frac{\Gamma}{\gamma}}||g_k||_\mathcal{L}
\end{equation}
Combining the above gives:
\begin{equation}\label{eq_n_16}
||g_k + \alpha_kH_k\tilde{d}_k||_\mathcal{L} \le \left[1 - \alpha_k + \alpha_k\epsilon\frac{\mu_n(\mathcal{L})}{\mu_2(\mathcal{L})}\sqrt{\frac{\Gamma}{\gamma}}\right]||g_k||_\mathcal{L}
\end{equation}
Therefore, we have:
\begin{align*}
&||g_{k+1}|| \le ||g_k + \alpha_kH_k\tilde{d}_k||_\mathcal{L} + \frac{\alpha^2_kB}{2}||\tilde{d}_k||^2_\mathcal{L} \le \\\nonumber
&\left[1 - \alpha_k + \alpha_k\epsilon\frac{\mu_n(\mathcal{L})}{\mu_2(\mathcal{L})}\sqrt{\frac{\Gamma}{\gamma}}\right]||g_k||_\mathcal{L} + \frac{\alpha^2_kB\Gamma^2(1 + \epsilon)^2}{2\mu^2_2(L)}||g_k||^2_\mathcal{L}
\end{align*}

\end{proof}
\begin{lemma}
Consider the algorithm given by the following iteration protocol: $\bm{\lambda}_{k+1}=\bm{\lambda}_{k+1}+\alpha^{*}\tilde{\bm{d}}_{k}$. Let $\bm{\lambda}_{0}$ be the initial value of the dual variable, and $q^{*}$ be the optimal value of the dual function. Then, the number of iterations needed by each of the three phases satisfy: 
\begin{enumerate}
\item The {\textbf{strict decrease phase}} requires the following number iterations to achieve the quadratic phase: 
\begin{equation*}
N_{1} \leq C_{1} \frac{\mu_{n}(\mathcal{L})^{2}}{\mu_{2}^{3}(\mathcal{L})} \left[1-\epsilon\frac{\mu_{n}(\mathcal{L})}{\mu_{2}(\mathcal{L})}\sqrt{\frac{\Gamma}{\gamma}}\right]^{-2},
\end{equation*}
where $C_{1}=C_{1}\left(\epsilon, \gamma, \Gamma, \bm{\delta}, q(\bm{\lambda}_{0}), q^{\star} \right)=2\bm{\delta}^{2}(1+\epsilon)^{2}\left[q(\bm{\lambda}_{0})-q^{\star}\right]\frac{\Gamma^{2}}{\gamma}$.
\item The \textbf{quadratic decrease phase} requires the following number of iterations to terminate: 
\begin{equation*}
N_{2} = \log_{2}\left[\frac{\frac{1}{2}\log_{2}\left(\left[1-\alpha^{*}\left(1-\epsilon\frac{\mu_{n}(\mathcal{L})}{\mu_{2}(\mathcal{L})}\sqrt{\frac{\Gamma}{\gamma}}\right)\right]\right)}{\log_{2}(r)}\right],
\end{equation*}
where $r=\frac{1}{\eta_{1}}||\bm{g}_{k^{\prime}}||_{\mathcal{L}}$, with $k^{\prime}$ being the first iteration of the quadratic decrease phase. 
\item The radius of the {\textbf{terminal phase}} is characterized by:
\begin{equation*}
\rho_{\text{terminal}} \leq \frac{2\left[1-\epsilon\frac{\mu_{n}(\mathcal{L})}{\mu_{2}(\mathcal{L})}\sqrt{\frac{\Gamma}{\gamma}}\right]}{e^{-\epsilon^{2}\gamma\bm{\delta}}}\mu_{n}(\mathcal{L})\sqrt{\mu_{2}(\mathcal{L})}.
\end{equation*}
\end{enumerate}
\end{lemma}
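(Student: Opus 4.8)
The plan is to treat the three phases separately, invoking the per-iteration estimates already established in the preceding three-phase convergence theorem together with the auxiliary quantities $\bm{\xi}$, $\bm{\zeta}$, $\eta_{0}=\bm{\xi}(1-\bm{\xi})/\bm{\zeta}$, $\eta_{1}=(1-\bm{\xi})/\bm{\zeta}$ and the optimal step $\alpha^{*}$. Throughout I abbreviate $c=1-\epsilon\tfrac{\mu_{n}(\mathcal{L})}{\mu_{2}(\mathcal{L})}\sqrt{\Gamma/\gamma}$, so that $\bm{\xi}=\sqrt{1-\alpha^{*}c}$.

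For the \textbf{strict decrease phase}, I would first note that the theorem supplies a \emph{fixed} per-iteration decrement $\Delta=\tfrac{1}{2}\tfrac{e^{-2\epsilon^{2}}}{(1+\epsilon)^{2}}\tfrac{\gamma^{3}}{\Gamma^{2}}\tfrac{\mu_{2}^{2}(\mathcal{L})}{\mu_{n}^{4}(\mathcal{L})}\eta_{1}^{2}$ whenever $||\bm{g}_{k}||_{\mathcal{L}}\ge\eta_{1}$. Since $q$ is bounded below by $q^{\star}$ and the iterates are monotone, telescoping the decrements across the phase gives $N_{1}\le (q(\bm{\lambda}_{0})-q^{\star})/\Delta$, so the task reduces to lower bounding $\eta_{1}$. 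Using the elementary inequality $1-\sqrt{1-x}\ge x/2$ on $\bm{\xi}=\sqrt{1-\alpha^{*}c}$ yields $1-\bm{\xi}\ge\alpha^{*}c/2$, hence $\eta_{1}\ge\alpha^{*}c/(2\bm{\zeta})$. Substituting the explicit $\alpha^{*}$, the definition $\bm{\zeta}=B(\alpha^{*}\Gamma(1+\epsilon))^{2}/(2\mu_{2}^{2}(\mathcal{L}))$, and $B=\mu_{n}(\mathcal{L})\bm{\delta}/(\gamma\sqrt{\mu_{2}(\mathcal{L})})$, then collecting the powers of $\gamma,\Gamma,\epsilon,\mu_{n}(\mathcal{L}),\mu_{2}(\mathcal{L})$ collapses the bound to $C_{1}\,\mu_{n}^{2}(\mathcal{L})\mu_{2}^{-3}(\mathcal{L})\,c^{-2}$ with $C_{1}=2\bm{\delta}^{2}(1+\epsilon)^{2}[q(\bm{\lambda}_{0})-q^{\star}]\Gamma^{2}/\gamma$, which is precisely the claimed expression.

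For the \textbf{quadratic phase}, I would normalize by $r_{k}=||\bm{g}_{k}||_{\mathcal{L}}/\eta_{1}$, which turns the theorem's bound $||\bm{g}_{k+1}||_{\mathcal{L}}\le\eta_{1}^{-1}||\bm{g}_{k}||_{\mathcal{L}}^{2}$ into the clean recursion $r_{k+1}\le r_{k}^{2}$, so after $m$ steps $r_{k'+m}\le r^{2^{m}}$ where $r=r_{k'}$ is the value at the first quadratic iteration. The phase ends once $||\bm{g}_{k}||_{\mathcal{L}}\le\eta_{0}$, equivalently $r_{k}\le\eta_{0}/\eta_{1}=\bm{\xi}$; solving $r^{2^{m}}\le\bm{\xi}$ by taking base-$2$ logarithms twice (and using $\log_{2}r,\log_{2}\bm{\xi}<0$) gives $m=\log_{2}(\log_{2}\bm{\xi}/\log_{2}r)$, and rewriting $\log_{2}\bm{\xi}=\tfrac12\log_{2}\bm{\xi}^{2}$ with $\bm{\xi}^{2}=1-\alpha^{*}c$ reproduces the stated $N_{2}$. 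For the \textbf{terminal phase} I would identify the neighborhood radius with $\eta_{0}=\bm{\xi}(1-\bm{\xi})/\bm{\zeta}$, bound $\bm{\xi}\le1$, and apply the companion inequality $1-\sqrt{1-x}\le x$ to obtain $1-\bm{\xi}\le\alpha^{*}c$, whence $\eta_{0}\le\alpha^{*}c/\bm{\zeta}$; the same substitutions for $\alpha^{*},\bm{\zeta},B$ simplify $\alpha^{*}/\bm{\zeta}$ to $2e^{\epsilon^{2}}\mu_{n}(\mathcal{L})\sqrt{\mu_{2}(\mathcal{L})}/(\gamma\bm{\delta})$, yielding the stated bound on $\rho_{\text{terminal}}$.

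I expect the main obstacle to be the constant-chasing in the first part: every factor of $\gamma,\Gamma,\epsilon,\mu_{n}(\mathcal{L}),\mu_{2}(\mathcal{L})$ must cancel exactly, and a single slip—most plausibly in the half-integer power of $\mu_{2}(\mathcal{L})$ buried inside $B$ and therefore inside $\bm{\zeta}$—changes the final exponent of $\mu_{2}(\mathcal{L})$. The conceptual ingredients (telescoping against the lower bound $q^{\star}$, the self-similar quadratic recursion, and the identification of the terminal radius with $\eta_{0}$) are otherwise routine, as are the two scalar facts $x/2\le 1-\sqrt{1-x}\le x$ on $[0,1]$ that are used to pass between $\bm{\xi}$ and $\alpha^{*}c$.
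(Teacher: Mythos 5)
Your proposal is correct and follows essentially the same route as the paper's own proof: telescoping the fixed per-iteration decrement against $q(\bm{\lambda}_{0})-q^{\star}$ for the strict-decrease phase, normalizing by $\eta_{1}$ to get the recursion $r_{k+1}\le r_{k}^{2}$ with stopping threshold $\eta_{0}/\eta_{1}=\bm{\xi}$ for the quadratic phase, and bounding $\rho_{\text{terminal}}=\eta_{0}\le\alpha^{*}c/\bm{\zeta}$ for the terminal phase. Your scalar inequalities $x/2\le 1-\sqrt{1-x}\le x$ are exactly equivalent to the paper's bounds $\tfrac{1}{4}(1-\mathcal{A}^{2})^{2}\le(1-\mathcal{A})^{2}\le(1-\mathcal{A}^{2})^{2}$, and your constant-chasing reproduces the paper's $C_{1}$ and terminal radius precisely.
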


\begin{proof}
We will start with strict decrease phase. From Theorem~1:
\begin{equation}\label{lem_24_inter_eq_1}
q(\lambda_{k+1}) - q(\lambda_k) \le -\frac{1}{2}\frac{e^{-2\epsilon^2}}{(1 + \epsilon)^2}\frac{\gamma^3}{\Gamma^2}\frac{\mu^2_2(L)}{\mu^4_n(L)}\eta^2_1
\end{equation}
where $\eta_1 = \frac{1 - \mathcal{A}}{\mathcal{B}}$, with:
\begin{equation*}
\mathcal{A} = \sqrt{\left[1 - \alpha^* + \alpha^*\epsilon\frac{\mu_n(L)}{\mu_2(L)}\sqrt{\frac{\Gamma}{\gamma}}\right]};\hspace{0.2cm} \mathcal{B} = \frac{B(\alpha^*\Gamma (1 + \epsilon))^2}{2\mu^2_2(L)}
\end{equation*}
Hence:
\begin{align}
&q(\lambda_{k+1}) - q(\lambda_k) \le -\frac{1}{2}\frac{e^{-2\epsilon^2}}{(1 + \epsilon)^2}\frac{\gamma^3}{\Gamma^2}\frac{\mu^2_2(L)}{\mu^4_n(L)}\eta^2_1 \le \\\nonumber
& -\frac{1}{2}\frac{e^{-2\epsilon^2}}{(1 + \epsilon)^2}\frac{\gamma^3}{\Gamma^2}\frac{\mu^2_2(L)}{\mu^4_n(L)}\frac{(1 - \mathcal{A})^2}{\mathcal{B}^2} = \\\nonumber
&-\frac{1}{2}\frac{e^{-2\epsilon^2}}{(1 + \epsilon)^2}\frac{\gamma^3}{\Gamma^2}\frac{\mu^2_2(L)}{\mu^4_n(L)}\frac{(1 - \mathcal{A})^2}{\left(\frac{B(\alpha^*\Gamma (1 + \epsilon))^2}{2\mu^2_2(L)}\right)^2} = \\\nonumber
&-2\frac{e^{-2\epsilon^2}}{(1 + \epsilon)^6}\frac{\gamma^3}{\Gamma^6}\frac{\mu^6_2(L)}{\mu^4_n(L)}\frac{(1 - \mathcal{A})^2}{B^2(\alpha^*)^4} = \\\nonumber
&-2\frac{e^{-2\epsilon^2}}{(1 + \epsilon)^6}\frac{\gamma^3}{\Gamma^6}\frac{\mu^6_2(L)}{\mu^4_n(L)}\frac{(1 - \mathcal{A})^2}{B^2\left(\frac{e^{-\epsilon^2}}{(1 + \epsilon)^2}\left(\frac{\gamma}{\Gamma}\frac{\mu_2(L)}{\mu_n(L)}\right)^2 \right)^4} = \\\nonumber
& -2e^{2\epsilon^2}(1 + \epsilon)^2\frac{\Gamma^2}{\gamma^5}\frac{\mu^4_n(L)}{\mu^2_2(L)}\frac{1}{B^2}(1 - \mathcal{A})^2 = \\\nonumber
&-2e^{2\epsilon^2}(1 + \epsilon)^2\frac{\Gamma^2}{\gamma^5}\frac{\mu^4_n(L)}{\mu^2_2(L)}\frac{1}{\left(\frac{\mu_n(L)\xi}{\gamma\sqrt{\mu_2(L)}}\right)^2}(1 - \mathcal{A})^2 = \\\nonumber
& -2e^{2\epsilon^2}(1 + \epsilon)^2\frac{\Gamma^2}{\gamma^3}\frac{\mu^2_n(L)}{\mu_2(L)}\frac{(1 - \mathcal{A})^2}{\xi^2}
\end{align}
Now, notice that:
\begin{equation*}
1 - \mathcal{A} = \frac{1 - \mathcal{A}^2}{1 + \mathcal{A}}
\end{equation*}
Moreover, because $0 \le \mathcal{A} \le 1$, therefore:
\begin{equation}\label{lem_24_interm_eq_3}
\frac{1}{4}(1 - \mathcal{A}^2)^2 \le (1 - \mathcal{A})^2 \le (1 - \mathcal{A}^2)^2
\end{equation}
Therefore, we can write:
\begin{align}\label{lem_24_inter_eq_4}
&q(\lambda_{k+1}) - q(\lambda_k) \le  -2e^{2\epsilon^2}(1 + \epsilon)^2\frac{\Gamma^2}{\gamma^3}\frac{\mu^2_n(L)}{\mu_2(L)}\frac{(1 - \mathcal{A})^2}{\xi^2} \le \\\nonumber
&-2e^{2\epsilon^2}(1 + \epsilon)^2\frac{\Gamma^2}{\gamma^3}\frac{\mu^2_n(L)}{\mu_2(L)}\frac{1}{\xi^2}\frac{(1 - \mathcal{A}^2)^2}{4} = \\\nonumber
& -\frac{1}{2}e^{2\epsilon^2}(1 + \epsilon)^2\frac{\Gamma^2}{\gamma^3}\frac{\mu^2_n(L)}{\mu_2(L)}\frac{1}{\xi^2}(\alpha^*)^2\left[1 - \epsilon\frac{\mu_n(L)}{\mu_2(L)}\sqrt{\frac{\Gamma}{\gamma}}\right]^2 = \\\nonumber
&-\frac{1}{2}e^{2\epsilon^2}(1 + \epsilon)^2\frac{\Gamma^2}{\gamma^3}\frac{\mu^2_n(L)}{\mu_2(L)}\frac{1}{\xi^2}\left(\frac{e^{-\epsilon^2}}{(1 + \epsilon)^2}\left(\frac{\gamma}{\Gamma}\frac{\mu_2(L)}{\mu_n(L)}\right)^2\right)^2\times\\\nonumber
&\left[1 - \epsilon\frac{\mu_n(L)}{\mu_2(L)}\sqrt{\frac{\Gamma}{\gamma}}\right]^2 \\
&= -\frac{1}{2\xi^2}\frac{1}{(1 + \epsilon)^2}\frac{\gamma}{\Gamma^2}\frac{\mu^3_2(L)}{\mu^2_n(L)}\left[1 - \epsilon\frac{\mu_n(L)}{\mu_2(L)}\sqrt{\frac{\Gamma}{\gamma}}\right]^2 =\\\nonumber
&-\frac{1}{2\xi^2(1 + \epsilon)^2}\frac{\gamma}{\Gamma^2}\frac{\mu^3_2(L)}{\mu^2_n(L)}\left[1 - \epsilon\frac{\mu_n(L)}{\mu_2(L)}\sqrt{\frac{\Gamma}{\gamma}}\right]^2
\end{align}
Denote
\begin{equation*}
\delta^* = \frac{1}{2\xi^2(1 + \epsilon)^2}\frac{\gamma}{\Gamma^2}\frac{\mu^3_2(L)}{\mu^2_n(L)}\left[1 - \epsilon\frac{\mu_n(L)}{\mu_2(L)}\sqrt{\frac{\Gamma}{\gamma}}\right]^2,
\end{equation*}
then from (\ref{lem_24_inter_eq_4}) we have:
\begin{equation*}
q(\lambda_{k+1}) - q(\lambda_k) \le -\delta^* 
\end{equation*}
Hence, the number of iterations required by the algorithm for the strict decrease phase is upper-bounded by:
\begin{align*}
&N_1 \le \frac{(q(\lambda_0) - q^*)}{\delta^*} = \\\nonumber
&2\xi^2(1+ \epsilon)^2[q(\lambda_0) - q^*]\frac{\Gamma^2}{\gamma}\frac{\mu^2_n(L)}{\mu^3_2(L)}\left[1 - \epsilon\frac{\mu_n(L)}{\mu_2(L)}\sqrt{\frac{\Gamma}{\gamma}}\right]^{-2}
\end{align*}
where $q^*$ - optimal value of dual function.\\\newline
Now, lets analyze the quadratic decrease phase. We have, for $\eta_0 \le ||g_k||_L < \eta_1$:
\begin{equation*}
||g_{k+1}||_L \le \frac{1}{\eta_1}||g_k||^2_L
\end{equation*}
Hence,
\begin{equation}
\frac{1}{\eta_1}||g_{k+1}||_L \le \left(\frac{1}{\eta_1}||g_k||_L\right)^2
\end{equation}
Now, denote $l$ be the first iteration when quadratic phase is achieved, i.e $||g_l||_L < \eta_1$, therefore, for $k+l$ iteration:
\begin{align*}
\frac{1}{\eta_1}||g_{k+l}||_L \le \left(\frac{1}{\eta_1}||g_{k + l -1}||_L\right)^2 \le \ldots \le \left(\frac{1}{\eta_1}||g_l||_L\right)^{2^k} = r^{2^k}
\end{align*}
where we use notation $r = \frac{1}{\eta_1}||g_l||_L, r \in (0,1)$.
Hence, the number or iterations ADD-SDDM algorithm requires to reach
terminal phase is given by the following condition:
\begin{equation*}
\eta_1r^{2^k} < \eta_0
\end{equation*}
which immediately gives:
\begin{equation*}
k \ge \log_2\left[\frac{\log_2\left(\frac{\eta_0}{\eta_1}\right)}{\log_2r}\right] = \log_2\left[\frac{\log_2\mathcal{A}}{\log_2r}\right]
\end{equation*}
Hence,
\begin{equation*}
N_2 = \log_2\left[\frac{\frac{1}{2}\log_2\left(\left[1 - \alpha^*\left(1 -  \epsilon\frac{\mu_n(L)}{\mu_2(L)}\sqrt{\frac{\Gamma}{\gamma}}\right)\right] \right)}{\log_2r} \right]
\end{equation*}
Finally lets consider the radius of the terminal phase:
\begin{align*}
&\rho_{terminal} = \eta_0  = \frac{\mathcal{A}(1 - \mathcal{A})}{\mathcal{B}} \le \frac{1 - \mathcal{A}^2}{\mathcal{B}} = \frac{•\alpha^*\left[1 - \epsilon\frac{\mu_n(L)}{\mu_2(L)}\sqrt{\frac{\Gamma}{\gamma}}\right]}{\frac{B(\alpha^*\Gamma (1 + \epsilon))^2}{2\mu^2_2(L)}} = \\\nonumber
& \frac{2\left[1 - \epsilon\frac{\mu_n(L)}{\mu_2(L)}\sqrt{\frac{\Gamma}{\gamma}}\right]}{e^{-\epsilon^2}\gamma\xi}\mu_n(L)\sqrt{\mu_2(L)}
\end{align*}

\end{proof}

\end{document}